\title{Cobordism and Concordance of Surfaces in 4-Manifolds}
\author{Simeon Hellsten}
\address{School of Mathematics and Statistics, University of Glasgow, United Kingdom}
\email{simeon.hellsten@glasgow.ac.uk}
\def\subjclassname{\textup{2020} Mathematics Subject Classification}
\let\csname subjclassname@1991\endcsname=\subjclassname
\subjclass{
57K45, %Higher-dimensional knots and links
57Q60, %Cobordism and concordance in PL-topology
57N70, %Cobordism and concordance in topological manifolds
57K40. %General topology of 4-manifolds
}
\newcommand{\cref}{\zcref[S]}
\newcommand{\stimes}{\mathord{\times}}
\newcommand{\cyc}[1]{{\Z/{#1}}}
\renewcommand{\iff}{\Leftrightarrow}
\newcommand{\inv}{^{-1}}
\newcommand{\restr}[3][]{\mathchoice{%
\restriction{#2}{#3}{\displaystyle}{#1}}{%
\restriction{#2}{#3}{\textstyle}{#1}}{%
\restriction{#2}{#3}{\scriptstyle}{#1}}{%
\restriction{#2}{#3}{\scriptscriptstyle}{#1}}}
\newlength{\totbarheight}
\newlength{\bardepth}
\renewcommand{\restriction}[4]{
\settodepth{\bardepth}{\(#3 #1\)} 
\settoheight{\totbarheight}{\(#3 #1\)}
\addtolength{\totbarheight}{\bardepth}
\addtolength{\totbarheight}{1ex}
\addtolength{\bardepth}{0.7ex}
{#1 
\rule[-\bardepth]{0ex}{\totbarheight} 
\mkern 2mu \vrule width 0.14ex \mkern 2.5mu}
{\addtolength{\bardepth}{-0.2ex}
\addtolength{\totbarheight}{-0.2ex}
\rule[-\bardepth]{0ex}{\totbarheight}}^{#4}_{#2}
}
\DeclareMathOperator{\pr}{pr}
\DeclareMathOperator{\pt}{pt}
\newcommand{\C}{\mathbb{C}}
\newcommand{\R}{\mathbb{R}}
\renewcommand{\P}{\mathbb{P}}
\newcommand{\Z}{\mathbb{Z}}
\renewcommand{\S}{\Sigma}
 \newcommand{\cI}{\mathcal{I}} 
 \newcommand{\cP}{\mathcal{P}} 
\newcommand{\del}{\partial}
\DeclareMathOperator{\fr}{fr}  
\DeclareMathOperator{\self}{self}
\DeclareMathOperator{\BO}{BO}
\DeclareMathOperator{\BSO}{BSO}
\DeclareMathOperator{\BSpin}{BSpin}
\DeclareMathOperator{\BPin}{BPin}
\DeclareMathOperator{\lk}{lk}
\DeclareMathOperator{\PD}{PD}
\newcommand{\immerse}{\looparrowright}
\DeclareMathOperator{\Aut}{Aut}
\DeclareMathOperator*{\colim}{colim}
\DeclareMathOperator{\Hom}{Hom}
\DeclareMathOperator{\Map}{Map}
\DeclareMathOperator{\id}{id}
\DeclareMathOperator{\im}{im}
\DeclareMathOperator{\rank}{rank}
\renewcommand{\O}{\mathrm{O}}
\DeclareMathOperator{\SO}{SO}
\DeclareMathOperator{\Spin}{Spin}
\DeclareMathOperator{\Pin}{Pin}
\newcommand{\floor}[1]{\left\lfloor #1 \right\rfloor}
\newcommand{\abs}[1]{\left\lvert #1 \right\rvert}
\newcommand{\wh}{\widehat}
\renewcommand{\bar}{\overline}
\theoremstyle{plain}
\declaretheorem[name=Theorem,sibling=theorem,numberwithin=section]{theorem}
\declaretheorem[name=Theorem]{mainthm}
\declaretheorem[name=Corollary,sibling=mainthm]{maincor}
\declaretheorem[name=Proposition,sibling=theorem]{proposition}
\declaretheorem[name=Lemma,sibling=theorem]{lemma}
\declaretheorem[name=Corollary,sibling=theorem]{corollary}
\theoremstyle{definition}
\declaretheorem[name=Definition, sibling=theorem]{definition}
\declaretheorem[name=Remark,sibling=theorem]{remark}
\declaretheorem[name=Notation, sibling=theorem]{notation}
\newtheoremstyle{named}{}{}{\itshape}{}{\bfseries}{.}{.5em}{\thmnote{#3}}
\theoremstyle{named}
\newtheorem*{namedtheorem}{Theorem}
\begin{document}

\begin{abstract}
    We show that two properly embedded compact surfaces in an orientable 4-manifold are cobordant if and only if they are $\mathbb{Z}/2$-homologous and either the 4-manifold has boundary or the surfaces have the same normal Euler number. 
    If the 4-manifold is simply-connected and the surfaces are closed, non-orientable, and cobordant, we show that they are in fact concordant. This completes the classification of closed surfaces in simply-connected 4-manifolds up to concordance.
    Our methods give new constructions of cobordisms with prescribed boundaries, and completely determine when a given cobordism between the boundaries extends to a cobordism or concordance between the surfaces. We obtain our concordance results by extending Sunukjian's method of ambient surgery to the unoriented case using Pin$^-$-structures. 
    We also discuss conditions for an arbitrary codimension 2 properly embedded submanifold to admit an unoriented spanning manifold with prescribed boundary. All results hold in both the smooth and topological categories.
\end{abstract}
\maketitle
\thispagestyle{empty}

\vspace{-1em}
 \section{Introduction}\label{sec: introduction}
{\setstretch{1.09}
Let $X$ be a $4$-manifold and let $\S_0, \S_1 \subset X$ be properly embedded compact surfaces. A \textit{cobordism} from $\S_0$ to $\S_1$ is a properly embedded compact 3-manifold with corners $Y \subset X \times I$ such that $Y \cap (X \times \{i\}) = \S_i \times \{i\}$ for $i=0,1$. We say that a cobordism $Y$ is a \textit{concordance} if $\S_0 \cong \S_1$ and $Y \cong \S_0 \times I$. 
If $\del \S_0 = \del \S_1$ and  $Y \cap (\del X \times I) = \del \S_0 \times I$, then we call $Y$ a \textit{cobordism} (resp.\ \textit{concordance}) \textit{rel.\ boundary}.

Kervaire began the study of concordance of surfaces in 4-manifolds, using surgery techniques to show that if $X = S^4$ and $\S_0 \cong \S_1 \cong S^2$, then $\S_0$ and $\S_1$ are concordant \cite[Th\'eor\`eme III.6]{Kervaire}. 
This was extended to all pairs of closed connected surfaces in $X=S^4$ by Blanl{\oe}il--Saeki, who showed that $\S_0, \S_1\subset S^4$ are concordant if and only if $\S_0 \cong \S_1$ and they have the same normal Euler number \cite[Corollaire 3.1]{BlanloeilSaeki}. Their approach used $\Pin^-$-structures to study when a closed connected surface in $S^4 = \del D^5$ bounds a handlebody in $D^5$.
Sunukjian adapted Kervaire's techniques, using $\Spin$-structures to prove that for any closed simply-connected 4-manifold $X$, two closed connected orientable surfaces $\S_0, \S_1 \subset X$ are concordant if and only if $\S_0 \cong \S_1$ and they are $\Z$-homologous after some choice of orientations \cite[Theorem 6.1]{Sunukjian}. 

In this paper, we extend these results to arbitrary simply-connected 4-manifolds $X$ and arbitrary compact connected surfaces, possibly non-orientable and with non-empty boundary.

\begin{mainthm}\label{mainthm: concordance rel boundary}
    Let $X$ be a simply-connected 4-manifold, and let $\S_0, \S_1 \subset X$ be properly embedded compact connected surfaces. Suppose that $\del \S_0 = \del \S_1$. Then $\S_0$ and $\S_1$ are concordant rel.\ boundary if and only if $\S_0 \cong \S_1$ and either:
    \begin{enumerate}[label=(\roman*)]
        \item $\S_0$ and $\S_1$ are orientable, and $[\S_0 \cup \S_1] = 0 \in H_2(X;\Z)$ after a choice of orientation; or
        \item $\S_0$ and $\S_1$ are non-orientable, $[\S_0 \cup \S_1] = 0 \in H_2(X;\cyc{2})$, and $e(\S_0,s)  = e(\S_1,s)$ after an arbitrary choice of framing $s$ of $\del \S_0$.
    \end{enumerate}
\end{mainthm}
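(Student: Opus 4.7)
The necessity of the listed conditions is essentially immediate from the definitions and standard invariants. If $Y$ is a concordance rel.\ boundary then $Y \cong \S_0 \times I$, so certainly $\S_0 \cong \S_1$; moreover $\del Y$ is the union of $\S_0 \cup_\del \S_1$ together with $\del \S_0 \times I$, exhibiting $[\S_0 \cup \S_1]$ as null in $H_2(X;\Z)$ or $H_2(X;\cyc{2})$ according to orientability; and the product structure transports any framing $s$ of $\del \S_0$ to $\del \S_1$, so the normal Euler numbers agree.

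For the converse, the plan is to combine the cobordism classification stated in the abstract with an ambient surgery argument in the spirit of Kervaire and Sunukjian, extended to the non-orientable setting via $\Pin^-$-structures. First invoke the cobordism result to obtain some cobordism $Y \subset X \times I$ between $\S_0$ and $\S_1$ rel.\ boundary; it then suffices to modify $Y$ by ambient surgery inside $X \times I$ until it becomes the product $\S_0 \times I$.

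Fix a handle decomposition of $Y$ relative to a collar of $\S_0 \times \{0\} \subset Y$. Since $\S_0 \cong \S_1$, Euler characteristic considerations allow us to arrange that $Y$ has no $0$- or $3$-handles and equally many $1$- and $2$-handles; by Poincar\'e--Lefschetz duality, it is enough to eliminate the $1$-handles. To cancel a single $1$-handle, one seeks an embedded disk $D \subset X \times I$ whose boundary is a loop $\gamma \subset Y$ running once across the handle, meeting $Y$ only along $\gamma$ and carrying a compatible normal framing. Simple-connectedness of $X \times I$ makes $\gamma$ null-homotopic, yielding an immersed disk, and the standard $4$-dimensional toolkit --- tubing into dual surfaces, Freedman's topological Whitney trick, and trading intersections for extra genus later removed by a further ambient surgery --- upgrades this to an embedding.

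The principal obstruction is framing: the surgery disk $D$ must carry a normal framing on $\del D$ that extends across $D$ and is compatible with the structure group data on $Y$. In the orientable case this is controlled by a $\Spin$-structure on $Y$ extending one on $X \times I$, exactly as in Sunukjian. For non-orientable $Y$, I would replace $\Spin$ by $\Pin^-$: the hypotheses $[\S_0 \cup \S_1] = 0 \in H_2(X;\cyc{2})$ and $e(\S_0,s) = e(\S_1,s)$ should provide precisely the input needed to extend compatible $\Pin^-$-structures from the ends of $Y$ across $Y$, and hence to locate embedded surgery disks with the required framing. The hard part will be the technical development of this $\Pin^-$-ambient-surgery machinery --- especially verifying that the framing obstruction really is captured by the $\Pin^-$-bordism class relative to the prescribed boundary data --- together with a careful check that every modification preserves the rel.\ boundary condition $Y \cap (\del X \times I) = \del \S_0 \times I$.
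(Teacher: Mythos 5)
Your overall strategy matches the paper's: first construct a cobordism rel.\ boundary (orientable in case (i)) using a cobordism classification, then ambiently surger it in $X \times I$ into a product, with $\Spin$ replaced by $\Pin^-$ in the non-orientable case. The paper actually deduces Theorem A in one paragraph from Theorem~B, Corollary~F, and Theorem~1.8, which together carry exactly this content.

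However, your description of the ambient surgery step has two genuine gaps. First, the disc-finding: you appeal to ``the standard $4$-dimensional toolkit'' and Freedman's topological Whitney trick, but the surgery happens inside $X \times I$, a $5$-manifold, in the smooth category. There a generic $2$-disc is already embedded; the only issue is transverse point intersections with the $3$-manifold $Y$, and Freedman's machinery is neither needed nor relevant. The paper instead first performs ambient $0$-surgeries (ambient connected sums) to arrange that $\pi_1(X \times I \setminus Y)$ is cyclic and generated by a meridian to $Y$; after that, every curve in $Y$ has a null-homotopic pushoff into the complement, giving a spanning disc that misses $Y$ entirely. Second, your handle-cancellation argument on $Y$ is not right as stated: arranging equal numbers of $1$- and $2$-handles in a relative handle decomposition of the $3$-manifold $Y$ does not, on its own, produce a sequence of $1$-surgeries on $Y$ yielding $\S_0 \times I$, and a $3$-dimensional $k$-handle of $Y$ is not the same thing as a surgery on $Y$. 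The paper's route is to use $\Omega_3^{\Pin^-} = 0$ to find a $4$-dimensional $\Pin^-$-manifold $W$ filling $\del W = (Y) \cup (\del \S_0 \times I) \cup (\S_0 \times I)$, then modify the handle decomposition of $W$ rel.\ $Y$ so that it consists only of $2$-handles (a Kaplan-type argument); these $2$-handles encode the required sequence of $\Pin^-$-$1$-surgeries. This $4$-dimensional filling, not handle-counting in $Y$ itself, is what makes the surgery sequence exist, and it is the reason the statement $\Omega_3^{\Pin^-} = 0$ is a load-bearing ingredient.

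Finally, be careful with the claim that the framing condition is ``captured by the $\Pin^-$-bordism class relative to the prescribed boundary data.'' What actually matters is subtler: when $w_2(X \times I \setminus Y)$ acts trivially on $H_2$, the paper constructs a specific $\Pin^-$-structure on $Y$ (by choosing framings of $\restr{T(X\times I)}{\gamma}$ extendable over spanning discs for a basis of $H_1(Y;\cyc{2})$) and shows that \emph{that} structure's $\Pin^-$-surgeries are realisable ambiently; when $w_2$ acts non-trivially there is a dual sphere to tube into and every framing works. The $\Pin^-$-structure is not canonical and is built to fit the embedding, which is the crux of the argument you flag as ``the hard part.''
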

Here $e(\S_i,s)$ is the normal Euler number of $\S_i$ relative to the framing~$s$ of the link $\del \S_i \subset \del X$. 
Although one must choose an orientation of $X$ to compute $e(\S_0,s)$ and $e(\S_1,s)$, reversing the orientation changes the sign of both quantities. Similarly, changing the framing $s$ changes both quantities by the same amount, hence the equality $e(\S_0,s) = e(\S_1,s)$ is either true or false independent of these choices.
Theorem~\ref{mainthm: concordance rel boundary}, as well as all other results in this paper, holds in both the smooth and topological categories, and does not require $X$ to be compact.

By applying Theorem~\ref{mainthm: concordance rel boundary} with $\S_0$ and $\S_1$ closed, we complete the classification of closed connected surfaces in simply-connected 4-manifolds up to concordance.
}

\begin{maincor}\label{maincor: closed concordance}
    Let $X$ be a simply-connected 4-manifold and let $\S_0, \S_1 \subset X$ be embedded closed connected surfaces. Then $\S_0$ and $\S_1$ are concordant if and only if $\S_0 \cong \S_1$ and either:
    \begin{enumerate}[label=(\roman*)]
        \item $\S_0$ and $\S_1$ are orientable and $[\S_0] = [\S_1] \in H_2(X;\Z)$ with some choice of orientations;~or
        \item $\S_0$ and $\S_1$ are non-orientable, $[\S_0] = [\S_1] \in H_2(X;\cyc{2})$, and $e(\S_0) = e(\S_1)$.
    \end{enumerate}
\end{maincor}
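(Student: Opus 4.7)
The plan is to deduce this corollary essentially formally from Theorem~\ref{mainthm: concordance rel boundary} by specializing to the closed case. Since $\del \Sigma_0 = \del \Sigma_1 = \emptyset$, the hypothesis $\del \Sigma_0 = \del \Sigma_1$ in Theorem~\ref{mainthm: concordance rel boundary} holds automatically, cobordism (resp.\ concordance) and cobordism (resp.\ concordance) rel.\ boundary coincide, and there is no framing $s$ of $\del \Sigma_i$ to choose. In particular the relative Euler number $e(\Sigma_i,s)$ appearing in Theorem~\ref{mainthm: concordance rel boundary}(ii) degenerates to the intrinsic normal Euler number $e(\Sigma_i)$, so the framing-free hypothesis $e(\Sigma_0) = e(\Sigma_1)$ in the corollary is precisely what is needed.

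The remaining task is to reconcile the homology conditions. For closed surfaces $[\Sigma_0 \cup \Sigma_1] = [\Sigma_0] + [\Sigma_1]$ in the appropriate homology group. Over $\cyc{2}$ this immediately yields $[\Sigma_0 \cup \Sigma_1] = 0 \iff [\Sigma_0] = [\Sigma_1]$, matching condition (ii) on the nose. Over $\Z$, the condition that $[\Sigma_0 \cup \Sigma_1] = 0$ for \emph{some} choice of orientations on $\Sigma_0$ and $\Sigma_1$ translates to $[\Sigma_0] = -[\Sigma_1]$ for such a choice; reversing the orientation of $\Sigma_1$ (which is allowed, since it is an independent connected component) then gives $[\Sigma_0] = [\Sigma_1]$ for some choice of orientations, and this reversal argument also works in the other direction.

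Finally, orientability is a concordance invariant (since $\Sigma_0 \times I$ is orientable iff $\Sigma_0$ is, and concordance rel.\ boundary requires the total space to be diffeomorphic to $\Sigma_0 \times I$), and under $\Sigma_0 \cong \Sigma_1$ the two surfaces are either both orientable or both non-orientable, so the dichotomy in Theorem~\ref{mainthm: concordance rel boundary} matches the dichotomy in the corollary case by case. I do not anticipate any substantive obstacle here; the argument is a pure bookkeeping reduction and should be a short paragraph in the paper.
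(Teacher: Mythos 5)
Your proposal is correct and matches the paper's approach exactly: the paper simply states that Corollary~\ref{maincor: closed concordance} follows "by applying Theorem~\ref{mainthm: concordance rel boundary} with $\S_0$ and $\S_1$ closed," without spelling out the bookkeeping you carry out (degeneration of the relative Euler number, $[\S_0 \cup \S_1]=0 \iff [\S_0]=[\S_1]$, and the orientation-reversal trick to pass between $[\S_0]=-[\S_1]$ and $[\S_0]=[\S_1]$). Your slightly more explicit reduction is valid and fills in the details the paper leaves implicit.
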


As in \cites{Kervaire,BlanloeilSaeki,Sunukjian}, the proof of Theorem~\ref{mainthm: concordance rel boundary} proceeds in two steps. The first step is to reduce the existence of a concordance rel.\ boundary to the existence of a cobordism rel.\ boundary. More generally, we say that a cobordism $Z \subset \del X \times I$ from $\del \S_0$ to $\del \S_1$ \textit{extends to} a cobordism $Y \subset X \times I$ from $\S_0$ to $\S_1$ if $Z = Y \cap (\del X \times I)$; equivalently, we say that $Y$ \textit{extends} $Z$. We show that if a concordance from $\del \S_0$ to $\del \S_1$ extends to a cobordism from $\S_0$ to $\S_1$ which is orientable if $\S_0$ and $\S_1$ are, then it in fact extends to a concordance.

\begin{mainthm}\label{mainthm: general concordance}
    Let $X$ be a simply-connected 4-manifold and let $\S_0,\S_1 \subset X$ be properly embedded compact connected surfaces. Let $Z \subset \del X \times I$ be a concordance from $\del \S_0$ to $\del \S_1$. Then $Z$ extends to a concordance from $\S_0$ to $\S_1$ if and only if $\S_0 \cong \S_1$ and either:
    \begin{enumerate}[label=(\roman*)]
        \item $\S_0$ and $\S_1$ are orientable, and $Z$ extends to an orientable cobordism from $\S_0$ to $\S_1$; or
        \item $\S_0$ and $\S_1$ are non-orientable, and $Z$ extends to a cobordism from $\S_0$ to $\S_1$.
    \end{enumerate}
\end{mainthm}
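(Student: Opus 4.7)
The ``only if'' direction is immediate: a concordance $\S_0 \times I$ is in particular a cobordism, and is orientable whenever $\S_0$ (hence $\S_1 \cong \S_0$) is. For the converse, suppose $Y \subset X \times I$ is a cobordism extending $Z$, chosen orientable in case (i). Following the strategy introduced by Kervaire and carried out by Sunukjian for closed oriented surfaces, I modify $Y$ into a concordance by a sequence of ambient surgeries supported away from $\partial X \times I$, so that $Z = Y \cap (\partial X \times I)$ is preserved throughout. Choose a relative handle decomposition of $Y$ built on $\S_0 \times \{0\} \cup Z$. Since $\S_0 \cong \S_1$ and both are connected, standard handle trading inside the $3$-manifold $Y$ lets me assume that $Y$ is obtained from $\S_0 \times I$ by attaching only ambient $2$-handles along a disjoint collection of embedded circles $\gamma_1, \ldots, \gamma_k$ in the interior of $Y$.

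To cancel such a $2$-handle, I ambiently surger $\gamma_i$ inside $X \times I$. This requires an embedded disk $D_i \subset X \times I$ with $\partial D_i = \gamma_i$, meeting $Y$ only along $\gamma_i$, together with a framing of $\nu_{D_i}$ extending the framing of $\nu_{\gamma_i/Y}$ coming from the handle. Simple connectivity of $X \times I$ lets each $\gamma_i$ bound a map of a disk; transversality in dimension $4$, combined with finger/Whitney moves and tubing into classes of $\pi_2(X) = \pi_2(X \times I)$, then lets me make the $D_i$ embedded, pairwise disjoint, and disjoint from $Y$ away from the $\gamma_i$, in both the smooth and topological categories. What remains is the framing obstruction.

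In case (i), $Y$ is orientable, so $\nu_{\gamma_i/Y}$ has a canonical framing and the obstruction to extending it over $D_i$ lies in $\pi_1(\SO(2)) \cong \Z$; as in Sunukjian, it can be killed by altering the homology class of $D_i$ via $\pi_2(X)$, using a $\Spin$-based framing compatibility on the simply-connected $X \times I$. In case (ii), $Y$ need not be $\Spin$, but every surface carries a $\Pin^-$ structure together with its Brown--Arf quadratic refinement $q \colon H_1(\S_i; \cyc{2}) \to \cyc{4}$ of the mod $2$ intersection form. Following Blanl{\oe}il--Saeki, I extend a chosen $\Pin^-$ structure on $\S_0$ across $Y$, transport it to $\S_1$, and use this data to pick each bounding disk $D_i$ together with a framing so that the ambient surgery is well-defined and genuinely cancels the corresponding handle.

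The main obstacle is the framing step in case (ii): one must show that the $\Pin^-$-theoretic obstruction to framing each $D_i$ can in fact be annihilated using only the available freedom of tubing into $\pi_2(X)$ and performing finger moves, with no Euler-number hypothesis to lean on (that condition having been absorbed into the existence of $Y$). This demands a $\Pin^-$ analogue of Sunukjian's $\Spin$-based framing lemma, tracking how ambient surgery alters $q$ simultaneously with the normal framing; establishing this is the core technical input behind the theorem and will occupy the $\Pin^-$-structure portion of the paper.
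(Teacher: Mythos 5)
Your high-level philosophy is sound — surger the cobordism into a concordance using $\Pin^-$-structures to control framings, following Sunukjian and Blanl{\oe}il--Saeki — but there are two genuine gaps.

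First, the reduction step is wrong as stated. You claim that "standard handle trading inside the $3$-manifold $Y$ lets me assume that $Y$ is obtained from $\S_0 \times I$ by attaching only ambient $2$-handles." A handle decomposition of the $3$-manifold $Y$ relative to $\S_0$ has handles of index $0$ through $3$; handle trading removes the $0$- and $3$-handles, but one is generically left with both $1$- and $2$-handles, and when $\S_0\cong\S_1$ has positive genus you cannot eliminate the $1$-handles — attaching only $3$-dimensional $2$-handles to $\S_0\times I$ compresses the surface and strictly decreases genus (or changes the number of components), so you can never recover $\S_1\cong\S_0$. The correct reduction is of a different nature: one needs that $Y$ and $\S_0\times I$ differ by a sequence of abstract \emph{integral Dehn surgeries} (1-surgeries on the $3$-manifold, i.e.\ $4$-dimensional $2$-handle attachments on the trace), which is Lickorish--Wallace, together with a Kaplan-type refinement showing these surgeries can be chosen to preserve a fixed $\Pin^-$-structure. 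This is precisely what Proposition~\ref{lmm: pin cobordism with boundary} supplies, using $\Omega_3^{\Pin^-}=0$ and a Morse-theoretic handle-trading argument on a $\Pin^-$ null-cobordism $W^4$ of $\partial Y$ — not handle trading on $Y$ itself. The student's framing conflates the $3$-manifold handle decomposition of $Y$ with the $4$-dimensional surgery trace that relates $Y$ to $\S_0 \times I$.

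Second, the framing step has the logic backwards and omits a necessary preliminary. You propose to choose each disc $D_i$, examine the $\Pin^-$-theoretic framing obstruction, and "annihilate it using tubing into $\pi_2(X)$ and finger moves." That only works when $w_2(X\times I\setminus Y)$ pairs nontrivially with some class of $H_2(X\times I\setminus Y;\Z)$; if $w_2$ vanishes on all of $H_2$, no amount of tubing changes the framing parity. The paper's approach in this case (Proposition~\ref{prop: main sunukjian step}) is to \emph{construct} a particular $\Pin^-$-structure on $Y$ — specified by stable framings of $TX$ over a mod-$2$ basis of $H_1(Y;\cyc{2})$ extending over chosen spanning discs — with the property that every abstract $\Pin^-$-surgery along any embedded circle \emph{is} ambiently realizable, and then to run the Kaplan-type argument with respect to that structure. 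You also omit the preliminary ambient $0$-surgeries (Lemma~\ref{lmm: 1-surgery fundamental group}) that make $\pi_1(X\times I\setminus Y)$ cyclic and generated by a meridian; without this, the spanning discs $D_i$ need not exist, and the argument in Proposition~\ref{prop: main sunukjian step} that the relevant framings of $\restr{TX}{K}$ extend over an orientable surface in the complement fails.
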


We prove Theorem~\ref{mainthm: general concordance} by showing that when $X$ is simply-connected, any cobordism extending $Z$ can be ambiently surgered into a concordance in $X \times I$. We do this by adapting Sunukjian's methods of $\Spin$-surgery to the unoriented setting, by replacing $\Spin$-structures with their unoriented analogues, $\Pin^\pm$-structures. See \cref{sec: intro concordance} below for more details. 

\begin{wrapfigure}{r}{0.4\textwidth} 
    \vspace{-0.4cm}
    \begin{center}
    \begin{tikzpicture}
\begin{scope}[rotate around y=-7]

\path (1,0,0);
\pgfgetlastxy{\cylxx}{\cylxy}
\path (0,1,0);
\pgfgetlastxy{\cylyx}{\cylyy}
\path (0,0,1);
\pgfgetlastxy{\cylzx}{\cylzy}
\pgfmathsetmacro{\cylt}{(\cylzy * \cylyx - \cylzx * \cylyy)/ (\cylzy * \cylxx - \cylzx * \cylxy)}
\pgfmathsetmacro{\ang}{atan(\cylt)}
\pgfmathsetmacro{\ct}{1/sqrt(1 + (\cylt)^2)}
\pgfmathsetmacro{\st}{\cylt * \ct}

\pgfmathsetmacro{\myl}{5}

\fill[cyan!50!blue,opacity=0.1] (\ct,\st,0) -- ++(0,0,-\myl) arc[start angle=\ang,delta angle=180,radius=1] -- ++(0,0,\myl) arc[start angle=\ang+180,delta angle=-180,radius=1];

\fill[cyan!50!blue, fill opacity=0.1] (0,0,0) circle[radius=1];

\draw[green!50!black, thick]
      plot[variable=\t,domain=-1:1,samples=50,smooth]
      ({sqrt(0.25)*\t},
       {sqrt(1-0.25*\t*\t)},
       {-2*cos(\t*90)});

\draw[green!50!black, thick, dashed]
      plot[variable=\t,domain=180:240,samples=50,smooth]
      ({cos(\t)},
       {sin(\t)},
        {\myl*\t/60-4*\myl});

 \draw[green!50!black, thick]
      plot[variable=\t,domain=180:230,samples=50,smooth]
      ({-cos(\t)},
       {sin(\t)},
       {\myl*\t/60-4*\myl});

\draw[green!50!black, thick,dashed]
      plot[variable=\t,domain=230:240,samples=50,smooth]
      ({-cos(\t)},
       {sin(\t)},
       {\myl*\t/60-4*\myl});

\draw[red,thick] ({sqrt(0.25)},{sqrt(0.75)},0)--({sqrt(0.25)},-{sqrt(0.75)},0);
\draw[red,thick] (-{sqrt(0.25)},{sqrt(0.75)},0)--(-{sqrt(0.25)},-{sqrt(0.75)},0);

\draw[red,dashed,thick] (-1,0,-\myl)--(1,0,-\myl);

\draw (0,0,0) circle[radius=1];

\draw (\ct,\st,0) -- ++(0,0,-\myl);
\draw (-\ct,-\st,0) -- ++(0,0,-\myl);
\draw (\ct,\st,-\myl) arc[start angle=\ang,delta angle=180,radius=1];
\draw[dashed] (\ct,\st,-\myl) arc[start angle=\ang,delta angle=-180,radius=1];

\node[left] at (-1,1.2,-\myl/2) {$X \times I$};

\node[red!50!black] at (0,0,0) {$\S_0$};
\node[red!50!black] at (0,0.3,-\myl) {$\S_1$};

\node[green!30!black] at (0.2,-0,-\myl/2) {$Z$};
\draw[green!30!black,->] (0.4,-0.05,-\myl/2) -- ++ (0.3,-0.3,0);
\draw[green!30!black,->] (0,-0.1,-\myl/2) -- ++ (-0.45,-0.05,0);
\draw[green!30!black,->] (0.05,0.15,-\myl/2) -- ++ (-0.35,0.3,0);

\end{scope}
\end{tikzpicture}
    \caption{Example of input data for Theorem \ref{mainthm: general cobordism}, with reduced dimensions. Here $X=D^2$, $\S_0 \cong I \sqcup I$, $\S_1 \cong I$, and $Z \cong I \sqcup I \sqcup I$. In this example, $Z$ extends to a cobordism from $\S_0$ to $\S_1$ (not shown).}
    \label{fig: excob}
    \end{center}
    \vspace{-1em}
\end{wrapfigure}
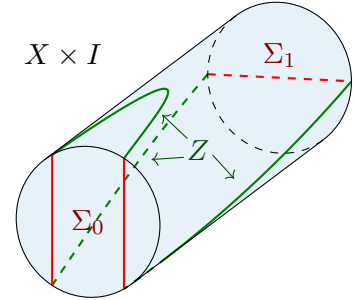

The second step in the proof of Theorem~\ref{mainthm: concordance rel boundary} is to classify when a given cobordism from $\del \S_0$ to $\del \S_1$ extends to some cobordism between the surfaces. We do this for all orientable 4-manifolds $X$; see Theorem~\ref{mainthm: general cobordism} below. 

Perhaps surprisingly, this is the more technically difficult step, since the classical approaches in e.g.\ \cite{ThomQuelqueProprietes} cannot be easily adapted from the oriented setting to the unoriented setting. 
Instead, we introduce a novel method for constructing unoriented cobordisms between properly embedded surfaces. Our method relies on a new notion of a spanning 3-manifold for an immersed surface in a 4-manifold, which is inspired by ideas in \cite{BaykurSunukjian}. See \cref{sec: intro cobordisms} for more details on this construction. Below, we write $\pr_X \colon X \times I \to X$ for the projection map. See \cref{fig: excob} for an example of the set-up of Theorem~\ref{mainthm: general cobordism} in reduced dimensions.

\begin{mainthm}\label{mainthm: general cobordism}
    Let $X$ be a connected orientable 4-manifold, and let $\S_0, \S_1 \subset X$ be properly embedded compact surfaces. 
    Let $Z \subset \del X \times I$ be a cobordism from $\del \S_0$ to $\del \S_1$.
    Then $Z$ extends to a cobordism from $\S_0$ to $\S_1$ if and only if $[\S_0 \cup \pr_X(Z) \cup \S_1] = 0 \in H_2(X;\cyc{2})$ and $e(\S_1,s_1) = e(\S_0,s_0) + e(Z,s_0\cup s_1)$ after an arbitrary choice of framing $s_i$ of $\del \S_i$ for $i=0,1$.
\end{mainthm}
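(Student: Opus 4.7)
\emph{Necessity.} Suppose $Y\subset X\times I$ is a cobordism extending $Z$; my plan is to deduce both conditions by characteristic-class arithmetic. Its boundary $\del Y = \S_0\cup Z\cup\S_1$ is a closed surface bounding $Y$, so it is null-homologous in $X\times I$; pushing forward along the homotopy equivalence $\pr_X\colon X\times I\to X$ yields $[\S_0\cup\pr_X(Z)\cup\S_1]=0$ in $H_2(X;\cyc{2})$. For the Euler-number identity, I would regroup $\del Y = (\S_0\cup Z)\cup\S_1$ and view $Y$ as a cobordism relative to the common boundary $\del\S_1$. The rank-$2$ normal bundle $\nu_Y$ of $Y$ in $X\times I$ restricts on the three pieces of $\del Y$ to the normal bundles of $\S_0$, $Z$, and $\S_1$. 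Pairing the relative Euler class of $\nu_Y$ with the null-homologous cycle $[\S_0\cup Z]+[\S_1]$ in $H_2(Y,\del\S_1;\cyc{2})$, which vanishes, shows $e(\S_0\cup Z,s_1)=e(\S_1,s_1)$, and combining with the standard additivity of framed Euler numbers under boundary gluing, $e(\S_0\cup Z,s_1)=e(\S_0,s_0)+e(Z,s_0\cup s_1)$, gives the stated identity.

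\emph{Sufficiency.} Form the closed surface $S := \S_0\cup Z\cup\S_1\subset X\times I$; a cobordism extending $Z$ is precisely a properly embedded 3-manifold $Y\subset X\times I$ with $\del Y=S$. My plan is a two-stage construction along the lines advertised in the introduction. First, use the $\cyc{2}$-homology hypothesis to produce, in $X$ itself, an immersed spanning 3-manifold $W\immerse X$ for the projected surface $\pr_X(S)$, in the new sense developed in the paper. Second, resolve the self-intersections of $W$ by pushing into the $I$-direction, obtaining an embedded 3-manifold $Y\subset X\times I$ with $\del Y=S$. The Euler-number hypothesis is what guarantees that the normal twisting of $Y$ along the corner locus $\del\S_0\cup\del\S_1$ matches the data of $Z$, so that $\del Y$ is exactly $S$ and not $S$ modified by some extra twisted annuli or disc-bundle boundaries.

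\emph{Main obstacle.} The crux is the first step of the sufficiency argument, the construction of the immersed spanning 3-manifold in the unoriented setting. In the oriented case treated by \cite{ThomQuelqueProprietes}, a closed oriented surface with trivial $\Z$-homology class bounds an oriented 3-chain which can be promoted to a manifold by standard surgery. For non-orientable $\S_i$ these arguments fail at the $\cyc{2}$-level because $\Omega_2^{\O}\cong\cyc{2}$ is generated by $\R\P^2$, introducing a bordism obstruction that the homological hypothesis alone cannot eliminate. I anticipate that the paper's novel spanning-manifold construction, inspired by \cite{BaykurSunukjian}, will build $W$ directly from the normal geometry of the immersion and the framings of the link boundary, reducing the existence of $W$ to exactly the two hypotheses in the theorem.
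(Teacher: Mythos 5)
Your necessity argument has a coefficient problem: the Euler-number identity you need is an \emph{integer} equality, but pairing a relative Euler class of $\nu_Y$ against a class in $H_2(Y,\del\S_1;\cyc{2})$ can only yield the mod-2 reduction. The paper instead proves directly (\cref{lmm: euler numbers of boundaries}) that $e(\del Y)=0\in\Z$ for any properly embedded compact $(n+1)$-manifold $Y$ in an orientable $(2n+1)$-manifold, by intersecting $Y$ with a generic push-off of itself and checking that the endpoints of each resulting arc contribute opposite signs; writing out $\del Y$ with the orientation conventions then gives $e(\del Y)=-e(\S_0,s_0)-e(Z,s_0\cup s_1)+e(\S_1,s_1)$, whence the integer identity.

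Your sufficiency sketch has a genuine gap, and it also misreads the paper's strategy. You propose to find an \emph{immersed} spanning 3-manifold $W\immerse X$ for $\pr_X(S)$ and then ``resolve the self-intersections of $W$ by pushing into the $I$-direction.'' But two generically immersed 3-manifolds in a 4-manifold meet in \emph{surfaces}, not isolated points, so the self-intersection locus of $W$ would be 2-dimensional, and there is no analogue of resolving isolated double points by graphing a function $F\colon W\to I$: along a 2-dimensional double locus, any continuous $F$ takes equal values on both branches, so the graph stays immersed. The paper never considers immersed spanning 3-manifolds. Instead it punctures the \emph{surface} at its (0-dimensional) double points, producing an \emph{embedding} $\wh\S\subset\wh X$, then finds a genuinely embedded spanning 3-manifold $\wh Y\subset\wh X$ that meets each puncture's $S^3$ boundary in a carefully chosen Hopf annulus. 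The existence of such annuli is exactly where the Euler-number hypothesis enters (\cref{cor: puncturing embeddings}, proved by the combinatorial sign-assignment argument of \cref{sec: new surfaces sec}); your proposal leaves the role of that hypothesis at the level of a vague ``normal twisting'' heuristic. The graph $\{(y,F(y))\}\subset\wh X\times I$ is embedded precisely because $\wh Y$ is, and the Hopf annuli stretch to traces of isotopies that are then capped off by \cref{lmm: local around double point}. Finally, your proposal does not address the reduction from the relative case to the closed case: the paper separately handles this via the suspension isomorphism and \cref{lmm: embedding surfaces in 4manifolds} to produce a surface $M\subset X$ with $[\del(M\times I)]=[\S_0\cup Z\cup\S_1]\in H_2(\del(X\times I);\cyc{2})$, applies the closed case to find a cobordism to $\del(M\times I)$ in a collar, and then fills in with $M\times I$; this step is essential and missing from your sketch.
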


As a consequence of Theorem~\ref{mainthm: general cobordism}, we complete the classification of compact surfaces in orientable 4-manifolds up to cobordism. This extends results of Carter--Kamada--Saito--Satoh in the case that $\S_0$ and $\S_1$ are connected and $X = \R^4$ \cite{CKSSBordism}.

\begin{mainthm}\label{mainthm: cobordism}
    Let $X$ be a connected orientable 4-manifold and let $\S_0, \S_1 \subset X$ be properly~embedded compact surfaces. Then $\S_0$ and $\S_1$ are cobordant if and only if $[\S_0] = [\S_1] \in H_2(X, \del X;\cyc{2})$ and either $\del X \neq \varnothing$ or $e(\S_0) = e(\S_1)$.
\end{mainthm}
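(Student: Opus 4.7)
The plan is to reduce to Theorem~\ref{mainthm: general cobordism} by constructing a cobordism $Z \subset \del X \times I$ between $\del \S_0$ and $\del \S_1$ whose two extension hypotheses hold. For the forward direction, a cobordism $Y \subset X \times I$ projects to a mod-$2$ $3$-chain $\pr_X(Y)$ witnessing $[\S_0] = [\S_1] \in H_2(X, \del X; \cyc 2)$; when $\del X = \varnothing$ the surfaces are closed and $Y$ is a closed $3$-manifold bounded by $\S_0 \sqcup \S_1$, so its rank-$2$ normal bundle interpolates between those of $\S_0$ and $\S_1$ and a standard obstruction-theoretic argument gives $e(\S_0) = e(\S_1)$.

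For the converse, if $\del X = \varnothing$ I would apply Theorem~\ref{mainthm: general cobordism} with $Z = \varnothing$: both its hypotheses translate directly to the given ones via $H_2(X; \cyc 2) = H_2(X, \del X; \cyc 2)$. Suppose instead $\del X \neq \varnothing$. The connecting homomorphism gives $[\del \S_0] = [\del \S_1] \in H_1(\del X; \cyc 2)$, and since $\del X$ is an orientable $3$-manifold, $\del \S_0 \sqcup \del \S_1$ bounds an embedded surface $W \subset \del X$. From $W$ I would assemble a ``pillow'' cobordism $Z \subset \del X \times I$ by placing $W$ inside $\del X \times \{1/2\}$ and attaching the cylinders $\del \S_0 \times [0, 1/2]$ and $\del \S_1 \times [1/2, 1]$, smoothing corners along $\del W$.

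To verify the homology hypothesis of Theorem~\ref{mainthm: general cobordism}, I would use that different valid choices of $W$ differ by an arbitrary embedded closed surface in $\del X$, so $[\pr_X(Z)] = [W] \in H_2(X; \cyc 2)$ varies over an affine coset of $\iota_*(H_2(\del X; \cyc 2))$; a chase in the long exact sequence of $(X, \del X)$ using the hypothesis $[\S_0] = [\S_1] \in H_2(X, \del X; \cyc 2)$ shows this coset contains the class needed to make $[\S_0 \cup \pr_X(Z) \cup \S_1] = 0$.

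The main obstacle is the Euler condition $e(\S_1, s_1) - e(\S_0, s_0) = e(Z, s_0 \cup s_1)$, which requires tuning $e(Z, \cdot)$ to a prescribed integer. My plan is to realize this by local modifications of $Z$ that leave the mod-$2$ homology class of $\pr_X(Z)$ invariant: tube-summing with embedded $\R P^2$'s supported in small $4$-balls of $\del X \times I$ shifts $e(Z, \cdot)$ by $\pm 2$, while replacing part of a cylinder $\del \S_i \times [\cdot, \cdot]$ (when $\del \S_i \neq \varnothing$) by the trace of a framing-twist loop in the diffeomorphism group of $\del X$ supplies shifts by any integer. When $\del \S_i = \varnothing$, a parity argument — using that $[\S_0] = [\S_1] \in H_2(X, \del X; \cyc 2)$ forces $e(\S_0) - e(\S_1)$ to be even via intersection-form and Wu-formula considerations — shows the tube-sum modifications alone suffice. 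Combining both types of adjustments yields a $Z$ verifying both hypotheses of Theorem~\ref{mainthm: general cobordism}, producing the desired cobordism.
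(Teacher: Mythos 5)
Your strategy is the same as the paper's: reduce to Theorem~\ref{mainthm: general cobordism} by first producing a spanning surface $W \subset \del X$ for $\del\S_0 \cup \del\S_1$ and stretching it into a cobordism $Z$, choosing the homology class of $W$ (via the long exact sequence of the triple $(X,\del X, L)$, or equivalently your coset observation) so that $[\S_0 \cup \pr_X(Z) \cup \S_1] = 0$, and then correcting the Euler number by disjointly adding unknotted $\R\P^2$'s in $\del X \times I$. The homology step is sound and is essentially what the paper does using Theorem~\ref{thm: relative spanning manifolds}.

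The gap is in your Euler-number adjustment. The $\R\P^2$-insertions change $e(Z,\cdot)$ only by multiples of $2$, so one must know a priori that the discrepancy $e(\S_1,s_1) - e(\S_0,s_0) - e(Z,s_0\cup s_1)$ is even. You supply a parity argument only when $\del\S_i = \varnothing$, and for $\del\S_i \neq \varnothing$ you propose to realize odd shifts by inserting the trace of a ``framing-twist loop'' of diffeomorphisms of $\del X$ into a cylinder $\del\S_i \times [a,b]$. This mechanism does not work: the isotopy $\Phi_{s,t} = \phi_{(1-s)t}$ exhibits the trace of any such loop as isotopic to the trivial cylinder, and since $e(Z,s_0\cup s_1)$ is invariant under isotopies of $Z$ rel.\ boundary one cannot change $e(Z,\cdot)$ by $\pm 1$ this way. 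Moreover, there is a structural reason the discrepancy is always even: only even shifts could ever be realized by embedded modifications, because a cobordism $Y'$ from $Z$ to a hypothetical $Z'$ shifting $e$ by an odd amount would contradict Lemma~\ref{lmm: euler numbers of boundaries}. In other words, the modification you need is impossible, not merely elusive.

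The fix is to establish the even-parity claim in full generality, which is what the paper does. Taking $Z$ to be the graph of a function $W \to I$ (your pillow, suitably smoothed), Lemma~\ref{lmm: assoc section extends} shows $e(Z,s^W) = 0$ for the Seifert framing $s^W$ associated to $W$; and Lemma~\ref{lmm: even euler number facts}, applied using the fact that $[\S_0 \cup W \cup \S_1] = 0 \in H_2(X;\cyc{2})$, shows $e(\S_0,s^W) \equiv e(\S_1,s^W) \pmod{2}$. These two facts together make the discrepancy even, so the $\R\P^2$-insertions alone complete the proof — the framing-twist device is neither necessary nor available. You should also beware that the piecewise pillow ($W$ at level $1/2$ glued to two half-cylinders) is not smoothly embedded; the graph construction is the standard way to make $Z$ an honest submanifold of $\del X \times I$ with $\pr_X$ restricting to an embedding, which is precisely the hypothesis needed to invoke Lemma~\ref{lmm: assoc section extends}.
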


We deduce Theorem~\ref{mainthm: cobordism} from Theorem~\ref{mainthm: general cobordism} and a result of Whitney stating that for any $n \in \Z$, there is an embedded compact surface $F \subset D^4$ with $e(F) = 2n$ \cite{WhitneyDiffMan}. This allows us to modify a fixed initial cobordism on the boundary to satisfy the conditions in Theorem~\ref{mainthm: general cobordism}, by introducing a suitable disjoint closed surface in $\del X \times I$ of the correct normal Euler number; see \cref{sec: theorem E}.

Theorem~\ref{mainthm: general cobordism} also lets us complete the classification of compact surfaces in orientable 4-manifolds up to cobordism rel.\ boundary; see \cref{sec: cobordisms rel boundary}

\begin{maincor}\label{maincor: cobordism rel boundary}
    Let $X$ be an orientable 4-manifold and let $\S_0,\S_1 \subset X$ be properly embedded compact surfaces such that $\del \S_0 = \del \S_1$. Then $\S_0$ and $\S_1$ are  cobordant rel.\ boundary if and only if $[\S_0 \cup \S_1] = 0 \in H_2(X;\cyc{2})$ and $e(\S_0,s)  = e(\S_1,s)$ after an arbitrary choice of framing~$s$~of~$\del \S_0$.
\end{maincor}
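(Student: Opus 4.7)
The plan is to deduce this directly from Theorem~\ref{mainthm: general cobordism} by taking the cobordism on the boundary to be the trivial product. Set $Z \coloneqq \del \S_0 \times I = \del \S_1 \times I \subset \del X \times I$. Under the identification $\del(\S_i \times I) \cap (\del X \times I) = \del \S_i \times I$, a cobordism $Y \subset X \times I$ from $\S_0$ to $\S_1$ satisfies $Y \cap (\del X \times I) = \del \S_0 \times I$ (i.e., $Y$ is a cobordism rel.\ boundary) if and only if $Y$ extends $Z$. Thus $\S_0$ and $\S_1$ are cobordant rel.\ boundary precisely when $Z$ admits an extension, and so Theorem~\ref{mainthm: general cobordism} applies.

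It remains to simplify the conditions of Theorem~\ref{mainthm: general cobordism} in this case. First, $\pr_X(Z) = \del \S_0$ is $1$-dimensional, and consequently the class $[\S_0 \cup \pr_X(Z) \cup \S_1] \in H_2(X;\cyc{2})$ coincides with $[\S_0 \cup \S_1]$ (the $1$-dimensional stratum contributes nothing to the $2$-dimensional $\cyc{2}$-fundamental class of the union; note that $\del \S_0 = \del \S_1$ ensures $\S_0 \cup \S_1$ is already a mod~$2$ cycle). Second, given any framing $s$ of $\del \S_0 = \del \S_1$ in $\del X$, the product structure $Z = \del \S_0 \times I$ in $\del X \times I$ identifies the normal bundle of $Z$ with the pullback of the normal bundle of $\del \S_0$ in $\del X$ along the projection to the first factor; hence $s$ extends constantly in the $I$-direction to a global trivialization of the normal bundle of $Z$, yielding $e(Z, s \cup s) = 0$. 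Taking $s_0 = s_1 = s$, the Euler number condition from Theorem~\ref{mainthm: general cobordism} becomes precisely $e(\S_0,s) = e(\S_1,s)$.

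Combining these two simplifications, Theorem~\ref{mainthm: general cobordism} gives the equivalence claimed in Corollary~\ref{maincor: cobordism rel boundary}. The only step requiring care — which I expect to be the main (minor) obstacle — is the assertion that $e(Z, s \cup s) = 0$ for the product cobordism with the constant framing; this should be either immediate from the definition of the relative normal Euler number used in Theorem~\ref{mainthm: general cobordism} or a one-line verification citing the relevant definition in the body of the paper.
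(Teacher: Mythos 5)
The proposal is correct and takes a genuinely different route from the paper. The paper's proof first perturbs $\S_1$ off itself in the direction of the framing $s$ to obtain a surface $\S'_1$ whose boundary is \emph{disjoint} from $\del\S_0$, takes $Z$ to be the trace of this isotopy, and then invokes Lemma~\ref{lmm: assoc section extends} (which requires the two links to be disjoint) to conclude $e(Z,s^Z)=0$ before applying Theorem~\ref{mainthm: general cobordism}. You instead take $Z$ literally to be the product $\del\S_0 \times I$ and verify $e(Z,s\cup s)=0$ directly: since $T(\del X\times I)\cong T(\del X)\oplus TI$ and $TZ \cong T(\del\S_0)\oplus TI$, the normal bundle $N_{\del X\times I}Z$ is the pullback $\pr^{*}N_{\del X}(\del\S_0)$, so $s$ extends over all of $Z$ as a nowhere-vanishing section, and the relative Euler number vanishes. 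This is shorter and avoids Lemma~\ref{lmm: assoc section extends} entirely; your homology simplification ($[\S_0\cup\pr_X(Z)\cup\S_1]=[\S_0\cup\S_1]$ because $\pr_X(Z)$ is supported on the 1-complex $\del\S_0$) is likewise correct.

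One point worth flagging: by feeding $Z=\del\S_0\times I$ directly into Theorem~\ref{mainthm: general cobordism}, you are invoking that theorem precisely in the case $\del\S_0=\del\S_1$. The proof of Theorem~\ref{mainthm: general cobordism} opens with a reduction ("we may assume $\S_0$ and $\S_1$ intersect transversely"), which when the boundaries coincide requires exactly the kind of push-off and bookkeeping the paper carries out explicitly in its proof of Corollary~\ref{maincor: cobordism rel boundary}. So the paper's perturbation can be seen as making that reduction visible rather than hiding it inside the earlier theorem. Since the statement of Theorem~\ref{mainthm: general cobordism} does hold as written, your direct application is sound; just be aware that you are leaning on the black box in its least trivial case.
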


In order to complete the proof of Theorem~\ref{mainthm: concordance rel boundary}, we also need to complete the classification of orientable surfaces up to orientable cobordisms rel.\ boundary.
This is due to Thom in the case that $X$ is closed \cite[Th\'eor\`eme IV.6]{ThomQuelqueProprietes}. We extend these methods, using similar classical techniques, to arbitrary $X$ and arbitrary orientable cobordisms on the boundary.

\begin{theorem}[\cite{ThomQuelqueProprietes}]\label{thm: oriented cobordism}
    Let $X$ be a connected oriented $(n+2)$-manifold and let $\S_0, \S_1 \subset X$ be oriented properly embedded compact $n$-manifolds. Then there is an oriented cobordism from $\S_0$ to $\S_1$ if and only if $[\S_0] = [\S_1] \in H_2(X, \del X;\Z)$.
\end{theorem}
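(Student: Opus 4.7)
The plan is to adapt Thom's classical approach via the Pontryagin--Thom correspondence, exploiting the codimension-two identification $MSO(2) \simeq \mathbb{CP}^\infty = K(\Z, 2)$.

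The forward direction is routine: if $Y \subset X \times I$ is an oriented cobordism from $\S_0$ to $\S_1$, then pushing its fundamental chain forward under the projection $\pi \colon X \times I \to X$ yields a singular $(n+1)$-chain in $X$ whose boundary is $[\S_1] - [\S_0]$ modulo $\del X$ (the rest of $\del Y$ lies in $\del X \times I$). Hence $[\S_0] = [\S_1]$ in $H_n(X, \del X; \Z)$.

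For the converse, I would assign to each oriented properly embedded codim-$2$ submanifold $\S \subset X$ a Pontryagin--Thom classifying map $f_\S \colon X \to MSO(2) \simeq \mathbb{CP}^\infty$, built by collapsing the complement of a tubular neighborhood $\nu(\S)$ to the basepoint and composing with the classifying map of the oriented normal bundle. The homotopy class of $f_\S$ encodes the Poincar\'e--Lefschetz dual $\PD([\S]) \in H^2(X; \Z)$, and by construction $\S = f_\S\inv(H)$ for a codim-$2$ subspace $H \subset \mathbb{CP}^\infty$ corresponding to the zero section of $MSO(2)$. The hypothesis then forces $f_{\S_0} \simeq f_{\S_1}$; I would choose such a homotopy $F \colon X \times I \to \mathbb{CP}^\infty$, perturb it rel.\ $X \times \{0, 1\}$ to be transverse to $H$ (including along $\del X \times I$), and take $Y = F\inv(H)$ as the desired oriented codim-$2$ cobordism.

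The main technical obstacle is the bookkeeping when $X$ is non-compact or has non-empty boundary: for the Pontryagin--Thom map to return the correct class in $H^2(X; \Z)$, one must work with compactly supported cohomology and the corresponding version of Poincar\'e--Lefschetz duality, and one must arrange the classifying maps so that $f_{\S_i}\inv(H)$ agrees with $\S_i$ on the nose rather than merely up to isotopy. The transverse perturbation of $F$ is standard in the smooth category; in the topological category it relies on codimension-two topological transversality, combined with a boundary-parametrized version to guarantee that the resulting $Y$ is properly embedded.
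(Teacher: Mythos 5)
Your proposal follows essentially the same route as the paper: both reduce the theorem to the compactly-supported Pontryagin--Thom bijection $[X,\C\P^{k+1}]_c \cong H_n(X,\del X;\Z)$, choose a compactly supported homotopy realizing the assumed agreement of classes, and take the transverse preimage of $\C\P^k$ as the cobordism (this is precisely how the paper outlines it in the introduction, and what is formalized via \cref{prop: compact homotopy classes}(iii) and \cref{prop: oriented codimension 2}, applied with $M = X \times I$ and $A = X \times \{0,1\}$). The technical obstacles you flag --- using a finite skeleton $\C\P^{k+1}$ rather than $\C\P^\infty$ for transversality, working with compactly supported cohomology, and pinning down $f_{\S_i}^{-1}(\C\P^k) = \S_i$ on the nose rather than up to homotopy --- are exactly the points the paper handles with the colimit description of $[-,-]_c$, the cofibration-extension trick, and the explicit Thom collapse map in \cref{prop: oriented codimension 2}.
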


\begin{theorem}\label{thm: oriented general cobordism}
        Let $X$ and $\S_0,\S_1 \subset X$ be as in \cref{thm: oriented cobordism}.
        Let $Z \subset \del X \times I$ be an oriented cobordism from $\del \S_0$ to $\del \S_1$. Then $Z$ extends to an oriented cobordism from $\S_0$ to $\S_1$ if and only if $[-\S_0 \cup -\pr_{X}(Z) \cup \S_1] = 0 \in H_n(X;\Z).$
\end{theorem}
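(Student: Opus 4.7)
The plan is to adapt Thom's classical Pontryagin--Thom approach underlying \cref{thm: oriented cobordism}.

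Necessity is immediate: if $Y$ extends $Z$, then $\pr_X$ maps the fundamental chain of $Y$ to an $(n+1)$-chain in $X$ whose singular boundary equals $-\S_0 \cup -\pr_X(Z) \cup \S_1$, certifying that this class vanishes in $H_n(X;\Z)$.

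For sufficiency, set $M = X \times I$ and
\[ N = (-\S_0 \times \{0\}) \cup (-Z) \cup (\S_1 \times \{1\}) \subset \del M, \]
oriented so that $N = \del Y$ for any sought-after extension $Y$. Because $\pr_X$ is a homotopy equivalence, the hypothesis translates to $\iota_*[N] = 0 \in H_n(M;\Z)$, where $\iota \colon \del M \hookrightarrow M$. Since $N$ and any chain bounding it are compact, we may replace $X$ by a compact codimension-$0$ submanifold containing their supports and hence assume $X$ is compact.

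Pontryagin--Thom, together with the homotopy equivalence $\mathrm{MSO}(1) \simeq K(\Z,1)$, classifies oriented properly embedded codimension $1$ submanifolds of $M$ (with boundary in $\del M$) up to cobordism by $H^1(M;\Z)$, with the operation of taking boundary corresponding to the restriction $H^1(M;\Z) \to H^1(\del M;\Z)$. By naturality of Poincaré--Lefschetz duality, the connecting map $\delta \colon H^1(\del M;\Z) \to H^2(M, \del M;\Z)$ is identified with $\iota_* \colon H_n(\del M;\Z) \to H_n(M;\Z)$. Hence the vanishing of $\iota_*[N]$ allows the Poincaré dual of $[N]$ to be lifted to some $\beta \in H^1(M;\Z)$, and applying Pontryagin--Thom to $\beta$ produces an oriented properly embedded $(n+1)$-submanifold $Y_0 \subset M$ with $[\del Y_0] = [N]$ in $H_n(\del M;\Z)$.

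It remains to upgrade this homological equality to $\del Y_0 = N$. Two homologous closed oriented codimension $1$ submanifolds of the oriented $(n+1)$-manifold $\del M$ are cobordant (by the same Pontryagin--Thom argument applied inside $\del M$), so there exists an oriented cobordism $V \subset \del M \times I$ from $\del Y_0$ to $N$. Using a collar of $\del M$ in $M$, we push $V$ into $M$ and graft it onto $Y_0$; this yields a properly embedded oriented $(n+1)$-submanifold $Y \subset M$ with $\del Y = N$, the desired extension of $Z$. The principal technicality is the naturality identification of $\delta$ with $\iota_*$ under Poincaré--Lefschetz duality; the rest is a routine extension of Thom's original argument to manifolds with boundary.
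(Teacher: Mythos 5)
Your argument is conceptually sound and parallels the paper's Pontryagin--Thom strategy, but it contains a pervasive dimensional error. The cobordism $Y$ is an $(n+1)$-manifold inside the $(n+3)$-manifold $M = X \times I$, so it has codimension $2$, not codimension $1$; likewise $N$ has codimension $2$ in the $(n+2)$-manifold $\del M$ (not $(n+1)$-manifold, as you wrote). Accordingly, the relevant Thom space is $\mathrm{MSO}(2) \simeq \C\P^\infty \simeq K(\Z,2)$, not $\mathrm{MSO}(1) \simeq K(\Z,1)$, and all cohomological degrees should shift up by one: the classifying cohomology is $H^2$ rather than $H^1$, and the connecting map to be identified with $\iota_*$ is $\delta \colon H^2(\del M;\Z) \to H^3(M, \del M;\Z)$, not $H^1 \to H^2$. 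Once these degrees are corrected, every step of your argument goes through: a straightforward duality computation confirms that $\delta$ on $H^2(\del M)$ corresponds to $\iota_* \colon H_n(\del M) \to H_n(M)$, so $\PD[N]$ lifts to some $\beta \in H^2(M;\Z)$, and the rest proceeds as you wrote it.

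With the degrees fixed, your route genuinely differs from the paper's in how it controls the boundary of the constructed manifold. The paper's proposition realising oriented codimension~$2$ classes builds a map $M \to \C\P^{k+1}$ arranged via a cofibration argument to have \emph{exactly} the prescribed preimage on $\del M$, so the submanifold it produces already satisfies $\del Y = N$. You instead produce $Y_0$ with $[\del Y_0]=[N]$ only up to homology, then invoke a second Pontryagin--Thom argument inside $\del M$ to get a cobordism $V$ from $\del Y_0$ to $N$, and graft $V$ onto $Y_0$ through a collar. Both are correct; yours trades the cofibration extension for an extra cobordism-and-graft step, and sidesteps the compact-support homotopy machinery by first truncating $X$ to a compact piece. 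One detail you should make explicit after truncation: the new boundary components of $X_0$ are interior to $X$, so you must note (as is immediate once $\del Y = N$, since $N$ misses the new boundary) that $Y$ stays away from them, ensuring $Y$ remains properly embedded in the original $X \times I$.
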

Here an oriented cobordism from $\S_0$ to $\S_1$ is a properly embedded compact oriented $(n+1)$-manifold $Y \subset X \times I$ such that $\del Y \cap (X \times \{0\} ) = -\S_0$ and $\del Y \cap (X \times \{1\}) = \S_1$ as oriented manifolds. 
Both Theorems~\ref{thm: oriented cobordism} and \ref{thm: oriented general cobordism} hold for codimension 2 proper embeddings in all ambient dimensions.
See \cref{sec: intro oriented} for an outline of the proof of Theorems~\ref{thm: oriented cobordism} and \ref{thm: oriented general cobordism}, and a discussion about why these classical techniques do not adapt to the unoriented setting.

We can now complete the proof of Theorem~\ref{mainthm: concordance rel boundary}.

\begin{proof}[Proof of Theorem~\ref{mainthm: concordance rel boundary}]
    Let $X$ be a simply-connected 4-manifold, and let $\S_0, \S_1 \subset X$ be properly embedded compact connected surfaces with $\del \S_0 = \del \S_1$. 
    By Theorem~\ref{mainthm: general concordance}, they are concordant rel.\ boundary if and only if $\S_0 \cong \S_1$ and there is a cobordism rel.\ boundary from $\S_0$ to $\S_1$, which is orientable if $\S_0$ and $\S_1$ are. Corollary~\ref{maincor: cobordism rel boundary} says that they are cobordant rel.\ boundary if and only if they are $\cyc{2}$-homologous and $e(\S_0,s) = e(\S_1,s)$ after a choice of framing $s$ of $\del \S_0$; \cref{thm: oriented general cobordism} says that this cobordism can be chosen to be orientable if and only if $[\S_0 \cup \S_1] = 0 \in H_2(X;\Z)$ after a choice of orientation. These two facts prove Theorem~\ref{mainthm: concordance rel boundary} in the cases that $\S_0$ and $\S_1$ are non-orientable and orientable respectively.
\end{proof}

\subsection{$\Pin^\pm$-surgery and concordance}\label{sec: intro concordance}
We now discuss the proof of Theorem~\ref{mainthm: general concordance} in more detail, and explain why a cobordism between compact connected surfaces in a simply-connected 4-manifold can be surgered into a concordance. We first consider Sunukjian's approach in the oriented setting \cite{Sunukjian}, before discussing our adaptations to the unoriented setting.

Given a compact connected oriented 3-manifold $Y$, properly embedded in a simply-connected 5-manifold $X$, Sunukjian showed that $\Spin$-structures on $Y$ can be used to control which integral Dehn surgeries on $Y$ can be performed ambiently in $X$.
We understand this as follows. An abstract surgery on a knot $K \subset Y$ is specified by a framing of $N_YK$. Whether this surgery can be realised ambiently in $X$ depends only on the stable equivalence class of this framing. 
A $\Spin$-structure can be interpreted as a compatible choice of stable framing of $N_YK$ for each $K \subset Y$, so one can hope to construct a $\Spin$-structure on $Y$ given by stable framings which yield ambient surgery. By performing weak internal stabilisations (i.e.\ ambient self-connected sums), we can always arrange that $Y$ admits such a $\Spin$-structure, and so all abstract integral Dehn surgeries which preserve this $\Spin$-structure can be performed ambiently in $X$. 

If $Y$ is an oriented cobordism between diffeomorphic surfaces $\S_0 \cong \S_1$, which extends a concordance from $\del \S_0$ to $\del \S_1$, the Lickorish--Wallace theorem says that there is a sequence of abstract integral Dehn surgeries taking $Y$ to $\S_0 \times I$. Kaplan showed that that these surgeries can be chosen to preserve any fixed $\Spin$-structure on $Y$ \cite{Kaplan}. In particular, with the $\Spin$-structure on $Y$ described above, they can be realised ambiently in $X$, and hence $Y$ can be surgered into a concordance. This proves Theorem~\ref{mainthm: general concordance} in the case that $\S_0$ and $\S_1$ are orientable.

We adapt these ideas to the unoriented setting by replacing $\Spin$-structures with their unoriented analogues, $\Pin^\pm$-structures. If $Y$ is a 3-manifold, not necessarily orientable, properly embedded in an orientable 5-manifold $X$, a $\Pin^-$-structure on $Y$ can be interpreted as a compatible choice of framing of $\restr{TX}{K}$ for each knot $K \subset Y$. We use this perspective to show that many important results from \cite{Sunukjian} carry over to the unoriented setting. In particular, if $X$ is simply-connected, we can perform weak internal stabilisations to arrange that $Y$ admits a $\Pin^-$-structure such that all abstract surgeries on $Y$ which preserve the $\Pin^-$-structure can be realised ambiently. Our proof of this corrects an error in the statement and proof of Proposition 5.1 of \cite{Sunukjian} (see \cref{rmk: proof of 5.1}). We also give a non-orientable analogue of Kaplan's theorem, and hence prove Theorem~\ref{mainthm: general concordance} in the case that $\S_0$ and $\S_1$ are non-orientable.

We do not consider concordance when $\pi_1(X) \neq 1$, since the $\Pin^\pm$-surgery techniques described here do not yield very interesting results. Instead, we refer the reader to \cite{FreedmanQuinn}, \cite{Stong}, and \cites{KlugMiller1, KlugMiller2} for results in these cases.

\subsection{Construction of cobordisms}\label{sec: intro cobordisms} In this subsection, we describe our method of constructing unoriented cobordisms between surfaces, and give an outline of the proof of Theorem~\ref{mainthm: general cobordism}.

Let $X$ be an orientable 4-manifold, and let $\S_0, \S_1 \subset X$ be properly embedded compact surfaces. For simplicity, first assume that $\del X = \varnothing$, and that $\S_0$ and $\S_1$ are closed. 
Write $\wh{X} \subseteq X$ for the 4-manifold given by puncturing $X$ at the points of intersection between $\S_0$ and $\S_1$.
Then $\wh{\S} \coloneq (\S_0 \cup \S_1) \cap \wh{X}$ is a compact surface properly embedded in $\wh{X}$. Note that $\del \wh{X}$ is a disjoint union of one copy of $S^3$ for each intersection point in $\S_0\cap \S_1$, and $\wh{\S}$ meets each component of $\del \wh{X}$ in a Hopf link, where one component comes from $\S_0$ and the other from $\S_1$. 

Fix a compact surface $\wh{Z} \subset \del \wh{X}$ which is a disjoint union of annuli and satisfies $\del \wh{Z} = \del \wh{\S}$; that is, $\wh{Z}$ is a union of an annular Seifert surfaces for the Hopf links which make up $\del \wh{\S}$. 
Now suppose that there exists a 3-manifold with corners $\wh{Y} \subset \wh{X}$ such that $\del \wh{Y} = \wh{\S} \cup \wh{Z}$. We call $\wh{Y}$ a \textit{spanning 3-manifold} for $\wh{\S}$. We can then ``stretch out" $\wh{Y}$ into a cobordism from $\S_0 \cap \wh{X}$ to $\S_1 \cap \wh{X}$ as follows. For any map $f \colon \wh{Y} \to I$ such that $f\inv(\{i\}) = \S_i \cap \wh{X}$ for $i=0,1$, the submanifold $\{(y, f(y)) \mid y \in \wh{Y}\} \subset \wh{X} \times I$ is such a cobordism. The boundary of this cobordism is the trace of an isotopy from $\del \wh{\S}_0$ to $\del \wh{\S}_1$, so it can be extended around the punctures to give a cobordism $Y \subset X \times I$ from $\S_0$ to $\S_1$.

The existence of a cobordism from $\S_0$ to $\S_1$ thus reduces to the existence of such a spanning 3-manifold $\wh{Y}$. We break this question into two parts: first, when does a fixed choice of annuli $\wh{Z}$ extend to a spanning 3-manifold $\wh{Y}$; and secondly, when can $\S_0$ and $\S_1$ be modified by isotopies to ensure that such a choice $\wh{Z}$ exists.

We answer the first question via a thorough study on the existence of spanning manifolds of codimension 2 proper embeddings; see \cref{sec: intro spanning manifolds} for a summary of our results, or \cref{sec: Seifert manifolds} for our full discussion. We show that $\wh{Z}$ extends to a spanning 3-manifold if and only if $[\S_0] = [\S_1] \in H_2(X; \cyc{2})$ and the normal Euler number $e(S,s^Z) = 0$ for each component $S$ of $\S_0$ or $\S_1$, where $s^Z$ is the framing of $\del \wh{\S}_0 \cup \del \wh{\S}_1 \subset \del \wh{X}$ given by the normal direction into $\wh{Z}$. 

Then, by using a combinatorial argument, we show that the normal Euler number condition can be satisfied after isotopies of $\S_0$ and $\S_1$ if and only if $e(\S_0) = e(\S_1)$. See \cref{sec: new surfaces sec} for details of the argument, and more general results. We use carefully chosen finger moves between $\S_0$ and $\S_1$ to find suitable isotopies, and introduce a new type of diagram to help keep track of finger moves and their effects on the normal Euler number. We do this in the more general setting of a proper immersion of a surface in an orientable 4-manifold, then at the end reduce to the case of an immersion given by the union of two embeddings.

Taken together, this shows that $\S_0$ and $\S_1$ are cobordant whenever they are $\cyc{2}$-homologous and $e(\S_0)=e(\S_1)$. These conditions are also necessary, proving Theorem~\ref{mainthm: general cobordism} whenever $\del X = \varnothing$.

Now consider the general case, where $\del X$, $\del \S_0$, and $\del \S_1$ may all be non-empty. Fix a cobordism $Z$ from $\del \S_0$ to $\del \S_1$, and let $ \S' \coloneq \S_0 \stimes \{0\} \cup Z \cup \S_1 \stimes \{1\} \subset \del (X \times I)$.
Then $\S'$ is a closed surface embedded in the closed orientable 4-manifold $\del (X \times I)$. We use the closed case outlined above and a homological argument to show that there exists a properly embedded compact surface $M \subset X$ such that $\S'$ is cobordant to $\del (M \times I)$ in $\del (X \times I)$. By embedding this cobordism in a collar of $\del (X \times I)$, and filling it in with $M \times I$, we build a cobordism from $\S_0$ to $\S_1$ which extends $Z$. This concludes the proof of Theorem~\ref{mainthm: general cobordism}.

\subsection{Existence of spanning manifolds}\label{sec: intro spanning manifolds} 
As mentioned in \cref{sec: intro cobordisms}, our existence statements for cobordisms rely on existence statements for spanning manifolds of properly embedded surfaces in 4-manifolds. To this end, \cref{sec: Seifert manifolds} is a detailed study of the existence of unoriented spanning manifolds of codimension 2 embeddings. Although we are primarily interested in the case of ambient dimension 4, we work with arbitrary ambient dimension. 

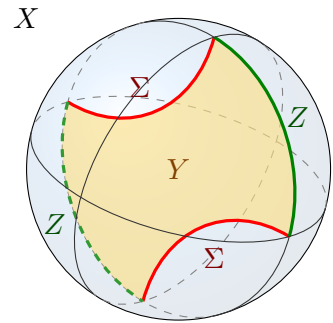
\begin{wrapfigure}{r}{0.35\textwidth}
    \vspace{-1em}
    \begin{center}
    \def\myr{2}
\tdplotsetmaincoords{65}{125}

\begin{tikzpicture}[scale=1]

    \draw[ball color=cyan!50!blue, opacity=0.1, draw opacity=1] (0,0) circle[radius={\myr}];

    \node at (-\myr,\myr) {$X$};

    \begin{scope}[tdplot_main_coords, rotate=-15]
    \begin{scope}[canvas is xz plane at y=0]
            \draw[dashed,black!50] (\tdplotmainphi:\myr) 
                arc[start angle={\tdplotmainphi}, 
                    end angle={\tdplotmainphi+180}, 
                    radius={\myr}];
        \end{scope}
        
        \begin{scope}[canvas is xy plane at z=0] 
            \draw[dashed,black!50] (\tdplotmainphi:\myr) 
                arc[start angle={\tdplotmainphi}, 
                    end angle={\tdplotmainphi+180}, 
                    radius={\myr}];
        \end{scope}

        \begin{scope}[canvas is yz plane at x=0]
            \draw[dashed,black!50] (\tdplotmainphi:\myr) 
                arc[start angle={\tdplotmainphi}, 
                    end angle={\tdplotmainphi+180}, 
                    radius={\myr}];
        \end{scope}

        \begin{scope}[canvas is yz plane at x=0]
            \draw[black!80] (\tdplotmainphi:\myr) 
                arc[start angle={\tdplotmainphi}, 
                end angle={\tdplotmainphi-180}, 
                radius={\myr}];
        \end{scope}

        \begin{scope}[canvas is yz plane at x=0]

            \fill[fill=orange!50!yellow!50, fill opacity=0.6]
        (90:\myr) arc[start angle={0}, end angle={-90}, radius={\myr}] arc[start angle={180}, end angle={270}, radius={\myr}] arc[start angle={180}, end angle={90}, radius={\myr}] arc[start angle={0}, end angle={90}, radius={\myr}];

        \draw[very thick,red] (90:\myr) 
                arc[start angle={0}, 
                end angle={-90}, 
                radius={\myr}];
        \draw[very thick,red] (0:\myr) 
                arc[start angle={90}, 
                end angle={180}, 
                radius={\myr}];
        \draw[very thick,green!50!black!80,dashed] (180:\myr) 
                arc[start angle={180}, end angle={270}, radius={\myr}];
        \draw[very thick,green!50!black] (0:\myr) 
                arc[start angle={0}, end angle={90}, radius={\myr}];

        \node[orange!50!black] at (0,0) {$Y$};
        \node[green!30!black] at (45:\myr*1.15) {$Z$};
        \node[green!30!black] at (225:\myr*1.2) {$Z$};

        \node[red!50!black,right] at (-60:\myr*0.6) {$\Sigma$};
        \node[red!50!black,right] at (155:\myr*0.7) {$\Sigma$};
        
        \end{scope}

        \begin{scope}[canvas is xz plane at y=0]
            \draw[black!80] (\tdplotmainphi:\myr) 
                arc[start angle={\tdplotmainphi}, 
                    end angle={\tdplotmainphi-180}, 
                    radius={\myr}];
        \end{scope}
        
        \begin{scope}[canvas is xy plane at z=0]
            \draw[black!80] (\tdplotmainphi:\myr) 
                arc[start angle={\tdplotmainphi}, 
                    end angle={\tdplotmainphi-180}, 
                    radius={\myr}];
        \end{scope}
    
    \end{scope}

\end{tikzpicture}
    \caption{A proper embedding with $X = D^3$ and $\S \cong I \sqcup I$ (red), and spanning surface $Y$ (yellow) extending $Z \cong I \sqcup I$ (green).}
    \label{fig: exspan}
    \end{center}
    \vspace{-1em}
\end{wrapfigure}

Let $X$ be an $(n+2)$-manifold and let $\S \subset X$ be a properly embedded compact $n$-manifold. A \textit{spanning manifold} for $\S$ is a compact $(n+1)$-manifold $Y$ with corners, embedded in $X$, such that $\del Y = \S \cup Z$, where $Z \subset \del X$ is an embedded $n$-manifold. We say that $Z$ \textit{extends to} $Y$, or equivalently that $Y$ \textit{extends} $Z$. See \cref{fig: exspan}. These definitions are adapted from \cite{BaykurSunukjian}; they may differ from those used elsewhere.

If $\S$ is oriented, we call $Y$ an \textit{oriented spanning manifold} if $Y$ and $Z$ are oriented so that $\del Y = \S \cup Z$ as oriented manifolds. Note this implies that $\del Z = -\del \S$. If $\S$ and $X$ are closed, this recovers the usual definition of a spanning manifold, and it is well-known in this case that $\S$ admits a spanning manifold if and only if $[\S] = 0 \in H_n(X;\Z)$ \cite[Section  VIII]{Kirby}, \cite[Section  XXI]{RanickiKnots}. If $\del X \neq \varnothing$, the same argument shows that $\S$ admits an oriented spanning manifold if and only if $[\S]=0 \in H_n(X, \del X;\Z)$ and the normal bundle $N_X\S$ is trivial (see \cref{prop: oriented spanning manifolds}). 

Fewer results about unoriented spanning manifolds are recorded in the literature. Gordon--Litherland discussed the existence of unoriented spanning surfaces for links in $S^3$ \cite{GordonLitherland}. In the proof of Theorem 6 of \cite{BaykurSunukjian}, Baykur--Sunukjian showed that certain non-orientable surfaces properly embedded in an orientable 4-manifold admit spanning 3-manifolds. Their approach requires that the boundary of the 4-manifold be non-empty and that the surface meet the boundary in a fibred link. 
We expand on their methods to give more general conditions for the existence of a spanning manifold.

\begin{theorem}\label{thm: spanning manifolds}
    Let $X$ be an $(n+2)$-manifold and let $\S \subset X$ be a properly embedded compact $n$-manifold. Let $w \colon \pi_1(\S) \to \Aut(\Z)$ be the orientation character of the normal bundle $N_X\S$, and suppose that $H^2(\S ;\Z^{w} )$ has no 2-torsion. Then $\S$ admits a spanning manifold if and only if $[\S] = 0 \in H_n(X, \del X;\cyc{2} )$ and $N_X\S$ admits a nowhere-vanishing section.
\end{theorem}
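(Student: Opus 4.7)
The necessity direction is essentially immediate: given a spanning manifold $Y$ with $\del Y = \S \cup Z$ and $Z \subset \del X$, we have $[\S] + [Z] = [\del Y] = 0 \in H_n(X; \cyc{2})$, hence $[\S] = 0 \in H_n(X, \del X; \cyc{2})$; and the inward unit normal to $Y$ along $\S$ gives a nowhere-vanishing section of $N_X\S$.

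For sufficiency, I would use a nowhere-vanishing section $s$ of $N_X\S$ to split $N_X\S \cong \underline{\R} \oplus L$, where $L$ is a real line bundle with $w_1(L) = w$. Let $N$ be a closed tubular neighborhood of $\S$ and set $X' \coloneq X \setminus \mathrm{int}(N)$, so that $\del X' = \del X \cup S(N_X\S)$; the section $s$ singles out a distinguished section $\tilde s \colon \S \hookrightarrow S(N_X\S) \subset \del X'$. Any spanning manifold $Y$ for $\S$ can be built from a codimension one proper submanifold $Y' \subset X'$ with $\del Y' = \tilde s(\S) \sqcup Z'$ for some $Z' \subset \del X$, by gluing on a half-tubular collar $\S \times I \subset N$ along the $\underline\R$-direction. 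By mod-$2$ Pontryagin--Thom in codimension one, such a $Y'$ exists if and only if $[\tilde s(\S)] = 0 \in H_n(X', \del X; \cyc{2})$. The long exact sequence of the triple $(X, X', \del X)$ combined with the mod-$2$ Thom isomorphism $H_k(X, X'; \cyc{2}) \cong H_{k-2}(\S; \cyc{2})$ yields
\[
H_{n-1}(\S; \cyc{2}) \xrightarrow{\iota_*} H_n(X', \del X; \cyc{2}) \to H_n(X, \del X; \cyc{2}) \to H_{n-2}(\S; \cyc{2}),
\]
in which the middle arrow sends $[\tilde s(\S)]$ to $[\S] = 0$. So $[\tilde s(\S)] = \iota_*(\alpha)$ for some residual class $\alpha \in H_{n-1}(\S; \cyc{2})$.

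The main obstacle is to eliminate $\alpha$ by modifying the nowhere-vanishing section. Sections of the circle bundle $S(N_X\S) \to \S$ form a torsor over $H^1(\S; \Z^w)$, and the change in $[\tilde s(\S)]$ under a section twist should be identifiable, via twisted Poincaré--Lefschetz duality on $\S$ composed with mod-$2$ reduction, with the image of the twist class under a composite $H^1(\S; \Z^w) \to H^1(\S; \cyc{2}) \to H_{n-1}(\S; \cyc{2})$. The role of the no-$2$-torsion hypothesis on $H^2(\S; \Z^w)$ is to guarantee, via the Bockstein associated to $0 \to \Z^w \xrightarrow{2} \Z^w \to \cyc{2} \to 0$, that the connecting map $H^1(\S; \cyc{2}) \to H^2(\S; \Z^w)$ vanishes; consequently, mod-$2$ reduction $H^1(\S; \Z^w) \to H^1(\S; \cyc{2})$ is surjective, providing exactly the flexibility to realize (and hence cancel) any residual $\alpha$. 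With an appropriately twisted section $s'$ in hand, the corresponding $Y'$ and spanning manifold $Y$ then follow from the construction above.
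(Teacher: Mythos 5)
Your outline follows essentially the same route as the paper: reduce to finding a section of the sphere normal bundle whose push-off bounds in the exterior (what the paper calls a Seifert section, Propositions~\ref{lmm: extending spanning manifolds with specified sections}--\ref{prop: existence of Seifert manifold}), locate the residual obstruction via the exact sequence of the triple coupled with the Thom isomorphism (equivalently the Gysin sequence used in Lemma~\ref{lemma: new enough sections}), and kill it by twisting the section, with the no-$2$-torsion hypothesis entering through the Bockstein exactly as you describe. The one step you gloss over---``should be identifiable''---is where the real work lies: proving that twisting the section by $\delta \in H^1(\S;\Z^w)$ changes the push-off class by the Gysin image of $\PD\rho(\delta)$ is a genuine obstruction-theoretic calculation (naturality of the primary difference, intersection numbers in pulled-back circle bundles), which is precisely the content of the paper's Lemma~\ref{lmm: unoriented existence of sections}; also note that Lefschetz duality on $\S$ targets $H_{n-1}(\S,\del\S;\cyc{2})$ rather than $H_{n-1}(\S;\cyc{2})$, so you must reconcile this with the domain of your map $\iota_*$ (it works out, because $\iota_*$ agrees with the composition of the quotient $H_{n-1}(\S;\cyc{2}) \to H_{n-1}(\S,\del\S;\cyc{2})$ with the circle-bundle Gysin map pushed into the exterior).
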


The proof of \cref{thm: spanning manifolds} follows the outline of the usual proof in the oriented setting, where a spanning manifold is first shown to exist in the exterior of $\S$ by transversality, then extended through a tubular neighbourhood of $\S$. To this end, we introduce the notion of a \textit{Seifert section} of the circle normal bundle $SN_X\S$, which is a section whose image is null-homologous in the exterior of $\S$. See \cref{sec: seifert sections} for a more precise definition. We show that $\S$ admits a spanning manifold if and only if $SN_X\S$ admits a Seifert section. The forwards direction of this implication is clear, since the normal direction to $\S$ into a spanning manifold determines a Seifert section, which we term the Seifert section \textit{associated to} that spanning manifold. The reverse direction involves a study of sections of $S^1$-bundles, and reducing the well-known homotopy-theoretic obstructions to the existence of sections of bundles to obstructions in $\cyc{2}$-homology.

These methods also allow us to determine when a spanning manifold $Z \subset \del X$ for $\del \S$ extends to a spanning manifold for $\S$. This is the result which is used in the proof of Theorem~\ref{mainthm: general cobordism}.

\begin{theorem}\label{thm: relative spanning manifolds}
    Let $X$, $\S \subset X$, and $w$ be as in \cref{thm: spanning manifolds}.
    Let $Z \subset \del X$ be a spanning manifold for $\del \S$, and let $s^Z$ be the Seifert section associated to $Z$.
    Assume that $H^2(\S, \del \S;\Z^{w})$ has no 2-torsion. 
    Then $Z$ extends to a spanning manifold for $\S$ if and only if there is a section of $SN_X\S$ extending $s^Z$ and $[Z \cup \S] = 0 \in H_n(X;\cyc{2}).$
\end{theorem}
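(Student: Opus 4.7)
The plan is to adapt the strategy used for \cref{thm: spanning manifolds} to the relative setting, working in the exterior of $\S$ and gluing back through a tubular neighbourhood. Fix a closed tubular neighbourhood $\nu(\S) \subset X$ and set $X^\circ \coloneq X \setminus \operatorname{int}\nu(\S)$; its boundary is the sphere bundle $SN_X\S$ glued to $\del X \setminus \operatorname{int}\nu(\del\S)$ along $SN_X\del\S$.

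For necessity, if $Y \subset X$ is a spanning manifold with $\del Y = \S \cup Z$, then $[\S \cup Z] = 0 \in H_n(X;\cyc{2})$ because $\S \cup Z$ bounds $Y$, and the inward-pointing normal to $\S$ along $Y$ gives a nowhere-vanishing section of $SN_X\S$ which restricts on $\del \S$ to the normal direction into $Y \cap \del X = Z$, that is, to $s^Z$.

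For sufficiency, let $s$ be a section of $SN_X\S$ extending $s^Z$ and suppose $[\S \cup Z] = 0$. Let $\S_s \subset SN_X\S$ be the image of $s$. Since $s$ extends $s^Z$, the $n$-submanifold
\[
    W \coloneq \S_s \cup (Z \setminus \operatorname{int}\nu(\del\S)) \subset \del X^\circ
\]
is closed. Any spanning manifold $Y$ extending $Z$ must intersect $\nu(\S)$ in the cylinder $\S \times I$ determined by the framing $s$, so it suffices to build a compact $(n+1)$-submanifold $Y^\circ \subset X^\circ$ with $\del Y^\circ = W$ lying entirely in $\del X^\circ$; gluing $Y^\circ$ to $\S \times I$ along $\S_s$ and smoothing corners then yields the desired $Y$. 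Since the normal bundle of $W$ in $X^\circ$ is trivial (provided by the outward collar direction), such a $Y^\circ$ exists as soon as $[W] = 0 \in H_n(X^\circ;\cyc{2})$, by the absolute case already established in \cref{thm: spanning manifolds} applied to $W \hookrightarrow X^\circ$.

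The main step, and the main obstacle, is to promote the hypothesis $[\S \cup Z] = 0 \in H_n(X;\cyc{2})$ to the vanishing $[W] = 0 \in H_n(X^\circ;\cyc{2})$. Via the cylinder $\S \times I \subset \nu(\S)$ and a collar of $\del X$, $W$ is homologous in $X$ to $\S \cup Z$, so the hypothesis gives $[W] = 0$ after pushing forward to $H_n(X;\cyc{2})$. The Mayer--Vietoris sequence for $X = X^\circ \cup \nu(\S)$ with intersection $SN_X\S$ shows that the kernel of $H_n(X^\circ;\cyc{2}) \to H_n(X;\cyc{2})$ consists of classes coming from the sphere bundle, i.e.\ multiples of the fibre and of images of sections; here the fact that $s$ is a section of $SN_X\S$ extending $s^Z$ pins down $[\S_s]$ in this image in a controlled way. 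This is exactly the point at which I expect the hypothesis that $H^2(\S,\del\S;\Z^w)$ has no 2-torsion to enter: the Bockstein sequence associated to $0 \to \Z^w \xrightarrow{\times 2} \Z^w \to \cyc{2} \to 0$ then ensures that the obstruction to $s$ being a genuine (relative) Seifert section in the sense of \cref{sec: seifert sections} is detected by the $\cyc{2}$-class already assumed to vanish. Once this translation is in hand, the gluing step is routine, and combined with the necessity direction this proves the theorem.
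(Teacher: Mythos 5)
Your overall strategy — work in the exterior, build a spanning manifold there whose boundary on $S\nu\S$ is a push-off of $\S$, then glue back through $\nu\S$ — is the same as the paper's. However, the sufficiency direction as you have written it has a genuine gap at the step you yourself flag as "the main obstacle."

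The problem is that you fix an arbitrary section $s$ of $SN_X\S$ extending $s^Z$, form $W = \S_s \cup (Z \setminus \operatorname{int}\nu(\del\S))$, and then try to argue that $[W] = 0 \in H_n(X^\circ;\cyc{2})$ follows from $[\S \cup Z] = 0 \in H_n(X;\cyc{2})$. This cannot work for an arbitrary such $s$. Different sections extending $s^Z$ give push-offs $\S_s$ lying in different classes of $H_n(S\nu\S, \del(S\nu\S);\cyc{2})$, and hence give different classes $[W] \in H_n(X^\circ;\cyc{2})$; the map $H_n(X^\circ;\cyc{2}) \to H_n(X;\cyc{2})$ has a nontrivial kernel (your Mayer--Vietoris point actually shows this), so knowing $[W]$ maps to $0$ does not force $[W]=0$. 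Your remark that $s$ extending $s^Z$ "pins down $[\S_s]$ in a controlled way" is not correct: $s^Z$ only constrains the restriction of $s$ to $\del\S$, and there is genuine remaining freedom over the interior, precisely the freedom the proof must exploit.

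What the paper does, and what your argument is missing, is the selection of $s$. From $[\S \cup Z]=0$ one first produces a relative class $\beta \in H_{n+1}(E,\del E;\cyc{2})$ (your $E$ is my $X^\circ$) with $\del_X\beta = [Z]$ and $\pi_*\del_S\beta = [\S]$. One then needs a section $s$ extending $s^Z$ whose push-off represents exactly $\del_S\beta \in H_n(S\nu\S,\del(S\nu\S);\cyc{2})$; only for such a carefully chosen $s$ does the boundary class $[W]$ vanish in $H_n(E;\cyc{2})$. The existence of a section realizing a prescribed class is exactly the content of \cref{lmm: new enough sections rel boundary} (relying on \cref{lmm: unoriented existence of sections} and \cref{lemma: new enough sections}), and this is the precise point where the hypothesis that $H^2(\S,\del\S;\Z^w)$ has no 2-torsion enters, via the Bockstein exact sequence you mention. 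So the Bockstein is used to show $\rho \colon H^1(\S,\del\S;\Z^w) \to H^1(\S,\del\S;\cyc{2})$ is surjective so that every prescribed class is realized by a section, not to analyze the Mayer--Vietoris kernel as you suggest. Once $s$ is chosen this way, the vanishing $[(Z\cap E) \cup \S^s_+] = 0 \in H_n(E;\cyc{2})$ holds automatically by exactness, and the rest of your gluing argument goes through.
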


We will apply Theorems~\ref{thm: spanning manifolds} and \ref{thm: relative spanning manifolds} respectively in the cases $n=1$ and $n=2$, corresponding to finding spanning surfaces for the boundaries of surfaces, and spanning 3-manifolds for the surfaces themselves. In these low dimensions, the 2-torsion condition simplifies as follows.
If $n=1$, then $H^2(\S;\Z^w) = 0$, so the 2-torsion condition is automatically satisfied. If $n=2$, then $H^2(\S, \del \S;\Z^w) \cong H_0(\S;\Z^{w'})$, where $w' \colon \pi_1(\S) \to \Aut(\Z)$ is a character which is trivial when the normal bundle $N_X\S$ has orientable total space. Therefore the 2-torsion condition is automatically satisfied whenever the 4-manifold $X$ is orientable. The condition that $s^Z$ extends to a section of $SN_X\S$ is equivalent to saying that $e(S, s^Z) = 0$ for all components $S \subset \S$. This implies the result mentioned in \cref{sec: intro cobordisms} about when the collection $\wh{Z}$ of annuli extends to a spanning 3-manifold of the punctured surface $\wh{\S}$.

\subsection{Failure of classical techniques}\label{sec: intro oriented}
In this subsection, we provide motivation for the introduction of new techniques for constructing cobordisms. We outline a proof of \cref{thm: oriented cobordism} in the case $n=2$, based on the argument in \cite{ThomQuelqueProprietes}, to highlight the need for a different approach in the unoriented setting. See \cref{sec: cs homotopy} and \cref{sec: oriented cobordisms} for more detail, as well as a slightly different perspective which affords more control over the boundary by using ideas similar to those described by Baykur--Sunukjian in Section 2.2 of \cite{BaykurSunukjian}.

Let $X$ be an oriented connected 4-manifold, and let $\S_0, \S_1 \subset X$ be oriented properly embedded compact surfaces.
Since $\C\P^\infty \simeq \BSO(2)$, the Thom--Pontryagin construction and cellular approximation give maps $f_0,f_1 \colon X \to \C\P^2$ such that $f_i\inv(\C\P^1) = \S_i$ for $i=0,1$. Since $\S_0$ and $\S_1$ are compact, $f_0$ and $f_1$ can be chosen to have compact support, in the sense that they agree and are constant outside of a compact set. Since $\C\P^
\infty$ is a $K(\Z,2)$-space, we show in \cref{sec: cs homotopy} that there are bijections
\begin{equation*}
 H_2(X, \del X;\Z) \cong [X,K(\Z,2)]_c \cong [X,\C\P^2]_c,
\end{equation*}
where $[A,B]_c$ is the set of maps $A \to B$ with compact support up to homotopy with compact support. This follows from the usual bijections between cohomology groups and sets of homotopy classes of maps to Eilenberg--MacLane spaces and a general form of Poincar\'e duality. Then if $[\S_0] = [\S_1] \in H_2(X, \del X;\Z)$, there is a homotopy $H \colon X \times I \to \C\P^2$ from $f_0$ to $f_1$ which is constant outside of a compact neighbourhood of $(\S_0 \cup \S_1) \times I$. The preimage $H\inv(\C\P^1) \subset X \times I$ is an oriented cobordism from $\S_0$ to $\S_1$. Hence compact oriented surfaces which are $\Z$-homologous rel.\ boundary are cobordant, and the converse is clear.

This strategy does not work easily in the unoriented case, since it would require the existence of a $K(\cyc{2},2)$-space and a space homotopy equivalent to $\BO(2)$ with a common 5-skeleton. This is not possible, since $\pi_1(\BO(2)) = \cyc{2} \neq \{1\}$. An alternative homotopy-theoretic approach may be viable, but we do not pursue it here. 

Throughout this paper, we provide proofs of both the unoriented statements that we are primarily interested in, and the analogous oriented statements. This is partially because we believe the comparison to be instructive, and partially because our methods give slightly more general results than those currently in the literature, even in the oriented case.

\subsection*{Topological vs smooth categories}
All results in this paper hold in both the topological and smooth categories. In each case, we only prove the smooth statement, then deduce each topological result from the corresponding smooth one. One direction will follow since all of our obstructions are topological. To deduce the reverse direction of statements with ambient dimension $n=4$, recall that by Section 8.2 of \cite{FreedmanQuinn}, we can choose a smooth structure on the 4-manifold $X$ in the complement of a point which extends the smooth structure on the normal bundles of the embedded surfaces $\S_0$ and $\S_1$. Since we never assume that $X$ is compact, we may instead assume that $\S_0$ and $\S_1$ are smoothly properly embedded in $X \setminus \{\pt\}$, and apply our smooth results there. This proves the topological version of all Theorems \ref{mainthm: concordance rel boundary}--\ref{maincor: cobordism rel boundary}. For statements with ambient dimension $n \neq 4$, we rely on topological transversality; see \cite{QuinnTransversality} and \cite[Essay III\textsection1]{KirbySiebenmann}, or Section 10 of \cite{FNOPTopological} for a survey.
Henceforth we work only in the smooth category except where otherwise specified, and assume all manifolds and maps between them are smooth. 

\subsection*{Immersions and embeddings}

Let $f \colon S \immerse X$ be an immersion of manifolds. We say $f$ is \textit{proper} if $\restr{f}{\del S}$ is an embedding, $f\inv(\del X) = \del S$, and $f$ is transverse to $\del X$. If $S$ is a compact $n$-manifold and $X$ is a $2n$-manifold, $n \geq 1$, we assume that $f$ is generic, meaning that the only self-intersections of $f$ are isolated transverse double points.

We say $\S \subset X$ is \textit{properly embedded} if the inclusion map is a proper embedding. 
A \textit{tubular neighbourhood} of $\S$ is a pair $(\nu \S, \varphi)$ where $\nu\S \varsubsetneq X$ is a closed neighbourhood of $\S$ and $\varphi \colon DN_X\S \to X$ is an embedding of the normal disc bundle such that $\varphi(DN_X\S) = \nu\S$ and if $s_0\colon \S \to DN_X\S$ is the zero-section, then $\varphi s_0 = \id_\S$. 
We write $S\nu\S \coloneq \varphi(SN_X\S) \subseteq \del (\nu \S)$ for the image of the normal sphere bundle.
If $A \subseteq \del X$ is a subset, a \textit{collar} of $A$ is a pair $(C,\varphi)$ where $C \varsubsetneq X$  and
$\varphi \colon A \times I \to X$ is an embedding such that $\varphi(A \times I) = C$, $\varphi(x,0) = x$ for $x \in A$, and $\varphi\inv(\del X) = A \times \{0\}$. 
If the identification $\varphi$ is not relevant, we may simply refer to $\nu\S$ (resp.\ $C$) as a tubular neighbourhood of $\S$ (resp.\ a collar of $A$).

\subsection*{Orientation conventions}
We make no assumptions about orientations or orientability unless otherwise specified.
In particular, we use \textit{unoriented manifold} to refer either to a non-orientable manifold or to an orientable manifold for which a choice of orientation has not been made.
We say two properly immersed $R$-oriented $n$-manifolds $A,B \subseteq X$ are \textit{$R$-homologous} if $[A] = [B] \in H_n(X, \del X;R)$. When the choice of $R$ is clear from context, we may simply say that $A$ and $B$ are \textit{homologous}. 

If $M$ is an oriented manifold, $-M$ refers to $M$ but with the reversed orientation.
We orient $N_M \del M$ in the direction of the outward normal. We orient $\del M$ such that for all $p \in \del M$, the two orientations on $T_p M$ coming from $M$ and from the decomposition $\restr{N_M \del M}{p} \oplus T_p \del M$ agree. We orient $M \times I$ such that the orientations on $T_{(p,t)} (M \times I)$ and $T_{t}I \oplus T_p M$ agree. This is to fit with the usual convention that if $M$ is closed and oriented, $\del (M \times I) = -M \times \{0\} \sqcup M \times \{1\}$.

\subsection*{Organisation}
Sections~\ref{sec: cs homotopy}--\ref{sec: euler number theory} contain most of the technical results required to prove our main theorems which work in arbitrary ambient dimension. 
In \cref{sec: cs homotopy}, we prove results about representing relative homology classes by properly embedded submanifolds with prescribed boundary.
In \cref{sec: Seifert manifolds}, we use these to study spanning manifolds of codimension 2 embeddings and prove Theorems~\ref{thm: spanning manifolds} and \ref{thm: relative spanning manifolds}.
\cref{sec: euler number theory} is primarily a review of the relative normal Euler number of immersed submanifolds.

We then specialise to the case of surfaces in 4-manifolds.
In \cref{sec: new surfaces sec}, we study spanning manifolds of properly embedded surfaces given by puncturing immersions around double points.
In \cref{sec: cobordisms}, we prove Theorems~\ref{mainthm: general cobordism} and \ref{mainthm: cobordism}, and Corollary~\ref{maincor: cobordism rel boundary}.

\cref{sec: Ambient surgery} is primarily a review of facts about ambient surgery featured in \cite{Sunukjian} and \cite{KlugMiller1}.
The theory of $\Pin^\pm$-surgery is developed in \cref{sec: Spin}. In \cref{sec: final concordance}, we prove Theorem~\ref{mainthm: general concordance}.

\subsection*{Acknowledgements}
The author is grateful to their advisors Mark Powell and Brendan Owens for their patience and many useful insights, as well as to Anthony Conway for a useful conversation concerning the material in \cref{sec: existence of seifert sections}. The author also thanks Nathan Sunukjian and Anthony Conway for helpful conversations about an earlier draft. This work was supported by the Engineering and Physical Sciences Research Council through the EPSRC Centre for Doctoral Training in Algebra, Geometry and Quantum Fields (AGQ) (EP/Y035232/1).
This work was also supported by the Additional Funding Programme for Mathematical Sciences, delivered by EPSRC (EP/V521917/1) and the Heilbronn Institute for Mathematical Research. 
 \section{Representing homology classes by properly embedded submanifolds}\label{sec: cs homotopy}

For spaces $A$ and $B$, we write $[A,B]$ for the set of continuous maps $A \to B$ up to homotopy.
It is a standard result in algebraic topology that for any abelian group $G$ and any $n \geq 1$, there is a fundamental class $\alpha \in H^n(K(G,n);G)$ such that for any CW-complex $X$, the map $[X,K(G,n)] \to H^n(X;G)$ given by $[f] \mapsto f^*\alpha$ is a bijection \cite[Chapter V.4]{Whitehead}. One consequence of this and Poincar\'e duality, originally due to Thom, is that all $\cyc{2}$-homology classes of codimension 1 in closed manifolds are represented by embedded closed manifolds. Similarly, all $\Z$-homology classes of codimension 1 or 2 in closed oriented manifolds are represented by embedded closed oriented manifolds \cite[\textsection II.11]{ThomQuelqueProprietes}. 

In this section, we prove generalisations of these realisation results arbitrary manifolds, possibly non-compact and with non-empty boundary. In particular, we show in \cref{sec: representing} that the submanifolds representing given relative homology classes of small enough codimension can be taken to have any prescribed boundary satisfying the necessary homological conditions. We will use these results to prove the existence of spanning manifolds (see \cref{sec: Seifert manifolds}) and oriented cobordisms (see \cref{sec: oriented cobordisms}) with prescribed boundaries.

\subsection{Homotopy with compact support}\label{sec: homotopy}

For a space $A$ and a pointed space $(B,*)$, let $\Map_c (A,B)$ be the maps $A \to B$ with compact support, i.e.
    \begin{equation*}
        \Map_c (A,B) \coloneq \big\{ f \colon A \to B \mid \exists K \subseteq A \text{ compact s.t. } f(A \setminus K) \subseteq \{*\}\big\}.
    \end{equation*}
For $f,g \in \Map_c(A,B)$, we write $f \sim_c g$ if there is a homotopy from $f$ to $g$ with compact support; that is, if there is a compact set $K \subseteq A$ such that $f \simeq g$ rel.\ $A \setminus K$ and $f(A \setminus K) = g(A \setminus K) \subseteq \{*\}$. 
Then $\sim_c$ is an equivalence relation on $ \Map_c(A,B)$, and we write
\begin{equation*}
    [A,B]_c \coloneq \frac{\Map_c(A,B)}{\sim_c}
\end{equation*}
for the set of equivalence classes.

\begin{remark}
    Although both $ \Map_c(A,B)$ and $[A,B]_c$ do in general depend on the choice of basepoint $* \in B$, this dependence is trivial when $B$ is a connected manifold. This is the only case that we will consider, so we suppress basepoint-dependence in our notation. If $A$ is compact, then $ \Map_c(A,B) = \Map(A,B)$ and $[A,B]_c = [A,B]$ as would be expected.
\end{remark}

The set $[A,B]_c$ also admits a description as a colimit of sets.

\begin{lemma}\label{lmm: colimit definition of homotopy classes}
    For any space $A$ and pointed space $(B,*)$,
    \begin{equation*}
        [A,B]_c = \colim_{K \subseteq A} [(A, A \setminus K), (B,*)],
    \end{equation*}
    where the colimit is taken over all compact subsets $K \subseteq A$ ordered by inclusion.
\end{lemma}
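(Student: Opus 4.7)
The plan is to construct a bijection between $[A,B]_c$ and the colimit by exhibiting explicit maps in both directions and checking they are mutually inverse. Throughout, I will repeatedly use the fact that the poset of compact subsets of $A$ is directed under inclusion, so the colimit can be computed as the disjoint union modulo the relation of eventual agreement.

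First, I would define the structure maps of the colimit. For compact subsets $K \subseteq K'$, any map $f \colon (A, A \setminus K) \to (B,*)$ also restricts to $f \colon (A, A \setminus K') \to (B,*)$ since $A \setminus K' \subseteq A \setminus K$; moreover, a homotopy rel.\ $A \setminus K$ is a fortiori a homotopy rel.\ $A \setminus K'$. Hence the inclusions induce well-defined maps $[(A, A \setminus K), (B,*)] \to [(A, A \setminus K'), (B,*)]$ compatible with composition, giving a directed system.

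Next, I would construct the map $\Phi \colon \colim_K [(A, A \setminus K), (B,*)] \to [A,B]_c$ by sending the class of $f \colon (A, A \setminus K) \to (B,*)$ to its class in $[A,B]_c$ (note $f \in \Map_c(A,B)$ with support in $K$). This is well-defined on each level since a homotopy rel.\ $A \setminus K$ has support in $K$, hence is a compactly supported homotopy; it is compatible with the structure maps by construction. In the other direction, I would define $\Psi \colon [A,B]_c \to \colim_K [(A, A \setminus K), (B,*)]$ as follows. Given $[f] \in [A,B]_c$, choose a compact $K$ with $f(A \setminus K) \subseteq \{*\}$; then $f$ represents a class in $[(A, A \setminus K), (B,*)]$, whose image in the colimit I declare to be $\Psi[f]$. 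To check this is well-defined, I would take two choices $K_1, K_2$ and observe that both representatives agree (on the nose) in $[(A, A \setminus (K_1 \cup K_2)), (B,*)]$, so they give the same class in the colimit. If $f \sim_c g$ via a homotopy $H$ supported in a compact set $L$, then enlarging to $K \supseteq K_1 \cup K_2 \cup L$ makes $H$ a homotopy rel.\ $A \setminus K$ between $f$ and $g$ viewed as maps of pairs $(A, A \setminus K) \to (B,*)$, so $\Psi[f] = \Psi[g]$.

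Finally, I would verify $\Phi$ and $\Psi$ are mutually inverse. The composition $\Phi \circ \Psi = \id$ is immediate from the definitions. For $\Psi \circ \Phi = \id$, given a class in the colimit represented by $f \colon (A, A \setminus K) \to (B,*)$, the image $\Phi[f] \in [A,B]_c$ is the class of $f$; choosing $K$ itself as a witness of compact support, $\Psi\Phi[f]$ is represented at level $K$ by $f$ again, hence equal to the original class in the colimit. No step here presents a genuine obstacle; the only point requiring care is keeping the distinction between the two equivalence relations (homotopy rel.\ $A \setminus K$ versus compactly supported homotopy) straight, which is handled by always enlarging $K$ to absorb the support of any auxiliary compact set.
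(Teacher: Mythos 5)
Your proof is correct and takes essentially the same approach as the paper: both arguments reduce to unwinding the explicit construction of a filtered colimit of sets, with your write-up simply making the inverse maps $\Phi$ and $\Psi$ explicit where the paper argues directly at the level of representatives and their identifications. The one point the paper leaves implicit and you correctly spell out is that the poset of compact subsets is directed under inclusion, so elements of the colimit may be compared at a common upper bound.
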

\begin{proof}
    By the construction of a colimit of sets, an element of $\colim [(A,A\setminus K), (B,*)]$ is represented by a homotopy class of maps $[f] \in [(A, A \setminus K), (B,*)]$ for some $K \subseteq A$ compact. Then $[f]$ represents the same class in the colimit as $[g] \in [(A, A \setminus K'), (B,*)]$ if and only if there is a compact subset $K'' \supseteq K \cup K'$ such that 
    \begin{equation*}
        [f] = [g] \in [(A, A \setminus K''), (B,*)].
    \end{equation*}
    But this is exactly saying that there is a homotopy from $f$ to $g$ with compact support.
\end{proof}

We now give our bijection between homology classes rel.\ boundary and compactly supported homotopy classes of maps to Eilenberg--MacLane spaces. This is mostly a formal consequence of \cref{lmm: colimit definition of homotopy classes} and the Poincar\'e duality between homology rel.\ boundary and cohomology with compact support.

\begin{lemma}\label{prop: compact homotopy classes}
    Let $M$ be an $n$-manifold.
    \begin{enumerate}[label=(\roman*)]
        \item Let $k \geq n$ and fix a basepoint $* \in \R\P^{k+1} \setminus \R\P^k$.  There is a bijection
        \begin{equation*}
            [M,\R\P^{k+1}]_c \xrightarrow{\cong}  H_{n-1}(M,\del M;\cyc{2})
        \end{equation*}
        given by $[f] \mapsto [f\inv (\R\P^k) ]$ for any representative $f$ such that $f \pitchfork \R\P^k$.
        \end{enumerate}
    Now suppose that $M$ is oriented.
    \begin{enumerate}[label=(\roman*)]
     \setcounter{enumi}{1}
        \item Fix a basepoint $* \in S^1 \setminus \{1\}$. There is a bijection 
        \begin{equation*}
            [M,S^1]_c \xrightarrow{\cong}  H_{n-1}(M,\del M;\Z)
        \end{equation*}
        given by $[f] \mapsto [f\inv (\{1\}) ]$ for any representative $f$ such that $f \pitchfork \{1\}$, i.e.\ such that $1$ is a regular value of $f$.
        \item Let $k \geq \floor{(n-1)/2}$ and fix a basepoint $* \in \C\P^{k+1} \setminus \C\P^k$. There is a bijection 
        \begin{equation*}
            [M,\C\P^{k+1}]_c \xrightarrow{\cong}  H_{n-2}(M,\del M;\Z)
        \end{equation*}
        given by $[f] \mapsto [f\inv (\C\P^k) ]$ for any representative $f$ such that $f \pitchfork \C\P^k$.
    \end{enumerate}
\end{lemma}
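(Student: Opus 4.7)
My plan is to establish all three bijections by a single chain of natural maps identifying the compactly supported homotopy set with compactly supported cohomology, then invoking Poincar\'e--Lefschetz duality. Concretely, I would show that
\begin{align*}
[M,B]_c &\xrightarrow{(a)} \colim_{K \subseteq M} \bigl[(M, M \setminus K), (B,*)\bigr] \\
&\xrightarrow{(b)} \colim_{K \subseteq M} H^r(M, M \setminus K;G) = H^r_c(M;G) \\
&\xrightarrow{(c)} H_{n-r}(M,\del M;G),
\end{align*}
where $(r,G)$ is $(1,\cyc{2})$, $(1,\Z)$, or $(2,\Z)$ in cases (i), (ii), (iii) respectively. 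Step (a) is Lemma \ref{lmm: colimit definition of homotopy classes}; step (b) is the classical bijection between relative homotopy classes of maps of CW pairs into Eilenberg--MacLane spaces and relative cohomology; and step (c) is Poincar\'e--Lefschetz duality, which holds without orientation hypotheses when $G = \cyc{2}$ and for oriented $M$ when $G = \Z$.

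For step (b), the target $B$ is not literally a $K(G,r)$ in cases (i) and (iii), so I would first replace $B$ by the appropriate Postnikov truncation. Recall that $\R\P^{k+1}$ models $K(\cyc{2},1)$ through its $k$-skeleton, with first higher homotopy $\pi_{k+1}(\R\P^{k+1}) = \Z$; and $\C\P^{k+1}$ is the $(2k+2)$-skeleton of $\C\P^\infty = K(\Z,2)$, whose next cell lies in dimension $2k+4$. Since each pair $(M, M \setminus K)$ has cellular dimension $n$, cellular approximation and standard obstruction theory identify $\bigl[(M, M \setminus K),(B,*)\bigr]$ with $\bigl[(M, M \setminus K),(K(G,r),*)\bigr] \cong H^r(M, M \setminus K;G)$ whenever both maps from such pairs into $B$ and homotopies between such maps avoid the higher cells of $B$. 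The bounds $k \geq n$ and $k \geq \lfloor (n-1)/2 \rfloor$ are tailored exactly to this.

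To establish the preimage formula, I would let $\alpha \in H^r(B;G)$ denote the Poincar\'e dual of the codimension-$r$ submanifold $S \subset B$ (namely $\R\P^k$, $\{1\}$, or $\C\P^k$). Under step (b) the class of a representative $f$ corresponds to $f^*\alpha \in H^r_c(M;G)$, and naturality of Poincar\'e duality under transverse pullbacks identifies $f^*\alpha$ with the Poincar\'e--Lefschetz dual of $[f\inv(S)] \in H_{n-r}(M, \del M;G)$ whenever $f \pitchfork S$. This yields the geometric description of the bijection stated in each case.

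The hard part will be step (b): verifying that the dimensional conditions on $k$ are exactly enough to make the replacement of $B$ by the appropriate $K(G,r)$ act as a bijection on $\bigl[(M, M \setminus K), -\bigr]$ uniformly in $K$, so that the identification passes to the colimit. Once this is established, the remainder of the argument is a routine assembly of standard facts from algebraic topology.
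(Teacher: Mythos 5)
Your proposal is correct and follows essentially the same route as the paper's proof: identify $[M,B]_c$ with a colimit of relative homotopy sets (the paper's Lemma~\ref{lmm: colimit definition of homotopy classes}), pass to the Eilenberg--MacLane model via cellular approximation (the paper inflates $B$ to $\R\P^\infty$, $S^1$, or $\C\P^\infty$, which is the same dimensional bookkeeping you phrase as Postnikov truncation), identify the colimit of relative cohomology groups with $H^r_c(M;G)$, apply Poincaré--Lefschetz duality, and conclude the preimage formula by the compatibility of cup products with transverse preimages. No material difference.
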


\begin{proof}
    We prove (i). The arguments for (ii) and (iii) are almost identical, using $S^1$ as a $K(\Z,1)$-space and $\C\P^\infty$ as a $K(\Z,2)$-space. 

    By \cref{lmm: colimit definition of homotopy classes} and cellular approximation, we get bijections
    \begin{equation*}
        [M,\R\P^{k+1}]_c = \colim_{K \subseteq M} [(M, M \setminus K), (\R\P^{k+1}, *)] \xrightarrow{\cong} \colim_{K \subseteq M} [(M, M \setminus K), (\R\P^\infty, *)],
    \end{equation*}
    where both colimits are colimits of sets taken over compact subsets $K \subseteq M$.
    Since $\R\P^\infty$ is a $K(\cyc{2},1)$-space, there is a bijection
\begin{equation*}
    \colim_{K \subseteq M} [(M, M \setminus K), (\R\P^\infty, *)] \xrightarrow{\cong} \colim_{K \subseteq M} H^1(M, M \setminus K;\cyc{2}) = H^1_c(M;\cyc{2}),
\end{equation*}
where $H^*_c(M;\cyc{2})$ is cohomology with compact support. %Note that both this second colimit is filtered, so is a colimit of abelian groups as well as sets \cite[\textsection2.13]{Borceux}.
By the non-compact version of Poincar\'e duality \cite[\textsection6]{Spanier}, we get a bijection
\begin{equation*} \PD \colon H^1_c(M;\cyc{2}) \xrightarrow{\cong}  H_{n-1}(M,\del M;\cyc{2}). \end{equation*}
Composing all of the above, we get the required bijection.

To see the explicit form, note that the fundamental class $\alpha \in H^1(\R\P^\infty;\cyc{2}) \cong \cyc{2}$ is the generator, and that the restriction $\alpha' \in H^1(\R\P^{k+1};\cyc{2}) \cong \cyc{2}$ is Poincar\'e dual to the generator $[\R\P^k] \in H_n(\R\P^{k+1};\cyc{2}) \cong \cyc{2}$. Then since transverse preimages are Poincar\'e dual to pullbacks (see e.g.\ \cite[Theorem 11.16]{Bredon}),
% More concretely, we argue as follows. Let $f : X \to Y$ be a smooth map transverse to $A \subset Y$. Then let $B = f\inv(A) \subset X$, and note that $f$ extends to a bundle map $F : N_XB \to N_YA$. Let $\tau_A$, $\tau_B$ be the Thom collapse maps. The Thom classes $u_A$, $u_B$ satisfy that $[B]=\PD(\tau_B^*u_B) = \PD(\tau_B^*F^*u_A) = \PD(F^*\tau_A^*u_A) = \PD(f^*\PD\inv[A])$.
\begin{equation*}
    \PD(f^*\alpha') = [f\inv(\R\P^k)] \in H_{n-1}(M, \del M;\cyc{2}),
\end{equation*}
whenever $f \pitchfork \R\P^k$. Finally, recall that such $f$ are generic \cite[IV\textsection4]{GolubitskyGuillemin}.
\end{proof}

\begin{remark}
    The assumption that $M$ is oriented in (ii) and (iii) is not in fact required. In general, the proof gives bijections
    \begin{equation*}
        [M,S^1]_c \xrightarrow{\cong} H_{n-1}(M, \del M;\Z^{w_1}) \quad \text{and} \quad [M,\C\P^{k+1}]_c \xrightarrow{\cong} H_{n-2}(M, \del M;\Z^{w_1}),
    \end{equation*}
    where $w_1 \colon \pi_1(M) \to \Aut(\Z)$ is the orientation character of $M$.
\end{remark}

\begin{remark}\label{rem: topological category}
    \cref{prop: compact homotopy classes} also holds in the topological category, since all transversality statements also hold for topological transversality. See the proof of Theorem 10.11 of \cite{FNOPTopological} for the case of $M$ oriented and compact; the same modifications as given in the proof above for $M$ unoriented and non-compact go through in that setting. In turn, this implies that all later results in this section as well as all direct applications of them also go through in the topological category without any major modifications. We therefore omit any future remarks on proofs in the topological category.
\end{remark}

\subsection{Representing homology classes with prescribed boundaries}\label{sec: representing}

We can use \cref{prop: compact homotopy classes} to represent relative homology classes by compact embedded manifolds with prescribed boundaries. Let $M$ be an $n$-manifold and let $A \subseteq \del M$ be an embedded $(n-1)$-manifold which is closed as a subspace of $M$. Define the maps on homology (with any choices of coefficients and degree)
\begin{equation*}
        \del_A \colon  H_{*}(M, \del M) \xrightarrow{\del}  H_{*-1}( \del M, \del M \setminus A) \xrightarrow{\cong} H_{*-1}(A, \del A),
\end{equation*}
where the first map is the boundary map in the long exact sequence of the triple $(M, \del M, \del M \setminus A)$, and the second is the inverse of an excision isomorphism. This is given explicitly on a relative cycle $\sigma$ in $(M, \del M)$ by $\del_A[\sigma] = [\sigma \cap A]$.

We can use \cref{prop: compact homotopy classes}(i) to represent any $\cyc{2}$-homology class of codimension 1 by an unoriented submanifold with appropriate prescribed boundary. The methods we use here to control the boundary of the representing submanifolds are adapted and expanded from \cite{BaykurSunukjian}.

\begin{proposition}\label{prop: realising homology classes}
    Let $B \subset A$ be a properly embedded compact $(n-2)$-manifold. 
    For any $\beta \in H_{n-1}(M, \del M;\cyc{2})$ with $\del_A \beta = [B] \in H_{n-2}(A, \del A;\cyc{2})$,
    there exists a properly embedded compact $(n-1)$-manifold $Y \subset M$ such that $Y \cap A = B$ and $[Y] = \beta \in H_{n-1}(M, \del M;\cyc{2})$.
\end{proposition}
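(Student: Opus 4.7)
The plan is to build a smooth map $f \colon M \to \R\P^{k+1}$ representing $\beta$ via \cref{prop: compact homotopy classes}(i), arranged so that $(f|_A)\inv(\R\P^k) = B$ on the nose; the submanifold $Y \coloneq f\inv(\R\P^k)$ will then do the job. Fix $k \geq n$ throughout.

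First, the Thom--Pontryagin construction applied to $B \subset A$ yields a smooth compactly supported map $g \colon A \to \R\P^{k+1}$, transverse to $\R\P^k$, with $g\inv(\R\P^k) = B$: one takes a tubular neighbourhood $\nu B \subset A$, uses the classifying map of the normal line bundle to produce a map $\nu B \to \R\P^{k+1}$ sending $B$ to $\R\P^k$, and sends the complement of $\nu B$ to the basepoint. Under the explicit bijection of \cref{prop: compact homotopy classes}(i), $[g]$ corresponds to $[B] \in H_{n-2}(A, \del A; \cyc{2})$. Separately, \cref{prop: compact homotopy classes}(i) applied to $M$ provides a smooth compactly supported map $f_0 \colon M \to \R\P^{k+1}$, transverse to $\R\P^k$, representing $\beta$. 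Since taking preimage commutes with restriction, $f_0|_A$ represents $\del_A \beta = [B]$, so $f_0|_A \sim_c g$; fix a compactly supported homotopy $H \colon A \times I \to \R\P^{k+1}$ between them.

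Using a collar of $\del M$ restricted to $A$ (which exists since $A$ is closed in $M$), the inclusion $A \hookrightarrow M$ is a cofibration, so the homotopy extension property produces a homotopy $\tilde H \colon M \times I \to \R\P^{k+1}$ starting at $f_0$ and extending $H$. Taking $\tilde H$ to be stationary outside a compact neighbourhood of the supports of $H$ and $f_0$ makes it compactly supported. Set $f \coloneq \tilde H(-, 1)$; then $f|_A = g$, $f$ is compactly supported, and $f \sim_c f_0$, so $f$ still represents $\beta$. A relative transversality perturbation rel $A$, supported in a compact subset of $M \setminus A$, then makes $f \pitchfork \R\P^k$ on all of $M$ without changing $f|_A$. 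Setting $Y \coloneq f\inv(\R\P^k)$ yields a properly embedded compact $(n-1)$-submanifold of $M$ with $Y \cap A = g\inv(\R\P^k) = B$ and $[Y] = \beta \in H_{n-1}(M, \del M; \cyc{2})$.

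The main technical point is that \cref{prop: compact homotopy classes}(i) on its own only provides representatives of homology classes up to compactly supported homotopy --- hence only up to bordism at the level of preimages --- so to realise the boundary as $B$ on the nose the Thom--Pontryagin construction is needed. Everything else is formal: the homotopy extension property (coming from the collar) handles the gluing of $f_0$ and $g$, and relative transversality can be applied with $A$ held fixed since $g \pitchfork \R\P^k$ already.
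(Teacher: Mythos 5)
Your argument follows essentially the same route as the paper: Thom construction on $B \subset A$ to produce $g$, a compactly supported representative $f_0$ of $\beta$, the observation that $f_0|_A \sim_c g$ since both represent $\del_A\beta$, a homotopy extension to replace $f_0$ by $f$ with $f|_A = g$, and finally $Y = f\inv(\R\P^k)$. One spot is looser than it should be: ``taking $\tilde H$ to be stationary outside a compact neighbourhood'' is not something the bare HEP for $A \hookrightarrow M$ hands you. The clean fix (and the one the paper uses) is to first extend $H$ by the constant basepoint map over $A \cup \bar{M \setminus N}$, where $N$ is a compact neighbourhood of the support of $H$, and then apply HEP for the closed subset $A \cup \bar{M \setminus N} \hookrightarrow M$; the resulting extension is then automatically supported in $N$. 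On the other side, your explicit relative transversality perturbation rel $A$ at the end is a welcome addition --- the paper leaves that step implicit, but it is needed since the homotoped map need not be transverse to $\R\P^k$ away from $A$.
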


\begin{proof}
Fix a basepoint  $* \in \R\P^{n+1} \setminus \R\P^{n}$. Let $f \colon M \to \R\P^{n+1}$ be a representative of the class corresponding to $\beta$ under the bijection
\begin{equation*}
    [M,\R\P^{n+1}]_c \xrightarrow{\cong} H_{n-1}(M,\del M;\cyc{2})
\end{equation*}in \cref{prop: compact homotopy classes}(i). We may assume that $f \pitchfork \R\P^n$, so that $Y \coloneq f\inv(\R\P^{n}) \subset M$
is a properly embedded compact $(n-1)$-manifold with $[Y] = \beta \in H_{n-1}(M, \del M;\cyc{2})$. 

We now show that if there exists some map $g \colon A \to \R\P^{n+1}$ with compact support such that $g\inv(\R\P^n) = B$, then we can choose $f$ so that $Y \cap A = \restr{f}{A}\inv(\R\P^n) = B$.
Indeed, suppose that such a $g$ exists. Since both $B$ and $Y \cap A$ represent the class $\del_A \beta \in H_{n-2}(A, \del A;\cyc{2})$, they are homologous in $A$ rel.\ $\del A$. Then by applying \cref{prop: compact homotopy classes}(i) to $A$ with $k=n$, we see that $\restr{f}{A} \simeq_c g$. That is, there is a homotopy $H \colon A \times I \to \R\P^{n+1}$ from $\restr{f}{A}$ to $g$, and a compact set $K \subseteq A$ such that $H\big((A \setminus K) \times I\big) \subseteq \{*\}$. Let $N \subseteq M$ be a compact neighbourhood of $K$. Then $H$ extends to a homotopy $H' \colon (A \cup \bar{M \setminus N}) \times I\to \R\P^{n+1} $ by
\begin{equation*}
    H'(x,t) = \begin{cases} H(x,t) & x \in A, \\ * & x \in \bar{M \setminus N}. \end{cases} 
\end{equation*}
So $H'$ is a homotopy from $\restr{f}{A \cup \bar{M \setminus N} }$ to $g \cup \restr{f}{\bar{M \setminus N}}$.
Since $A\cup\bar{M \setminus N}$ is a closed subspace of $M$, the inclusion $A \cup \bar{M \setminus N} \hookrightarrow M$ is a cofibration \cite[Chapter VII.1]{Bredon}, and hence $H'$ can be extended over $M$ to a homotopy from $f$ to some $f'\colon M \to \R\P^{n+1}$ with 
\begin{equation*}
    \restr{f'}{A \cup \bar{M \setminus N}} = g \cup \restr{f}{\bar{M \setminus N}}.
\end{equation*}
This homotopy is supported in $N$, so has compact support. Hence $f \simeq_c f'$. So by replacing $f$ by $f'$, we may assume that $Y \cap A = \restr{f'}{A}\inv(\R\P^n) = B$, and the result follows.

It remains to construct such a map $g\colon A \to \R\P^{n+1}$. Let $\nu B \subset A$ be a tubular neighbourhood for $B$ in $A$, and hence a $D^1$-bundle over $B$. The universal $D^1$-bundle is the disc bundle of the tautological line bundle $\gamma \colon E \to \R\P^\infty$, so after cellular approximation we obtain a bundle morphism
\[\begin{tikzcd}
	{\nu B} & {DE^n} \\
	B & {\R\P^n},
	\arrow["{g_0}", from=1-1, to=1-2]
	\arrow[from=1-1, to=2-1]
	\arrow["{D\gamma^n}", from=1-2, to=2-2]
	\arrow[from=2-1, to=2-2]
\end{tikzcd}\]
where $\gamma^n \colon E^n \to \R\P^n$ is the tautological line bundle \cite[\textsection5.1]{MilnorStasheff}. Since the Thom space ${DE^n/SE^n}$ is diffeomorphic to $\R\P^{n+1}$, we can extend $g_0$ to a map
\begin{equation*}
    g \colon A \xrightarrow{\text{collapse}} \nu B / S \nu B \xrightarrow{\bar{g_0}} DE^n/ SE^n \xrightarrow{\cong} \R\P^{n+1}.
\end{equation*}
This map satisfies $g\inv(\R\P^{n}) = B$ by construction, so $g$ is the required map. In the case that $A$ is non-compact, we must ensure that the chosen basepoint is the point $g(A \setminus \nu B)$. This can be arranged by choosing an appropriate diffeomorphism $DE^n/ SE^n \xrightarrow{\cong} \R\P^{n+1}$.
\end{proof}

Using \cref{prop: compact homotopy classes}(ii), we can show an analogous result when all manifolds are taken to be oriented and homology is taken with $\Z$-coefficients. 

\begin{proposition}\label{prop: realising oriented homology classes}
    Suppose that $M$ and $A$ are oriented. Let $B \subset A$ be an oriented properly embedded compact $(n-2)$-manifold. 
    For any $\beta \in H_{n-1}(M, \del M;\Z)$ with $\del_A \beta = [B] \in H_{n-2}(A, \del A;\Z)$,
    there exists an oriented properly embedded compact $(n-1)$-manifold $Y \subset M$ such that $Y \cap A = B$ and $[Y] = \beta \in H_{n-1}(M, \del M;\Z)$.
\end{proposition}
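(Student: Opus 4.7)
The argument runs parallel to the proof of \cref{prop: realising homology classes}, with $\R\P^\infty$ replaced by the Eilenberg--MacLane space $S^1 = K(\Z,1)$. First I would apply \cref{prop: compact homotopy classes}(ii) to obtain a map $f \colon M \to S^1$ with $f \pitchfork \{1\}$ such that $f\inv(\{1\})$ is an oriented properly embedded compact $(n-1)$-manifold representing $\beta \in H_{n-1}(M, \del M;\Z)$, where the preimage orientation is induced from the standard orientation of $T_1 S^1$ together with the orientation of $M$.

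Next I would construct a compactly supported map $g \colon A \to S^1$ such that $g\inv(\{1\}) = B$ as oriented manifolds. Since $B$ is an oriented codimension-one submanifold of the oriented manifold $A$, the normal bundle $N_A B$ is a trivial oriented line bundle, so a tubular neighbourhood admits an oriented trivialisation $\nu B \cong B \times [-1,1]$. I would set $g(b,t) \coloneq e^{i\pi t}$ on $\nu B$ and $g \equiv -1$ on $A \setminus \nu B$; a direct check with the orientation conventions confirms that the preimage orientation on $g\inv(\{1\}) = B \times \{0\}$ matches the original orientation of $B$. The chosen constant value $-1$ is precisely the basepoint $* \in S^1 \setminus \{1\}$ of \cref{prop: compact homotopy classes}(ii), and $g$ has compact support since $B$ is compact.

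Since both $\restr{f}{A}$ and $g$ now represent the class $\del_A \beta \in H_{n-2}(A, \del A;\Z)$, \cref{prop: compact homotopy classes}(ii) applied to $A$ gives $\restr{f}{A} \sim_c g$. I would then reproduce verbatim the cofibration extension argument from the proof of \cref{prop: realising homology classes}: extend the compactly supported homotopy over $A \cup \bar{M \setminus N}$ for a suitable compact neighbourhood $N$ of its support, then over $M$ using that the inclusion is a cofibration, yielding a compactly supported homotopy from $f$ to some $f'$ with $\restr{f'}{A} = g$. Setting $Y \coloneq f'\inv(\{1\})$ produces the required oriented submanifold.

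\textbf{Main obstacle.} The only genuinely new ingredient compared to the unoriented case is the oriented construction of $g$; the rest transports formally. In the unoriented argument one must invoke the tautological bundle over $\R\P^\infty$ together with cellular approximation and the identification of the Thom space of $\gamma^n$ with $\R\P^{n+1}$. In the oriented setting this step is bypassed entirely, since the universal oriented line bundle is trivial and the triviality of $\nu B$ yields $g$ directly, so the oriented statement is arguably easier than its unoriented counterpart. The only subtlety worth tracking carefully is matching the preimage orientation of $g$ at $1 \in S^1$ with the given orientation of $B$, which is a direct check against the orientation conventions set out in the introduction.
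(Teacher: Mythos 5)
Your proposal is correct and takes essentially the same approach as the paper's proof. The paper also applies \cref{prop: compact homotopy classes}(ii), reduces to constructing a compactly supported $g\colon A\to S^1$ with $g\inv(\{1\})=B$, and then reuses the cofibration argument from \cref{prop: realising homology classes} verbatim; the paper builds $g$ via the Thom collapse map $A \to \nu B/S\nu B \to D^1/\del D^1 \cong S^1$ for the (trivial) oriented $D^1$-bundle $\nu B$, which is exactly your explicit formula $g(b,t)=e^{i\pi t}$ on $\nu B$, constant elsewhere. Your added remark about checking that the preimage orientation on $g\inv(\{1\})$ matches that of $B$ is a reasonable sanity check that the paper leaves implicit.
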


\begin{proof}
    The proof is almost identical to the proof of \cref{prop: realising homology classes}. Let $f\colon M \to S^1$ be a representative of the class corresponding to $\beta$ under the bijection
    \begin{equation*}
        [M,S^1]_c \xrightarrow{\cong} H_{n-1}(M, \del M;\Z)
    \end{equation*}
    in \cref{prop: compact homotopy classes}(ii), and assume $1$ is a regular value of $f$. Then $Y \coloneq f\inv(\{1\}) \subset M$ is a oriented properly embedded compact manifold with $[Y] = \beta \in H_{n-1}(M, \del M;\Z)$.    
    As in the unoriented case, to arrange that $Y \cap A = B$, it suffices to show that there is a map $g \colon A \to S^1$ with compact support such that $g\inv(\{1\}) = B$.

    To construct the map $g\colon A \to S^1$, let $\nu B \subset A$ be a tubular neighbourhood of $B$. Then since $A$ and $B$ are oriented, $\nu B$ is the trivial $D^1$-bundle over $B$. Hence there is projection map $\nu B \to D^1$, and the Thom construction gives the required map
    \begin{equation*}
        g \colon A \xrightarrow{\text{collapse}} \nu B / S\nu B \to D^1 / \del D^1 \cong S^1
    \end{equation*}
    with $g\inv(\{1\}) = B$.
\end{proof}

By \cref{prop: compact homotopy classes}(iii), the same construction works for $\Z$-homology classes of codimension 2.

\begin{proposition}\label{prop: oriented codimension 2}
    Suppose that $M$ and $A$ are oriented. Let $B \subset A$ an oriented properly embedded compact $(n-3)$-manifold.
    Then for any $\beta \in H_{n-2}(M, \del M;\Z)$ with $\del_A \beta = [B] \in H_{n-3}(A, \del A;\Z)$,
    there exists an oriented properly embedded compact $(n-2)$-manifold $Y \subset M$ such that $Y \cap A = B$ and $[Y] = \beta \in H_{n-2}(M, \del M;\Z)$.
\end{proposition}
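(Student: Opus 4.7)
The plan is to follow the proof strategy of \cref{prop: realising oriented homology classes} essentially verbatim, replacing the codimension 1 target $S^1$ with the codimension 2 target $\C\P^{k+1}$ and invoking \cref{prop: compact homotopy classes}(iii) in place of part (ii). Concretely, fix $k \geq \floor{(n-1)/2}$ and a basepoint $* \in \C\P^{k+1} \setminus \C\P^k$. Take a representative $f \colon M \to \C\P^{k+1}$ of the class corresponding to $\beta$ under the bijection $[M, \C\P^{k+1}]_c \xrightarrow{\cong} H_{n-2}(M, \del M;\Z)$, chosen so that $f \pitchfork \C\P^k$. Then $Y \coloneq f\inv(\C\P^k) \subset M$ is an oriented properly embedded compact $(n-2)$-manifold with $[Y] = \beta$, and it remains only to modify $f$ by a compactly supported homotopy so that $Y \cap A = B$.

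The crucial step is to exhibit an auxiliary map $g \colon A \to \C\P^{k+1}$ with compact support such that $g\inv(\C\P^k) = B$. Since $A$ and $B$ are oriented and $B$ has codimension 2 in $A$, the tubular neighbourhood $\nu B \subset A$ is an oriented rank 2 disc bundle, classified by a map to $\BSO(2) \simeq \C\P^\infty$. By cellular approximation, this classifying map factors through $\C\P^k$, giving a bundle morphism from $\nu B \to B$ to the tautological $D^2$-bundle $D\gamma^k \colon DE^k \to \C\P^k$. Since the Thom space $DE^k/SE^k$ is diffeomorphic to $\C\P^{k+1}$, the usual Thom collapse
\begin{equation*}
    g \colon A \xrightarrow{\text{collapse}} \nu B / S\nu B \xrightarrow{\bar{g_0}} DE^k/SE^k \xrightarrow{\cong} \C\P^{k+1}
\end{equation*}
is a map with compact support and $g\inv(\C\P^k) = B$, where as in the proof of \cref{prop: realising homology classes} we choose the diffeomorphism $DE^k/SE^k \cong \C\P^{k+1}$ so that $g(A \setminus \nu B) = *$.

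With $g$ in hand, the rest of the argument is formal. The preimages $B = g\inv(\C\P^k)$ and $Y \cap A = \restr{f}{A}\inv(\C\P^k)$ represent the same class $\del_A \beta \in H_{n-3}(A, \del A;\Z)$, so applying \cref{prop: compact homotopy classes}(iii) to $A$ gives a compactly supported homotopy $H \colon A \times I \to \C\P^{k+1}$ from $\restr{f}{A}$ to $g$. Exactly as in the proof of \cref{prop: realising homology classes}, choose a compact neighbourhood $N$ of the support of $H$ and extend $H$ by the constant homotopy at $*$ over $\bar{M \setminus N}$ to get a homotopy $H'$ on the closed subspace $A \cup \bar{M \setminus N}$; since the inclusion into $M$ is a cofibration, $H'$ extends to a compactly supported homotopy from $f$ to some $f' \colon M \to \C\P^{k+1}$ with $\restr{f'}{A} = g$. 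Replacing $f$ by $f'$ yields $Y \cap A = B$, completing the proof.

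There is no genuine obstacle here beyond making sure the dimensional hypothesis $k \geq \floor{(n-1)/2}$ of \cref{prop: compact homotopy classes}(iii) is also valid for the application to $A$, which needs $k \geq \floor{(n-2)/2}$; since the latter is weaker, any $k \geq \floor{(n-1)/2}$ works for both. The only conceptual input beyond the proof of \cref{prop: realising oriented homology classes} is the observation that $\C\P^{k+1}$ arises naturally as the Thomification of the classifying space for oriented rank 2 bundles, which is exactly what makes the codimension 2 oriented case work in parallel with the codimension 1 cases of Propositions~\ref{prop: realising homology classes} and~\ref{prop: realising oriented homology classes}.
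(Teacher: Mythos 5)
Your proof is correct and follows essentially the same route as the paper: apply \cref{prop: compact homotopy classes}(iii) to $M$ to obtain $f$, build the auxiliary map $g \colon A \to \C\P^{k+1}$ via the Thom collapse through the tautological oriented rank 2 bundle over $\C\P^k$ (using that $\nu B$ is an oriented $D^2$-bundle since $A$ and $B$ are oriented, and that $DE^k/SE^k \cong \C\P^{k+1}$), and then glue $g$ onto $f$ using the cofibration extension argument established in the proof of \cref{prop: realising homology classes}. Your explicit check that the dimensional constraint of \cref{prop: compact homotopy classes}(iii) also holds for $A$ is a correct detail that the paper leaves implicit.
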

\begin{proof}
    Let $k = \floor{(n-1)/2}$, and let $f\colon M \to \C\P^{k+1}$ be a representative of the class corresponding to $\beta$ under the bijection
    \begin{equation*}
        [M,\C\P^{k+1}]_c \xrightarrow{\cong} H_{n-2}(M, \del M;\Z)
    \end{equation*}
    in \cref{prop: compact homotopy classes}(iii), and assume $f \pitchfork \C\P^k$. Then $Y \coloneq f\inv(\C\P^k) \subset M$ is a oriented properly embedded compact manifold with $[Y] = \beta \in H_{n-2}(M, \del M;\Z)$.    
    As before, to arrange that $Y \cap A = B$, it suffices to show that there is a map $g \colon A \to \C\P^{k+1}$ with compact support such that $g\inv(\C\P^k) = B$. 

    The construction of such a map is nearly identical to the unoriented codimension 1 case. Let $\nu B \subset A$ be a tubular neighbourhood, which is an oriented $D^2$-bundle over $B$ since $A$ and $B$ are oriented. 
    The universal oriented $D^2$-bundle is the disc bundle of the tautological oriented rank 2 bundle $\gamma \colon E \to \C\P^\infty$, so after cellular approximation, $\nu B$ is classified by a bundle morphism
\[\begin{tikzcd}
	{\nu B} & {DE^k} \\
	B & {\C\P^{k},}
	\arrow["{g_0}",from=1-1, to=1-2]
	\arrow[from=1-1, to=2-1]
	\arrow["{D\gamma^k}", from=1-2, to=2-2]
	\arrow[from=2-1, to=2-2]
\end{tikzcd}\]
where $\gamma^k \colon E^k \to \C\P^k$ is the tautological oriented rank 2 bundle. The Thom space $DE^k/SE^k$ is diffeomorphic to $\C\P^{k+1}$, so the Thom construction gives the required map
\begin{equation*}
    g \colon A \xrightarrow{\text{collapse}} \nu B / S \nu B \xrightarrow{\bar{g_0}} DE^k/ SE^k \xrightarrow{\cong} \C\P^{k+1},
\end{equation*}
with $g\inv(\C\P^k) = B$.
\end{proof}

There is no complete analogue to \cref{prop: oriented codimension 2} allowing us to represent codimension 2 classes in $\cyc{2}$-homology by properly embedded compact submanifolds. However we can make some progress in the case of $n=4$; see e.g.\ Chapter 2 of \cite{Kirby} or Remark 1.2.4 of \cite{GompfStipsicz}.

\begin{proposition}\label{lmm: embedding surfaces in 4manifolds}
    Let $X$ be a 4-manifold, and fix $\alpha \in H_2(X, \del X; \cyc{2})$. Then there exists a properly embedded compact surface $\S \subset X$ such that $[\S] = \alpha$. 
\end{proposition}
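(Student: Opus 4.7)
\bigskip
\noindent\textit{Proof proposal.} The plan is to first represent $\alpha$ by a properly immersed compact surface, and then resolve its double points by local surgeries, staying within the same $\cyc{2}$-homology class throughout.

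For the first step, I would invoke the surjectivity of the Thom homomorphism from unoriented bordism,
\begin{equation*}
    \Omega_2^{\mathrm{O}}(X, \del X) \twoheadrightarrow H_2(X, \del X;\cyc{2}),
\end{equation*}
which is a standard consequence of the splitting of the Thom spectrum $MO$ as a wedge of Eilenberg--MacLane spectra. Hence $\alpha$ is represented by a continuous map $f \colon (\Sigma, \del \Sigma) \to (X, \del X)$ with $\Sigma$ a compact surface. Smooth approximation, followed by generic perturbation rel.\ $\del \Sigma$, makes $f$ a proper immersion whose only singularities are finitely many transverse double points in the interior of $X$, with $\restr{f}{\del \Sigma}$ an embedding; these are both generic conditions given the codimensions involved.

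For the second step, I would resolve the double points one at a time. At a transverse double point $p$, choose a 4-ball neighbourhood $B \subset X \setminus \del X$ on which $f(\Sigma)$ appears as the union of two coordinate 2-disks $D_1 \cup D_2 \subset D^4$ meeting at the origin. The boundary link $\del D_1 \cup \del D_2 \subset \del B \cong S^3$ is a Hopf link, which bounds a smoothly embedded annulus $A \subset B$ (a pushed-in Hopf band). Replacing $D_1 \cup D_2$ inside $f(\Sigma)$ by $A$ produces a properly immersed compact surface $f' \colon \Sigma' \immerse X$ with one fewer double point, where $\Sigma'$ differs from $\Sigma$ by removing two small disks and gluing in an annulus. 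Since $H_2(B;\cyc{2}) = 0$, the closed surface $(D_1 \cup D_2) \cup A \subset B$ is $\cyc{2}$-null-homologous in $B$, whence $[f'(\Sigma')] = [f(\Sigma)] = \alpha \in H_2(X, \del X;\cyc{2})$.

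Iterating this resolution over the finite set of double points produces a proper embedding of some compact surface $\S \hookrightarrow X$ with $[\S] = \alpha$. The main conceptual input is the availability of the Hopf band---i.e.\ the classical fact that the standard Hopf link in $\del D^4$ bounds an embedded annulus in $D^4$---which lets us trade a double point for extra genus at no cost in $\cyc{2}$-homology; otherwise, the argument is a routine double-point resolution, with no orientation data to track.
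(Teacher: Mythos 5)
Your proof is correct and is precisely the standard argument that the paper outsources to Gompf--Stipsicz (Remark 1.2.4) and Kirby (Ch.\ 2): realize the class by a generic proper immersion via Thom representability, then iteratively remove double points with pushed-in Hopf bands, each step fixing the relative $\cyc{2}$-homology class since the difference chain is a $\cyc{2}$-cycle supported in an interior ball. (Minor slip: $(D_1\cup D_2)\cup A$ is singular at the origin rather than an embedded closed surface, but what you actually use---that $D_1\cup D_2 + A$ is a $\cyc{2}$-cycle in $B$, hence nullhomologous---is correct.)
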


 The same amount of control over the boundary as in \cref{prop: oriented codimension 2} can also be obtained in this case, although this uses different techniques and will not be necessary for our applications.

\section{Spanning manifolds of codimension 2 embeddings}\label{sec: Seifert manifolds}
This section contains a discussion of spanning manifolds of codimension 2 proper embeddings. Recall that if $X$ is an $(n+2)$-manifold and $\S \subset X$ is a properly embedded compact $n$-manifold, then a spanning manifold for $\S$ is a compact $(n+1)$-manifold $Y$ with corners, embedded in $X$, such that $\del Y = \S \cup Z$, where $Z \subset \del X$ is an embedded $n$-manifold.

In \cref{sec: seifert sections}, we define the notion of a Seifert section of a codimension 2 properly embedded submanifold $\S \subset X$. This is a section of the normal circle bundle $SN_X\S$, which is a generalisation of a Seifert framing of a knot or link in $S^3$.
In \cref{sec: oriented seifert manifolds}, we give exact conditions for an oriented proper codimension 2 embedding in an oriented manifold to admit a spanning manifold.
In \cref{sec: section determines spanning}, we show that an unoriented properly embedded submanifold admits a spanning manifold if and only if its normal circle bundle admits a Seifert section. 
Sections~\ref{sec: existence of seifert sections} and \ref{sec: actual seifert stuff}, prove Theorems~\ref{thm: spanning manifolds} and \ref{thm: relative spanning manifolds} by giving sufficient conditions for the existence of a Seifert section.

\begin{notation}\label{notation: cobordism}
    For the rest of this section, fix an $(n+2)$-manifold $X$ and  a properly embedded compact $n$-manifold $\S \subset X$, with $n \geq 1$. 
    Fix a tubular neighbourhood $(\nu \S, \varphi)$ for $\S$, and recall that we write
    \begin{equation*}
        S\nu \S \coloneq \varphi(SN_X\S) \subseteq \del (\nu \S)
    \end{equation*}
    for the embedding of the normal circle bundle around $\S$.
    Write $\pi \colon S\nu\S \to \S$ for the $S^1$-bundle structure such that $\restr{\pi\varphi}{SN_X\S} \colon SN_X\S \to \S$ is the usual projection map.
    Let $E \coloneq \bar{X \setminus \nu \S} \subset X$ be the exterior of $\S$. Note that $E$ is a manifold with corners, and that
    \begin{equation*}
        \del E = (\del X \cap \del E) \mathop{\cup}_{\del (S\nu \S)} S\nu \S.
    \end{equation*}
    We also define the maps on homology (with any choice of coefficient ring)
\begin{equation*}
        \del_{S} \colon H_{n+1}(E, \del E) \xrightarrow{\del} H_n(\del E, \del X \cap \del E) \xrightarrow{\cong} H_n(S\nu\S, \del( S\nu\S))
\end{equation*}
and
\begin{equation*}
        \del_X \colon H_{n+1}(E, \del E) \xrightarrow{\cong} H_{n+1}(X, \del X \cup \S) \xrightarrow{\del} H_n(\del X \cup \S, \S) \xrightarrow{\cong} H_n(\del X, \del \S),
\end{equation*}
where all marked isomorphisms are excision isomorphisms or their inverses, and all indicated boundary maps come from the appropriate long exact sequences of triples. If $\sigma$ is a relative $(n+1)$-cycle in $(E, \del E)$, then $\del_S[\sigma] = [\del \sigma \cap S\nu \S]$ and $\del_X[\sigma] = [\del \sigma \cap \del X]$.
\end{notation}

\begin{remark}\label{rmk: on boundary components away from Sigma}
    If $\del_0 X \subseteq \del X$ is a boundary component disjoint from $\S$, then we may replace the ambient manifold $X$ with $X \setminus \del_0 X$. In this way, we can assume that all constructions occur away from boundary components of $X$ that do not meet $\S$. In particular, if $\S$ is closed, then all results in this section still hold when considering $H_n(X;\cyc{2})$ instead of $H_n(X, \del X;\cyc{2})$ and assuming that all constructions occur in the interior of $X$.
\end{remark}

\subsection{Seifert sections}\label{sec: seifert sections}

Suppose that $\S$ has a spanning manifold $Y$.
By taking the tubular neighbourhood $\nu \S$ small enough, we may assume that $Y\cap \nu\S$ is a collar of $\S$ in $Y$. Then $Y \cap S\nu \S$ is a push-off of $\S$, and hence is the image of a section $\S \to S\nu\S$ of $\pi$. In this way, $Y$ determines a section $s \colon \S \to SN_X\S$ via $\varphi$, given by the direction into $Y$. This is analogous to the Seifert framing of a knot in $S^3$, though in the general case the normal bundle $N_X\S$ need not be trivial. We formalise this notion as follows, and refer the reader to \cref{fig: example embedding}.

\begin{figure}[t]
    \begin{center}
    \def\myr{2}
\tdplotsetmaincoords{65}{125}

\begin{tikzpicture}[scale=1]

    \draw[ball color=cyan!50!blue, opacity=0.1, draw opacity=1] (0,0) circle[radius={\myr}];

    \node at (-\myr*0.9,\myr*0.9) {$X$};

    \begin{scope}[tdplot_main_coords, rotate=-15]
    \begin{scope}[canvas is xz plane at y=0]
            \draw[dashed,black!50] (\tdplotmainphi:\myr) 
                arc[start angle={\tdplotmainphi}, 
                    end angle={\tdplotmainphi+180}, 
                    radius={\myr}];
        \end{scope}
        
        \begin{scope}[canvas is xy plane at z=0] 
            \draw[dashed,black!50] (\tdplotmainphi:\myr) 
                arc[start angle={\tdplotmainphi}, 
                    end angle={\tdplotmainphi+180}, 
                    radius={\myr}];
        \end{scope}

        \begin{scope}[canvas is yz plane at x=0]
            \draw[dashed,black!50] (\tdplotmainphi:\myr) 
                arc[start angle={\tdplotmainphi}, 
                    end angle={\tdplotmainphi+180}, 
                    radius={\myr}];
        \end{scope}

        \begin{scope}[canvas is yz plane at x=0]

            \fill[fill=orange!50!yellow!50, fill opacity=0.6]
        (90:\myr) arc[start angle={0}, end angle={-90}, radius={\myr}] arc[start angle={180}, end angle={270}, radius={\myr}] arc[start angle={180}, end angle={90}, radius={\myr}] arc[start angle={0}, end angle={90}, radius={\myr}];

        \draw[very thick,red] (90:\myr) 
                arc[start angle={0}, 
                end angle={-90}, 
                radius={\myr}];
            \draw[very thick,red] (0:\myr) 
                arc[start angle={90}, 
                end angle={180}, 
                radius={\myr}];

        \draw[very thick,red!80,densely dotted] (80:\myr) 
                arc[start angle={-10}, 
                end angle={-80}, 
                radius={\myr*1.42}];

        \draw[very thick,red!80,densely dotted] (10:\myr) 
                arc[start angle={100}, 
                end angle={170}, 
                radius={\myr*1.42}];

        \node[orange!50!black] at (0,0) {$Y$};
        \node[red!50!black,right] at (-60:\myr*0.5) {$\Sigma$};
        \node[red!50!black,right] at (155:\myr*0.65) {$\Sigma$};
        
        % \node[red!50!black,above] at (72:\myr*1.2) {$\Sigma^s_+$};
        % \node[red!50!black,right] at (12:\myr*0.95) {$\Sigma^s_+$};
        \end{scope}

        \begin{scope}[canvas is xz plane at y=0]
            \draw[black!80] (\tdplotmainphi:\myr) 
                arc[start angle={\tdplotmainphi}, 
                    end angle={\tdplotmainphi-180}, 
                    radius={\myr}];
        \end{scope}
        
        \begin{scope}[canvas is xy plane at z=0]
            \draw[black!80] (\tdplotmainphi:\myr) 
                arc[start angle={\tdplotmainphi}, 
                    end angle={\tdplotmainphi-180}, 
                    radius={\myr}];
        \end{scope}

        \begin{scope}[canvas is yz plane at x=0]
            \draw[black!80] (\tdplotmainphi:\myr) 
                arc[start angle={\tdplotmainphi}, 
                end angle={\tdplotmainphi-180}, 
                radius={\myr}];
        \end{scope}
    
    \end{scope}

\end{tikzpicture}
    \caption{A proper codimension 2 embedding with $X = D^3$ and $\S = I \sqcup I$ (red). A spanning surface $Y$ (yellow) is given. The push-off $\S^s_+$ of $\S$ determined by the Seifert section associated to $Y$ is shown (dashed red).}
    \label{fig: example embedding}
    \end{center}
\end{figure}
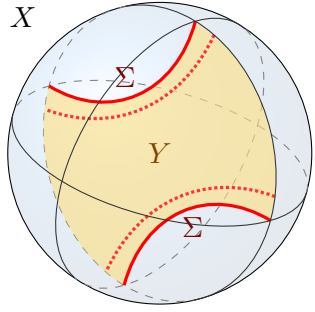

\begin{definition}\label{def: Seifert section}
    Let $s \colon \S \to SN_X\S$ be a section. We write $\S^s_+ \coloneq \varphi s (\S) \subset S\nu \S$ for the push-off of $\S$ in the direction of $s$. 
    We call $s$ a  \textit{Seifert section} if \begin{equation*} [\S^s_+] = (\varphi s)_*[\S] \in \im \del_{S} \subseteq H_n(S\nu\S, \del(S\nu\S);\cyc{2}).\end{equation*}  
    Given a spanning manifold $Y$ for $\S$, suppose that $\nu \S$ is small enough that $Y \cap \nu \S$ is a collar neighbourhood for $\S$ in $Y$. The \textit{Seifert section associated to $Y$} is the unique section $s \colon \S \to SN_X\S$ such that $\S^s_+ = Y\cap S\nu\S$.
\end{definition}

Note that whether a section of $SN_X\S$ is a Seifert section or not is independent of the choice of tubular neighbourhood $(\nu\S, \varphi)$. Moreover, the choice of tubular neighbourhood only affects the Seifert section associated to spanning manifold $Y$ by an isotopy. If $Z \subset \del X$ is a spanning manifold for $\del \S$ with associated Seifert section $s^Z$, and $Z$ extends to a spanning manifold $Y$ for $\S$ with associated Seifert section $s$, then $s^Z = \restr{s}{\del \S}$ after identifying $SN_{\del X}{\del \S} = \restr{SN_X\S}{\del \S}$.

\begin{remark}\label{rem: null-homologous seifert}
    It is interesting and important to remark that $SN_X\S$ can admit a Seifert section only if $\S$ is null-homologous. To see this, consider the diagram
\[\begin{tikzcd}
	{H_{n+1}(X, \del X \cup \S)} & {H_n(\del X \cup \S, \del X)} & {H_n(X, \del X)} \\
	{H_{n+1}(E, \del E)} & {H_n(\S, \del \S)} & {H_n(X, \del X),}
	\arrow["\del", from=1-1, to=1-2]
	\arrow[from=1-2, to=1-3]
	\arrow["\cong", from=2-1, to=1-1]
	\arrow["{\pi_* \del _S}", from=2-1, to=2-2]
	\arrow["\cong"', from=2-2, to=1-2]
	\arrow[from=2-2, to=2-3]
	\arrow["{=}"', from=2-3, to=1-3]
\end{tikzcd}\]
where marked isomorphisms are excision isomorphisms, unlabelled arrows are induced by inclusion, and $\cyc{2}$-coefficients are omitted.
    The top row is exact by the long exact sequence of the triple $(X, \del X \cup \S, \del X)$; the left square commutes by the definition of $\del_S$; the right square commutes since all maps are induced by inclusion. Hence the bottom row is exact, and if $s \colon \S \to SN_X\S$ is a Seifert section, then
\begin{equation*}
    [\S] = \pi_*[\S^s_+] \in \im \big( H_n(E, \del E) \xrightarrow{\pi_* \del_S } H_n(\S, \del \S) \big) = \ker \big(H_n(\S, \del \S) \to H_n( X, \del X) \big),
\end{equation*}
and so $[\S]$ is null-homologous.
\end{remark}

\subsection{Existence of oriented spanning manifolds}\label{sec: oriented seifert manifolds}

We first prove an oriented version of Theorem~\ref{thm: spanning manifolds}; that is, we prove that a codimension 2 proper embedding admits a spanning manifold if and only if it is null-homotopic and has a trivial normal bundle. We feel this is instructive even though it is very similar to standard results in e.g.\ Chapter VIII of \cite{Kirby}, since the proof follows many of the the same steps as the proof of Theorems~\ref{thm: spanning manifolds} and \ref{thm: relative spanning manifolds}. Recall that we call $Y$ an oriented spanning manifold for $\S$ extending $Z \subset \del X$ if $Y$ and $Z$ are oriented, and $\del Y = \S \cup Z$ as oriented manifolds. 

The proof strategy, both in the oriented and unoriented case, is as follows. First, impose assumptions on the normal circle bundle $SN_X\S$ that guarantee that it admits a Seifert section $s$. Then consider the inclusions $\S^s_+ \subset S\nu\S \subseteq \del E$. By applying \cref{prop: realising oriented homology classes} (or in the unoriented case, \cref{prop: realising homology classes}) with $M = E$, $A= S\nu\S$, and $B = \S^s_+$, we find a spanning manifold $\wh{Y} \subset E$ for $\S^s_+$ which is contained in the exterior of $\S$. This can then be extended linearly through $\nu\S$ to a spanning manifold $Y$ for $\S$.

\begin{proposition}\label{prop: oriented spanning manifolds}
    Suppose that $X$ and $\S$ are oriented. Then $\S$ admits an oriented spanning manifold if and only if $N_X\S$ is trivial and $[\S] = 0 \in H_n(X, \del X;\Z)$.
\end{proposition}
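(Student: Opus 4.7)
The plan is to follow the strategy outlined just above the statement: reduce to the codimension-$1$ oriented realization result \cref{prop: realising oriented homology classes} applied to the exterior $E$ of $\S$.

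The forward direction is quick. If $Y$ is an oriented spanning manifold with $\del Y = \S \cup Z$, then shrinking $\nu\S$ we may assume $Y \cap \nu\S$ is a collar of $\S$ in $Y$, so $Y \cap S\nu\S$ is the image of a nowhere-vanishing section of $SN_X\S$. An oriented rank $2$ real vector bundle admitting such a section splits as a sum of two trivial oriented line bundles (the spanned sub-bundle and its oriented line quotient), so $N_X\S$ is trivial. Moreover $\del Y = \S \cup Z$ exhibits $[\S \cup Z] = 0 \in H_n(X;\Z)$, and passing to $H_n(X,\del X;\Z)$ kills $Z \subset \del X$, so $[\S] = 0$.

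For the reverse direction, assume $N_X\S$ is trivial and $[\S] = 0 \in H_n(X, \del X;\Z)$. The triviality identifies $S\nu\S \cong \S \times S^1$ and gives a K\"unneth splitting $H_n(S\nu\S, \del S\nu\S;\Z) \cong H_n(\S, \del\S;\Z) \oplus H_{n-1}(\S, \del\S;\Z)$ in which the first projection is $\pi_*$. Running the diagram of \cref{rem: null-homologous seifert} with $\Z$ coefficients, the image of $\pi_* \del_S$ equals the kernel of $H_n(\S, \del\S;\Z) \to H_n(X, \del X;\Z)$, so the hypothesis $[\S] = 0$ yields $\alpha \in H_{n+1}(E, \del E;\Z)$ with $\pi_*\del_S(\alpha) = [\S]$; write $\del_S(\alpha) = ([\S], c)$ for some $c \in H_{n-1}(\S, \del\S;\Z)$. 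Since $\S$ is oriented, sections of $\S \times S^1 \to \S$ are classified up to homotopy by $H^1(\S;\Z) \cong H_{n-1}(\S, \del\S;\Z)$ via Poincar\'e--Lefschetz duality, and a direct computation of push-off classes shows that varying $s$ realizes every value of $c$. Choose $s$ accordingly so that $[\S^s_+] = \del_S(\alpha)$, and then apply \cref{prop: realising oriented homology classes} with $M = E$, $A = S\nu\S$, $B = \S^s_+$, $\beta = \alpha$ to produce an oriented $(n+1)$-manifold $\wh Y \subset E$ with $\wh Y \cap S\nu\S = \S^s_+$. Finally, the trivialization of $\nu\S \cong \S \times D^2$ supplies the half-tube $T \coloneq \{(x, r s(x)) : x \in \S,\ r \in [0,1]\}$, an oriented $(n+1)$-manifold with corners whose boundary is $\S \cup \S^s_+$ together with a face in $\del X$; gluing $Y \coloneq \wh Y \cup_{\S^s_+} T$ and smoothing corners yields the required oriented spanning manifold.

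The main obstacle is the middle step of the reverse direction: computing the push-off class $[\S^s_+]$ in the K\"unneth splitting in terms of the classifying map $g \colon \S \to S^1$ of $s$, and checking that varying $s$ realizes every lift of $[\S]$ under $\pi_*$ so that $\alpha$ can be matched on the nose. Once this is in hand, the remainder is a direct application of \cref{prop: realising oriented homology classes} together with the half-tube construction, and the same argument template will reappear (with appropriate modifications) in the proofs of Theorems~\ref{thm: spanning manifolds} and \ref{thm: relative spanning manifolds}.
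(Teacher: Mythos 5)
Your proof is correct and follows essentially the same approach as the paper: reduce to \cref{prop: realising oriented homology classes} applied to the exterior $E$, choose a section of the trivialized circle bundle whose push-off realizes the prescribed class $\del_S\beta$, and then extend radially through $\nu\S$. The step you flag as the main obstacle (verifying via K\"unneth that push-offs realize every lift of $[\S]$ under $\pi_*$) is asserted without further justification in the paper's proof as well, so your write-up matches the paper's level of detail.
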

If $X$ is closed, it is enough to assume that $[\S]=0 \in  H_n(X;\Z)$, since this implies that $N_X\S$ is trivial \cite[Theorem VIII.2]{Kirby}. 
\begin{proof}
    We first prove the forward direction, so suppose that $Y$ is an oriented spanning manifold for $\S$. Then $\S$ must be null-homologous, and $SN_X\S$ must admit a Seifert section, namely the Seifert section associated to $Y$. Since $SN_X\S$ is an oriented $S^1$-bundle over an oriented base which admits a section, it must be trivial. Then $N_X\S$ is also trivial, proving the forward direction.

    For the reverse direction, we use the argument outlined above. Suppose that $N_X\S$ is trivial and that $[\S] = 0 \in H_n(X, \del X;\Z)$. Repeating the argument in \cref{rem: null-homologous seifert} with $\Z$-coefficients, exactness of the bottom row shows that there is a class $\beta \in H_{n+1}(E, \del E;\Z)$ such that
    \begin{equation*}
        \pi_* \del_S \beta = [\S] \in H_n(\S, \del \S;\Z).
    \end{equation*}
    Since $N_X\S$ is trivial, so is $SN_X\S$. Thus for each class $\alpha \in H_n(S\nu \S, \del (S\nu \S);\Z)$ such that $\pi_*\alpha = [\S]$, there is a section $s\colon \S \to S\nu\S$ of $\pi$ such that $s_*[\S] = \alpha$. In particular, there is a Seifert section $s \colon \S \to SN_X\S$ such that
    \begin{equation*}
        [\S_+^s] = \del_S \beta \in H_n(S\nu\S, \del (S\nu\S);\Z).
    \end{equation*}

    After smoothing the corners of $E$, we can apply \cref{prop: realising oriented homology classes} with $M=E$, $A = S\nu\S$, and $B = \S_+^s$ to find a properly embedded compact $(n+1)$-manifold $\wh{Y} \subset E$ such that $\wh{Y} \cap S\nu\S = \S_+^s$ and $[\wh{Y}] = \beta \in H_{n+1}(E,\del E;\Z)$. Finally, we can extend $\wh{Y}$ through $\nu\S$ by setting
    \begin{equation*}
        Y \coloneq \varphi\big(\{ \lambda \cdot  s(x) \mid \lambda \in [0,1],\, x \in \S \}\big) \cup \wh{Y} \subset X.
    \end{equation*}
    Then $Y$ is an oriented spanning manifold for $\S$ as required.
\end{proof}

We can similarly prove an oriented version of \cref{thm: relative spanning manifolds}, classifying when an oriented spanning manifold for $\del \S$ extends to one of $\S$. Recall that we say a spanning manifold $Z \subset \del X$ for $\del \S$ extends to a spanning manifold $Y \subset X$ for $\S$ if $\del Y = \S \cup Z$.

\begin{proposition}\label{prop: relative oriented spanning manifolds}
    Suppose that $X$ and $\S$ are oriented. Let $Z \subset \del X$ be an oriented spanning manifold for $\del \S$ with associated Seifert section $s^Z$. Then $\S$ admits an oriented spanning manifold extending $Z$ if and only if $N_X\S$ is trivial and $[\S \cup Z] = 0 \in H_n(X;\Z)$.
\end{proposition}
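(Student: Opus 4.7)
The plan is to follow the strategy of the proof of Proposition 3.5, carrying along the prescribed boundary $Z$ as an additional constraint.

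For the forward direction, if $Y$ is an oriented spanning manifold for $\S$ extending $Z$, then the Seifert section associated to $Y$ restricts to $s^Z$ on $\del \S$ and is in particular a section of $SN_X\S$. Since $SN_X\S$ is an oriented $S^1$-bundle over an oriented base admitting a section, it is trivial, and hence so is $N_X\S$. Moreover, $\S \cup Z = \del Y$ as oriented manifolds in $X$, so $[\S \cup Z] = 0 \in H_n(X;\Z)$.

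For the reverse direction, assume $N_X\S$ is trivial and $[\S \cup Z] = 0 \in H_n(X;\Z)$. I would first extend $s^Z$ to a section $s \colon \S \to SN_X\S$ of $\pi$, which is possible since $SN_X\S$ is a trivial $S^1$-bundle. Then, after smoothing the corners of $E$, the plan is to apply \cref{prop: realising oriented homology classes} with $M = E$, $A = \del E$, and $B = \S^s_+ \cup Z$ to find a properly embedded compact oriented $\wh Y \subset E$ with $\del \wh Y = \S^s_+ \cup Z$, and finally extend $\wh Y$ linearly through $\nu \S$ exactly as in the proof of \cref{prop: oriented spanning manifolds} to obtain the desired $Y$. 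For this application to succeed I need a class $\beta \in H_{n+1}(E, \del E;\Z)$ with $\del_A \beta = [\S^s_+ \cup Z]$; equivalently, by the long exact sequence of $(E, \del E)$, I need $[\S^s_+ \cup Z] = 0 \in H_n(E;\Z)$.

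The main technical point is to arrange this vanishing by a careful choice of $s$. Since $\S^s_+$ is homologous to $\S$ through $\nu \S$, the map $H_n(E;\Z) \to H_n(X;\Z)$ induced by inclusion sends $[\S^s_+ \cup Z]$ to $[\S \cup Z] = 0$, so by the long exact sequence of $(X,E)$ the class $[\S^s_+ \cup Z]$ lies in the image of $H_{n+1}(X, E;\Z) \to H_n(E;\Z)$. By excision together with the Thom isomorphism, and using that $N_X\S$ and $\S$ are oriented, one has $H_{n+1}(X, E;\Z) \cong H_{n-1}(\S;\Z)$. On the other hand, the choices of section $s$ of $SN_X\S$ extending $s^Z$ form a torsor over $[(\S, \del\S), (S^1, 1)] \cong H^1(\S, \del\S;\Z)$, which by Poincar\'e--Lefschetz duality is isomorphic to $H_{n-1}(\S;\Z)$. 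The hard part will be the diagram chase showing that these two incarnations of $H_{n-1}(\S;\Z)$ are compatibly identified, so that varying $s$ induces changes in $[\S^s_+] \in H_n(E;\Z)$ which realise precisely the image of $H_{n+1}(X, E;\Z) \to H_n(E;\Z)$. Granted this, $s$ can be chosen so that $[\S^s_+ \cup Z] = 0 \in H_n(E;\Z)$, completing the argument.
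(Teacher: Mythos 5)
Your proposal is on the right track and identifies the correct ingredients, but it inverts the order of choices relative to the paper, and this inversion is precisely what creates the ``hard part'' you acknowledge leaving undone. The paper's proof chooses the bounding $(n+1)$-chain $\beta$ first: since $[\S\cup Z]=0\in H_n(X;\Z)$, one fixes $\beta$ with $\del\beta = \S\cup Z$, views $[\beta]\in H_{n+1}(E,\del E;\Z)\cong H_{n+1}(X,\del X\cup\S;\Z)$, observes $\pi_*\del_S[\beta]=[\S]$, and \emph{then} uses triviality of $SN_X\S$ to pick a section $s$ realising $[\S^s_+]=\del_S[\beta]$. With this ordering the class $[(Z\cap E)\cup\S^s_+]$ is automatically $\del[\beta]$, hence lies in the image of $\del\colon H_{n+1}(E,\del E;\Z)\to H_n(\del E;\Z)$ and dies in $H_n(E;\Z)$ by exactness, so \cref{prop: realising oriented homology classes} applies with no further work. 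You instead fix an arbitrary extension $s$ of $s^Z$ first, then try to adjust it to make $[\S^s_+\cup (Z\cap E)]$ vanish in $H_n(E;\Z)$, which forces you to compare the torsor $H^1(\S,\del\S;\Z)$ of choices of $s$ with the excision--Thom identification $H_{n+1}(X,E;\Z)\cong H_{n-1}(\S;\Z)$. That comparison can be completed (the change-of-section map factors as $\pi_!\circ\PD_\S$ with $\PD_\S\colon H^1(\S,\del\S;\Z)\xrightarrow{\cong}H_{n-1}(\S;\Z)$ Lefschetz duality, and the Gysin/pair sequences identify $\im\pi_!$ with $\ker\bigl(H_n(S\nu\S;\Z)\to H_n(\nu\S;\Z)\bigr)=\im\bigl(H_{n+1}(\nu\S,S\nu\S;\Z)\to H_n(S\nu\S;\Z)\bigr)$), so your route would work, but you should actually carry out that chase rather than asserting it. Two minor points: the prescribed boundary you feed into \cref{prop: realising oriented homology classes} should be $\S^s_+\cup(Z\cap E)\subset\del E$, not $\S^s_+\cup Z$; and you gain something the paper is slightly informal about --- by insisting $\restr{s}{\del\S}=s^Z$ from the start, you get $Y\cap\del X=Z$ on the nose, whereas the paper's choice of $s$ need not restrict to $s^Z$ and so requires the closing isotopy argument.
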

\begin{proof}
    The forward direction follows quickly by the same argument as in \cref{prop: oriented spanning manifolds}. So we prove the reverse. If $[\S \cup Z] = 0 \in H_n(X;\Z)$, then there is an $(n+1)$-chain $\beta$ in $X$ with boundary $\del \beta = \S \cup Z$. Then $\beta$ represents a class
    \begin{equation*}
         [\beta] \in H_{n+1}(E, \del E;\Z) \cong H_{n+1}(X, \del X \cup \S;\Z),
    \end{equation*}
    where the isomorphism follows by excision.
    So again $\pi_* \del_S [\beta] = [\S] \in H_n(\S, \del \S;\Z)$. Since $SN_X\S$ is trivial, there is a section $s$ of $SN_X\S$ such that $[\S_+^s] = \del_S [\beta]$. 

    Since both $Z \cap E$ and $\S^s_+$ meet $\del (S\nu\S)$ transversely, we can smooth the corners of $E$ so that
    \begin{equation*}
        B \coloneq (Z \cap E) \cup \S_+^s \subset \del E
    \end{equation*}
    is a smoothly embedded submanifold. Note that $\del [\beta] = [B] \in H_n(\del E;\Z)$. We can thus apply \cref{prop: realising oriented homology classes} with $M=E$ and $A = \del E$ to find a properly embedded compact $(n+1)$-manifold $\wh{Y} \subset E$ such that $\wh{Y} \cap \del E= \S_+^s \cup Z$. 
    As $\wh{Y} \cap S\nu\S = \S^s_+$ is the image of a section of $\pi \colon S\nu \S \to \S$, we may extend $\wh{Y}$ through $\nu \S$ to obtain a spanning manifold $Y \subset X$ for $\S$ as in the proof of \cref{prop: oriented spanning manifolds}.
    Then $Y \cap \del X$ is at least isotopic to $Z$ rel.\ $\del \S$, so we can perform an isotopy to arrange that $Y$ extends $Z$.
\end{proof}

\subsection{Existence of a spanning manifold given a Seifert section}\label{sec: section determines spanning}
We now return to the unoriented situation.
In this subsection, we use the proof strategy outlined in \cref{sec: oriented seifert manifolds} to show that every Seifert section of $s\colon \S \to SN_X\S$ arises as the Seifert section associated to some spanning manifold. We do this in two steps. The first step is to show that if the restriction $\restr{s}{\del \S}$ is the Seifert section associated to some spanning manifold of $\del \S \subset \del X$, then $s$ is the Seifert section associated to some spanning manifold of $\S$.

\begin{lemma}\label{lmm: extending spanning manifolds with specified sections}
    Let $s \colon \S \to SN_X\S$ be a Seifert section and let $Z \subset \del X$ be a spanning manifold for $\del \S$ whose associated Seifert section is $\restr{s}{\del \S}$. Then $Z$ extends to a spanning manifold $Y$ for $\S$ with associated Seifert section $s$ if and only if
    \begin{equation*}
        [(Z \cap E) \cup \S^s_+] = 0 \in H_n(E;\cyc{2}).
    \end{equation*}
    In this case, for any $\beta \in H_{n+1}(E, \del E;\cyc{2})$ such that $\del_S\beta = [\S^s_+]$ and $\del_X \beta = [Z]$, we can choose $Y$ such that $[Y \cap E] = \beta$.
\end{lemma}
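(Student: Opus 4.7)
My plan is to follow the two-step strategy used in \cref{prop: relative oriented spanning manifolds} in the oriented setting, applied now with $\cyc{2}$-coefficients.

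For the forward direction, assume $Y$ is a spanning manifold for $\S$ extending $Z$ with associated Seifert section $s$. After shrinking $\nu\S$ so that $Y \cap \nu\S$ is a collar of $\S$ in $Y$, the manifold $\wh{Y} \coloneq Y \cap E$ is a properly embedded compact $(n+1)$-submanifold of $E$ whose boundary, after smoothing corners, equals $(Z \cap E) \cup \S^s_+$. This immediately yields $[(Z \cap E) \cup \S^s_+] = 0 \in H_n(E;\cyc{2})$, as required.

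For the reverse direction, the hypothesis that the Seifert section associated to $Z$ is $\restr{s}{\del \S}$ guarantees that $B \coloneq (Z \cap E) \cup \S^s_+$ is, after smoothing corners of $E$, a closed properly embedded $n$-submanifold of $\del E$. The null-homology hypothesis together with the long exact sequence of the pair $(E, \del E)$ implies that $[B]$ lifts to some class in $H_{n+1}(E, \del E;\cyc{2})$. Unpacking the excisions in \cref{notation: cobordism}, any such lift $\beta$ satisfies $\del_S \beta = [\S^s_+]$ and $\del_X \beta = [Z]$, and conversely any class $\beta$ with these two boundary values has $\del \beta = [B] \in H_n(\del E;\cyc{2})$. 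Thus for any prescribed $\beta$ as in the statement I can apply \cref{prop: realising homology classes} with $M = E$, $A = \del E$, and $B$ as above to produce a properly embedded compact $(n+1)$-manifold $\wh{Y} \subset E$ with $\wh{Y} \cap \del E = B$ and $[\wh{Y}] = \beta$.

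It remains to extend $\wh{Y}$ through the tubular neighbourhood $\nu\S$. Since $\wh{Y} \cap S\nu\S = \S^s_+$ is the image of the section $s$, I set
\[Y \coloneq \wh{Y} \cup \varphi\bigl(\{\lambda \cdot s(x) \mid \lambda \in [0,1],\, x \in \S\}\bigr),\]
exactly as in the proofs of \cref{prop: oriented spanning manifolds} and \cref{prop: relative oriented spanning manifolds}. Then $Y$ is a spanning manifold for $\S$ with associated Seifert section $s$ and $[Y \cap E] = \beta$ by construction; a small ambient isotopy supported near $\del X$ arranges that $Y \cap \del X$ equals $Z$ on the nose rather than only up to isotopy rel.\ $\del \S$. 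The main technical point is verifying the existence and correct boundary behaviour of $\beta$ from the null-homology hypothesis via the long exact sequence and the excisions in \cref{notation: cobordism}; I expect this to be bookkeeping rather than a deep obstacle, as the corresponding step in the oriented case is routine.
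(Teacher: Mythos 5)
Your proof is correct and follows essentially the same route as the paper's: forward direction by taking boundaries, reverse direction by producing $\wh{Y} \subset E$ via \cref{prop: realising homology classes} applied with $M=E$, $A=\del E$, then extending radially through $\nu\S$ and isotoping to match $Z$. Your extra observation that $\del_S\beta = [\S^s_+]$ and $\del_X\beta = [Z]$ together imply $\del\beta = [B] \in H_n(\del E;\cyc{2})$ is a valid (and slightly more explicit) justification of the ``for any $\beta$'' clause; it holds because $\ker\big(H_n(\del E) \to H_n(\del E, \del X \cap \del E)\big) \cap \ker\big(H_n(\del E) \to H_n(\del E, S\nu\S)\big)$ is trivial, as the two kernels are spanned by fundamental classes of closed components of $\del E$ lying entirely in the respective pieces, which are disjoint sets of components.
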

\begin{proof}
    Write $B \coloneq (Z \cap E) \cup \S^s_+$. The forward direction follows quickly, since $B = \del (Y \cap E)$, so must be null-homologous in $E$.
    
    We now prove the reverse direction. Suppose that $[B] = 0 \in  H_n(E;\cyc{2})$, so that there exists some $(n+1)$-chain in $E$ with boundary $B$. In particular, we can find some $\beta \in H_{n+1}(E, \del E;\cyc{2})$ such that $\del_S \beta = [\S_+^s]$ and $\del_X\beta = [Z]$. After smoothing the corners of $E$ so that $B \subset \del E$ is a smoothly embedded submanifold, we apply \cref{prop: realising homology classes} with $M=E$ and $A = \del E$ to find a compact properly embedded $(n+1)$-manifold $\wh{Y} \subset E$ with $\wh{Y} \cap A = B$ and $[\wh{Y}] = \beta \in H_{n+1}(E, \del E;\cyc{2})$.
    
    As $\wh{Y} \cap S\nu\S = \S^s_+$ is the image of a section of $\pi \colon S\nu \S \to \S$, we may extend $\wh{Y}$ through $\nu \S$ to obtain a spanning manifold $Y \subset X$ for $\S$ as in the proof of \cref{prop: oriented spanning manifolds}.
    Then $Y \cap \del X$ is at least isotopic to $Z$ rel.\ $\del \S$, so we can perform an isotopy to arrange that $Y$ extends $Z$.
\end{proof}

We can now apply \cref{lmm: extending spanning manifolds with specified sections} to $\del \S \subset \del X$, and use the fact that $\del X$ has empty boundary to argue that $\restr{s}{\del \S}$ must always be the Seifert section associated to some spanning manifold of $\del \S$. Hence any Seifert section $s$ is associated to some spanning manifold for $\S$. 

\begin{proposition}\label{prop: existence of Seifert manifold}
     Let $s \colon \S \to SN_X\S$ be a Seifert section. Then $\S$ admits a spanning manifold $Y$ whose associated Seifert section is $s$.
\end{proposition}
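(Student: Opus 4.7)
My plan is to bypass the need to first construct a spanning manifold for $\del \S \subset \del X$ and instead apply \cref{prop: realising homology classes} directly to the exterior $E$, taking the submanifold $A$ to be the circle bundle $S\nu\S$ (only part of $\del E$) rather than all of $\del E$. The resulting submanifold will yield both the spanning manifold $Y$ for $\S$ in $X$ and a spanning manifold $Z$ for $\del \S$ in $\del X$ as a byproduct, without having to separately invoke \cref{lmm: extending spanning manifolds with specified sections}.

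Since $s$ is a Seifert section, there is a class $\beta \in H_{n+1}(E, \del E;\cyc{2})$ with $\del_S \beta = [\S^s_+]$. After smoothing the corners of $E$, the circle bundle $S\nu\S$ is an embedded $(n+1)$-manifold-with-boundary in $\del E$ which is closed as a subspace of $E$, and $\S^s_+ \subset S\nu\S$ is a properly embedded compact $n$-submanifold with $\del \S^s_+ = (\del \S)^{\restr{s}{\del \S}}_+$. I would then apply \cref{prop: realising homology classes} with $M = E$, $A = S\nu\S$, $B = \S^s_+$, and class $\beta$; the hypothesis $\del_A \beta = [B]$ is precisely the Seifert condition $\del_S \beta = [\S^s_+]$, which holds by construction. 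The conclusion is a properly embedded compact $(n+1)$-manifold $\wh Y \subset E$ with $\wh Y \cap S\nu\S = \S^s_+$ and $[\wh Y] = \beta$.

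The remaining piece $Z_E \coloneq \wh Y \cap (\del X \cap \del E)$ of $\del \wh Y$ is then an embedded compact $n$-submanifold of $\del X \cap \del E$ whose boundary equals $\del \S^s_+ = (\del \S)^{\restr{s}{\del \S}}_+$ in $S\nu(\del \S)$. Extending $Z_E$ linearly through the tubular neighbourhood $\nu(\del \S) \subset \del X$ in the direction of $\restr{s}{\del \S}$, as in the proof of \cref{prop: oriented spanning manifolds}, produces a spanning manifold $Z \subset \del X$ for $\del \S$ whose associated Seifert section is $\restr{s}{\del \S}$. Extending $\wh Y$ similarly through $\nu \S$ in the direction of $s$ yields the desired spanning manifold $Y \subset X$ for $\S$: by construction $\del Y = \S \cup Z$ and the associated Seifert section of $Y$ is $s$.

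The main technical point is verifying that \cref{prop: realising homology classes} may be applied with $A$ a proper closed subset of $\del E$ rather than all of $\del E$. This requires checking, after corner-smoothing, that $S\nu \S$ remains an embedded submanifold of $E$ closed as a subspace with $\del(S\nu\S) = S\nu(\del \S)$, and that the dimension and codimension hypotheses of the proposition are met; both follow from the compactness of $\S$ and standard tubular neighbourhood theory. Once this is in hand, the construction is purely mechanical and the homological bookkeeping of the lemma-based approach (in particular, the Mayer--Vietoris ambiguity between $\del \beta$ and $\wh Z \cup \S^s_+$ in $H_n(\del E;\cyc{2})$) is avoided entirely.
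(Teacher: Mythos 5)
Your proof is correct, and it takes a genuinely different route from the paper's. The paper proves this by invoking \cref{lmm: extending spanning manifolds with specified sections} twice: first to $\del\S \subset \del X$ (which is possible because $\del X$ is closed), producing a boundary piece $Z \subset \del X$ with controlled class $[Z] = [\alpha]$; then, after checking $[(Z\cap E) \cup \S^s_+] = 0 \in H_n(E;\cyc{2})$ from the chain-level relation $\del\beta = \alpha \cup \S^s_+$, applying the lemma again to $\S$, $s$, $Z$. You instead apply \cref{prop: realising homology classes} once with $A = S\nu\S$, a proper codimension-0 compact submanifold-with-boundary of $\del E$, so that only $\wh{Y}\cap S\nu\S = \S^s_+$ is prescribed and $Z$ falls out of $\wh{Y}\cap(\del X\cap\del E)$ for free. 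This is exactly the structure of the paper's proof of the oriented \cref{prop: oriented spanning manifolds}, transposed to $\cyc{2}$-coefficients, and the technical point you flag — that \cref{prop: realising homology classes} is formulated for any closed codimension-0 $A \subseteq \del M$, not only $A = \del M$ — is precisely what the paper itself relies on there. What your route gives up is control over the homology classes $[Z]$ and $[Y\cap E]$; that is irrelevant for \cref{prop: existence of Seifert manifold} as stated, but it is exactly the extra content the lemma-based factoring delivers in the proof of \cref{thm: relative spanning manifolds}, which is presumably why the paper sets it up that way. (Your parenthetical about a ``Mayer--Vietoris ambiguity'' slightly misnames the bookkeeping being avoided — the real work sidestepped is matching $[Z]$ against $[\alpha]$ so that the null-homology hypothesis of \cref{lmm: extending spanning manifolds with specified sections} holds — but the underlying observation is sound.)
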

\begin{proof}
    Since $s$ is a Seifert section, we can find an $n$-chain $\alpha$ in $\del X \cap \del E$ and an $(n+1)$-chain $\beta$ in $E$ such that $\del \beta = \alpha \cup \S^s_+$ and $\del \alpha =  \del \S^s_+$. Since $\del X$ has empty boundary, we can apply \cref{lmm: extending spanning manifolds with specified sections} with $\del \S \subset \del X$ in place of $\S \subset X$ and $\restr{s}{\del \S}$ in place of $s$. This gives a spanning manifold $Z \subset \del X$ for $\del \S$ with associated Seifert section $\restr{s}{\del \S}$ and such that $[Z] = [\alpha] \in H_n(\del X, \del \S;\cyc{2})$. Then 
    \begin{equation*}
        [(Z \cap E) \cup \S^s_+] = [\alpha \cup \S^s_+] = [\del \beta] = 0 \in H_n(E;\cyc{2}).
    \end{equation*}
    Finally we can apply \cref{lmm: extending spanning manifolds with specified sections} to $\S$, $s$, and $Z$, to find a spanning manifold for $\S$ with associated Seifert section $s$.
\end{proof}

\begin{corollary}
    The submanifold $\S \subset X$ admits a spanning manifold if and only if $SN_X\S$ admits a Seifert section.
\end{corollary}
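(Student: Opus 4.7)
The plan is to prove the two directions by combining Definition~\ref{def: Seifert section} with the construction just developed in Proposition~\ref{prop: existence of Seifert manifold}, so the corollary becomes essentially a bookkeeping exercise.

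For the forward direction, suppose that $\S$ admits a spanning manifold $Y$. After shrinking $\nu \S$ if necessary so that $Y \cap \nu\S$ is a collar of $\S$ in $Y$, Definition~\ref{def: Seifert section} produces a canonical section $s \colon \S \to SN_X\S$ with $\S^s_+ = Y \cap S\nu\S$, namely the Seifert section \emph{associated to} $Y$. I just need to check that $s$ satisfies the defining homological condition of a Seifert section. Setting $\wh{Y} \coloneq Y \cap E$, this is a compact $(n+1)$-manifold properly embedded in $E$ (after smoothing corners), and its boundary decomposes as $\wh{Y} \cap S\nu\S = \S^s_+$ together with $\wh{Y} \cap \del X \subseteq \del X \cap \del E$. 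Therefore $[\wh{Y}] \in H_{n+1}(E,\del E;\cyc{2})$ satisfies $\del_S[\wh{Y}] = [\S^s_+]$ by the explicit description of $\del_S$ in Notation~\ref{notation: cobordism}, so $[\S^s_+] \in \im \del_S$, which is exactly the condition for $s$ to be a Seifert section.

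For the reverse direction, I would just invoke Proposition~\ref{prop: existence of Seifert manifold}: given any Seifert section $s$ of $SN_X\S$, that proposition already produces a spanning manifold $Y$ for $\S$ whose associated Seifert section is $s$. In particular, $\S$ admits a spanning manifold.

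I do not expect any serious obstacle here; the only subtlety is the verification in the forward direction that the associated section really lies in $\im \del_S$, which is immediate once $Y \cap E$ is recognised as a relative cycle whose $\del_S$ image is $[\S^s_+]$.
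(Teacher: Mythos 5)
Your proof is correct and takes exactly the approach the paper intends: the paper states this corollary without explicit proof as an immediate consequence, but the content is precisely your two observations — that $[\S^s_+] = \del_S[Y \cap E]$ lies in $\im\del_S$ for the associated section $s$, and that Proposition~\ref{prop: existence of Seifert manifold} supplies the converse. The forward-direction verification you spell out is the implicit step justifying the terminology "Seifert section associated to $Y$" in Definition~\ref{def: Seifert section}.
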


Hence the question of whether $\S$ admits a spanning manifold is equivalent to the question of whether $SN_X\S$ admits a Seifert section. The next section will focus on giving sufficient conditions for all possible $\cyc{2}$-homology classes of the total space of an arbitrary $S^1$-bundle to represented by the images of sections, which will give sufficient (but not necessary) conditions for $\S$ to admit a spanning manifold.

\subsection{Existence of Seifert sections}\label{sec: existence of seifert sections} 
We now consider sufficient conditions for $SN_X\S$ to admit a Seifert section, and prove \cref{thm: spanning manifolds}. As discussed in \cref{sec: oriented seifert manifolds}, when $X$ and $\S$ are orientable, $SN_X\S$ admits a Seifert section if and only if $\S$ is null-homologous and $SN_X\S$ admits any section at all, since this implies that $SN_X\S$ is trivial. This is not true without the assumption on the orientability of $X$ and $\S$. Instead, we will have to place other assumptions on the inclusion $\S \subset X$.

We recall some facts about homology with local coefficients, and refer the reader to Chapter VI of \cite{Whitehead} for further details. Let $B$ be a compact $n$-manifold.
Any group homomorphism
\begin{equation*}
    w \colon \pi_1(B)\to \Aut(\Z) \cong \{\pm 1\}
\end{equation*}
defines a left $\Z [\pi_1(B)]$-module with underlying abelian group $\Z$ and the action of $\pi_1(B)$ given by $w$, which we write $\Z^w$. This in turn defines a local coefficient system for $B$. 
The unique non-zero homomorphism $\Z^w \to \cyc{2}$ of $\Z [\pi_1(B)]$-modules given by reduction mod 2 induces change-of-coefficient homomorphisms
\begin{equation*}
    \rho \colon H^*(B; \Z^{w} ) \to H^*(B;\cyc{2}).
\end{equation*}
If $\xi \colon M \to B$ is a vector bundle or sphere bundle, we write $w_1(\xi) \colon \pi_1(B) \to \{\pm 1\}$ for the orientation character of $\xi$. Explicitly, for a loop $\gamma \colon S^1 \to B$, this is given by $w_1(\xi)([\gamma]) = +1$ if the monodromy of $\gamma^*\xi$ preserves the orientation of the fibres, and $w_1(\xi)([\gamma]) = -1$ otherwise. 

Given an $S^1$-bundle $\xi \colon M \to B$ which admits a section, we classify which relative $\cyc{2}$-homology classes in $M$ are represented by the images of sections of $\xi$.

\begin{proposition}\label{lmm: unoriented existence of sections}
    Let $B$ be a compact $n$-manifold and let $\xi \colon M \to B$ be an $S^1$-bundle which admits a section $s \colon B \to M$. Fix $\beta \in H_n(M, \del M;\cyc{2} )$.
    Then there is a section $s' \colon B \to M$ of $\xi$ with $s_*[B] + s'_*[B] = \beta$ if and only if
    \begin{equation*}
        \beta \in \im \Big( H^1(B;\Z^{w_1(\xi)}) \xrightarrow{\rho}  H^1(B;\cyc{2}) \xrightarrow{\xi^*} H^1(M;\cyc{2} ) \xrightarrow{\PD} H_n(M, \del M;\cyc{2} )\Big).
    \end{equation*}
\end{proposition}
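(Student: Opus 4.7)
The plan is to identify the homotopy classes of sections of $\xi$ using $s$ as basepoint with a twisted cohomology group, and then show that the resulting map to $H_n(M, \del M;\cyc{2})$ sending $[s'] \mapsto s_*[B] + s'_*[B]$ factors through the composition in the statement.

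I will first invoke the Gysin sequence for $\xi$, writing $M$ as the sphere bundle of a rank 2 vector bundle $\nu$ with $w_1(\nu) = w_1(\xi)$. The section $s$ splits $\nu \cong \R \oplus L$ for a line bundle $L$ satisfying $w_1(L) = w_1(\xi)$, so in particular $w_2(\nu) = 0$. The $\cyc{2}$-Gysin sequence then reduces to the short exact sequence
\begin{equation*}
0 \to H^1(B;\cyc{2}) \xrightarrow{\xi^*} H^1(M;\cyc{2}) \xrightarrow{\xi_!} H^0(B;\cyc{2}) \to 0.
\end{equation*}
For any section $s'$, fiber integration gives $\xi_! \PD(s'_*[B]) = 1$, so $\xi_! \PD(s_*[B] + s'_*[B]) = 0$, and there is a unique class $\alpha_{s'} \in H^1(B;\cyc{2})$ with $\xi^*\alpha_{s'} = \PD(s_*[B] + s'_*[B])$.

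Next, standard obstruction theory for the $S^1 = K(\Z, 1)$-bundle $\xi$ identifies $\pi_0(\Gamma(\xi))$ with $H^1(B;\Z^{w_1(\xi)})$ after fixing $s$ as basepoint; I will call the class corresponding to $[s']$ the difference class $d(s, s') \in H^1(B;\Z^{w_1(\xi)})$, and note that every class in $H^1(B;\Z^{w_1(\xi)})$ is realised as such a difference. It then suffices to show the key identity $\alpha_{s'} = \rho(d(s, s'))$ in $H^1(B;\cyc{2})$. To prove this, I will first arrange $s$ and $s'$ to be transverse so that the coincidence locus $A \coloneq \{b : s(b) = s'(b)\} \subset B$ is a codimension 1 submanifold, and then use the formula $i^* \PD_M[N] = w_1(\mathcal{N})$ for a codimension 1 submanifold $N$ with normal bundle $\mathcal{N}$ to directly compute $\alpha_{s'} = w_1(\xi) + \PD_B[A] \in H^1(B;\cyc{2})$. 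A parallel cochain-level computation of $d(s, s')$, representing it by a twisted $\Z$-valued cocycle that records the winding of $s'$ around $s$ along 1-chains in $B$, shows that its mod-2 reduction agrees with $w_1(\xi) + \PD_B[A]$.

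The main obstacle is this final cochain identification, which is delicate due to the interaction between the twist $w_1(\xi)$ of $\xi$ and the twisted coefficients $\Z^{w_1(\xi)}$. A cleaner alternative is to argue by naturality: both $\alpha_{s'}$ and $\rho(d(s, s'))$ are natural with respect to pullback of $(\xi, s, s')$ along maps of base spaces, so it suffices to verify the identity on a universal example over (a finite skeleton of) $\R\P^\infty$, which classifies pairs $(\xi, s)$ via the line bundle $L$. Either way, once the identity is established, the image of $[s'] \mapsto s_*[B] + s'_*[B]$ equals $\PD \, \xi^* \rho(H^1(B;\Z^{w_1(\xi)}))$, as claimed.
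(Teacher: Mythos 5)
Your strategy is genuinely different from the paper's. The paper never passes through the Gysin sequence or the coincidence locus; instead it evaluates $\xi^*\bar\delta(s,s')$ on arbitrary loops $\alpha\colon S^1\to M$ by pulling back to $\gamma^*M$ over the circle (a torus or Klein bottle), inserting an auxiliary section $u$ coming from $\alpha$, and counting intersections. Your route — isolate $\alpha_{s'} \in H^1(B;\cyc{2})$ via the Gysin short exact sequence and the section $s^*$, then compare it directly to $\rho(d(s,s'))$ via the coincidence locus $A$ — is a clean reformulation, and the formula $\alpha_{s'} = w_1(\xi) + \PD_B[A]$ is correct. (You should make explicit that you descend to $H^1(B;\cyc{2})$ by applying $s^*$, which sends $\PD_M(s_*[B])$ to $w_1(N_Ms(B)) = w_1(\xi)$ and $\PD_M(s'_*[B])$ to $\PD_B[A]$; the first uses the restriction formula you cite, the second is a separate transversality/intersection fact you don't name.)

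The genuine gap is the identity $\rho(d(s,s')) = w_1(\xi) + \PD_B[A]$, which is exactly where the content of the lemma lives. You acknowledge the cochain-level computation is delicate, and indeed you have not carried it out: the primary difference lives in $H^1(B;\Z^{w_1(\xi)})$, and reconciling the $w_1(\xi)$-twist in the coefficients with the $w_1(\xi)$ summand in your formula is precisely the subtle point (note that when $s'\simeq s$ one has $d(s,s')=0$ but $\PD_B[A]=w_1(\xi)$, so the twist term is not a cosmetic correction). Your fallback naturality argument does not close this gap, and in fact cannot as stated. A map $f_L\colon B\to\R\P^N$ classifies only the pair $(\xi,s)$ — equivalently the complementary line bundle $L$ — and \emph{not} the second section $s'$. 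The homotopy class of $s'$ is the extra datum $d(s,s')\in H^1(B;\Z^{w_1(\xi)})$, which is generally not in the image of $f_L^*\colon H^1(\R\P^N;\Z^{w_1})\to H^1(B;\Z^{w_1(\xi)})$ (e.g.\ if $L$ is trivial, $f_L^*$ is zero while $H^1(B;\Z)$ need not be). So the triple $(\xi,s,s')$ over $B$ is generally not a pullback of a triple over any $\R\P^N$, and the identity cannot be verified universally there. To salvage the naturality approach you would need a classifying object for triples (roughly, the total space of the universal $S^1$-bundle with section), and then a separate argument to handle the Poincaré-duality step, which is not homotopy-natural. In the end the cleanest way to prove the identity is something close to what the paper does: reduce to loops in the base, pull back to $S^1$, and count intersections of sections in the resulting torus or Klein bottle — precisely the computation your plan defers.
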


\begin{proof}
    We explicitly construct a bijection between $\im(\PD \xi^* \rho) \subseteq H_n(M,\del M;\cyc{2})$ and sections of $\xi$ up to $\cyc{2}$-homology. Note that by working over each connected component separately, we may assume that $B$ is connected. 
    
    Since $\xi$ admits a section $s$, there is a bijection
    \begin{equation*}
        \delta(s,-) \colon \frac{ \{ s'\colon B \to M \text{ a section of } \xi\}}{ \text{homotopy}} \xrightarrow{\cong} H^1(B;\Z^{w_1(\xi)} )
    \end{equation*}
   given by the obstruction-theoretic primary difference \cite[Corollary VI.6.16]{Whitehead}. For a section $s' \colon B \to M$ of $\xi$, write $\bar{\delta}(s,s') \coloneq \rho{\delta}(s,s') \in H^1(B;\cyc{2} )$ for the reduction of the primary difference mod 2.

    We describe $\bar{\delta}(s,s')$ for two sections $s,s'$ of $\xi$. Identify $H^1(B;\cyc{2}) \cong \Hom(H_1(B;\cyc{2}), \cyc{2})$ via the map $\alpha \mapsto \left\langle \alpha,- \right\rangle$. 
    Choose a map $\gamma \colon S^1 \to B$, and consider the pullback bundle $\gamma^*\xi \colon \gamma^*M \to S^1$ with the pullback sections $\gamma^*s$ and $\gamma^*s'$ of $\gamma^*\xi$. 
    By the naturality of the primary difference,
    \begin{equation*}
        \gamma^* \bar\delta(s,s') = \bar\delta(\gamma^*s, \gamma^*s') \in H^1(S^1;\cyc{2}) \cong \cyc{2}.
    \end{equation*}
    Geometrically, this reduced primary difference between $\gamma^*s$ and $\gamma^*s'$ corresponds to the difference in how many times $\gamma^*s$ and $\gamma^*s'$ twist around the fibre, counted mod 2. Hence it is given by the algebraic intersection number of $\gamma^*s(S^1)$ and $\gamma^*s'(S^1)$ in $\gamma^*M$. Explicitly,
    \begin{equation}\label{eq: new dagger}
        \left\langle \bar{\delta}(s,s'), [\gamma(S^1)] \right\rangle = \left\langle \bar\delta(\gamma^*s, \gamma^*s'), [S^1] \right\rangle = [\gamma^*s (S^1)] \cdot [\gamma^*s'(S^1)] \in \cyc{2}. \tag{$\dagger$}
    \end{equation}
    Since $\gamma^*M$ has total space either a Klein bottle or a torus, for any other section $u\colon S^1 \to \gamma^*M$ of $\gamma^*\xi$, we have that
    \begin{equation*}
        [\gamma^*s (S^1)] \cdot [\gamma^*s'(S^1)] = [\gamma^*s(S^1)] \cdot [u(S^1)] + [\gamma^*s'(S^1)] \cdot [u(S^1)],
    \end{equation*}
    and hence by \eqref{eq: new dagger} that
    \begin{equation}\label{eq: other section relation}
        \left\langle \bar{\delta}(s,s'), [\gamma(S^1)] \right\rangle = [\gamma^*s(S^1)] \cdot [u(S^1)] +  [\gamma^*s'(S^1)] \cdot [u(S^1)]. \tag{$\ddagger$}
    \end{equation} 
    Any class in $H_1(B;\cyc{2})$ is represented by the image of a circle, so this fully describes $\bar\delta(s,s')$.

    Next, we describe $\xi^*\bar\delta(s,s') \in H^1(M;\cyc{2})$. Fix a map $\alpha \colon S^1 \to M$ and write $\gamma \coloneq \xi\alpha \colon S^1 \to B$. We can assume that $\alpha$ and $s$ are transverse and intersect away from any double points of $\alpha$.
    
    There is a section $u \colon S^1 \to \gamma^*M$ of $\gamma^*\xi$ given by
    \begin{equation*}
        u(\theta) \coloneq (\theta, \alpha(\theta) ) \in \gamma^*M = \big\{ (t,e) \in S^1 \times M \mid \gamma(t) = \xi(e) \big\}.
    \end{equation*}
    See \cref{fig: sections}. This has the property that $[\gamma^*s(S^1)] \cdot [u(S^1)] = [s(B)] \cdot [\alpha(S^1)] \in \cyc{2}$, where the first algebraic intersection is taken in $\gamma^*M$ and the final one is taken in $M$. This follows from the following computation:
\begin{figure}[t]
    \begin{center}
    \tdplotsetmaincoords{70}{60}

\begin{tikzpicture}[scale=0.8]
  \begin{scope}[tdplot_main_coords]

    \node at (-2.3,-2.3,2.3) {$M$};
    \node[red!60!black,above] at (-1,0,2) {$\alpha$};
    \node at (-2.3,-2.3,-3.7) {$B$};
    \node[above] at (0,1,-4) {$\gamma$};

    \node[green!40!black,left] at (-2,-2,1) {$s(B)$};
    \node[blue!50!black,left] at (-2,-2,-0.5) {$s'(B)$};

  \draw[black, very thin,opacity=0.5] (-2,-2,-2) -- (2,-2,-2) -- (2,2,-2) -- (-2,2,-2) -- cycle;

  \draw[black, very thin,opacity=0.5] (-2,2,-2) -- (-2,2,2);

    % helix below z=0
    \draw[red, very thick]
      plot[variable=\t,domain=-1:-0.81903,samples=50,smooth]
      ({cos(deg(pi*\t))},
       {sin(deg(pi*\t))},
       {3*\t*\t*\t-\t*\t-\t+1});

    \fill[left color = blue!60, right color =blue, opacity=0.3] (-2,-2,-0.5) -- (2,-2,-0.5) -- (2,2,-0.5) -- (-2,2,-0.5) -- cycle;
    %\draw[thick,red!90!black,densely dashed] (0,0,-0.5) circle (1);
    \fill[black] ({cos(-147.4254)},
       {sin(-147.4254)},
       {-0.5}) circle (2pt);

    % helix between
    \draw[red, very thick]
    plot[variable=\t,domain=-0.81903:-0.43426,samples=50,smooth]
      ({cos(deg(pi*\t))},
       {sin(deg(pi*\t))},
       {3*\t*\t*\t-\t*\t-\t+1});

    \draw[red, very thick]
    plot[variable=\t,domain=0:0.76759,samples=50,smooth]
      ({cos(deg(pi*\t))},
       {sin(deg(pi*\t))},
       {3*\t*\t*\t-\t*\t-\t+1});

    \fill[left color = green!60, right color =green!70!blue, opacity=0.4] (-2,-2,1) -- (2,-2,1) -- (2,2,1) -- (-2,2,1) -- cycle;
    %\draw[thick,red!90!black,densely dashed] (0,0,1) circle (1);

    \fill[black] ({cos(-78.1668)},
       {sin(-78.1668)},
       {1}) circle (2pt);
    \fill[black] ({1},
       {0},
       {1}) circle (2pt);
    \fill[black] ({cos(138.1662)},
       {sin(138.1662)},
       {1}) circle (2pt);

    % helix above z=0
    \draw[red, very thick]
      plot[variable=\t,domain=-0.43426:0,samples=50,smooth]
      ({cos(deg(pi*\t))},
       {sin(deg(pi*\t))},
       {3*\t*\t*\t-\t*\t-\t+1});
       \draw[red, very thick]
      plot[variable=\t,domain=0.76759:1,samples=50,smooth]
      ({cos(deg(pi*\t))},
       {sin(deg(pi*\t))},
       {3*\t*\t*\t-\t*\t-\t+1});

    \draw[black, very thin,opacity=0.5] (-2,-2,2) -- (2,-2,2) -- (2,2,2) -- (-2,2,2) -- cycle;
    \draw[black, very thin,opacity=0.5] (2,2,-2) -- (2,2,2);
    \draw[black, very thin,opacity=0.5] (2,-2,-2) -- (2,-2,2);
    \draw[black, very thin,opacity=0.5] (-2,-2,-2) -- (-2,-2,2);

    \draw[black, very thin,opacity=0.5] (-2,-2,-4) -- (2,-2,-4) -- (2,2,-4) -- (-2,2,-4) -- cycle;
    \draw[black, very thin,densely dashed,opacity=0.5] (0,0,-4) circle (1);

  \end{scope}

  \begin{scope}[tdplot_main_coords, xshift =8cm]

  \draw[black, very thin,opacity=0.5] (0,0,-2) circle (1);
  
    \node at (-1.5,-1.5,2.3) {$\gamma^*M$};
    \node[red!60!black,above] at (-1,0,2) {$u$};
    \node at (-1.5,-1.5,-3.7) {$S^1$};

    % helix below z=0
    \draw[red, very thick]
      plot[variable=\t,domain=-1:-0.81903,samples=50,smooth]
      ({cos(deg(pi*\t))},
       {sin(deg(pi*\t))},
       {3*\t*\t*\t-\t*\t-\t+1});

    \draw[blue!60, very thick] (0,0,-0.5) circle (1);
    \fill[black] ({cos(-147.4254)},
       {sin(-147.4254)},
       {-0.5}) circle (2pt);

    % helix between
    \draw[red, very thick]
    plot[variable=\t,domain=-0.81903:-0.43426,samples=50,smooth]
      ({cos(deg(pi*\t))},
       {sin(deg(pi*\t))},
       {3*\t*\t*\t-\t*\t-\t+1});

    \draw[red, very thick]
    plot[variable=\t,domain=0:0.76759,samples=50,smooth]
      ({cos(deg(pi*\t))},
       {sin(deg(pi*\t))},
       {3*\t*\t*\t-\t*\t-\t+1});

    \draw[green!60!blue!60, very thick] (0,0,1) circle (1);

    \fill[black] ({cos(-78.1668)},
       {sin(-78.1668)},
       {1}) circle (2pt);
    \fill[black] ({1},
       {0},
       {1}) circle (2pt);
    \fill[black] ({cos(138.1662)},
       {sin(138.1662)},
       {1}) circle (2pt);

    % helix above z=0
    \draw[red, very thick]
      plot[variable=\t,domain=-0.43426:0,samples=50,smooth]
      ({cos(deg(pi*\t))},
       {sin(deg(pi*\t))},
       {3*\t*\t*\t-\t*\t-\t+1});
       \draw[red, very thick]
      plot[variable=\t,domain=0.76759:1,samples=50,smooth]
      ({cos(deg(pi*\t))},
       {sin(deg(pi*\t))},
       {3*\t*\t*\t-\t*\t-\t+1});

    \draw[black, very thin,opacity=0.5] (0,0,2) circle (1);
    \begin{scope}[tdplot_main_coords,rotate around z=60]
        \draw[black, very thin,opacity=0.5] (1,0,-2)--(1,0,2);
        \draw[black, very thin,opacity=0.5] (-1,0,-2)--(-1,0,2);

        \node[green!40!black,left] at (-1,0,1) {$\gamma^*s(S^1)$};
    \node[blue!50!black,left] at (-1,0,-0.5) {$\gamma^*s'(S^1)$};
    \end{scope}

    \draw[black, very thin,opacity=0.5] (0,0,-4) circle (1);

  \end{scope}
\end{tikzpicture}
    \caption{A simple model, demonstrating that $[\gamma^*s(S^1)] \cdot [u(S^1)] = [s(B)] \cdot [\alpha(S^1)]$. Left: the $S^1$-bundle $M$, above $B$; a curve $\alpha$ (red), and projection $\gamma$ (dashed); the images of two sections $s,s'\colon B \to M$ (green,blue). The top and bottom faces of $M$ are identified. Right: the $S^1$-bundle $\gamma^*M$, above $S^1$; the curve $u$ (red); the images of the two sections $\gamma^*s,\gamma^*s'$ (green, blue). Note that the intersections between $\alpha$ and $s$ (resp.\ $s'$) in $M$ correspond to the intersections between $u$ and $\gamma^*s$ (resp.\ $\gamma^*s'$) in $\gamma^*M$.}
    \label{fig: sections}
    \end{center}
\end{figure}
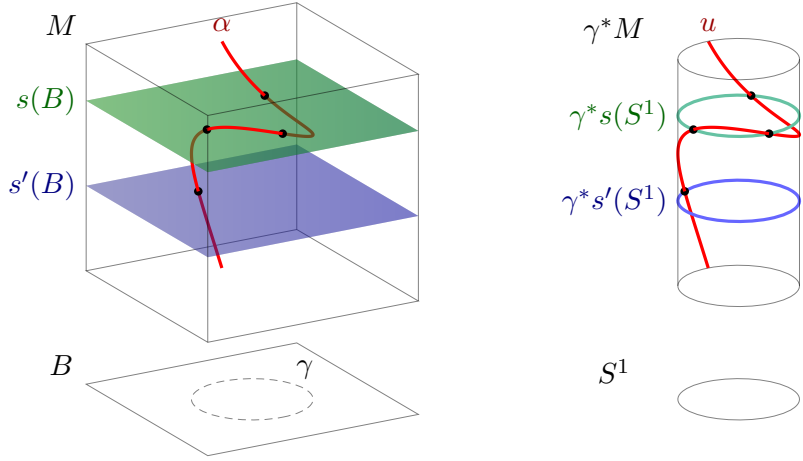
    \begin{align*}
        [\gamma^*s(S^1)] \cdot [u(S^1)] &= \#\{ \theta \in S^1 \mid s\gamma(\theta) = \alpha(\theta) \} \bmod{2} \\
        &= \#\{ (x,\theta) \in B \times S^1 \mid s(x) = \alpha(\theta) \} \bmod{2} \\
        &= [s(B)] \cdot [\alpha(S^1)] \in \cyc{2}.
    \end{align*}
    The second equality follows because the equation $s(x)=\alpha(\theta)$ implies $x=\gamma(\theta)$. Similarly, we get that
    \begin{equation*}
        [\gamma^*s'(S^1)] \cdot [u(S^1)] = [s'(B)] \cdot [\alpha(S^1)].
    \end{equation*}
    Applying \eqref{eq: other section relation} with this choice of $u$ gives that
    \begin{align*}
             \left\langle \bar\delta(s,s') , [\gamma(S^1)] \right\rangle &= [s(B)] \cdot [\alpha(S^1)] +  [s'(B)] \cdot [\alpha(S^1)]\\
             &= \left(s_*[B] + s'_*[B] \right) \cdot [\alpha(S^1)],
    \end{align*}
    and hence that
    \begin{align*}
        \left\langle \xi^*\bar\delta(s,s') , [\alpha(S^1)] \right\rangle & = \left\langle \bar\delta(s,s') , [\gamma(S^1)] \right\rangle\\
        &= \left(s_*[B] + s'_*[B] \right) \cdot [\alpha(S^1)].
    \end{align*}
    Thus $\xi^*\bar{\delta}(s,s')$ is Poincar\'e dual to $s_*[B]+s'_*[B] \in H_{n-1}(M, \del M;\cyc{2})$, or equivalently
    \begin{equation*}
        s_*[B] + s'_*[B] = \PD  \xi^* \rho \delta(s,s').
    \end{equation*}
    Thus the image of the realisation map
    \begin{equation*}
         \frac{ \{ s'\colon B \to M \text{ a section of } \xi\}}{ \text{homotopy}}\to H_n(M, \del M;\cyc{2})
    \end{equation*}
    given by $[s'] \mapsto s_*[B] +s'_*[B]$ is exactly the image of the composition
    \begin{equation*}
        \frac{ \{ s'\colon B \to M \text{ a section of } \xi\}}{ \text{homotopy}} \xrightarrow{\delta(s,-)} H^1(B;\Z^{w_1(\xi)}) \xrightarrow{\PD \xi^* \rho} H_n(M, \del M;\cyc{2}).
    \end{equation*}
    The final result follows since $\delta(s,-)$ is a bijection.
\end{proof}

Since $SN_X\S$ and $S\nu\S$ are isomorphic $S^1$-bundles, they both have the orientation character $w_1(\pi) \colon \pi_1(\S) \to \{\pm 1\}$. This allows us to classify when every possible codimension 1 relative $\cyc{2}$-homology class of $S\nu\S$ is realised by a push-off of $\S$.

\begin{lemma}\label{lemma: new enough sections}
Suppose that $SN_X\S$ admits a section. Then the following are equivalent.
\begin{enumerate}[label=(\roman*)]
        \item For each $\alpha \in H_n(S\nu \S, \del (S\nu\S);\cyc{2})$, there is a section $s\colon \S \to SN_X\S$ such that $[\S^s_+] = \alpha$ if and only if $\pi_*\alpha =[\S] \in H_n(\S,\del \S;\cyc{2})$.
        \item The 3-term sequence
        \[\begin{tikzcd}
	{H^1(\Sigma; \Z^{w_1(\pi)})} & {H_n(S\nu \Sigma, \partial(S \nu \Sigma);\Z/2)} & {H_n(\Sigma, \partial \Sigma;\Z/2)}
	\arrow["{\PD \pi^*\rho}", from=1-1, to=1-2]
	\arrow["{\pi_*}", from=1-2, to=1-3]
\end{tikzcd}\]
        is exact.
        \item The abelian group $H^2(\S;\Z^{w_1(\pi)})$ has no 2-torsion.
\end{enumerate}
\end{lemma}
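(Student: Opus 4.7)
The plan is to prove (i)$\iff$(ii) as a direct consequence of \cref{lmm: unoriented existence of sections}, and then to deduce (ii)$\iff$(iii) using the mod-$2$ Gysin sequence of $\pi$ together with a Bockstein argument.

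For (i)$\iff$(ii), I first fix a reference section $s$ of $\pi$, which exists by hypothesis. Applied to $\xi=\pi$ with base section $s$, \cref{lmm: unoriented existence of sections} says that as $s'$ ranges over all sections of $\pi$, the classes $[\S^{s'}_+]$ fill out exactly the coset $s_*[\S] + \im(\PD\pi^*\rho)$ in $H_n(S\nu\S, \del(S\nu\S);\cyc{2})$. Meanwhile, the condition $\pi_*\alpha = [\S]$ cuts out the coset $s_*[\S] + \ker\pi_*$. The inclusion $\im(\PD\pi^*\rho) \subseteq \ker\pi_*$ is automatic since $\pi s' = \id_\S$ for every section $s'$. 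Hence (i) holds if and only if these two cosets coincide, i.e.\ $\im(\PD\pi^*\rho) = \ker\pi_*$, which is exactness at the middle term of the sequence in (ii).

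For (ii)$\iff$(iii), I use Poincar\'e--Lefschetz duality with $\cyc{2}$ coefficients to identify $\pi_*$ with the fibre-integration $\pi_!\colon H^1(S\nu\S;\cyc{2}) \to H^0(\S;\cyc{2})$, via the projection formula and the vanishing of $\pi_*[S\nu\S, \del(S\nu\S)] \in H_{n+1}(\S, \del\S;\cyc{2}) = 0$. The mod-$2$ Gysin sequence of $\pi$,
\begin{equation*}
    0 = H^{-1}(\S;\cyc{2}) \to H^1(\S;\cyc{2}) \xrightarrow{\pi^*} H^1(S\nu\S;\cyc{2}) \xrightarrow{\pi_!} H^0(\S;\cyc{2}),
\end{equation*}
then shows both that $\pi^*$ is injective and that $\ker\pi_! = \im\pi^*$. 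Combined with the duality identification, this gives $\ker\pi_* = \im(\PD\pi^*)$, so exactness in (ii) reduces to the equality $\im(\PD\pi^*\rho) = \im(\PD\pi^*)$; since $\PD\pi^*$ is injective, this is in turn equivalent to surjectivity of $\rho\colon H^1(\S;\Z^{w_1(\pi)}) \to H^1(\S;\cyc{2})$.

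Finally, the Bockstein long exact sequence associated to the short exact sequence of $\Z[\pi_1(\S)]$-modules $0 \to \Z^{w_1(\pi)} \xrightarrow{\times 2} \Z^{w_1(\pi)} \to \cyc{2} \to 0$ contains the portion
\begin{equation*}
    H^1(\S;\Z^{w_1(\pi)}) \xrightarrow{\rho} H^1(\S;\cyc{2}) \xrightarrow{\beta} H^2(\S;\Z^{w_1(\pi)}) \xrightarrow{\times 2} H^2(\S;\Z^{w_1(\pi)}),
\end{equation*}
so $\rho$ is surjective iff $\beta=0$ iff multiplication by $2$ on $H^2(\S;\Z^{w_1(\pi)})$ is injective iff $H^2(\S;\Z^{w_1(\pi)})$ has no $2$-torsion. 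I expect the main subtlety will be checking that Poincar\'e--Lefschetz duality correctly intertwines the homology pushforward $\pi_*$ with the fibre-integration $\pi_!$ for the possibly non-orientable $S^1$-bundle $\pi$; however, since all coefficients are $\cyc{2}$, orientability issues vanish and the standard mod-$2$ argument applies without twisting.
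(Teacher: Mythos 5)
Your proof is correct and follows essentially the same route as the paper: (i)$\iff$(ii) via the coset comparison from \cref{lmm: unoriented existence of sections}, and (ii)$\iff$(iii) by combining the Gysin sequence of the $S^1$-bundle $\pi$ with the Bockstein sequence for $0 \to \Z^{w_1(\pi)} \xrightarrow{\times 2} \Z^{w_1(\pi)} \to \cyc{2} \to 0$. The only difference is cosmetic: you run the Gysin argument in cohomology and transport it to homology by Poincar\'e--Lefschetz duality, whereas the paper states the Gysin sequence directly in homology with the umkehr map $\pi_! = \PD\,\pi^*\,\PD^{-1}$, which is the same identification packaged differently.
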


\begin{proof}
    We first show that (i) $\iff$ (ii). Since $SN_X\S$ admits a section, we may fix a section $s \colon \S \to S\nu\S$ of $\pi$. Fix a class $\alpha \in H_n(S\nu \S, \del (S\nu\S);\cyc{2})$. By    \cref{lmm: unoriented existence of sections}, there is a section $s'$ of $\pi$ with $s'_*[\S]=\alpha$ if and only if $\alpha + s_*[\S] \in \im(\PD \pi^* \rho)$. But $\pi_*\alpha = [\S]$ if and only if $\alpha + s_*[\S] \in \ker \pi_*$. Thus (i) is equivalent to $\im(\PD \pi^* \rho) = \ker \pi_*$, i.e.\ that the sequence in (ii) is exact. So (i) $\iff$ (ii).
    
    We now show that (ii) $\iff$ (iii). This follows from two claims: first, that $\pi^*$ is injective and $\im (\PD  \pi^*) = \ker \pi_*$; and second, that $\rho$ is surjective if and only if $H^2(\S;\Z^{w_1(\pi)})$ has no 2-torsion. 
    
    For the first claim, consider the Gysin sequence of the fibration $\pi$, which begins
    \[\begin{tikzcd}[sep=small]
    	0 & {H_{n-1}(\S, \del \S;\cyc{2})} & {H_n(S\nu\S, \del (S\nu\S) ;\cyc{2})} & {H_n(\S, \del \S;\cyc{2}).}
    	\arrow[from=1-1, to=1-2]
    	\arrow["{\pi_!}", from=1-2, to=1-3]
    	\arrow["{\pi_*}", from=1-3, to=1-4]
    \end{tikzcd}\]
    Here, $\pi_!$ is the umkehr map defined by the composition
    \begin{equation*}
        H_{n-1}(\S, \del \S;\cyc{2}) \xrightarrow{\PD\inv} H^1(\S;\cyc{2}) \xrightarrow{\pi^*} H^1(S\nu\S;\cyc{2}) \xrightarrow{\PD} H_n(S\nu\S, \del (S\nu\S) ;\cyc{2}),
    \end{equation*}
    which is given explicitly on a relative $(n-1)$-cycle $\sigma$ in $(\S, \del \S)$ by $\pi_![\sigma] = [\pi\inv (\sigma)]$.
    Since Poincar\'e duality maps are isomorphisms and $\pi_!$ is injective by exactness, $\pi^*$ must be injective, and
    \begin{equation*}
        \im (\PD   \pi^*) = \im\pi_! = \ker \pi_*.
    \end{equation*}
    This proves the first claim.
    
    For the second claim, consider the  short exact sequence of $\Z[\pi_1(\S)]$-modules
    \[\begin{tikzcd}[sep=small]
	0 & {\Z^{w_1(\pi)}} & {\Z^{w_1(\pi)}} & {\cyc{2}} & 0.
	\arrow[from=1-1, to=1-2]
	\arrow["{\times 2}", from=1-2, to=1-3]
	\arrow[from=1-3, to=1-4]
	\arrow[from=1-4, to=1-5]
\end{tikzcd}\]
This induces the exact sequence
    \[\begin{tikzcd}[sep = small]
	{H^1(\S;\Z^{w_1(\pi)})} & {H^1(\S;\Z/2)} & {H^2(\S;\Z^{w_1(\pi)})} & {H^2(\S;\Z^{w_1(\pi)}),}
	\arrow["{\rho}", from=1-1, to=1-2]
	\arrow["\beta", from=1-2, to=1-3]
	\arrow["{\times 2}", from=1-3, to=1-4]
\end{tikzcd}\]
where $\beta$ is a Bockstein homomorphism. Then by exactness, $\rho$ is surjective if and only if $\beta = 0$, if and only if $H^2(\S;\Z^{w_1(\pi)})$ has no 2-torsion.
\end{proof}

Finally we deduce \cref{thm: spanning manifolds}, giving sufficient conditions for $\S$ to admit a spanning~manifold. 

\begin{namedtheorem}[\cref{thm: spanning manifolds}]
    Let $X$ be an $(n+2)$-manifold and let $\S \subset X$ be a properly embedded compact $n$-manifold. Let $w_1(\pi) \colon \pi_1(\S) \to \Aut(\Z) = \{\pm 1\}$ be the orientation character of $SN_X\S$, and suppose that $H^2(\S ;\Z^{w_1(\pi)} )$ has no 2-torsion. Then $\S$ admits a spanning manifold if and only if $SN_X\S$ admits a section and $[\S] = 0 \in H_n(X, \del X;\cyc{2} )$.
\end{namedtheorem}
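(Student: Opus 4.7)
The plan is to deduce the theorem from three ingredients already established: \cref{prop: existence of Seifert manifold}, which reduces the existence of a spanning manifold to the existence of a Seifert section; \cref{lemma: new enough sections}, which under the 2-torsion assumption says that every class in $H_n(S\nu\S,\del(S\nu\S);\cyc{2})$ projecting to $[\S]$ is realised by a push-off; and the argument of \cref{rem: null-homologous seifert}, which relates Seifert sections to the vanishing of $[\S]$.

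For the forward direction, suppose $\S$ admits a spanning manifold $Y$. Then by definition, the Seifert section associated to $Y$ is a section of $SN_X\S$, so $SN_X\S$ admits a section. Furthermore, this section is by construction a Seifert section, so by the exact-sequence argument in \cref{rem: null-homologous seifert} applied with $\cyc{2}$-coefficients, we have $[\S] = 0 \in H_n(X,\del X;\cyc{2})$.

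For the reverse direction, assume $SN_X\S$ admits a section and $[\S] = 0 \in H_n(X,\del X;\cyc{2})$. Reading the diagram of \cref{rem: null-homologous seifert} with $\cyc{2}$-coefficients, the vanishing of $[\S]$ gives a class $\beta \in H_{n+1}(E,\del E;\cyc{2})$ with $\pi_*\del_S\beta = [\S] \in H_n(\S,\del\S;\cyc{2})$. The hypothesis that $H^2(\S;\Z^{w_1(\pi)})$ has no 2-torsion activates (iii)$\Rightarrow$(i) of \cref{lemma: new enough sections}: every class $\alpha \in H_n(S\nu\S,\del(S\nu\S);\cyc{2})$ with $\pi_*\alpha = [\S]$ arises as $[\S_+^s]$ for some section $s$ of $SN_X\S$. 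Applying this with $\alpha = \del_S\beta$ produces a section $s$ with $[\S_+^s] = \del_S\beta \in \im \del_S$, so $s$ is a Seifert section. Then \cref{prop: existence of Seifert manifold} provides a spanning manifold for $\S$ whose associated Seifert section is $s$.

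The potential obstacle is purely bookkeeping: one has to be careful that the 2-torsion hypothesis on $H^2(\S;\Z^{w_1(\pi)})$ is the correct orientation character to invoke \cref{lemma: new enough sections}, since that lemma is phrased in terms of $w_1$ of the bundle $\pi \colon S\nu\S \to \S$, which under the identification $S\nu\S \cong SN_X\S$ coincides with $w_1(SN_X\S)$. Given this identification the argument is a direct assembly of the three ingredients, so no substantive new computation is needed.
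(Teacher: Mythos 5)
Your proof is correct and follows essentially the same route as the paper's: \cref{rem: null-homologous seifert} supplies a class in $\im\del_S$ projecting to $[\S]$, \cref{lemma: new enough sections}(iii)$\Rightarrow$(i) realises that class by a push-off, and \cref{prop: existence of Seifert manifold} promotes the resulting Seifert section to a spanning manifold. (The paper's forward direction is marginally more direct—observing that a spanning manifold itself is a relative chain witnessing $[\S]=0$—but routing through \cref{rem: null-homologous seifert} as you do is equally valid.)
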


\begin{proof}
    The forward direction is clear, and holds without the assumption that $H^2(\S ;\Z^{w_1(\pi)} )$ has no 2-torsion. If $\S$ admits a spanning manifold, then it must be null-homologous, since it bounds a relative chain in $(X, \del X)$.
    It must also admit a Seifert section, and so $SN_X\S$ admits a section.

    We now prove the reverse. By \cref{rem: null-homologous seifert}, if $\S$ is null-homologous then there is a class
    \begin{equation*}
        \alpha \in \im \del_S \subseteq H_n(S\nu \S, \del (S\nu\S) ;\cyc{2})
    \end{equation*}
    such that $\pi_* \alpha = [\S] \in H_n(\S, \del \S; \cyc{2})$.
    Since $H^2(\S ;\Z^{w_1(\pi)} )$ has no 2-torsion and $SN_X\S$ admits a section by assumption, \cref{lemma: new enough sections}(iii)$\Rightarrow$(i) says that there is a section $s \colon \S \to SN_X\S$ such that $[\S_+^s] = \alpha$. Hence $s$ is a Seifert section, and by \cref{prop: existence of Seifert manifold} it is the Seifert section associated to some spanning manifold for $\S$.
\end{proof}

If $H^2(\S,\Z^{w_1(\pi)})$ has non-trivial 2-torsion, more careful examination of the inclusion $\S \subset X$ is required to establish the existence of a spanning manifold. We do not investigate this further.

\subsection{Existence of spanning manifolds with specified boundary}\label{sec: actual seifert stuff}
We now prove \cref{thm: relative spanning manifolds}; that is, we give sufficient conditions for a spanning manifold $Z \subset \del X$ for $\del \S$ to extend to a spanning manifold for $\S$.

After fixing a section of $\restr{SN_X\S}{\del \S}$, the obstruction-theoretic results about extending this to a section of $SN_X\S$ give the following lemma, which is a relative version
of \cref{lemma: new enough sections}.

\begin{lemma}\label{lmm: enough sections rel boundary}
Let $s$ be a section of $SN_X\S$. The following statements are equivalent.
\begin{enumerate}[label=(\roman*)]
        \item For any $\beta \in H_n(S\nu \S;\cyc{2})$, there exists a section $s'\colon \S \to SN_X\S$ extending $\restr{s}{\del \S}$ such that $[\S^s_+ \cup \S^{s'}_+] = \beta$ if and only if $\pi_*\beta = 0 \in H_n(\S;\cyc{2})$.
        \item The sequence
        \[\begin{tikzcd}
	{H^1(\Sigma, \del \S; \Z^{w_1(\pi)})} & {H_n(S\nu \Sigma;\Z/2)} & {H_n(\Sigma;\Z/2)}
	\arrow["{\PD \pi^*\rho}", from=1-1, to=1-2]
	\arrow["{\pi_*}", from=1-2, to=1-3]
\end{tikzcd}\]
        is exact.
        \item The abelian group $H^2(\S,\del\S;\Z^{w_1(\pi)})$ has no 2-torsion.
\end{enumerate}
\end{lemma}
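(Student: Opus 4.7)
The plan is to mirror the proof of \cref{lemma: new enough sections}, replacing absolute cohomology and homology with their relative and absolute variants as appropriate, and controlling the obstruction theory relative to the boundary.

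First I would establish (i) $\iff$ (ii). Fix the section $s$ of $SN_X\S$. For any other section $s' \colon \S \to SN_X\S$ extending $\restr{s}{\del \S}$, the relative version of obstruction theory gives a primary difference $\delta(s,s') \in H^1(\S, \del \S; \Z^{w_1(\pi)})$ (rather than in $H^1(\S;\Z^{w_1(\pi)})$, since the two sections already agree on $\del \S$), and this construction yields a bijection between homotopy classes of sections extending $\restr{s}{\del\S}$ and $H^1(\S, \del \S; \Z^{w_1(\pi)})$. By exactly the same geometric intersection calculation as in the proof of \cref{lmm: unoriented existence of sections}, we have
\begin{equation*}
    s_*[\S] + s'_*[\S] = \PD \pi^* \rho\, \delta(s,s') \in H_n(S\nu\S;\cyc{2}),
\end{equation*}
where here we land in absolute rather than relative homology because $\del \S^s_+ = \del \S^{s'}_+$, so $\S^s_+ \cup \S^{s'}_+$ is a closed $n$-cycle in $S\nu \S$. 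Note that $\pi_*(s_*[\S]+s'_*[\S]) = 2[\S] = 0$, so the necessity of $\pi_*\beta = 0$ is automatic. Thus (i) holds if and only if $\im(\PD \pi^* \rho) = \ker \pi_*$, which is (ii).

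Next I would establish (ii) $\iff$ (iii) by the same two-step argument. The first step is to show that $\im(\PD \pi^*) = \ker \pi_*$ and that $\pi^*$ is injective on the relevant groups; this follows from the Gysin sequence for $\pi \colon S\nu\S \to \S$, which in the appropriate degree reads
\begin{equation*}
    0 \to H_{n-1}(\S;\cyc{2}) \xrightarrow{\pi_!} H_n(S\nu\S;\cyc{2}) \xrightarrow{\pi_*} H_n(\S;\cyc{2}),
\end{equation*}
together with the identification of $\pi_!$ as the composite $\PD \circ \pi^* \circ \PD\inv$ using Poincar\'e--Lefschetz duality $H^1(\S, \del \S;\cyc{2}) \cong H_{n-1}(\S;\cyc{2})$ and $H^1(S\nu\S, \del(S\nu\S);\cyc{2}) \cong H_n(S\nu\S;\cyc{2})$. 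The second step is to show that $\rho \colon H^1(\S, \del \S;\Z^{w_1(\pi)}) \to H^1(\S,\del\S;\cyc{2})$ is surjective if and only if $H^2(\S,\del \S;\Z^{w_1(\pi)})$ has no 2-torsion. This uses the Bockstein sequence associated to $0 \to \Z^{w_1(\pi)} \xrightarrow{\times 2} \Z^{w_1(\pi)} \to \cyc{2} \to 0$, exactly as in the proof of \cref{lemma: new enough sections}. Combining the two steps gives (ii) $\iff$ (iii).

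The main subtlety is the first step, making sure that the relative primary difference in $H^1(\S, \del \S; \Z^{w_1(\pi)})$ is the correct obstruction-theoretic invariant for sections extending a fixed boundary section, and that the geometric argument computing $\xi^*\bar\delta(s,s')$ in the proof of \cref{lmm: unoriented existence of sections} still yields a well-defined Poincar\'e dual in \emph{absolute} homology $H_n(S\nu\S;\cyc{2})$. Once that is set up, both directions reduce to the same Gysin plus Bockstein manipulation as in the closed case, and no new input is needed.
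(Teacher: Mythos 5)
Your proposal is correct and carries out precisely the "relativization" of Lemmas~\ref{lmm: unoriented existence of sections} and \ref{lemma: new enough sections} that the paper describes in the one-line remark following the statement, so it matches the paper's intended argument. The two things worth saying explicitly when writing it up are (a) that the relative Gysin sequence for the bundle pair $(S\nu\S, \del(S\nu\S)) \to (\S,\del\S)$, combined with Lefschetz duality $H^k(\S,\del\S) \cong H_{n-k}(\S)$ and $H^k(S\nu\S, \del(S\nu\S)) \cong H_{n+1-k}(S\nu\S)$, yields your displayed sequence, and (b) that the class $[\S^s_+\cup\S^{s'}_+]$ is a genuine lift of $s_*[\S]+s'_*[\S]$ to absolute homology because the boundaries cancel — your $s_*[\S]+s'_*[\S]$ notation is slightly loose on this point but the meaning is clear.
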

This can be proven by following all of the steps in the proofs of Lemmas~\ref{lmm: unoriented existence of sections} and \ref{lemma: new enough sections}, but working with relative cohomology instead of absolute cohomology throughout. We require the following corollary.

\begin{corollary}\label{lmm: new enough sections rel boundary}
Suppose $H^2(\S,\del\S;\Z^{w_1(\pi)})$ has no 2-torsion. Let $s$ be a section of ${SN_X\S}$, and fix a class $\alpha \in H_n(S\nu \S, \del (S\nu\S);\cyc{2})$.  Then there exists a section $s'\colon \S \to SN_X\S$ extending $\restr{s}{\del \S}$ such that $[\S^{s'}_+] = \alpha$ if and only if $\pi_*\alpha =[\S] \in H_n(\S, \del \S;\cyc{2})$ and $\del \alpha = [\del \S^{s}_+] \in H_{n-1}(\del (S\nu\S);\cyc{2})$.
\end{corollary}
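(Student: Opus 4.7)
The plan is to derive this corollary from \cref{lmm: enough sections rel boundary} by producing a suitable lift of the class $\alpha + [\S^s_+] \in H_n(S\nu\S, \del(S\nu\S);\cyc{2})$ to absolute homology. The forward direction is a direct verification: if $s'$ extends $\restr{s}{\del \S}$ with $[\S^{s'}_+] = \alpha$, then because $\pi s' = \id_\S$ we get $\pi_*\alpha = \pi_* s'_* [\S] = [\S]$, and since $s'$ agrees with $s$ on $\del \S$ we have $\del \alpha = [\del \S^{s'}_+] = [s(\del \S)] = [\del \S^s_+]$.

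For the reverse direction, suppose the two homological conditions hold. Consider $\alpha + [\S^s_+] \in H_n(S\nu\S, \del(S\nu\S);\cyc{2})$ and the long exact sequence of the pair $(S\nu\S, \del(S\nu\S))$. The connecting map sends this class to $\del \alpha + [\del \S^s_+] = 0$, so by exactness it lifts to some $\beta \in H_n(S\nu\S;\cyc{2})$.

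The next step is to show that such a lift automatically satisfies $\pi_*\beta = 0 \in H_n(\S;\cyc{2})$. Indeed, by naturality the image of $\pi_*\beta$ under $H_n(\S;\cyc{2}) \to H_n(\S, \del \S;\cyc{2})$ equals $\pi_*(\alpha + [\S^s_+]) = [\S] + [\S] = 0$; and since $\del \S$ has dimension $n-1$ we have $H_n(\del \S;\cyc{2}) = 0$, so the long exact sequence of $(\S, \del \S)$ makes the map $H_n(\S;\cyc{2}) \to H_n(\S, \del \S;\cyc{2})$ injective, forcing $\pi_*\beta = 0$.

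Finally, the 2-torsion hypothesis lets me apply the implication (iii)$\Rightarrow$(i) of \cref{lmm: enough sections rel boundary} with this $\beta$: there is a section $s'$ extending $\restr{s}{\del \S}$ such that $[\S^s_+ \cup \S^{s'}_+] = \beta$ in absolute homology. Mapping both sides into $H_n(S\nu\S, \del(S\nu\S);\cyc{2})$ gives $[\S^s_+] + [\S^{s'}_+] = \alpha + [\S^s_+]$, hence $[\S^{s'}_+] = \alpha$. The only nontrivial point is arranging that the lift $\beta$ has vanishing $\pi_*$, and the dimensional observation $H_n(\del \S;\cyc{2}) = 0$ makes this step automatic; everything else is formal manipulation of the long exact sequences and an invocation of \cref{lmm: enough sections rel boundary}.
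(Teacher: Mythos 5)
Your proof is correct and follows essentially the same line of reasoning as the paper's: lift $\alpha + [\S^s_+]$ to absolute homology using the vanishing of its boundary, verify that the lift has trivial $\pi_*$ by commutativity together with the injectivity of $H_n(\S;\cyc{2}) \to H_n(\S,\del\S;\cyc{2})$, and then invoke Lemma~\ref{lmm: enough sections rel boundary}(iii)$\Rightarrow$(i) before passing back to relative homology. The only difference is purely presentational: you spell out that the injectivity comes from $H_n(\del\S;\cyc{2})=0$ for dimension reasons, whereas the paper records the same fact implicitly by placing a $0$ in its commutative diagram.
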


\begin{proof}
    The forward direction is clear, so we prove the reverse.
    Consider the diagram below, which commutes and has exact rows by naturality of the long exact sequence of the pair:
\[\begin{tikzcd}
	& {H_n(S\nu \S;\cyc{2})} & {H_n(S\nu \S,\del (S\nu \S);\cyc{2})} & {H_{n-1}(\del (S\nu\S);\cyc{2})} \\
	0 & {H_n(\S;\cyc{2})} & {H_n(\S,\del \S;\cyc{2}).}
	\arrow["{i_*}", from=1-2, to=1-3]
	\arrow["{\pi_*}", from=1-2, to=2-2]
	\arrow["\del", from=1-3, to=1-4]
	\arrow["{\pi_*}", from=1-3, to=2-3]
	\arrow[from=2-1, to=2-2]
	\arrow["{j_*}", hook, from=2-2, to=2-3]
\end{tikzcd}\]
    Since $\del \alpha = \del [\S^s_+]$, exactness says that there is some $\alpha' \in H_n(S\nu\S;\cyc{2})$ such that $i_*\alpha' = \alpha + [\S^s_+]$. Then because $\pi_*\alpha = [\S]$, we see that $\pi_* i_* \alpha' = j_*\pi_*\alpha' = 0$. Finally, since $j_*$ is injective by exactness, $\pi_*\alpha' = 0$. 
    
    By \cref{lmm: enough sections rel boundary}(iii)$\Rightarrow$(i) and the assumption that $H^2(\S,\del\S;\Z^{w_1(\pi)})$ has no 2-torsion, there is a section $s'$ of $SN_X\S$ extending $\restr{s}{\del \S}$ such that $[\S^s_+ \cup \S^{s'}_+] = \alpha'$. So 
    \begin{equation*}
        i_*[\S^s_+ \cup \S^{s'}_+] = [\S^s_+] + [\S^{s'}_+] = \alpha + [\S^s_+] \in H_n(S\nu \S, \del (S\nu \S);\cyc{2}),
    \end{equation*}
    and $[\S^{s'}_+] = \alpha \in H_n(S\nu \S, \del (S\nu\S) ; \cyc{2})$ as required.
\end{proof}

This is enough to give sufficient conditions for which spanning manifolds of $\del \S$ extend to spanning manifolds $\S$. Note that the statement of \cref{thm: relative spanning manifolds} below is stronger than the one given in the introduction, since it includes the fact that spanning surfaces can be realised in any possible homology class.

\begin{namedtheorem}[\cref{thm: spanning manifolds}]
    Let $Z \subset \del X$ be a spanning manifold for $\del \S$ and let $s^Z$ be the associated Seifert section of $\restr{SN_X\S}{\del \S}$. 
    Let $w_1(\pi) \colon \pi_1(\S) \to \Aut(\Z) = \{\pm 1\}$ be the orientation character of $SN_X\S$, and assume that $H^2(\S, \del \S;\Z^{w_1(\pi)})$ has no 2-torsion. 
    Then $Z$ extends to a spanning manifold for $\S$ if and only if there is a section of $SN_X\S$ extending $s^Z$ and $[Z \cup \S] = 0 \in H_n(X;\cyc{2}).$
    
    In this case, for any $\beta \in H_{n+1}(E, \del E;\cyc{2})$ with $\pi_*\del_S \beta = [\S]$ and $\del_X \beta = [Z]$, there is a spanning manifold $Y$ for $\S$ with boundary $\del Y = Z \cup \S$ and $[Y\cap E] = \beta$.
\end{namedtheorem}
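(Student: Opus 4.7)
The plan is to adapt the proof of \cref{thm: spanning manifolds} to the relative setting, using \cref{lmm: new enough sections rel boundary} in place of \cref{lemma: new enough sections}, and \cref{lmm: extending spanning manifolds with specified sections} to assemble the spanning manifold from a Seifert section with prescribed boundary data.

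For the forward direction, if $Y \subset X$ is a spanning manifold for $\S$ with $\del Y = \S \cup Z$, then the Seifert section of $SN_X\S$ associated to $Y$ restricts on $\del \S$ to $s^Z$, yielding the required extension. Moreover $[\S \cup Z] = [\del Y] = 0 \in H_n(X;\cyc{2})$ since it bounds in $X$. The 2-torsion hypothesis plays no role here.

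For the reverse direction, the first step is to produce a class $\beta \in H_{n+1}(E, \del E;\cyc{2})$ with $\del_X\beta = [Z]$ and $\pi_*\del_S\beta = [\S]$. Since $[\S \cup Z] = 0 \in H_n(X;\cyc{2})$, there is an $(n+1)$-chain in $X$ bounding $\S \cup Z$; pushing this chain off $\S$ through $\nu\S$ (equivalently, chasing through the excision $H_{n+1}(E, \del E;\cyc{2}) \cong H_{n+1}(X, \del X \cup \S;\cyc{2})$ and the long exact sequence of the triple $(X, \del X \cup \S, \del X)$) produces such a $\beta$. I will then apply \cref{lmm: new enough sections rel boundary} with $\alpha = \del_S\beta$ and with the given section of $SN_X\S$ extending $s^Z$: the condition $\pi_*\alpha = [\S]$ is built in, and $\del\alpha = [\del\S_+^{s^Z}]$ holds because $\partial\beta$ is a cycle in $\del E$ whose $\del_X$- and $\del_S$-components must meet $\del(S\nu\S)$ along the same link, namely the boundary of any push-off of $\del\S$. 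The corollary then delivers a Seifert section $s$ of $SN_X\S$ extending $s^Z$ with $[\S_+^s] = \del_S\beta$.

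With $s$ in hand, the final step is to invoke \cref{lmm: extending spanning manifolds with specified sections}. The required vanishing $[(Z \cap E) \cup \S_+^s] = 0 \in H_n(E;\cyc{2})$ is automatic, since this class is the image of $\partial\beta \in H_n(\del E;\cyc{2})$ under the inclusion $\del E \hookrightarrow E$, which vanishes by exactness of the long exact sequence of $(E, \del E)$. The lemma then produces a spanning manifold $Y$ for $\S$ with $\del Y = \S \cup Z$ and $[Y \cap E] = \beta$, which simultaneously settles the reverse direction and the \emph{in this case} clause. The main obstacle I anticipate is the diagram chase establishing existence of a $\beta$ hitting the prescribed classes under \emph{both} $\del_X$ and $\del_S$ (rather than just one), and verifying the boundary compatibility in $\del(S\nu\S)$ needed to apply \cref{lmm: new enough sections rel boundary}; once these are handled, the argument is a direct assembly of the tools developed in the preceding subsections.
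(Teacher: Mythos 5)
Your proposal follows the paper's proof essentially step for step: produce $\beta \in H_{n+1}(E,\del E;\cyc{2})$ from the chain bounding $Z \cup \S$ via the excision isomorphism, use Corollary~\ref{lmm: new enough sections rel boundary} (with the 2-torsion hypothesis) to upgrade the given section extending $s^Z$ to a Seifert section $s$ with $[\S_+^s] = \del_S\beta$, and then apply Lemma~\ref{lmm: extending spanning manifolds with specified sections}, with exactness of the long exact sequence of $(E,\del E)$ giving the needed null-homology in $H_n(E;\cyc{2})$. The justification you anticipate needing for the boundary compatibility $\del(\del_S\beta) = [\del\S_+^{s^Z}]$ is exactly the observation the paper records in passing, so there is no gap.
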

\begin{proof}
    The forward direction follows easily by the same argument as \cref{lmm: extending spanning manifolds with specified sections}, so we prove the reverse.
    Suppose that $[Z \cup \S] = 0$ and that $s \colon \S \to SN_X\S$ is a section with $\restr{s}{\del \S} = s^Z$. Then we can find an $(n+1)$-chain in $X$ with boundary $ Z \cup \S$. In particular, we can find some 
    \begin{equation*}
    \beta \in H_{n+1}(E, \del E;\cyc{2}) \cong H_{n+1}(X, \del X \cup \S;\cyc{2})
    \end{equation*}
    such that $\del_X \beta = [Z]$ and $\pi_*\del_S \beta = [\S]$. Note that any such $\beta$ must also satisfy
    \begin{equation*}
        \del (\del_S\beta) = [\del \S^{s'}_+] \in H_{n-1}(\del (S\nu\S); \cyc{2} ).
    \end{equation*}
    
    Since $\varphi s^Z$ extends to a section $\varphi s \colon \S \to S\nu\S$ of $\pi$, \cref{lmm: new enough sections rel boundary} says that we can choose $s$ such that $[\S^s_+] = \del_{S} \beta\in H_n(S\nu\S, \del (S\nu\S);\cyc{2})$. Then $s$ is a Seifert section of $\pi$, and by exactness of the long exact sequence of the pair $(E, \del E)$,
    \begin{equation*}
        [(Z\cup E) \cup \S^s_+] =  \del \beta = 0 \in H_n(E;\cyc{2}).
    \end{equation*}
    Hence we can apply \cref{lmm: extending spanning manifolds with specified sections} to find a spanning manifold $Y$ for $\S$, extending $Z$, such that $[Y \cap E] = \beta$.
\end{proof}

As outlined in \cref{sec: intro cobordisms}, the first step in our construction of cobordisms between surfaces will be to construct spanning 3-manifolds of embedded surfaces coming from puncturing immersed surfaces around their double points. \cref{thm: relative spanning manifolds} will allow us to do this in such a way that we can guarantee that these spanning manifolds extend annuli on the boundary 3-spheres introduced by puncturing at the double points.

In this case, the 2-torsion condition simplifies, and is automatically satisfied when $X$ is orientable. To see this, note that if $\S$ is a compact surface, then 
\begin{equation*}
    H^2(\S, \del \S; \Z^{w_1(\pi)} ) \cong H_0(\S; \Z^{w_1(T\S) + w_1(\pi)}) = H_0(\S;\Z^{w_1(\restr{TX}{\S})}).
\end{equation*}
So if $X$ is orientable so that $w_1(TX)$ is trivial, then $H^2(\S, \del \S; \Z^{w_1(\pi)} )$ is a free abelian group, so in particular has no 2-torsion. So \cref{thm: relative spanning manifolds} allows us to completely classify when a spanning surface for the boundary $\del \S$ extends to a spanning manifold for $\S$, without any additional assumptions of the embedding of $\S$ into $X$. 
\section{Relative normal Euler numbers}\label{sec: euler number theory}
In this section, we review the normal Euler number of an immersed submanifold, and prove several important technical lemmas. Much of this is well-known to experts, but is recorded for ease of reference and since proofs are not always easily found in the literature. We continue to only work explicitly in the smooth category, though all proofs can be modified to work in the topological category as well.

In \cref{sec: normal Euler number}, we recall the basic definitions in terms of counting intersection points. In \cref{sec: euler cob}, we discuss the normal Euler number as a cobordism obstruction. Section \ref{sec: main euler number section} gives a lemma explaining how to use the mod 2 reduction of the relative normal Euler number to compute the algebraic intersection number between two properly immersed submanifolds with disjoint boundaries. Finally, \cref{sec: knot framings} specialises to the case of surfaces with non-empty boundaries properly immersed in oriented 4-manifolds.

\subsection{Relative normal Euler numbers}\label{sec: normal Euler number}

We give the definition of the relative Euler number of a disc bundle with oriented total space in terms of counting intersections.

\begin{definition}\label{def: euler number}
    Let $B$ be a compact $n$-manifold, let $\xi \colon M \to B$ be a $D^n$-bundle with the total space $M$ oriented, and let $s$ be a section of the sphere bundle $\restr{S \xi}{\del B}$. Let $s'$ be any section of $\xi$ with $\restr{s'}{\del B} =s$. Identify $B$ with the zero section of $\xi$, and assume that $s'$ meets $B$ transversely. For each point $p \in B \cap s'(B)$, choose an arbitrary orientation of the tangent space $T_p B$. This induces an orientation of
    \begin{equation*}
        T_p B \oplus s'_*T_p B = T_p M.
    \end{equation*}
    Let $\varepsilon_p =+1$ if this orientation agrees with the orientation of $T_p M$ coming from the orientation of $M$, and let $\varepsilon_p=-1$ otherwise. The \textit{relative Euler number} $e(\xi, s)$ is the sum 
    \begin{equation*}
         \sum_{p \in B \cap s'(B)} \varepsilon_p \in \Z.
    \end{equation*}
    If $\del B = \varnothing$, we simply write $e(\xi)$. If $B_0$ is a union of connected components of $B$, we abuse notation and write $e(\restr{\xi}{B_0},s)$ instead of $e(\restr{\xi}{B_0},\restr{s}{B_0})$. 
\end{definition}

\begin{remark}\label{rem: normal euler algebraic}
    The relative Euler number $e(\xi,s)$ also has an algebraic formulation. The primary obstruction to the existence of a section of $S \xi$ extending $s$ is $\theta(s) \in H^n(B, \del B; \Z^{w_1(\xi)})$. Since $M$ is orientable, $w_1(\xi) + w_1(TB) \colon \pi_1(B) \to \Aut(\Z)$ is trivial. Then $e(\xi,s)$ is the image of $\theta(s)$ under the composition
    \begin{equation*}
        H^n(B, \del B; \Z^{w_1(\xi)}) \xrightarrow{\PD} H_0(B; \Z) \xrightarrow{\cong} \Z^{\pi_0(B)} \xrightarrow{\varepsilon} \Z,
    \end{equation*}
    where $\varepsilon(a_1,\ldots,a_k) = a_1 + \cdots + a_k$ is the augmentation map.
    See Chapters 9 \& 12 of \cite{MilnorStasheff} or Section 2 of \cite{ConwayOrsonPowell} for a more detailed treatment of this approach.
\end{remark}

Over a connected base space, the relative Euler number is the complete obstruction to the existence of a non-vanishing section of a disc bundle, which extends a given section on the boundary. This follows directly from the algebraic definition, but can also be shown from \cref{def: euler number} by cancelling intersection points of opposite sign.

\begin{lemma}\label{lmm: vanishing normal euler}
    With notation as in \cref{def: euler number}, $s$ extends to a section of $S \xi $ if and only if $e(\restr{\xi}{B_0},s) = 0$ for each connected component $B_0$ of $B$.
\end{lemma}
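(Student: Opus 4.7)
The plan is to derive this from the algebraic reformulation of $e(\xi, s)$ recorded in \cref{rem: normal euler algebraic}. There, $e(\xi, s)$ is the image of the primary obstruction $\theta(s) \in H^n(B, \del B; \Z^{w_1(\xi)})$ under Poincar\'e--Lefschetz duality followed by augmentation. The first step is to observe that the duality map
\[ \PD \colon H^n(B, \del B; \Z^{w_1(\xi)}) \xrightarrow{\cong} H_0(B; \Z) \cong \Z^{\pi_0(B)} \]
is a genuine isomorphism (the assumption that $M$ is orientable forces $w_1(\xi) + w_1(TB) = 0$, so the twisted duality on $B$ becomes untwisted duality into $H_0(B; \Z)$), and tracks contributions component-by-component: the coordinate indexed by a component $B_0$ is precisely $e(\restr{\xi}{B_0}, s)$. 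Hence $\theta(s) = 0$ if and only if $e(\restr{\xi}{B_0}, s) = 0$ for every component $B_0$.

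The second step is to argue that $\theta(s)$ is the complete obstruction to extending $s$ to a section of $S\xi$. I would appeal to standard obstruction theory applied to the $S^{n-1}$-bundle $S\xi \to B$ equipped with the boundary section $s$. The successive obstructions to extending a partial section over the $k$-skeleton of a relative CW-structure on $(B, \del B)$ lie in $H^{k+1}(B, \del B; \pi_k(S^{n-1}))$. Since $\pi_k(S^{n-1}) = 0$ for $0 < k < n-1$, the only possibly non-vanishing obstruction group is the one for $k = n-1$, which is exactly $H^n(B, \del B; \Z^{w_1(\xi)})$ (with the twisting encoding the local orientability of the fibres); higher obstructions vanish because $\dim B = n$. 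Thus $s$ extends if and only if $\theta(s) = 0$, and combined with the first step this gives the lemma.

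The main obstacle I anticipate is purely notational: carefully matching the sign and orientation conventions used in \cref{def: euler number} with the obstruction-theoretic class $\theta(s)$ so that the coordinate of $\PD(\theta(s))$ on a component $B_0$ really is the signed intersection count defining $e(\restr{\xi}{B_0}, s)$. This is a standard but fiddly compatibility check; however, once phrased in the form of \cref{rem: normal euler algebraic} it is essentially built into the definition, so no genuine difficulty arises. As the excerpt already notes, an alternative proof is available via \cref{def: euler number} directly, by pairing intersection points of opposite sign on each component and cancelling them by a local homotopy of the extending section $s'$ supported in a Whitney disc, but I would favour the obstruction-theoretic route because it treats all dimensions $n$ uniformly and avoids the dimension restrictions needed for the Whitney trick.
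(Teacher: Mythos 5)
Your proof is correct and takes the route the paper itself indicates (``This follows directly from the algebraic definition''): identify $\theta(s)$ as the primary obstruction and observe that the higher obstructions vanish by the connectivity of $S^{n-1}$ together with $\dim B = n$, so that $\theta(s) = 0$ is the complete obstruction, then read off vanishing component-by-component via the Poincar\'e duality isomorphism onto $\Z^{\pi_0(B)}$. The alternative you mention --- cancelling oppositely-signed intersection points of $s'$ with the zero section directly from \cref{def: euler number} --- is exactly the other route the paper offers.
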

% \begin{proof}
% The forward direction is immediate from the definition, so we prove the reverse. We can immediately reduce to the case that $B$ is connected. Suppose that $e(B,s) = 0$, and let $s' \colon B \to M$ be a section of $\xi$ with $\restr{s'}{\del B} = B$, and which meets $B$ transversely. If $s'$ is not nowhere-vanishing, find $p,q \in B \cap s'(B)$ with $\varepsilon_p = +1$ and $\varepsilon_q = -1$. Let $\gamma$ be a path in $B$ from $p$ to $q$ whose interior is disjoint from $s'(B)$. Then the circle $\gamma \cup s'(\gamma)$ bounds a Whitney disc for $B$ and $s'(B)$ in $M$ given by
% \begin{equation*}
%     W = \{ \lambda \cdot s'(x) \mid  \lambda \in [0,1],\, x \in \gamma\}.
% \end{equation*}
% This is a standard framed Whitney disc, so the two intersection points may be cancelled. Proceeding in this way we may assume that $s'$ is nowhere-vanishing. 
% \end{proof}

Now let $X$ be an oriented $2n$-manifold, let $S$ be a compact $n$-manifold, and let $f \colon S \immerse X$ be a proper immersion. Write $\S \coloneq f(S) \subset X$. Recall that any immersion $f$ admits a normal bundle $N(f) \to S$ which fits in the short exact sequence
\[\begin{tikzcd}
	0 & {TS} & {f^*TX} & {N(f)} & 0
	\arrow[from=1-1, to=1-2]
	\arrow[from=1-2, to=1-3]
	\arrow[from=1-3, to=1-4]
	\arrow[from=1-4, to=1-5]
\end{tikzcd}\]
of vector bundles over $S$.
Since $X$ is orientable, the orientation characters $w_1(TX)$ and $w_1(f^*TX)$ are trivial. 
Since orientation characters are additive under extension, $w_1(N(f)) = w_1(TS)$, which means that the total space of $N(f)$ is orientable. The choice of orientation on $X$ also determines one on the total space of $N(f)$ via $f$. 

If $f$ is proper, then $\restr{f}{\del S}$ is an embedding, and so there is a bundle isomorphism
\[\begin{tikzcd}
	{\restr{DN(f)}{\del S}} & {DN_{\del X}{\del \S}} \\
	{\del S} & {\del \S.}
	\arrow["f_*", from=1-1, to=1-2]
	\arrow[from=1-1, to=2-1]
	\arrow[from=1-2, to=2-2]
	\arrow["{\restr{f}{\del S}}", from=2-1, to=2-2]
\end{tikzcd}\]
Hence for any section $s \colon \del \S \to SN_{\del X}{\del \S}$, we can define the \textit{relative normal Euler number}
\begin{equation*}
e(\S,s)  \coloneq e\left(DN(f), f_*\inv\restr{s f}{\del S}\right).
\end{equation*}
The value of $e(\S,s)$ only depends on the isotopy class of $s$ and on the image $\S$, but not the choice of immersion $f$. It depends on the orientation of $X$ only up to sign, since reversing the orientation replaces $e(\S,s)$ with $-e(\S,s)$. Hence we can make sense of the statement $e(\S,s) = 0$ without fixing an orientation of $X$. Equally, if $\S_0, \S_1 \subset X$ are two compact properly immersed submanifolds and $s_i \colon \del \S_i \to SN_{\del X}{\del \S_i}$ is a section for $i=0,1$, we can make sense of the statement $e(\S_0,s_0) = e(\S_1,s_1)$ without fixing an orientation of $X$.

\subsection{Relative normal Euler numbers as cobordism invariants}\label{sec: euler cob}

Although it is not in general true that Euler characteristics are an obstruction to (abstract) cobordism, normal Euler numbers do give an obstruction to properly embedded submanifolds being cobordant. Specifically, if $X$ is an orientable $2n$-manifold with $\S_0, \S_1 \subset X$ properly embedded $n$-manifolds, and there is a cobordism $Y \subset X \times I$ extending $Z = Y \cap (\del X \times I)$, then the normal Euler number $e(\S_0 \stimes \{0\} \cup Z \cup \S_1 \stimes \{1\})$ must vanish. This was observed in the case that $X = \R^4$ in \cite{CKSSBordism}, but in general follows from the lemma below.

\begin{lemma}\label{lmm: euler numbers of boundaries}
    Let $M$ be an orientable $(2n+1)$-manifold and let $Y \subset M$ be a properly embedded compact $(n+1)$-manifold. Then $e(\del Y) = 0$, where $\del Y$ is viewed a closed $n$-manifold embedded in the orientable $2n$-manifold $\del M$.
\end{lemma}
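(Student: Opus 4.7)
The plan is to exhibit a compact oriented $1$-manifold inside $Y$ whose signed boundary on $\del Y$ computes $e(\del Y)$; since any compact oriented $1$-manifold has vanishing signed boundary count, this forces $e(\del Y) = 0$.

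First I would observe that because $Y$ is properly embedded in $M$, it meets $\del M$ transversely, yielding the identification $N_{\del M}(\del Y) = \restr{N_M Y}{\del Y}$ of rank-$n$ normal bundles. Orientability of $M$ together with the short exact sequence $0 \to TY \to \restr{TM}{Y} \to N_M Y \to 0$ gives $w_1(N_M Y) = w_1(TY)$, so the total space of $N_M Y$ is orientable; a choice of orientation of $M$ picks out such an orientation, and restricting to $\del Y$ recovers the orientation on the total space of $N_{\del M}(\del Y)$ used in \cref{def: euler number}.

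Next I would choose a section $\tilde\sigma$ of $N_M Y$ transverse to the zero section. Since $N_M Y$ has rank $n$ and $\dim Y = n+1$, the zero set $C \coloneq \tilde\sigma\inv(\text{zero section}) \subset Y$ is a compact, properly embedded $1$-dimensional submanifold, with boundary $\del C \subset \del Y$ equal to the zero set of the restriction $\sigma \coloneq \restr{\tilde\sigma}{\del Y}$. The orientation on the total space of $N_M Y$ induces, in the standard way for rank-$n$ bundles with oriented total space over an $(n+1)$-manifold, an orientation on $C$ and hence a sign at each point of $\del C$.

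The main step is then to identify the signed count of $\del C$ with $e(\del Y)$ computed using $\sigma$, which is allowed because $\del Y$ is closed and so any transverse section is admissible. I expect this sign-chase to be the main obstacle: one must verify that the sign $\varepsilon_p$ from \cref{def: euler number} agrees with the boundary sign coming from the oriented $1$-manifold $C$, which is essentially the compatibility of the geometric Euler number with restriction to a boundary. A cleaner alternative which sidesteps this is to use the algebraic description from \cref{rem: normal euler algebraic}: by naturality of the Euler class, $e(N_{\del M}(\del Y)) = i^* e(N_M Y)$ for the inclusion $i \colon \del Y \hookrightarrow Y$, so under twisted Poincar\'e--Lefschetz duality $e(\del Y)$ becomes the pairing of $i^*e(N_M Y)$ with $[\del Y] = \del [Y, \del Y]$, which vanishes by exactness of the long exact sequence of the pair $(Y, \del Y)$. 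Either route concludes $e(\del Y) = 0$.
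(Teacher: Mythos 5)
Your first plan is essentially the paper's own proof restated: the zero locus $C = \tilde\sigma\inv(\text{zero section})$ of a transverse section $\tilde\sigma$ of $N_M Y$ is precisely the $1$-manifold $Y \cap Y^s_+$ that the paper constructs from a push-off, and the entire content of the lemma is the sign verification you explicitly defer (``I expect this sign-chase to be the main obstacle''). The paper carries it out by parallel-transporting an orientation of $T_p\del Y$ along each arc of $C$ and comparing the induced signs at the two endpoints. So as written, this route reproduces the paper's set-up but omits its argument.

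Your second, algebraic route is correct and genuinely different, and it cleanly sidesteps the sign bookkeeping. Spelled out a bit more: write $w \coloneq w_1(T\del Y)$. Since $\del M$ is orientable, $w_1(N_{\del M}\del Y) = w$, so by \cref{rem: normal euler algebraic} (with $\del(\del Y) = \varnothing$, so that $\theta(s)$ is just the Euler class) one has $e(\del Y) = \langle e(N_{\del M}\del Y), [\del Y]\rangle$ with $[\del Y] \in H_n(\del Y;\Z^{w})$ the twisted fundamental class. Naturality gives $e(N_{\del M}\del Y) = i^*e(N_M Y)$, and $[\del Y] = \del[Y,\del Y]$, so by adjointness of the boundary and coboundary maps
\begin{equation*}
e(\del Y) = \big\langle i^*e(N_M Y),\, \del[Y,\del Y]\big\rangle = \big\langle \delta\, i^*e(N_M Y),\, [Y,\del Y]\big\rangle = 0,
\end{equation*}
since $\delta \circ i^* = 0$ in the long exact sequence of the pair $(Y,\del Y)$ (you say ``by exactness'' but need adjointness as well; both are standard). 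The twist matches throughout because orientability of $M$ forces $w_1(N_M Y) = w_1(TY)$ and its restriction to $\del Y$ is $w$. This argument trades the paper's elementary, hands-on use of \cref{def: euler number} for the obstruction-theoretic description in \cref{rem: normal euler algebraic}; what it buys is that no orientation conventions need to be checked pointwise. Either route proves the lemma.
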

\begin{proof}
    Fix any orientation on $M$.
    Let $(\nu Y, \varphi)$ be a tubular neighbourhood of $Y$ and let $s \colon Y \to DN_{M}Y$ be a section that meets the zero-section transversely. Write $Y^s_+\coloneqq \varphi s(Y)$ for the push-off of $Y$ in the direction of $s$. Then we may assume that $Y \cap Y^s_+$ consists of a collection of circles and arcs properly embedded in $Y$. 
    
    We show $e(\del Y)=0$ using $\restr{s}{\del Y}$ as in \cref{def: euler number}. Since all points of intersection in $\del Y \cap \del Y^s_+$ are endpoints of some properly embedded arc $\gamma \subseteq Y \cap Y^s_+$, it suffices to show that the two endpoints of each such arc contribute opposite signs to the sum in \cref{def: euler number}.

    Choose an arc $\gamma \subseteq Y \cap Y^s_+$ with endpoints $p,q \in \del Y$. Orient $\gamma$ from $p$ to $q$. Fix some orientation on $T_p\del Y$, and give $T_p \del Y^s_+ = (\varphi s)_* T_p Y$ the induced orientation. Without loss of generality, assume that the induced orientation on
    \begin{equation*}
        T_p M = T_p \gamma \oplus T_p \del Y \oplus T_p \del Y^s_+
    \end{equation*}
    agrees with the orientation coming from the fixed orientation on $\del M$, else consider the opposite orientation on $M$. Since $T_p \gamma$ is oriented in the direction of the inwards normal in $\restr{N_{M} \del M}{p}$, the induced orientation of $T_p \del M$ disagrees with the orientation coming from $T_p \del Y \oplus T_p \del Y^s_+$. In the notation of \cref{def: euler number}, this means $\varepsilon_p = -1$.

    However, parallel transporting the orientation of $T_p \del Y$ along $\gamma$ to $q$, we see that the orientation on $T_q M$ also agrees with the orientation coming from $T_q \gamma \oplus T_q \del Y \oplus T_q \del Y^s_+ $. However, since $T_q \gamma$ is oriented in the direction of the outward normal, the induced orientation of $T_q \del M$ agrees with the orientation coming from $T_q \del Y \oplus T_q \del Y^s_+$, so $\varepsilon_q = +1$. 

    Thus the two endpoints of $\gamma$ contribute opposite signs, and we are done.
\end{proof}

\subsection{Computing intersections between immersed submanifolds}\label{sec: main euler number section}

Let $X$ be an oriented $2n$-manifold, let $S$ be a compact $n$-manifold, and let $f \colon S \immerse X$ be a proper immersion. Write $\S \coloneq f(S) \subset X$.
If $\del S = \varnothing$, it is a standard result that $e(\S) \bmod{2}$ agrees with the algebraic intersection $[\S] \cdot [\S] \in \cyc{2}$. This is because both $e(\S) \bmod{2}$ and $[\S] \cdot [\S]$ are given by the mod 2 count of intersections between $\S$ and a push-off (or perturbation) of $\S$ which is chosen to intersect $\S$ transversely. 

The same argument works relative to a section $s \colon \del \S \to SN_{\del X}\del \S$ on the boundary, although one must take the algebraic intersection of $\S$ with a push-off in the direction of a section which extends $s$. The details of this appear to be missing from the literature, save a special case which appears as Lemma 13.8 in \cite{ConwayMiller}. In this subsection we give these details, and deduce a result that relates relative normal Euler numbers to the algebraic intersection of two proper immersions with disjoint boundaries.

First, recall that whenever $A,B \subseteq \del X$ are closed and have disjoint deformation retracts, there is an algebraic intersection form
\begin{equation*}
    - \cdot - \colon H_n(X, A;\cyc{2}) \times H_n(X, B;\cyc{2}) \to \cyc{2}.
\end{equation*}
This is realised by counting the intersections mod 2 of representing properly embedded $n$-manifolds which intersect transversely. In particular, it makes sense to discuss the algebraic intersection number of two properly immersed $n$-manifolds with disjoint boundaries, by taking $A$ to be a neighbourhood of the boundary of one, and taking $B \coloneq \bar{\del X \setminus A}$.

We now discuss how to formalise a push-off of the immersion $f$. Write $\pi \colon DN(f) \to S$ for the projection map. There exists an immersion $\vartheta \colon DN(f) \immerse X$ which restricts to $f \colon S \immerse X$ on the zero-section and is such that every point $x \in S$ has a neighbourhood $U \subseteq S$ such that $\restr{\vartheta}{\pi\inv(U)} \colon \pi\inv(U) \to X$ is an embedding. We may also assume that $\nu \del \S \coloneq \vartheta( \pi\inv(\del S) )$ is a tubular neighbourhood for $\del \S$ in $\del X$.
Then for any section $s'\colon S \to DN(f)$ of $\pi$ which is transverse to the zero section, write $\S_+^{s'} \coloneq \vartheta s'(S) \subset X$. This can be seen as a push-off of $\S$ in the direction of $s'$.

\begin{lemma}\label{cor: normal euler number}
    Fix a section $s\colon \del \S \to SN_{\del X}\del \S$, and any section $s'\colon S \to DN(f)$ with
    \begin{equation*}
        \restr{s'}{\del S} = f_*\inv \restr{sf}{\del S}\colon \del S \to \restr{SN(f)}{\del S}.
    \end{equation*}
    Then $[\S] \cdot [\S^{s'}_+] = e(\S,s) \bmod 2$.
\end{lemma}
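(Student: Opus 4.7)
The plan is to compare the two quantities by unpacking each as a (mod 2) count of intersections. By \cref{def: euler number}, after perturbing $s'$ in the interior so that it meets the zero section transversely, $e(\S,s)$ equals the signed count $\sum \varepsilon_p$ over $p \in S \cap s'(S)$, whose reduction mod 2 is simply the cardinality $\#(S \cap s'(S)) \bmod 2$. On the other hand, after a further small perturbation of $s'$ rel.\ $\del S$ (which does not change the mod 2 count because the homotopy class of $s'$ is unchanged), the immersions $f$ and $\vartheta s'$ become transverse, and the algebraic intersection $[\S] \cdot [\S^{s'}_+]$ is the mod 2 count of the set
\begin{equation*}
    \mathcal{I} \coloneq \big\{ (x,y) \in S \times S \mid f(x) = \vartheta s'(y) \big\}.
\end{equation*}
The boundary condition on $s'$ ensures $\del \S$ and $\del \S^{s'}_+$ are disjoint in $\nu \del \S$, so $\mathcal{I}$ is finite.

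The next step is to split $\mathcal{I}$ according to whether $x = y$ or not. Since $\vartheta$ is an embedding on the preimage of a sufficiently small neighborhood of each point of $S$, the pairs $(x,x) \in \mathcal{I}$ correspond exactly to the zeros of $s'$, i.e., to points of $S \cap s'(S)$; this gives a contribution of $\#(S \cap s'(S)) \bmod 2 = e(\S,s) \bmod 2$. For pairs with $x \neq y$, I will argue that after rescaling $s'$ to be $C^0$-small, $\vartheta s'(y)$ lies arbitrarily close to $f(y)$, so $f(x) = \vartheta s'(y)$ forces $f(x)$ and $f(y)$ to coincide at a double point of $f$.

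The main step is then the local model near a transverse double point $p = f(x_1) = f(x_2)$, where $f_* T_{x_1} S$ and $f_* T_{x_2} S$ span $T_p X$. Rescaling $s'$ and viewing the sheet $f(U_2)$ together with its displaced copy $\vartheta s'(U_2)$ as a family parametrised by $s'(x_2) \in \restr{N(f)}{x_2}$, transversality of the two sheets at $p$ guarantees that for generic small $s'$, the displaced sheet $\vartheta s'(U_2)$ meets $f(U_1)$ in exactly one point near $p$. Symmetrically, $\vartheta s'(U_1)$ meets $f(U_2)$ in one point. Hence each double point of $f$ contributes exactly two pairs to $\mathcal{I}$ of the second type, so the total second-type contribution is $0 \bmod 2$.

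Combining, $[\S] \cdot [\S^{s'}_+] \equiv \#(S \cap s'(S)) \equiv e(\S,s) \pmod 2$. The main technical obstacle is verifying transversality and the local intersection count near double points — in particular checking that no additional intersections occur near $\del S$, where $s'$ is pinned to $f_*\inv sf$, and that the perturbation of $s'$ to achieve simultaneous transversality with the zero section and with $f$ can be arranged without affecting the counts.
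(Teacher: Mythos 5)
Your proof is correct and follows essentially the same approach as the paper's: decompose the intersections of $\S$ with the push-off $\S_+^{s'}$ into those coming from zeros of $s'$ (contributing $e(\S,s) \bmod 2$) and those coming from the two sheets near each double point of $f$ (contributing $0 \bmod 2$). Your formulation via the set $\mathcal{I}$ and the dichotomy $x=y$ versus $x\neq y$ makes the decomposition slightly more explicit than the paper's, which phrases it as ``away from the double points'' versus ``at the double points,'' but the content is identical; the technical concerns you flag at the end (genericity, behavior near $\del S$) are real but routine and are also left implicit in the paper.
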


\begin{proof}
    We may assume that $\S$ and $\S^{s'}_+$ intersect in isolated transverse double points, so this algebraic intersection number is just the count of intersections between $\S$ and $\S_+^{s'}$ mod 2. By definition, exactly $e(\S,s)$ such intersections occur away from the double points of $f$, counted with sign. Each double point of $f$ gives two more intersections between $\S$ and $\S_+^{s'}$, given by each sheet of $\S$ intersecting the opposite sheet of $\S^{s'}_+$. But these do not contribute to the count mod~2.
\end{proof}

\cref{cor: normal euler number} allows us compare the relative Euler numbers of two homologous properly immersed submanifolds.

\begin{lemma}\label{lmm: even euler number facts}
    Suppose that $S = S_0 \sqcup S_1$ is a disjoint union of two compact $n$-manifolds, and write $\S_i = f(S_i)$ for $i=0,1$ so that $\S = \S_0 \cup \S_1$. Let $Z \subset \del X$ be a spanning manifold for $\del \S$ with associated Seifert section $s^Z$, and suppose that $[Z \cup \S] = 0 \in H_n(X;\cyc{2})$. Then
    \begin{equation*}
        [\S_0] \cdot [\S_1] = e(\S_0,s^Z) \bmod 2 = e(\S_1,s^Z) \bmod 2.
    \end{equation*}
\end{lemma}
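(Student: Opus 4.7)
The plan is to combine \cref{cor: normal euler number} applied to the restricted immersion $f|_{S_0}$, which yields
\begin{equation*}
e(\S_0, s^Z) \equiv [\S_0] \cdot [(\S_0)_+^{s_0'}] \pmod 2
\end{equation*}
for any section $s_0'$ of $DN(f|_{S_0})$ extending the pullback of $s^Z|_{\del \S_0}$, with a homological argument showing that $[\S_0] \cdot [(\S_0)_+^{s_0'}] \equiv [\S_0] \cdot [\S_1] \pmod 2$. The identity $[\S_0] \cdot [\S_1] \equiv e(\S_1, s^Z) \pmod 2$ will then follow by interchanging the roles of $S_0$ and $S_1$.

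For the homological comparison, I would invoke the hypothesis $[Z \cup \S] = 0 \in H_n(X;\cyc{2})$ to select an $(n+1)$-chain $Y \subset X$ with $\del Y = Z \cup \S$, and introduce the trace $(n+1)$-chain $T \subset X$ of the push-off from $\S_0$ to $(\S_0)_+^{s_0'}$, whose boundary is $\S_0 \cup (\S_0)_+^{s_0'} \cup T_\del$ with $T_\del \subset \del X$ the annular trace of $\del \S_0$ under $s^Z$. The sum $W \coloneq Y + T$ then has boundary
\begin{equation*}
\del W \equiv (\S_0)_+^{s_0'} + \S_1 + Z' \pmod 2,
\end{equation*}
where $Z' \coloneq Z + T_\del$. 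Since $s^Z$ is the Seifert section associated to $Z$, the annulus $T_\del$ sits inside $Z$ as a collar of $\del \S_0$, so $Z' \subset \del X$ is (as a $\cyc{2}$-chain) simply $Z$ with this collar removed. In particular, $Z'$ has boundary $(\del \S_0)_+^{s^Z} \cup \del \S_1$ and is disjoint from $\del \S_0$.

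Interpreting $W$ as a homology $(\S_0)_+^{s_0'} + \S_1 \sim Z'$ in $H_n(X, A;\cyc{2})$, where $A \subset \del X$ is a neighborhood of $(\del \S_0)_+^{s^Z} \cup \del \S_1$ chosen disjoint from $\del \S_0$, the intersection pairing with $[\S_0] \in H_n(X, \nu \del \S_0; \cyc{2})$ is well-defined. Because $Z' \subset \del X$ is disjoint from $\S_0 \cap \del X = \del \S_0$, the product $[\S_0] \cdot [Z']$ vanishes, and so $[\S_0] \cdot [(\S_0)_+^{s_0'}] \equiv [\S_0] \cdot [\S_1] \pmod 2$ as required. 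The main technical subtlety will lie in setting up the relative intersection pairing carefully, and in verifying that $Y$, $T$, and the various push-offs can all be placed in generic position so that the $\cyc{2}$-chain identities (particularly $Z + T_\del \equiv Z \setminus T_\del$) rigorously encode the intended geometric cancellations.
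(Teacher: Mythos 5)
Your argument is correct, and it reaches the same intermediate identity as the paper does — namely that $[\S_0]\cdot[(\S_0)^{s_0'}_+] = [\S_0]\cdot[\S_1]$ — but by a genuinely different route. The paper's proof applies the push-off to the whole immersion $f$ at once, sets $M\coloneq\S\cup(Z\cap\nu\del\S)$, and computes purely at the level of classes that $[\S^{s'}_+]=[M]=0$ in $H_n(X,E_\del;\cyc 2)$, so that $[\S'_0]=[\S'_1]$ there; the intersection pairing with $[\S_0]$ can then swap $\S_1$ for $\S'_0$, and \cref{cor: normal euler number} finishes. Your proof instead constructs an explicit $(n+1)$-chain $W=Y+T$ exhibiting $(\S_0)^{s_0'}_+ + \S_1 \sim Z'$ in the relevant relative group, and then closes by observing that $Z'\subset\del X$ is disjoint from $\S_0$ so that $[\S_0]\cdot[Z']=0$. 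What your route buys is that it makes the role of the Seifert-section hypothesis very visible: it is exactly what guarantees that the boundary trace $T_\del$ is a subchain of $Z$ (so $Z'=Z\setminus T_\del$ misses $\del\S_0$), whereas in the paper the same hypothesis is used implicitly when identifying $M$ with the push-off. The cost is heavier chain-level bookkeeping, and in particular the claim $Z+T_\del\equiv Z\setminus T_\del$ needs the push-off and tubular neighbourhood to be compatibly chosen so that $T_\del$ literally equals the collar $Z\cap\nu\del\S_0$ rather than merely being homologous to it rel.\ endpoints — you flag this yourself, and it is the one spot where the argument as sketched still requires care.
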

\begin{proof}
    Consider the tubular neighbourhood of $\del \S$ given by $\nu \del \S \coloneq \vartheta\big(\pi\inv(\del S) \big) \subset \del X,$
    and let $E_\del \coloneq \bar{\del X \setminus \nu \del \S} \subset \del X$ be the exterior of $\del \S$. Also let
    \begin{equation*}
        M \coloneq \S \cup (Z \cap \nu \del \S) \subset X
    \end{equation*}
    be the subspace given by extending $\S$ along its boundary by a collar of $Z$. Note that $M$ is the image of an immersion $S \immerse X$, homotopic to $f$, which is not a proper immersion. Finally, let $s'\colon S \to DN(f)$ be any section extending $s^Z$, and let $\S^{s'}_+ \coloneq \vartheta s'(S)$ be the push-off of $\S$ in the direction of $s'$. Then $\del M = \del \S^{s'}_+$, and in fact $\S^{s'}_+$ and $M$ are homotopic rel.\ $\del \S^{s'}_+$. So in $H_n(X, E_\del ;\cyc{2})$,
    \begin{align*}
        [\S^{s'}_+] = [M] = [\S \cup (Z \cap \nu \del \S)] &= [\S \cup (Z \setminus E_\del)] \\&= [Z \cup \S] - [Z \cap E_\del] \\&= 0\in H_n(X, E_\del ;\cyc{2}).
    \end{align*}
    Here the last line follows since $[Z \cup \S] = 0 \in H_n(X;\cyc{2})$ by assumption, and $Z \cap E_\del \subset E_\del$.

    Finally, let $f'\colon S \immerse X$ be some immersion, homotopic to $f$, with image $\S^{s'}_+$. Write $\S'_i \coloneq f'(S_i)$ for $i=0,1$, and note that $[\S'_0] = [\S'_1] \in H_n(X, E_\del;\cyc{2})$ since $[\S^{s'}_+] = [\S'_0 \cup \S'_1]=0$ by the previous paragraph. Hence
    \begin{equation*}
        [\S_0] \cdot [\S_1] = [\S_0] \cdot [\S'_1] = [\S_0] \cdot [\S'_0] \in \cyc{2},
    \end{equation*}
    where the first equality follows $\S'_1$ is homotopic to $\S_1$ rel.\ the exterior of $\del \S_0$.
    Finally, \cref{cor: normal euler number} gives that
    \begin{equation*}
         [\S_0] \cdot [\S'_0] = e(\S_0,s^Z) \bmod{2},
    \end{equation*}
    and the result follows.
\end{proof}

\begin{remark}
    By taking $\S = \S_0$ and $\S_1 = \varnothing$, repeating this argument with $\Z$-homology gives a proof that if $\S$ is closed, oriented, embedded, and $[\S] = 0 \in H_n(X;\Z)$, then $e(\S) = 0$. If $\S$ is closed and non-orientable, $e(\S)$ can be non-zero even if $\S$ is embedded and null-homologous. For example, results of Whitney and Massey show that if $\S \subset S^4$ is an embedded closed connected non-orientable surface of Euler characteristic $\chi$, then
    \begin{equation*}
        e(\S) \in \{2\chi - 4, 2\chi, 2\chi+4,\ldots,4-2\chi\},
    \end{equation*}
    and that all these normal Euler numbers are achieved \cite{WhitneyDiffMan}, \cite{Massey}.
\end{remark}

\subsection{Knot framings and relative normal Euler numbers of surfaces}\label{sec: knot framings}
Before discussing surfaces properly immersed in oriented 4-manifolds, we recall some facts about framings of 1-manifolds in oriented 3-manifolds.

Let $Y$ be an oriented 3-manifold. Let $L \subset Y$ be an embedded closed 1-manifold (i.e.\ a link in $Y$) and let $(\nu L,\varphi)$ be a tubular neighbourhood. Then the normal bundle $N_YL$ is trivial, and after fixing an orientation of $L$, two sections $s,s'\colon L \to SN_YL$ are isotopic if and only if $s_*[L]= s'_*[L] \in H_1(SN_YL;\Z)$. In this case, we refer to an isotopy class of sections of $SN_YL$ as a \textit{framing} of $L$. A \textit{Seifert framing} of $L$ is a framing that contains Seifert sections. We regularly abuse notation by using the same symbol to refer to a section of $SN_YL$ and the framing it represents.

Let $K \subset L$ be a connected component of $L$. If $Y= S^3$ with the usual orientation, there is a canonical identification
\begin{equation*}
    \fr_K \colon \{\text{framings of }K\} \xrightarrow{\cong} \Z
\end{equation*}
given by the linking number,
\begin{equation*}
\fr_K(s) \coloneq \lk(K,s(K)).
\end{equation*}

 For general oriented $Y$, the set of framings of $K$ still has a $\Z$-torsor structure, which we denote by $\star$. Let $\nu K$ be the component of $\nu L$ containing $K$. The orientation on $Y$ induces one on $\nu K$ and $S\nu K$, and hence one on $SN_YK$ via the identification $\varphi\colon DN_Y L \to \nu L$. Choose an orientation on $K$, which in turn induces one on the $S^1$-fibres of $SN_YK$. Then for a fixed framing $s$ and integer $n \in \Z$, there is a unique framing $s'$ of $K$ such that the algebraic intersection number $s_*[K] \cdot s'_*[K] = n \in  \Z$. This framing is independent of the choice of orientation of $K$, so we write $n \star s = s'$. Geometrically, $n$ acts by adding $n$ twists in the positive direction.
If $s$ is a framing of $L$, we abuse notation and write  $\fr_K(s)$ instead of $\fr_K(\restr{s}{K})$.

Now return to the situation where $X$ is an oriented 4-manifold, $S$ a compact surface, and $f \colon S \immerse X$ a proper immersion with image $\S \coloneq f(S)$. Then the $\Z$-action on the framings of $\del \S \subset \del X$ intertwines the relative normal Euler number of $\S$.

\begin{lemma}\label{lmm: twisting euler numbers}
    Fix a component $K$ of $\del \S$. Let $s, s'$ be framings of $\del \S$ which agree on $\del \S \setminus K$. If $\restr{s'}{K} = n \star \restr{s}{K}$, then $e(\S, s') = n+ e(\S,s)$.
\end{lemma}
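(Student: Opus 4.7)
The plan is to compute $e(\Sigma,s')$ geometrically by starting from a section of $DN(f)$ realising $e(\Sigma,s)$ and modifying it in a collar of $K$ in such a way that exactly $n$ (signed) intersections with the zero section are created. Throughout, let $K' \subseteq \del S$ be the component with $f(K')=K$.

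First I would fix a collar $c\colon K' \times [0,1] \hookrightarrow S$ of $K'$ in $S$ that is disjoint from the other components of $\del S$, and a section $\sigma\colon S \to DN(f)$ with $\restr{\sigma}{\del S} = f_*\inv \restr{s f}{\del S}$ meeting the zero section transversely, as in \cref{def: euler number}. By a small perturbation supported near the collar, I can arrange that $\restr{\sigma}{c(K' \times [0,1])}$ takes values in $SN(f)$ and is constant in the $[0,1]$-direction; in particular, $\sigma$ has no intersections with the zero section inside $c(K' \times [0,1])$, and all $e(\Sigma,s)$ intersections counted by \cref{def: euler number} lie in $S \setminus c(K' \times [0,1/2])$.

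Next I would build a section $\sigma'\colon S \to DN(f)$ by setting $\sigma' = \sigma$ on $S \setminus c(K' \times [0,1/2])$ and interpolating on the annular collar $A \coloneq c(K' \times [0,1/2])$ between $f_*\inv\restr{s'f}{K'}$ on $K' \times \{0\}$ and $\restr{\sigma}{c(K' \times \{1/2\})}$ on $K' \times \{1/2\}$, chosen transverse to the zero section. Since $\sigma'$ and $\sigma$ agree outside $A$, it suffices to show that the signed count of intersections of $\sigma'(A)$ with the zero section in $A$ equals $n$. After trivialising $\restr{DN(f)}{A}$, sections of $\restr{SN(f)}{A}$ are nowhere-zero maps $A \to D^2$, and the values of $\sigma'$ on $K' \times \{0\}$ and $K' \times \{1/2\}$ are two nowhere-zero maps $K' \to D^2 \setminus \{0\}$ whose winding numbers differ by the number of twists between $\restr{s}{K}$ and $\restr{s'}{K}$, namely $n$. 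The standard Brouwer--degree computation then shows that any transverse homotopy between them on $A$ has algebraic zero count equal to this difference.

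The main obstacle is orientation bookkeeping: one must check that the sign of each extra zero, measured by the convention in \cref{def: euler number} (via the orientation of $DN(f)$ inherited from $X$), agrees with the sign contributing to the intersection number $s_*[K] \cdot s'_*[K] = n$ used to define the $\star$-action. This follows from tracing through how both orientations are induced, in both cases, from a fixed orientation of $X$ together with the same choice of orientation on $T K$ and on the normal directions transverse to $K$ in $\del X$ and in $S$; reversing any auxiliary choice flips both signs simultaneously. Once this is confirmed, $\sigma'$ realises $e(\Sigma, s) + n$ intersections with the zero section, yielding $e(\Sigma,s') = n + e(\Sigma,s)$.
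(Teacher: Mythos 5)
Your proposal is correct and follows essentially the same strategy as the paper: localize to a collar of $K'$, extend $s'$ by the trace of an isotopy from $s'$ to $s$ there, and count the resulting intersections with the zero section. The only stylistic difference is that the paper handles the orientation bookkeeping by reducing in stages to an explicit model ($X = D^2 \times D^2$, $\Sigma = D^2 \times \{0\}$, $n=1$) and checking it by hand, whereas you defer it to a winding-number/degree argument; both routes are valid.
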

This can be seen from the boundary twisting operation described in Section 1.3 of \cite{FreedmanQuinn}; we give a slightly different proof which is similar in spirit.
\begin{proof}
    It suffices to consider the case that $e(\S,s)=0$. Then we may compute $e(\S,s')$ using an extension of $s'$ to $S$ which is nowhere-vanishing outside of a collar of $K$, and which is the trace of an isotopy from $s'$ to $s$ in this collar. Then $e(\S,s')$ is the signed count of intersections of this isotopy with the zero-section in the collar. Thus it suffices to replace $\S$ with just this collar of $K$, that is to consider the case that $X = S^1 \times I \times D^2$, $\S = S^1 \times I \times \{0\}$.
    By arranging the intersection points to occur in different levels $S^1 \times \{\pt\} \subset \S$, it also suffices to consider that case $n = 1$.
    
    Finally, by viewing $S^1 \times I$ as a collar neighbourhood of the boundary of $D^2$, it in fact suffices to consider the case $X = D^2 \times D^2$, $\S = D^2 \times \{0\}$, $n=1$, and $e(\S,s)=0$. In this case, explicit representatives $s,s' \colon S^1 \times \{0\} \to S^1 \times S^1$ can be given by $s(x,0) = (x,\{1\})$ and $s'(x,0) = (x,x)$. Then $s' = 1 \star s$. By considering the extensions $D^2 \times \{0\} \to D^2 \times D^2$ given by the same formulae, we see that $e(\S,s) = 0$ and $e(\S,s') = 1$.
\end{proof}

We can also rewrite \ref{thm: relative spanning manifolds} as follows in the case $n=2$.

\begin{corollary}\label{cor: key surface spanning}
    Let $X$ be an orientable 4-manifold and let $\S \subset X$ be a properly embedded compact surface. 
    Let $Z \subset \del X$ be a spanning surface for $\del \S$ and let $s^Z$ be the associated Seifert section. 
    Then $\S$ admits a spanning manifold extending $Z$ if and only if $[Z \cup \S] = 0 \in H_n(X;\cyc{2})$ and $e(\S_0,s^Z) = 0$ for each connected component $\S_0 \subseteq \S$.
\end{corollary}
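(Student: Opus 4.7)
The plan is to deduce this corollary directly from Theorem~\ref{thm: relative spanning manifolds} applied in the case $n=2$, so the work reduces to (a) verifying the 2-torsion hypothesis on $H^2(\S,\del\S;\Z^{w_1(\pi)})$, and (b) re-expressing the section-existence condition in that theorem as the Euler-number condition in the corollary.

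For (a), I will use additivity of the first Stiefel--Whitney class under the short exact sequence $0 \to T\S \to \restr{TX}{\S} \to N_X\S \to 0$ to write $w_1(\pi) + w_1(T\S) = w_1(\restr{TX}{\S})$. Since $X$ is orientable this vanishes, so Poincar\'e--Lefschetz duality with twisted coefficients gives
\begin{equation*}
H^2(\S, \del \S; \Z^{w_1(\pi)}) \cong H_0(\S; \Z^{w_1(\pi) + w_1(T\S)}) = H_0(\S;\Z),
\end{equation*}
which is free abelian and in particular has no 2-torsion. This is exactly the observation already made in the paragraph preceding the corollary, so I need only cite it.

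For (b), I will observe that a section of the normal sphere bundle $SN_X\S$ extending $s^Z$ is the same data as a nowhere-vanishing section of the normal disc bundle $DN_X\S$ extending $s^Z$. Since the total space of $N_X\S$ is orientable (by the same computation $w_1(\pi) + w_1(T\S) = 0$), the relative Euler number is defined, and applying \cref{lmm: vanishing normal euler} connected-component-by-component to $DN_X\S \to \S$ shows that such a nowhere-vanishing extension exists if and only if $e(\S_0, s^Z) = 0$ for every connected component $\S_0 \subseteq \S$.

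Combining (a) and (b) with \cref{thm: relative spanning manifolds} gives both directions of the corollary simultaneously, with no extra work. There is no genuine obstacle here; the corollary is pure bookkeeping that repackages the hypotheses of \cref{thm: relative spanning manifolds} in a form better suited to the surface-in-$4$-manifold setting used later in the paper.
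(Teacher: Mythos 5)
Your proposal is correct and matches the paper's reasoning essentially verbatim: the paper likewise cites \cref{lmm: vanishing normal euler} for the reformulation of the section-extension condition as vanishing of relative Euler numbers, and uses the same twisted Poincar\'e--Lefschetz duality computation (stated in the paragraph just before the corollary) to discharge the 2-torsion hypothesis of \cref{thm: relative spanning manifolds}. No gaps; this is the intended bookkeeping argument.
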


This follows from \cref{lmm: vanishing normal euler} and recalling that the 2-torsion condition is automatically satisfied when $X$ is orientable.

\section{Spanning manifolds of punctured immersed surfaces}\label{sec: new surfaces sec}

In order to construct cobordisms between properly embedded surfaces in 4-manifolds from spanning 3-manifolds, we need to develop a theory of spanning 3-manifolds for pairs of properly embedded surfaces, which may intersect. Their union is then an immersed surface, which is considered up to isotopy of the two surfaces. 

To this end, we develop a theory of spanning 3-manifolds for arbitrary properly immersed surfaces in oriented 4-manifolds. To do this, we puncture the ambient 4-manifold around the double points of the immersion to yield a proper embedding, and then only consider spanning 3-manifolds which extend annuli on the $S^3$ boundary components introduced by puncturing. We give conditions for such spanning 3-manifolds exist and have prescribed boundary when the immersion is allowed to be modified either by any regular homotopy, or only by a regular homotopy which decomposes as a pair of isotopies, at least one of which is constant on each connected component of the surface.

We now make this precise. Let $X$ be an oriented 4-manifold, let $S$ be a compact surface, let $f \colon S \immerse X$ be a proper immersion, and write $\S \coloneq f(S)$. 
Since we assume $f$ is generic, all self-intersections of $\S$ are transverse double points. Each such double point $p$ has a 4-ball neighbourhood $D \subset X$ such that the pair $(D, D \cap \S)$ is diffeomorphic to $(D^4, D^2 \stimes \{0,0\} \cup \{0,0\} \stimes D^2)$. The pair $(\del D,\del D \cap \S)$ a Hopf link in $S^3$ \cite[\textsection4.6]{GompfStipsicz}. We call the immersion
\begin{equation*}
    \restr{f}{f\inv(\bar{X \setminus D})} \colon f\inv(\bar{X \setminus D}) \immerse \bar{X \setminus D}
\end{equation*}
the \textit{immersion of $f$ punctured at $p$}.

\begin{definition}\label{def: almost-extendable}
The \textit{punctured embedding of $f$} is the embedding $\wh f \colon \wh S \to \wh X$ given by puncturing $f$ at all double points, so that $\wh \S \coloneq \wh f(\wh S)$ is properly embedded in $\wh X$. If $Z \subset \del X$ is a spanning surface for $\del \S$, we say that $Z$ is \textit{almost-extendable} over $\S$ if there is a spanning manifold $\wh Y$ for $\wh\S$ such that $\wh Y \cap \del X =Z$, and for each component $\del_0 \wh X \subseteq \del \wh X \setminus \del X$, the surface $\wh Y \cap \del_0 \wh X$ is an annulus.
\end{definition}

That is, $Z$ is almost-extendable over $\S$ if we can find annuli spanning the Hopf links in $\del\wh{\S} \subset \del \wh{X}$ introduced by puncturing, such that the union of $Z$ with these annuli extends to a spanning 3-manifold of $\wh{\S}$. Our main aim in this section is prove the following result, which will be an immediate corollary of our more general results.

\begin{proposition}\label{cor: puncturing embeddings}
    Let $X$ be an orientable 4-manifold, and let $\S_0,\S_1 \subset X$ be two properly embedded compact surfaces. Let $Z \subset \del X$ be a spanning surface for $\del \S_0 \cup \del \S_1$ with associated Seifert section $s^Z$. Then there are surfaces $\S'_0$ and $\S'_1$ isotopic to $\S_0$ and $\S_1$ respectively such that $Z$ is almost-extendable over $\S'_0 \cup \S'_1$ if and only if $[\S_0 \cup Z \cup \S_1] = 0 \in H_2(X;\cyc{2})$ and $e(\S_0,s^Z) = e(\S_1,s^Z)$. 
\end{proposition}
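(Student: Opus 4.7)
The plan is to derive Proposition \ref{cor: puncturing embeddings} by applying \cref{cor: key surface spanning} to the punctured embedding $\wh f \colon \wh S \hookrightarrow \wh X$ of $\S_0 \cup \S_1$, together with a choice of annuli $A \subset \del \wh X \setminus \del X$ spanning the Hopf links introduced by the punctures. Almost-extendability of $Z$ over $\S_0 \cup \S_1$ is by definition the existence of a spanning 3-manifold for $\wh \S$ extending some such $Z \cup A$, so \cref{cor: key surface spanning} reduces the problem to the two conditions $[\wh \S \cup Z \cup A] = 0 \in H_2(\wh X;\cyc{2})$ and $e(\wh C, s^{Z \cup A}) = 0$ for every connected component $\wh C$ of $\wh \S$.

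I would then translate both conditions into statements about $\S_0, \S_1, Z$ in $X$. A Mayer--Vietoris argument using the decomposition $X = \wh X \cup \bigsqcup_p B^4$ and $H_1(S^3) = H_2(S^3) = 0$ gives $H_2(\wh X;\cyc{2}) \cong H_2(X;\cyc{2})$, and the symmetric difference between $\wh \S \cup Z \cup A$ and $\S_0 \cup Z \cup \S_1$ is a disjoint union of 2-spheres, each bounding a 3-ball in a neighbourhood of a double point; so the homology condition is equivalent to $[\S_0 \cup Z \cup \S_1] = 0 \in H_2(X;\cyc{2})$. For the Euler number, a local computation in the Hopf link model shows that the annular framing $s^A$ on each meridian circle $K$ of a double point $p$ differs from the disc framing of $K$ (inherited from the small disc removed from $\S_i$) by $\pm 1$; writing this sign as $\epsilon_p$, \cref{lmm: twisting euler numbers} then gives
\[
e(\wh C, s^{Z \cup A}) \;=\; e(C, s^Z) + \sum_{p \in C \cap \S_{1-i}} \epsilon_p
\]
for each component $C$ of $\S_i$, where $\wh C$ is its puncturing.

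The forward direction follows immediately: if almost-extendability holds after isotopy to $\S'_0 \cup \S'_1$, then summing the component-wise vanishing over the components of $\S'_i$ yields $e(\S'_i, s^Z) + \sum_{p \in \S'_0 \cap \S'_1} \epsilon_p = 0$ for both $i = 0$ and $i = 1$. The total $\sum_p \epsilon_p$ is identical in both sums since each double point contributes once from each side, so $e(\S_0, s^Z) = e(\S'_0, s^Z) = e(\S'_1, s^Z) = e(\S_1, s^Z)$ by isotopy invariance; the homology condition transfers directly, being isotopy invariant.

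The reverse direction is the hard part and is where I would invoke the combinatorial machinery of \cref{sec: new surfaces sec} on immersed surfaces. Given the two conditions, one must produce isotopies $\S_i \to \S'_i$ arranging that for every component $C \subset \S'_i$, the sum $\sum_{p \in C \cap \S'_{1-i}} \epsilon_p$ takes the prescribed value $-e(C, s^Z)$. A finger move between $\S_0$ and $\S_1$ introduces a pair of new double points with opposite signs $\epsilon_p$, and by choosing on which components of $\S_0$ and $\S_1$ the two new intersections lie one can effect a unit transfer of target value between two components of $\S_0$, or between two components of $\S_1$. The main obstacle is proving that every assignment of component-wise targets $(a_C)$ subject only to the global balance $\sum_{C \subset \S_0} a_C = \sum_{C' \subset \S_1} a_{C'}$ (which is precisely the hypothesis $e(\S_0, s^Z) = e(\S_1, s^Z)$) is realisable by a finite sequence of finger moves. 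I would expect to carry this out via the diagrammatic bookkeeping advertised in the introduction, essentially reducing to a connectivity or flow argument on a bipartite graph whose vertices are the components of $\S_0$ and $\S_1$ and whose edges record the available finger moves.
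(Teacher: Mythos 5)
Your overall structure matches the paper's: puncture at double points, apply Corollary 4.9 (\cref{cor: key surface spanning}) to the punctured embedding with a choice of spanning annuli, then reduce to a combinatorial question about intersection patterns and signs. The homological translation via Mayer--Vietoris is the same as the paper's Lemma 5.2(i)--(ii), and the Euler number translation via \cref{lmm: twisting euler numbers} matches Lemma 5.2(iii). The forward direction (summing the component-wise identities and cancelling the sign-contributions across the two sides) is essentially the paper's argument in Proposition 5.5 specialized to $\self(\S_0)=\self(\S_1)=0$.

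The gap is the reverse direction, which you reduce to a ``flow argument on a bipartite graph'' without carrying it out, and the sketch you do give is imprecise in a way that obscures the real difficulty. The signs $\epsilon_p$ are not determined by the geometry: each $\epsilon_p$ is a \emph{choice} of spanning annulus at the double point $p$, and a finger move does not automatically create ``a pair of new double points with opposite signs'' --- it creates two new double points whose signs you are free to assign. The non-trivial constraint is that the same sign $\epsilon_p$ appears with coefficient $1$ in the equation for \emph{both} the component of $\S_0$ and the component of $\S_1$ that pass through $p$, because the annulus determines the same framing on both meridians of the Hopf link. A finger move between $C_0\subset\S_0$ and $D\subset\S_1$ adds two edges $(C_0,D)$; choosing the two new signs equal shifts both component-sums by $\pm 2$ simultaneously, so transfers happen in units of $\pm 2$, not $\pm 1$, and always through an intermediate component on the other side. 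This coupling is exactly what the paper's Proposition 5.5 is designed to untangle, via the explicit case analysis (a)--(c) and the type (I)--(IV) bookkeeping for double points; the paper then obtains \cref{cor: puncturing embeddings} as the special case $T=\self(\S_0)+\self(\S_1)=0$. You cannot simply cite ``connectivity of a bipartite graph'' --- you would need to show both that finger moves can realize enough edges (with the correct parities forced by $\cP^C \equiv e(C,s^Z) \bmod 2$ via Lemma 4.12) \emph{and} that a globally consistent sign assignment exists, and this second point is where the actual work lies. As written, your proposal observes the shape of the combinatorial problem but does not solve it.
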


\begin{remark}\label{remark: BS}
    The notion of an almost-extendable spanning surface is inspired by the arguments in \cite{BaykurSunukjian} to show that surfaces $\S_0,\S_1$ in properly embedded in an orientable 4-manifold $X$ are weakly internally stably isotopic (that is, can be made isotopic after finitely many ambient self-connected sums) if and only if (i) they are orientable and $\Z$-homologous with some choice of orientations, or (ii) they are non-orientable, $\cyc{2}$-homologous, and have the same normal Euler number. 

    We summarise that argument here. After an isotopy, we may assume that each component of $\S_0$ intersects each component of $\S_1$ in at least one point. We can then resolve intersections to obtain a properly embedded connected surface $\S \subset X$, by replacing small neighbourhoods of the double points of $\S_0 \cup \S_1$ by annuli. Under the stated assumptions, \cref{cor: key surface spanning} says that $\S$ admits a spanning manifold $Y \subset X$. Choose a handle decomposition of $Y$ rel.\ $\S_0 \cap Y$. The 1-handles specify internal stabilisations of $\S_0$ and the 2-handles specify internal stabilisations of $\S_1$, which result in isotopic surfaces. In fact, the two resulting surfaces are equal away from the intersection points, and near an intersection point they are related by an isotopy along the chosen annulus.

    This method does not quite suffice for our purposes. In particular, it is important for us that the spanning 3-manifold $Y$ lies entirely outside of the neighbourhood of the double points of $\S_0 \cap \S_1$ whose boundary contains the annulus resolving the intersection. We ensure this by considering spanning manifolds of the punctured embedding of $\S_0 \cup \S_1$, rather than simply resolving intersections.
\end{remark}

\subsection{Regular and generic homotopies}\label{sec: regular homotopies}

A regular homotopy of $f$ is a homotopy rel.\ $\del S$ through immersions. A generic homotopy rel.\ $\del S$ is a composition of regular homotopies and cusp homotopies. A regular homotopy is generically a composition of isotopies, finger moves, and Whitney moves \cites{WhitneySingularities1, WhitneySingularities2}, \cite[\textsection III.3]{GolubitskyGuillemin}.
We omit precise definitions of these moves, and instead refer the reader to Chapter 1 of \cite{FreedmanQuinn} or Chapter XII of \cite{Kirby}.

We write $\self(\S) \in \Z_{\geq 0}$ for the number of self-intersections of $f$, not counted with sign since $S$ is unoriented. Note that $\self(\S)$ depends only on the image $\S$, justifying the notation. 
If $\S$ is obtained from $\S'$ by a finger move or from $\S''$ by a cusp homotopy, then 
\begin{equation*}
    \self(\S) = \self(\S')+ 2 = \self(\S'') + 1,
\end{equation*}
and
\begin{equation*}
    e(\S,s) = e(\S',s) = e(\S'',s) \pm 2
\end{equation*}
for any framing $s$ of $\del \S$. In particular, neither $e(\S,s)$ or $2\self(\S) \bmod{4}$ are affected by regular homotopies; a cusp homotopy changes both $e(\S,s) \bmod{4}$ and $2\self(\S) \bmod{4}$ by $2 \in \cyc{4}$. In particular, the quantity
\begin{equation*}
    e(\S,s) - 2 \self(\S) \mod{4}
\end{equation*}
is an invariant of the pair $(\S,s)$ under homotopy rel.\ $\del \S$.

\subsection{Conditions for a spanning surface to be almost-extendable}
In order to give conditions for a spanning surface $Z \subset \del X$ for $\del \S$ to be almost-extendable over $\S$, we need to consider the hypotheses of \cref{cor: key surface spanning} as applied to the punctured embedding $\wh{\S} \subset \wh{X}$. To this end, we check the effect of puncturing on homology and relative normal Euler numbers.

\begin{lemma}\label{lmm: finger puncture boundary lemma}
    Let $f' \colon S' \immerse X'$ be obtained by puncturing $f$ at a double point. Write $\S' \coloneq f'(S')$.
    \begin{enumerate}[label=(\roman*)]
        \item The class $[\S'] = 0 \in H_2(X', \del X';\cyc{2})$ if and only if $[\S] = 0 \in H_2(X,\del X;\cyc{2})$.
        \item Let $Z \subset \del X'$ be a spanning surface for $\del \S'$. Then $Z \cap \del X$ is a spanning surface for $\del \S$, and $[Z \cup \S'] = 0 \in H_2(X';\cyc{2})$ if and only if $[(Z \cap \del X) \cup \S] = 0 \in H_2(X;\cyc{2})$.
        \item Let $s$ be a framing of $\del \S'$ and let $\del \S' = \del \S \sqcup K_0 \sqcup K_1$. Then
        \begin{equation*}e(\S',s) = e(\S,s) + \fr_{K_0}(s) + \fr_{K_1}(s). \end{equation*}
    \end{enumerate}
\end{lemma}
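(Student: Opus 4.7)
The plan is to dispatch (i) and (ii) via homological comparisons between $X$ and $X'$, and to prove (iii) by additivity of the relative normal Euler number together with \cref{lmm: twisting euler numbers}. Throughout, I write $D \subset X$ for the 4-ball neighbourhood of the double point $p$ used to form $X' = \bar{X \setminus D}$, so that $\del X' = \del X \sqcup S^3$ with $S^3 = \del D$ and $(D, D \cap \S) \cong (D^4, D_0 \cup D_1)$, where $D_0, D_1$ are transverse discs meeting at $p$ whose boundaries form the Hopf link $K_0 \cup K_1 \subset S^3$.

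For (i), the plan is to combine the excision isomorphism $H_2(X', \del X';\cyc 2) \cong H_2(X, \del X \cup D;\cyc 2)$ (excising the interior of $D$) with the isomorphism $H_2(X, \del X \cup D;\cyc 2) \cong H_2(X, \del X;\cyc 2)$ coming from the long exact sequence of the triple $(X, \del X \cup D, \del X)$; this works because $D$ is contractible and disjoint from $\del X$, giving $H_i(\del X \cup D, \del X;\cyc 2) \cong H_i(D;\cyc 2) = 0$ for $i \geq 1$. A chain-level inspection shows $[\S']$ corresponds to $[\S]$ under the composition. For (ii), I would apply Mayer--Vietoris to $X = X' \cup D$ with $X' \cap D = S^3$; since $H_1(S^3;\cyc 2) = H_2(S^3;\cyc 2) = H_2(D;\cyc 2) = 0$, inclusion induces an isomorphism $H_2(X';\cyc 2) \cong H_2(X;\cyc 2)$ sending $[Z \cup \S']$ to its image in $X$. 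This image equals $[(Z \cap \del X) \cup \S]$, because the symmetric difference $(Z \cap S^3) \cup (\S \cap D)$ is a 2-cycle in the contractible ball $D$ (both pieces have boundary the Hopf link $K_0 \cup K_1$), and hence is null-homologous.

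For (iii), let $\tilde s_i$ denote the zero-framing of $K_i$ in $\del X'$, so that $\fr_{K_i}(\tilde s_i) = 0$. The first step is to establish
\begin{equation*}
e(\S, s) = e(\S', s|_{\del \S} \cup \tilde s_0 \cup \tilde s_1).
\end{equation*}
For this, I would decompose the source surface as $S = S' \cup \tilde D_0 \cup \tilde D_1$, where $\tilde D_i \subset S$ is the preimage of $D_i$ under $f$, and apply additivity of the relative normal Euler number to obtain
\begin{equation*}
e(\S, s) = e(\S', s|_{\del \S} \cup \tilde s_0 \cup \tilde s_1) + e(D_0, \tilde s_0) + e(D_1, \tilde s_1).
\end{equation*}
The two boundary terms vanish by \cref{lmm: vanishing normal euler}, because the zero-framing of an unknot in $S^3$ corresponds under $f_*$ to a framing of $K_i \subset \del \tilde D_i$ that extends to a nowhere-vanishing section of $N(f)|_{\tilde D_i}$; this is the local fact that the disc-framing of a slice disc for an unknot in $S^3$ is its zero-framing. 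The second step is to twist each $\tilde s_i$ up to $s|_{K_i}$ via \cref{lmm: twisting euler numbers}. Since $s|_{K_i} = \fr_{K_i}(s) \star \tilde s_i$, two applications yield
\begin{equation*}
e(\S', s) = e(\S', s|_{\del \S} \cup \tilde s_0 \cup \tilde s_1) + \fr_{K_0}(s) + \fr_{K_1}(s),
\end{equation*}
and combining with the first step gives the claim. The main non-routine input will be the framing identification in the first step, which is a local computation in $D^4$ that I would carry out in the standard coordinates for $D_0, D_1$.
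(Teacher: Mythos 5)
Your argument follows the paper's proof closely in all three parts: (i) via excision and the long exact sequence of the triple $(X,\del X\cup D,\del X)$, (ii) via the isomorphism $H_2(X';\cyc2)\cong H_2(X;\cyc2)$ together with the observation that $(Z\cap S^3)\cup(\S\cap D)$ is a cycle in the contractible ball $D$, and (iii) via \cref{lmm: twisting euler numbers} and the local fact that the slice-disc framing of the unknot $K_i\subset S^3$ is the zero-framing. The one cosmetic difference is that in (iii) you package the paper's ``compute $e(\S,s)$ using an extension non-vanishing on $\S\cap D$'' as an explicit additivity of relative normal Euler numbers over the decomposition $S=S'\cup\tilde D_0\cup\tilde D_1$; this is equivalent and, if anything, slightly cleaner to state.
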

\begin{proof}
We prove (i)--(iii) in order. Let $D \subset X$ be the 4-ball such that $X' = \bar{X \setminus D}$. Then the image of $[\S]$ under the composite isomorphism
\[\begin{tikzcd}[sep=small]
	{H_{2}(X, \del X;\cyc{2})} & {H_2(X, \del X \cup D;\cyc{2})} & {H_2(X',\del X';\cyc{2}),}
	\arrow["\cong", from=1-1, to=1-2]
	\arrow["\cong", from=1-2, to=1-3]
\end{tikzcd}\]
is exactly $[\S']$, proving (i).

For (ii), note that the map $H_2(X';\cyc{2}) \to H_2(X;\cyc{2})$ induced by inclusion is an isomorphism, so it suffices to show that $[Z \cup \S'] = [(Z \cap \del X) \cup \S] \in H_2(X;\cyc{2})$. This is true, since
\begin{equation*}
    [Z \cup \S'] + [(Z \cap \del X) \cup \S] = [(Z \cap D) \cup (\S \cap D)] \in \im( H_2(D;\cyc{2}) \to H_2(X;\cyc{2}) ),
\end{equation*}
so must be trivial.

For (iii), let $s'$ be the framing of $\del \S'$ such that $\restr{s'}{\del \S} = \restr{s}{\del \S}$, but $\fr_{K_0}(s') = \fr_{K_1}(s') = 0$. By \cref{lmm: twisting euler numbers}, it suffices to show that $e(\S',s') = e(\S,s)$. But this is true, since $e(\S,s)$ can be computed using the intersections with the zero-section of an extension of $s$ which is non-vanishing on $\S \cap D$.
\end{proof}

We can now give a combinatorial condition for when a given spanning surface $Z$ for $\del \S$ is almost-extendable over $\S$, depending on the specifics of the intersections between different components of $S$ under $f$. 

\begin{lemma}\label{lmm: combinatorial condition}
    Let $Z \subset \del X$ be a spanning surface for $\del \S$ with associated Seifert framing $s^Z$. Let $n = \self(\S)$, and let $p_1,\ldots,p_n \in \S$ be the double points of $f$. For each component $C \subset S$ and each $i=1,\ldots,n$, let
    \begin{equation*}
        \cP^C_i \coloneq \#\big(C \cap f\inv(\{p_i\})\big) \in \{0,1,2\}
    \end{equation*}
    be the number of preimages of $p_i$ in $C$.
    Then $Z$ is almost-extendable over  $\S$ if and only if $[Z \cup \S] = 0\in H_2(X;\cyc{2})$ and there exists a choice of $\varepsilon_1,\ldots,\varepsilon_n \in \{\pm 1\}$ such that for each component $C \subseteq S$,
    \begin{equation*}
    e(f(C), s^Z) =  \sum_{i=1}^n\cP^C_i\varepsilon_i.
\end{equation*}
\end{lemma}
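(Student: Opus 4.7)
The plan is to reduce to \cref{cor: key surface spanning}, applied to the punctured embedding $\wh\S\subset\wh X$ together with a spanning surface of the form $\wh Z = Z \cup \bigcup_{i=1}^n A_i$, where $A_i \subset \del_i\wh X \cong S^3$ is an embedded annular Seifert surface for the Hopf link $\wh\S \cap \del_i\wh X$ at the $i$-th double point. By \cref{def: almost-extendable}, $Z$ is almost-extendable over $\S$ precisely when annuli $A_i$ can be chosen so that $\wh Z$ extends to a spanning $3$-manifold of $\wh \S$.

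The homology half is straightforward: iterating \cref{lmm: finger puncture boundary lemma}(ii) once per puncture gives $[\wh Z \cup \wh\S] = 0 \in H_2(\wh X;\cyc{2})$ if and only if $[Z \cup \S] = 0 \in H_2(X;\cyc{2})$, irrespective of the choice of $A_i$. This recovers the homological condition in the statement directly from \cref{cor: key surface spanning}.

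The heart of the argument will be translating the Euler number condition $e(\wh C, s^{\wh Z}) = 0$ (for each component $\wh C\subseteq\wh\S$) into the combinatorial equation. The key local input is that for any embedded annular Seifert surface $A \subset S^3$ of a Hopf link $K_0 \cup K_1$, the associated Seifert framing satisfies $\fr_{K_0}(s^A) = \fr_{K_1}(s^A) = \lk(K_0,K_1) \in \{\pm 1\}$, since a small push-off of $K_0$ along $A$ is ambient isotopic in $S^3\setminus K_0$ to $K_1$, and symmetrically. In the standard local model $(D^4, D^2 \stimes \{0\} \cup \{0\} \stimes D^2)$ at the double point $p_i$, the two embedded ``resolutions'' produce explicit annuli $A_+, A_- \subset S^3$ realising $\varepsilon_i = +1$ and $\varepsilon_i = -1$ respectively. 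Given any choice of signs $\varepsilon_1,\ldots,\varepsilon_n \in \{\pm 1\}$ together with corresponding annuli, iterating \cref{lmm: finger puncture boundary lemma}(iii) component-by-component yields
\begin{equation*}
  e(\wh C, s^{\wh Z}) = e(f(C), s^Z) + \sum_{i=1}^n \cP^C_i\, \varepsilon_i,
\end{equation*}
since each of the $\cP^C_i$ preimages of $p_i$ in $C$ contributes a single meridian circle to $\del\wh C$, each carrying framing $\varepsilon_i$. Requiring this to vanish on every component $C$ and relabelling $\varepsilon_i \leftrightarrow -\varepsilon_i$ gives exactly the equation in the statement, completing the reduction to \cref{cor: key surface spanning}.

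The main obstacle I expect is the local framing calculation: verifying that both $\varepsilon_i = +1$ and $\varepsilon_i = -1$ are realised by \emph{embedded} annuli in $S^3$, and that the sign conventions remain consistent when $\cP^C_i = 2$, so that both meridian circles from a single double point truly contribute with the same sign $\varepsilon_i$. I expect this to be a routine but careful computation in the standard local chart of a transverse double point, comparing the Seifert framing associated to $A_\pm$ with the canonical $0$-framings of the Hopf link components coming from meridian discs in $S^3$.
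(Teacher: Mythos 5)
Your proposal is correct and follows essentially the same path as the paper: reduce to \cref{cor: key surface spanning} for the punctured embedding, handle the homological condition with \cref{lmm: finger puncture boundary lemma}(ii), translate the Euler number condition via \cref{lmm: finger puncture boundary lemma}(iii) and the observation that each double point admits exactly two spanning annuli with Seifert framings $+1$ and $-1$, and set $\varepsilon_i$ to be the negative of the chosen annulus framing. The point you flag as a possible obstacle — that both signs are realised by embedded annuli, and that both meridians at a double point with $\cP^C_i = 2$ contribute with the same sign — is handled in the paper by the single line that the two annuli are specified by $\fr_K(\wh s_i) = \fr_{K'}(\wh s_i) \in \{\pm 1\}$, which is exactly the local framing fact you anticipate; your worry is warranted as a point to verify but is not a gap.
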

\begin{proof}
    We will prove the equivalence directly by considering the hypotheses of \cref{cor: key surface spanning}.    
    Let $\wh f \colon \wh S \to \wh X$ be the punctured embedding of $f$, and write $\wh\S \coloneq \wh{f}(\wh{S})$. For each $i=1,\ldots,n$, let $D_i \subset X$ be the 4-ball neighbourhood of the double point $p_i$ such that $\wh X = \bar{X \setminus (D_1 \cup \cdots \cup D_n)}$. 
    
By \cref{cor: key surface spanning}, $Z$ is almost-extendable over $\S$ if and only if, for each $i = 1,\ldots,n$, there is an annulus $A_i$ spanning the Hopf link $\S \cap \del D_i$ with associated Seifert section $\wh{s}_i$ such that the following two conditions are met. 
\begin{itemize}
    \item The first condition is that
\begin{equation*}
    \Big[Z \cup \bigcup_i A_i \cup \wh{\S}\Big] = 0 \in H_2(\wh{X}; \cyc{2}),
\end{equation*}
which is equivalent to $[Z \cup \S] = 0\in H_2(X;\cyc{2})$ by \cref{lmm: finger puncture boundary lemma}(ii). 

    \item The second condition is that the section $s^Z \cup\wh{s}_1 \cup \cdots \cup \wh{s}_n$ of $SN_{\del \wh{X}} \del \wh{\S}$ extends to a section of $SN_{\wh{X} }\wh{\S}$, or equivalently that
\begin{equation*}
    e\big(\wh{f}(C \cap \wh{S}),s^Z \cup\wh{s}_1 \cup \cdots \cup \wh{s}_n\big) = 0.
\end{equation*}
By \cref{lmm: finger puncture boundary lemma}(iii), this holds if and only if for each component $C \subseteq S$, 
\begin{equation*}
     e(f(C),s^Z) + \sum_{i=1}^n \sum_{K \subseteq f(C) \cap \del D_i} \fr_{K}(\wh{s}_i) = 0,
\end{equation*}
where the second sum is taken over the (possibly empty) set of components of $f(C) \cap \del D_i$.
\end{itemize}

For each double point $p_i$, there are two choices of spanning annulus $A_i$ up to isotopy. If the Hopf link $f(S) \cap \del D_i$ has components $K \sqcup K'$, then these annuli have Seifert framings specified by
\begin{equation*}
    \fr_{K}(\wh{s}_i) = \fr_{K'}(\wh{s}_i) \in \{ \pm 1\}.
\end{equation*}
Hence there exist valid choices of $A_i$ if and only if we can choose some $\varepsilon_i = -\fr_K(\wh{s}_i) \in \{\pm 1\}$ for each $i = 1,\ldots,n$ such that
\begin{align*}
    e(f(C),s^Z) &= \sum_{i=1}^n \sum_{K \subseteq f(C) \cap \del D_i} \varepsilon_i \\&= \sum_{i=1}^n \cP^C_i \varepsilon_i.
\end{align*}
This completes the proof of equivalence.
\end{proof}

\subsection{Conditions for a spanning surface to be almost-extendable after homotopy}\label{sec: finger-puncture stuff}

The second condition of \cref{lmm: combinatorial condition} is very difficult to check in practice. 
However, if $f$ is allowed to be modified by a regular homotopy, the situation simplifies.

\begin{proposition}\label{lmm: homotopy puncture}
    Let $Z \subset \del X$ be a spanning surface for $\del \S$ with associated Seifert section $s^Z$. Then the following are equivalent.
    \begin{enumerate}[label=(\roman*)]
        \item There exists an immersion $g\colon S \immerse X$ homotopic rel.\ $\del S$ to $f$ such that $Z$ is almost-extendable over $g(S)$.
        \item The class $[Z \cup \S] = 0\in H_2(X;\cyc{2})$ and $e(\S,s^Z) \equiv 2\self(\S) \bmod{4}$.
    \end{enumerate}
    In this case, $g$ can be taken to be regularly homotopic to $f$. If $S = S_0 \sqcup S_1 \sqcup S_2$ is a disjoint union of three non-empty compact surfaces, then the homotopy may be taken to be a sequence of finger moves between $S_0$ and $S_1$, between $S_0$ and $S_2$, and between $S_1$ and $S_2$.
\end{proposition}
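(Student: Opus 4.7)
The plan is to prove both implications by means of the combinatorial criterion of \cref{lmm: combinatorial condition}, and to use finger moves to modify $f$ whenever the resulting system is not directly solvable. Direction (i) $\Rightarrow$ (ii) is a bookkeeping computation; direction (ii) $\Rightarrow$ (i) requires constructing the homotopy explicitly.

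For (i) $\Rightarrow$ (ii), suppose $g \simeq f$ rel.\ $\del S$ with $Z$ almost-extendable over $g(S)$. By \cref{lmm: combinatorial condition} there exist $\varepsilon_1, \ldots, \varepsilon_n \in \{\pm 1\}$ with $e(g(C), s^Z) = \sum_i \cP^C_i \varepsilon_i$ for every component $C \subseteq S$, and $[Z \cup g(S)] = 0 \in H_2(X;\cyc{2})$. Since $g$ and $f$ are homotopic rel.\ $\del S$, the latter gives $[Z \cup \S] = 0$. Summing the combinatorial equations over all components $C$ and using $\sum_C \cP^C_i = 2$ for each double point yields $e(g(S), s^Z) = 2\sum_i \varepsilon_i \equiv 2\self(g(S)) \pmod 4$. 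As $e(\cdot, s^Z) - 2\self(\cdot) \bmod 4$ is an invariant of homotopy rel.\ $\del S$ by the observations of \cref{sec: regular homotopies}, this gives $e(\S, s^Z) \equiv 2\self(\S) \pmod 4$.

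For (ii) $\Rightarrow$ (i), since the class $[Z \cup f(S)] \in H_2(X;\cyc{2})$ is invariant under homotopy of $f$ rel.\ $\del S$, it suffices to find a regular homotopy taking $f$ to some $g$ for which the combinatorial system $e(g(C), s^Z) = \sum_i \cP^C_i \varepsilon_i$ admits a solution in signs $\varepsilon_i \in \{\pm 1\}$. A finger move introduces two new double points between the two sheets involved, with prescribed $\cP^C_i$-values; for each such pair the two associated signs can be chosen independently, so performing a finger move and then selecting its signs contributes any element of $\{-2, 0, +2\}$ to the right-hand side of each affected component equation. For the general regular-homotopy statement, one may first perform finger moves within individual components to enlarge the range of achievable right-hand sides, after which the single hypothesis $e(\S, s^Z) \equiv 2\self(\S) \pmod 4$ ensures simultaneous solvability of the per-component system.

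For the three-group refinement only cross-group finger moves are permitted, so self-intersection contributions of $f$ within each group are fixed. Introducing a variable $x_{CC'}$ for the signed contribution from intersections between each cross-group pair of components $(C,C')$, the combinatorial system becomes a linear system in the $x_{CC'}$ together with these fixed self-intersection contributions. Finger moves allow $|x_{CC'}|$ to be made arbitrarily large in steps of $2$, and a single extra finger move between the appropriate pair of components can flip the parity of any chosen $x_{CC'}$. The main obstacle is a careful parity analysis verifying that the single mod-$4$ hypothesis suffices to make the per-component system solvable in this restricted regime; here the three-way structure is essential, since the two-group case would force $e(f(S_0), s^Z) = e(f(S_1), s^Z)$ and is too restrictive, whereas three cross-pair types of finger moves provide just enough independent degrees of freedom to invert the component-wise incidence matrix up to the required mod-$4$ constraint.
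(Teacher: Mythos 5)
Your (i) $\Rightarrow$ (ii) argument is correct and matches the paper: sum the combinatorial equations from \cref{lmm: combinatorial condition} over all components, use $\sum_C \cP^C_i = 2$, and invoke the invariance of $e(\cdot,s^Z) - 2\self(\cdot) \bmod 4$ under homotopy rel.\ $\del S$.

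The (ii) $\Rightarrow$ (i) direction has a genuine gap. Each sign $\varepsilon_i$ appears simultaneously in the equations for \emph{both} components meeting at $p_i$, so the per-component equations $e(g(C),s^Z) = \sum_i \cP^C_i \varepsilon_i$ are \emph{coupled}, not independent. You assert that after enlarging the right-hand side range with finger moves ``the single hypothesis ensures simultaneous solvability,'' but this is exactly the content that needs proof, and your observation that a finger move between $C$ and $C'$ contributes an element of $\{-2,0,2\}$ to ``each affected component equation'' glosses over the fact that it contributes the \emph{same} element to both, since both new signs $\varepsilon_{n+1},\varepsilon_{n+2}$ enter both equations identically. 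So you cannot adjust the $C$-equation and $C'$-equation independently using a $C$--$C'$ finger move. The paper resolves the coupling by a much more delicate procedure: first replace each $\varepsilon_i$ with a component-dependent sign $\varepsilon_i^C$ (decoupling the system so each equation is trivially solvable once one arranges $\cP^C \geq \lvert e(g(C),s^Z) \rvert$ and $\cP^C \equiv e(g(C),s^Z) \bmod 4$), then run an explicit reconciliation argument — classifying double points into types (I)--(IV) and eliminating types (II)--(IV) via finger moves, sign swaps at equal-multiplicity double points, and free sign flips where $\cP^C_i = 0$ — to make $\varepsilon_i^C$ independent of $C$. None of this appears in your sketch, and it is the crux of the proof; the availability of three nonempty blocks $S_0, S_1, S_2$ is what makes the reconciliation moves always available (in particular, the step eliminating type (III) points needs a third block to route finger moves through).

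Your remark about the two-group case is also not quite right: the paper does prove a two-group version (\cref{prop: original finger puncture}), under the weaker hypothesis $e(\S_0,s^Z) - e(\S_1,s^Z) \in \{-2T,\ldots,2T\}$ rather than equality, so the issue is not that two groups force $e(f(S_0),s^Z) = e(f(S_1),s^Z)$ but that one loses the freedom to insert helper finger moves through a third block while reconciling signs.
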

\begin{proof}
We first show that (i) $\Rightarrow$ (ii). Suppose we have chosen some immersion $g \colon S \immerse X$ homotopic to $f$ rel.\ $\del S$ such that $Z$ is almost-extendable over $g(S)$. Then $g$ satisfies the conditions of \cref{lmm: combinatorial condition}; that is, $[Z \cup \S] = 0\in H_2(X;\cyc{2})$, and there exists a choice of $\varepsilon_1,\ldots,\varepsilon_n \in \{\pm 1\}$ such that for each component $C \subseteq S$,
    \begin{equation*}\label{eq: double dagger}
    e(g(C), s^Z) =  \sum_{i=1}^n\cP^C_i\varepsilon_i.\tag{$*$}
\end{equation*}
Summing over all components $C$ in \eqref{eq: double dagger}, we see that $e(g(S), s^Z) = \sum_i 2\varepsilon_i$ and hence that
\begin{equation*}
    e(g(S),s^Z) \equiv 2 \self(g(S)) \mod{4}.
\end{equation*}
As remarked at the end of \cref{sec: regular homotopies}, the quantity $ e(g(S),s^Z) - 2 \self(g(S)) \bmod{4}$ is an invariant of homotopy rel.\ boundary, and so
\begin{equation*}
    e(\S,s^Z) - 2\self(\S) \equiv e(g(S),s^Z) - 2\self(g(S)) \equiv 0 \mod{4}.
\end{equation*}
So (ii) follows as required.

We now show that (ii) $\Rightarrow$ (i). It suffices to show that the assumption $e(\S,s^Z) \equiv 2\self(\S) \bmod{4}$ guarantees that we can find a suitable immersion $g\colon S \immerse X$ and signs $\varepsilon_i \in \{\pm 1\}$ as in \cref{lmm: combinatorial condition}.

Choose some collections of components $S_0, S_1, S_2 \subseteq S$, all non-empty, such that $S = S_0 \cup S_1 \cup S_2$. We do not in general assume that the collections are pairwise disjoint; e.g.\ if $S$ is connected, then the choice $S=S_0=S_1=S_2$ is forced. We show that there exist finger moves taking $f$ to some $g\colon S \immerse X$ such that suitable $\varepsilon_i$ can be chosen. Moreover, all finger moves will be constructed between components such that there exist $i \neq j$ where the first component is in $S_i$ and the second is in $S_j$. This proves the final remark in the case that $S_0$, $S_1$, and $S_2$ are pairwise disjoint.

Suppose some $g\colon S \immerse X$ which is regularly homotopic to $f$ via a suitable sequence of finger moves has been chosen. Let $n \coloneq \self(g(S))$, and let $g$ have double points $p_1,\ldots,p_n$. As before, for a component $C \subseteq S$ and index $i=1,\ldots,n$, we write $\cP^C_i \coloneq  \#\big( C \cap g\inv(\{p_i\})\big)$. Write also $\cP^C \coloneq \cP^C_1 + \cdots + \cP^C_n$ for the total number of preimages of double points in $C$. 

By performing more finger moves if necessary, we may assume that $\cP^C \geq \abs{e(g(C), s^Z)}$ for each component $C \subseteq S$.
This is because performing finger moves does not affect relative normal Euler numbers, but performing a finger move between $C$ and another component increases $\cP^C$ by 2; performing a finger move between $C$ and itself increases $\cP^C$ by 4.

We now show that we assume that $\cP^C \equiv e(g(C),s^Z) \bmod{4}$ for all components $C \subseteq S$.
By \cref{lmm: even euler number facts} and the fact that $[Z \cup g(S)] = [Z \cup \S] = 0 \in H_2(X;\cyc{2})$, we see that for each component $C \subseteq S$,
    \begin{equation*}
       \cP^C \bmod{2}= [g(C)] \cdot [g(S \setminus C)]  = e(g(C), s^Z) \bmod{2}.
    \end{equation*}
Since $g$ differs from $f$ by regular homotopy,
\begin{equation*}
    \sum_C e(g(C),s^Z) \equiv e(g(S),s^Z) \equiv e(\S,s^Z) \equiv 2\self(\S) \mod{4},
\end{equation*}
where the final congruence follows by assumption. Moreover, since $g$ only differs from $f$ by finger moves,
\begin{equation*}
    \sum_C\cP^C \equiv 2 \self(g(S)) \equiv 2\self(\S) \mod{4},
\end{equation*}
and so $\sum_C e(g(C),s^Z) \equiv \sum_C\cP^C \bmod{4}$.

Suppose there is a component $C \subseteq S$ such that $\cP^C \not\equiv e(g(C), s^Z) \bmod{4}$. Then there must a second component $C' \subseteq S$ with $\cP^{C'} \not\equiv e(g(C'), s^Z) \bmod{4}$. If there are $i \neq j \in \{0,1,2\}$ such that $C \subseteq S_i$ and $C' \subseteq S_j$, we perform a finger move between $C$ and $C'$. This does not affect $e(g(C), s^Z)$ or $e(g(C'), s^Z)$, but increases both $\cP^C$ and $\cP^{C'}$ by 2.
If not, suppose $C,C' \subset S_i$, and choose $j \neq i$ and a component $D \subseteq S_j$. Then perform a finger move between both $C$ and $D$, and $C'$ and $D$. This again does not affect any relative normal Euler numbers, but increases $\cP^C$ and $\cP^{C'}$ by 2, and $\cP^D$ by 4.
In this way, we can assume that $\cP^C \equiv e(g(C),s^Z) \bmod{4}$ for all components $C \subseteq S$.

We next describe how to assign a unit $\varepsilon^C_i \in \{ \pm 1\}$ to all components $C \subseteq S$ and all $i=1,\ldots,n$, such that 
    \begin{equation*}
        \sum_ {i=1}^n \cP^C_i \varepsilon^C_i = e(g(C),s^Z).
    \end{equation*}
If we can later arrange that the value of $\varepsilon_i^{C}$ is independent of the component $C \subseteq S$, then we can apply \cref{lmm: combinatorial condition} to show that $Z$ is almost-extendable over $g$, proving (i).

Fix some component $C \subseteq S$, and let $k \in \Z$ be such that
\begin{equation*}
    \cP^C = \sum_ {i=1}^n \cP^C_i = e(g(C),s^Z) + 4k.
\end{equation*}
Since $\cP^C_i \in \{0,1,2\}$ for each $i$ and $\cP^C \geq \abs{e(g(C),s^Z)}$ for each component $C \subseteq S$, we can find a subset $\cI \subseteq \{1,\ldots,n\}$ such that $\sum_{i \in \cI} \cP^C_i = 2k$. Then
\begin{equation*}
    \sum_{i \notin \cI} \cP^C_i - \sum_{i \in \cI} \cP^C_i = e(g(C),s^Z).
\end{equation*}
Thus, we let $\varepsilon_i^C = -1$ if $i \in \cI$ and let $\varepsilon_i^C = +1$ if $i \notin \cI$, to get that $\sum_i \cP^C_i \varepsilon^C_i = e(g(C),s^Z)$.

We now wish to perform a sequence of moves on the immersion $g$ and the choices of signs $\varepsilon^C_i$, which do not affect $e(g(C),s^Z)$ or the sum $\sum_i \cP^C_i \varepsilon^C_i$ for any component $C$, but arrange that for any two components $C, C' \subseteq S$ and any $i=1,\ldots,n$, we have that $\varepsilon_i^{C} = \varepsilon_i^{C'}$. To do this, we allow three types of moves. 

The first move is to find components $C$ and $D$, possibly equal, and perform a finger move between them. This introduces two new double points $p_{n+1}$ and $p_{n+2}$. We set $\varepsilon^F_{n+1} = +1$ and $\varepsilon^F_{n+2}=-1$ for all components $F \subset S$. Performing finger moves does not affect relative normal Euler numbers, and the choice of signs ensure that $\sum_i \cP^C_i \varepsilon^C_i$ and $\sum_i \cP^D_i \varepsilon^D_i$ are unchanged. Note that in order to perform this move, there must be $i \neq j$ such that $C \subseteq S_i$ and $D \subseteq S_j$.

The second move is to find two double points $p_i$ and $p_j$ and a component $C$ such that $\varepsilon_i^C = -\varepsilon_j^C$ and $\cP^C_i = \cP^C_j$. We can then swap the signs of $\varepsilon^C_i$ and $\varepsilon^C_j$. This clearly does not affect either $e(g(C),s^Z)$ or the sum $\sum_i \cP^C_i \varepsilon^C_i$ for any component $C$.

The third and final move is to find a double point $p_i$ and a component $C$ such that $\cP^C_i = 0$. We can then replace $\varepsilon^C_i$ with $-\varepsilon^C_i$. This again affects neither $e(g(C),s^Z)$ nor the sum $\sum_i \cP^C_i \varepsilon^C_i$.

To keep track of the effects of these moves, we assign each double point $p_i$ (or more precisely, each index $i=1,\ldots,n$) to one of four types (I)--(IV) based on their interactions with the components of $S$. These types are described below.
    \begin{enumerate}[label=(\Roman*)]
        \item For any two components $C,C' \subseteq S$, $\varepsilon^C_i = \varepsilon^{C'}_i$.
        \item There exist two components $C,C' \subset S$ such that $\varepsilon^C_i \neq \varepsilon^{C'}_i$, but in all such cases $\cP^C_i \cP^{C'}_i = 0$.
        \item There exist two components $C,C' \subset S$ such that $\varepsilon^C_i \neq \varepsilon^{C'}_i$ and $\cP^C_i = \cP^{C'}_i = 1$, but in all such cases there exist $k \neq \ell \in \{0,1,2\}$ such that $C \subseteq S_k$ and $C' \subseteq S_\ell$.
        \item There exist two components $C,C' \subset S$ such that $\varepsilon^C_i \neq \varepsilon^{C'}_i$ and $\cP^C_i = \cP^{C'}_i = 1$, and for any $k \in \{0,1,2\}$, either $C, C' \subset S_k$ or $ C,C' \not\subset S_k$.
    \end{enumerate}
    These should be thought of as follows. If $p_i$ is type (I), then the signs $\varepsilon_i^C$ are independent of the choice of component $C \subseteq S$. 
    Thus we aim to convert all double points to being of type (I). Note that both double points introduced when performing a finger move are of type (I).
    
    If $p_i$ is of type (II), the sign $\varepsilon^C_i$ is the same for all components $C$ which contain a preimage of $p_i$, although it may differ for components $C$ which do not contain a preimage of $p_i$. As such, we may convert all double points of type (II) to type (I) moves of the third type; that is, by flipping signs of some integers $\varepsilon^C_i$ with $\cP^C_i = 0$. Thus we henceforth assume all double points are of types (I), (III), or (IV), and that there are no double points of type (II).
    
    If $p_i$ is of type (III), the signs $\varepsilon^C_i$ do depend on the component $C$, but whenever there are two components $C, C' \subset S$ with $\varepsilon^C_i \neq \varepsilon^{C'}_i$, they lie in different choices of $S_0$, $S_1$, or $S_2$. So in particular we are free to perform finger moves between $C$ and $C'$.
    
    If $p_i$ is of type (IV), there are choices of components $C,C' \subset S$ such that $\varepsilon^C_i \neq \varepsilon^{C'}_i$ and we are not free to perform finger moves between $C$ and $C'$.

    The procedure for converting double points of types (III) and (IV) to type (I) is relatively simple, though cumbersome to describe, so we recommend the reader consult the diagrammatic summary in \cref{fig: regular homotopy 1}. This diagram is to be read as follows. Each shaded region corresponds to a component of $S$, possibly with multiple shaded regions corresponding to the same component. Components are approximately aligned in three columns, corresponding from left-to-right to $S_0$, $S_1$, and $S_2$. Within a component $C$, a node with a sign corresponds to a preimage in $C$ of a double point $p_i$. The indicated sign is the sign of $\varepsilon^C_i$. Two nodes are joined by an arc if they correspond to the same double point. Thus double points of type (I) are represented by arcs joining two nodes of the same sign; double points of type (III) are arcs connecting nodes of different signs which cross from one column into another; double points of type (IV) are arcs connecting nodes of different signs which stay within a single column.

\begin{figure}[h]
    \begin{center}
    \newcommand{\pspt}[3][black]{
    \draw[line width = 1pt, #1, fill=white] (#2,#3) circle(0.15);
    \draw[line width = 1pt, #1] (#2,#3+0.1) -- (#2,#3-0.1);
    \draw[line width = 1pt, #1] (#2+0.1,#3) -- (#2-0.1,#3);
}

\newcommand{\ngpt}[3][black]{
    \draw[line width = 1pt, #1, fill=white] (#2,#3) circle(0.15);
    \draw[line width = 1pt, #1] (#2+0.1,#3) -- (#2-0.1,#3);
}

\begin{tikzpicture}[scale=1]
\node at (-1,0.5) {(IV):};

\fill[gray!20, dashed] (0.3,0) ellipse (0.5 and 0.4);
\fill[gray!20, dashed] (0.3,1) ellipse (0.5 and 0.4);

\fill[gray!20, dashed] (2.8,1) ellipse (0.8 and 0.5);
\fill[gray!20, dashed] (2.6,0) ellipse (0.5 and 0.4);
\fill[gray!20, dashed] (4.4,1) ellipse (0.7 and 0.5);

\fill[gray!20, dashed] (6.8,1) ellipse (0.8 and 0.5);
\fill[gray!20, dashed] (6.6,0) ellipse (0.5 and 0.4);
\fill[gray!20, dashed] (8.4,1) ellipse (0.7 and 0.5);

\draw[line width = 1pt, black] (0.5,1) -- (0.5,0);
\pspt{0.5}{1}
\ngpt{0.5}{0}

\node at (0.1,1) {$C$};
\node at (0.1,0) {$C'$};

\node at (1.5,0.5) {\Huge$\leadsto$};

\draw[line width = 1pt, black] (2.8,1) -- (2.8,0);
\draw[line width = 1pt, blue] (3.2,1.2) -- (4.2,1.2);
\draw[line width = 1pt, blue] (3.2,0.8) -- (4.2,0.8);
\pspt{2.8}{1}
\ngpt{2.8}{0}
\pspt[blue]{3.2}{1.2}
\ngpt[blue]{3.2}{0.8}
\pspt[blue]{4.2}{1.2}
\ngpt[blue]{4.2}{0.8}

\node at (2.4,1) {$C$};
\node at (2.4,0) {$C'$};
\node at (4.6,1) {$D$};

\node at (5.5,0.5) {\Huge$\leadsto$};

\draw[line width = 1pt, black] (6.8,1) -- (6.8,0);
\draw[line width = 1pt, black] (7.2,1.2) -- (8.2,1.2);
\draw[line width = 1pt, black] (7.2,0.8) -- (8.2,0.8);
\ngpt[red]{6.8}{1}
\ngpt{6.8}{0}
\pspt{7.2}{1.2}
\pspt[red]{7.2}{0.8}
\pspt{8.2}{1.2}
\ngpt{8.2}{0.8}

\node at (6.4,1) {$C$};
\node at (6.4,0) {$C'$};
\node at (8.6,1) {$D$};

\end{tikzpicture}

\vspace{1cm}

\begin{tikzpicture}[scale=1]
\node at (-1.5,0.4) {(III):};

\fill[gray!20, dashed] (-0.2,1) ellipse (0.5 and 0.3);
\fill[gray!20, dashed] (1.2,1) ellipse (0.5 and 0.3);
\fill[gray!20, dashed] (0.8,0.4) ellipse (0.5 and 0.3);
\fill[gray!20, dashed] (2.2,0.4) ellipse (0.5 and 0.3);

\fill[gray!20, dashed] (4.5,0.8) ellipse (0.6 and 0.5);
\fill[gray!20, dashed] (6.1,1) ellipse (0.5 and 0.3);
\fill[gray!20, dashed] (5.8,0.2) ellipse (0.6 and 0.5);
\fill[gray!20, dashed] (7.5,0.4) ellipse (0.5 and 0.3);
\fill[gray!20, dashed] (7.2,-1) ellipse (0.75 and 0.68);

\fill[gray!20, dashed] (9.8,0.8) ellipse (0.6 and 0.5);
\fill[gray!20, dashed] (11.4,1) ellipse (0.5 and 0.3);
\fill[gray!20, dashed] (11.1,0.2) ellipse (0.6 and 0.5);
\fill[gray!20, dashed] (12.8,0.4) ellipse (0.5 and 0.3);
\fill[gray!20, dashed] (12.5,-1) ellipse (0.75 and 0.68);

\draw[line width = 1pt, black] (0,1) -- (1,1);
\draw[line width = 1pt, black] (1,0.4) -- (2,0.4);
\pspt{0}{1}
\ngpt{1}{1}
\ngpt{1}{0.4}
\pspt{2}{0.4}
\node at (-0.4,1) {$C$};
\node at (1.4,1) {$C'$};
\node at (0.6,0.4) {$D$};
\node at (2.4,0.4) {$D'$};

\node at (3.3,0.4) {\Huge$\leadsto$};

\draw[line width = 1pt, black] (4.6,1) -- (5.9,1);
\draw[line width = 1pt, black] (5.9,0.4) -- (7.3,0.4);

\draw[line width = 1pt, blue] (5.7,0) arc(180:270:1.4 and 0.6);
\draw[line width = 1pt, blue] (6.1,0) arc(90:0:1.4 and 0.6);

\draw[line width = 1pt, blue] (4.8,0.6) arc(180:270:2 and 1.5);
\draw[line width = 1pt, blue] (4.4,0.6) arc(180:270:2.4 and 1.9);

\pspt{4.6}{1}
\ngpt[blue]{4.4}{0.6}
\pspt[blue]{4.8}{0.6}
\ngpt{5.9}{1}
\ngpt{5.9}{0.4}
\ngpt[blue]{5.7}{0}
\pspt[blue]{6.1}{0}
\pspt{7.3}{0.4}

\ngpt[blue]{7.1}{-0.6}
\pspt[blue]{7.5}{-0.6}
\pspt[blue]{6.8}{-0.9}
\ngpt[blue]{6.8}{-1.3}

\node at (4.2,1) {$C$};
\node at (6.3,1) {$C'$};
\node at (5.5,0.4) {$D$};
\node at (7.7,0.4) {$D'$};
\node at (7.3,-1.1) {$F$};

\node at (8.6,0.4) {\Huge$\leadsto$};

\draw[line width = 1pt, black] (9.9,1) -- (11.2,1);
\draw[line width = 1pt, black] (11.2,0.4) -- (12.6,0.4);

\draw[line width = 1pt, black] (11,0) arc(180:270:1.4 and 0.6);
\draw[line width = 1pt, black] (11.4,0) arc(90:0:1.4 and 0.6);

\draw[line width = 1pt, black] (10.1,0.6) arc(180:270:2 and 1.5);
\draw[line width = 1pt, black] (9.7,0.6) arc(180:270:2.4 and 1.9);

\ngpt[red]{9.9}{1}
\pspt[red]{9.7}{0.6}
\pspt{10.1}{0.6}
\ngpt{11.2}{1}
\pspt[red]{11.2}{0.4}
\ngpt{11}{0}
\ngpt[red]{11.4}{0}
\pspt{12.6}{0.4}

\ngpt{12.4}{-0.6}
\ngpt[red]{12.8}{-0.6}
\pspt{12.1}{-0.9}
\pspt[red]{12.1}{-1.3}

\node at (9.5,1) {$C$};
\node at (11.6,1) {$C'$};
\node at (10.8,0.4) {$D$};
\node at (13,0.4) {$D'$};
\node at (12.6,-1.1) {$F$};

\end{tikzpicture}
    \caption{A diagrammatic representation of how to convert all double points into type (I). Top: example converting from a double point of type (IV) to one of type (III), as well as two of type (I). Bottom: example converting from a pair of double points of type (III) to six of type (I).}
    \label{fig: regular homotopy 1}
    \end{center}
\end{figure}

    The first two moves outlined above are shown on in the diagram as follows. Performing a finger move looks like adding a pair of arcs between two components in different columns, and setting both nodes on one arc to positive and both nodes on the other arc to negative. These new arcs must cross horizontally from one column into another. When we perform this move, the introduced arcs are marked in blue in the diagram. 
    The second move looks like swapping two different signs of nodes in a component. When we perform this move, the swapped signs are marked in red in the diagram. The third move cannot be represented in the diagram, but we will also not require it again, because we only use it to implicitly convert all double points of type (II) into type (I).

    We first describe how to arrange that there are no double points of type (IV); see the top of \cref{fig: regular homotopy 1}.
    Suppose $p_i$ is of type (IV), and let $C, C' \subset S$ be components such that $\varepsilon^C_i \neq \varepsilon^{C'}_i$ and $\cP_i^C = \cP_i^{C'} =1$. Without loss of generality, suppose that $C,C' \subset S_0$. By performing a finger move if necessary, we may assume there is some double point $p_j$ and a component $D \subset S_1$ such that $\cP^C_j = \cP^D_j = 1$ and $\varepsilon^C_j = -\varepsilon^C_i$. By flipping the signs of $\varepsilon^C_i$ and $\varepsilon^C_j$, we may arrange that $p_i$ is of type (I), at the expense of making $p_j$ of type (III). Hence we may assume there are no double points of type (IV).

    We now describe how to arrange that there are no double points of type (III), assuming that there are none of type (IV); see the bottom of \cref{fig: regular homotopy 1}.
    Suppose that $p_i$ is of type (III), and let $C, C' \subset S$ be components such that $\varepsilon^C_i \neq \varepsilon^{C'}_i$ and $\cP_i^C = \cP_i^{C'} =1 $. Without loss of generality, suppose that $C \subseteq S_0$ and $C' \subseteq S_1$. Consider the sum
    \begin{equation*}
        e(g(S),s^Z) = \sum_{i=1}^n \sum_{C \subseteq S} \cP^C_i\varepsilon^C_i,
    \end{equation*}
    which was arranged to be true earlier by the choices of signs $\varepsilon_i^C$.
    Any double point of type (I) contributes $\pm 2$ to this sum, while a double point of type (III) contributes 0. So if there are $m$ double points of type (III),
    \begin{equation*}
        2n \equiv e(g(S),s^Z) \equiv 2(n-m) \mod{4},
    \end{equation*}
    where we recall that the first equivalence is by assumption. Therefore $m$ must be even. In particular, there must exist another double point $p_j$ of type (III), say with $\cP^D_j = \cP^{D'}_j = 1$ and $\varepsilon^D_j = -\varepsilon^C_i$, for two components $D, D' \subset S$. Find some $k \in \{0,1,2\}$ such that $D \subseteq S_k$. Then we can find $\ell \in \{1,2\}$, $\ell \neq k$, and a component $F \subseteq S_\ell$. By performing finger moves between $C$ and $F$ and between $D$ and $F$ if necessary, we can assume there exist double points $p_s$, $p_t$ of type (I) such that 
    \begin{equation*}
    \cP^C_s, \cP^F_s, \cP^D_t, \cP^F_t = 1, \text{ and }\varepsilon^C_s = \varepsilon^F_s = -\varepsilon^D_t=-\varepsilon^F_t = -\varepsilon^C_i.
    \end{equation*}
    By flipping the signs of $\varepsilon^C_s$, $\varepsilon^F_s$, $\varepsilon^D_t$, $\varepsilon^F_t$, $\varepsilon^C_i$, and $\varepsilon^D_j$, we convert both $p_i$ and $p_j$ to type (I), while keeping $p_s$ and $p_t$ as type (I). Hence we can assume there are no double points of type (III).

    Hence all double points may be assumed to be of type (I), and we are done by \cref{lmm: combinatorial condition} as described above. 
\end{proof}

If $S = S_0 \sqcup S_1 \sqcup S_2$ with each $S_i$ non-empty, then the regular homotopy from $f$ to $g$ described in the proof of \cref{lmm: homotopy puncture} restricts to an isotopy of $\restr{f}{S_i}$ for each $i=0,1,2$. However, if we can only assume that $S = S_0 \sqcup S_1$, such as when $S$ is the union of two properly embedded connected surfaces which may intersect, 
the conversion from double points of type (III) to type (I) or (II) outlined above requires self-finger moves of either $S_0$ or $S_1$. In fact, an extra condition is required to be able to make $Z$ almost-extendable over $\S$ after just isotopies of $\restr{f}{S_0}$ and $\restr{f}{S_1}$. 

\begin{proposition}\label{prop: original finger puncture}
    Let $Z \subset \del X$ be a spanning surface for $\del \S$ with associated Seifert section $s^Z$. Suppose that $S = S_0 \sqcup S_1$ is a disjoint union of two non-empty compact surfaces. Write $\S_i = f(S_i)$ and $T= \self(\S_0) + \self(\S_1)$.  
    Then the following are equivalent.
    \begin{enumerate}[label=(\roman*)]
        \item There exists an immersion $g\colon S \immerse X$ regularly homotopic to $f$ such that the homotopy restricts to an isotopy on both $S_0$ and $S_1$, and $Z$ is almost-extendable over $g(S)$.
        \item The class $[Z \cup \S] = 0\in H_2(X;\cyc{2})$ and
        \begin{equation*}
            e(\S_0,s^Z)-e(\S_1,s^Z) \in \{-2T, -2T + 4,\ldots,2T\}.
        \end{equation*}
    \end{enumerate}
\end{proposition}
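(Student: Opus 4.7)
The plan is to adapt the argument of \cref{lmm: homotopy puncture} to the two-group setting $S = S_0 \sqcup S_1$, in which the only allowed finger moves are between $S_0$ and $S_1$.

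For the forward direction, I take such a $g$ with $Z$ almost-extendable over $g(S)$ and apply \cref{lmm: combinatorial condition} to obtain signs $\varepsilon_i \in \{\pm 1\}$ indexed by the double points of $g$. Partitioning these into the self-intersections $I_0$ of $g|_{S_0}$, the self-intersections $I_1$ of $g|_{S_1}$, and the cross-intersections $J$, the assumption that the homotopy restricts to isotopies on each $S_i$ gives $|I_0| = \self(\S_0)$ and $|I_1| = \self(\S_1)$. Summing the combinatorial identity $e(g(C), s^Z) = \sum_i \cP^C_i \varepsilon_i$ over components of $S_0$ and of $S_1$ and subtracting yields
\[ e(\S_0, s^Z) - e(\S_1, s^Z) = 2\Big(\sum_{i \in I_0} \varepsilon_i - \sum_{i \in I_1} \varepsilon_i\Big), \]
which lies in $\{-2T, -2T+4, \ldots, 2T\}$.

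For the backward direction, I assume (ii) and aim to construct $g$ by performing finger moves between components of $S_0$ and $S_1$, along with a choice of signs satisfying \cref{lmm: combinatorial condition}. I would begin by choosing integers $a$ and $b$ of the parities of $\self(\S_0)$ and $\self(\S_1)$, with $|a| \le \self(\S_0)$, $|b| \le \self(\S_1)$, and $2(a-b) = e(\S_0, s^Z) - e(\S_1, s^Z)$; the hypothesis guarantees such a choice. Fixing arbitrary signs $\varepsilon_i$ on self-intersections with $\sum_{i \in I_0} \varepsilon_i = a$ and $\sum_{i \in I_1} \varepsilon_i = b$, the per-component residuals
\[ r_C := e(f(C), s^Z) - \sum_{i \in I_0 \cup I_1} \cP^C_i \varepsilon_i \]
must be realised by the signs $\eta_j$ assigned to cross-intersections. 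Writing $M^{\mathrm{init}}_{CD}$ for the count of initial cross-intersections between $C \subseteq S_0$ and $D \subseteq S_1$, and performing $k_{CD}$ finger moves between each such pair, this reduces to finding integers $\alpha_{CD}$ with $\alpha_{CD} \equiv M^{\mathrm{init}}_{CD} \pmod{2}$, $|\alpha_{CD}| \le M^{\mathrm{init}}_{CD} + 2k_{CD}$, $\sum_{D} \alpha_{CD} = r_C$, and $\sum_{C} \alpha_{CD} = r_D$. Substituting $\alpha_{CD} = M^{\mathrm{init}}_{CD} + 2\beta_{CD}$ turns this into a standard bipartite transportation problem for the $\beta_{CD}$, which is soluble whenever its prescribed row and column sums agree.

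The main obstacle will be the compatibility bookkeeping: one must verify that (a) $\sum_{C \subseteq S_0} r_C = \sum_{D \subseteq S_1} r_D$, which follows from the defining properties of $a$ and $b$ together with the homological hypothesis $[Z \cup \S] = 0$; and (b) $r_C \equiv M^{\mathrm{init}}_{C,\cdot} \pmod{2}$ for each component $C$, which follows by applying \cref{lmm: even euler number facts} to $f(C)$ versus $f(S \setminus C)$ and tracking the parity of the self-intersection contribution to $r_C$. Once both checks pass, taking each $k_{CD}$ large enough to accommodate the magnitude bounds and applying \cref{lmm: combinatorial condition} to the resulting immersion completes the proof. Unlike in the proof of \cref{lmm: homotopy puncture}, the explicit resolution of type-(III) and type-(IV) conflicts is replaced here by the flexibility of choosing both the endpoint components and the signs for each finger move.
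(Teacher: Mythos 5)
Your forward direction agrees with the paper's in substance: both extract the constraints on $e(\S_0,s^Z) - e(\S_1,s^Z)$ from the combinatorial identity of \cref{lmm: combinatorial condition}, though you obtain parity and magnitude simultaneously from the single identity $e(\S_0, s^Z) - e(\S_1, s^Z) = 2\big(\sum_{I_0}\varepsilon_i - \sum_{I_1}\varepsilon_i\big)$, where the paper treats them in two steps. The backward direction is a genuinely different argument. The paper follows the schema of \cref{lmm: homotopy puncture}, eliminating type~(III) double points by an explicit case analysis (\cref{fig: finger puncture}) using only the allowed cross-component finger moves. You instead fix the self-intersection signs so that $\sum_{I_0}\varepsilon_i = a$ and $\sum_{I_1}\varepsilon_i = b$ with $2(a-b) = e(\S_0,s^Z) - e(\S_1,s^Z)$, and then realise the residuals $r_C$ via a bipartite integer transportation problem for the aggregate cross-intersection signs $\alpha_{CD}$, taking each $k_{CD}$ large enough to remove the magnitude constraint. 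Your two compatibility checks are correct: check~(a) reduces to $e(\S_0,s^Z) - 2a = e(\S_1,s^Z) - 2b$, which holds by the choice of $a,b$; for check~(b), applying \cref{lmm: even euler number facts} to $f(C)$ versus $f(S\setminus C)$ gives $e(f(C),s^Z) \equiv \#\{i : \cP^C_i = 1\} \pmod 2$, and subtracting $\sum_{i\in I_0\cup I_1}\cP^C_i \equiv \#\{i\in I_0\cup I_1 : \cP^C_i = 1\} \pmod 2$ leaves exactly $M^{\mathrm{init}}_{C,\cdot} \pmod 2$. The transportation formulation is arguably cleaner and makes the threshold $2T$ transparent; the paper's case analysis has the advantage of being a local modification of the proof of \cref{lmm: homotopy puncture} and directly exhibiting the permitted finger moves. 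Both routes are valid; you should, however, state explicitly in the forward direction that \cref{lmm: combinatorial condition} also yields the homological conclusion $[Z\cup\S]=0$.
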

\begin{proof}
    There are two additional things to prove in order to adapt the proof of \cref{lmm: homotopy puncture}: first, that condition $e(\S_0,s^Z)-e(\S_1,s^Z) \in \{-2T, -2T + 4,\ldots,2T\}$ is necessary for (i) to hold; and second, that it is enough to provide an alternative method of converting double points of type (III) into double points of type (I).

    We first show the condition is necessary, and hence that (i) $\Rightarrow$ (ii). By \cref{lmm: even euler number facts}, we have that $2e(\S_1,s^Z) \equiv 2[\S_0] \cdot [\S_1] \bmod{4}$. Subtracting this from $e(\S,s^Z) \equiv 2\self(\S) \bmod{4}$, which is necessary by \cref{lmm: homotopy puncture}, gives the necessary condition
    \begin{equation*}
         e(\S_0,s^Z) - e(\S_1,s^Z) \equiv 2\self(\S) - 2[\S_0] \cdot [\S_1] \equiv  2T \mod{4}.
    \end{equation*}
    
    It remains to show that (i) implies that $\abs{ e(\S_0,s^Z) - e(\S_1,s^Z)} \leq 2T$. Suppose a suitable immersion $g \colon S \immerse X$ and values $\varepsilon_i \in \{ \pm 1\}$ have been found, so that for each component $C \subseteq S$, 
    \begin{equation*}
        e(g(C),s^Z) = \sum_{i=1}^n \cP^C_i \varepsilon_i.
    \end{equation*}
    Then since $g$ is regularly homotopic to $f$,
    \begin{align*}
        e(\S_0,s^Z) - e(\S_1,s^Z)&= e(g(S_0),s^Z) - e(g(S_1),s^Z) \\
        &= \sum_{i=1}^n \left( \sum_{C \subseteq S_0} \cP^C_i\varepsilon^i -  \sum_{C \subseteq S_1} \cP^C_i\varepsilon^i \right)\\
        &= \sum_{\substack{1 \leq i \leq n, \\g\inv(p_i) \subset S_0}} 2\varepsilon_i - \sum_{\substack{1 \leq i \leq n, \\g\inv(p_i) \subset S_1}} 2\varepsilon_i,
    \end{align*}
    where the last equality follows because double points $p_i$ with preimages in both $S_0$ and $S_1$ contribute 0 to the sum. But since the homotopy from $f$ to $g$ is assumed to be an isotopy on each $S_i$, this gives
    \begin{equation*}
e(\S_0,s^Z) - e(\S_1,s^Z) = \sum_{\substack{1 \leq i \leq n, \\f\inv(p_i) \subset S_0}} 2\varepsilon_i - \sum_{\substack{1 \leq i \leq n, \\f\inv(p_i) \subset S_1}} 2\varepsilon_i.
    \end{equation*}
    The triangle inequality then gives that
    \begin{equation*}
        \abs{e(\S_0,s^Z) - e(\S_1,s^Z)} \leq 2\self(\S_0) + 2\self(\S_1) = 2T.
    \end{equation*}
    Hence (i) $\Rightarrow$ (ii).

    We now provide an alternative method of removing double points of type (III), under the assumption that there are no double points of type (II) or (IV). This will prove that (ii) $\Rightarrow$ (i). Again, the procedure is best described diagrammatically as in \cref{fig: finger puncture}. This diagram is to be read in the same way as \cref{fig: regular homotopy 1}; note, however, that the grey regions corresponding to the different components of $S$ are now only arranged into two columns, corresponding to $S_0$ and $S_1$ respectively.

    \begin{figure}[t]
    \vskip 0.0in
    \begin{center}
    \newcommand{\pspt}[3][black]{
    \draw[line width = 1pt, #1, fill=white] (#2,#3) circle(0.15);
    \draw[line width = 1pt, #1] (#2,#3+0.1) -- (#2,#3-0.1);
    \draw[line width = 1pt, #1] (#2+0.1,#3) -- (#2-0.1,#3);
}

\newcommand{\ngpt}[3][black]{
    \draw[line width = 1pt, #1, fill=white] (#2,#3) circle(0.15);
    \draw[line width = 1pt, #1] (#2+0.1,#3) -- (#2-0.1,#3);
}

\begin{tikzpicture}[scale=1]

\node at (-1.1,0.7) {(a)};

\fill[gray!20, dashed] (0.2,1.1) ellipse (0.5 and 0.3);
\fill[gray!20, dashed] (1.6,1.1) ellipse (0.5 and 0.3);
\fill[gray!20, dashed] (0.2,0.3) ellipse (0.5 and 0.3);
\fill[gray!20, dashed] (1.6,0.3) ellipse (0.5 and 0.3);

\fill[gray!20, dashed] (3.8,1.4) ellipse (0.5 and 0.3);
\fill[gray!20, dashed] (5.5,1.2) ellipse (0.6 and 0.5);
\fill[gray!20, dashed] (3.9,0.2) ellipse (0.6 and 0.5);
\fill[gray!20, dashed] (5.6,0) ellipse (0.5 and 0.3);

\fill[gray!20, dashed] (7.8,1.4) ellipse (0.5 and 0.3);
\fill[gray!20, dashed] (9.5,1.2) ellipse (0.6 and 0.5);
\fill[gray!20, dashed] (7.9,0.2) ellipse (0.6 and 0.5);
\fill[gray!20, dashed] (9.6,0) ellipse (0.5 and 0.3);

\draw[line width = 1pt, black] (0.4,1.1) -- (1.4,1.1);
\draw[line width = 1pt, black] (0.4,0.3) -- (1.4,0.3);

\pspt{0.4}{1.1}
\ngpt{1.4}{1.1}
\ngpt{0.4}{0.3}
\pspt{1.4}{0.3}

\node at (0,1.1) {$C$};
\node at (1.8,1.1) {$C'$};
\node at (0,0.3) {$D$};
\node at (1.8,0.3) {$D'$};

\node at (2.7,0.7) {\Huge$\leadsto$};

\draw[line width = 1pt, black] (4,1.4) -- (5.4,1.4);
\draw[line width = 1pt, black] (4,0) -- (5.4,0);
\draw[line width = 1pt, blue] (5.2,1) arc(90:180:1.4 and 0.6);
\draw[line width = 1pt, blue] (5.6,1) arc(0:-90:1.4 and 0.6);

\pspt{4}{1.4}
\ngpt{5.4}{1.4}
\pspt[blue]{5.2}{1}
\ngpt[blue]{5.6}{1}
\ngpt{4}{0}
\pspt[blue]{3.8}{0.4}
\ngpt[blue]{4.2}{0.4}
\pspt{5.4}{0}

\node at (3.6,1.4) {$C$};
\node at (5.8,1.4) {$C'$};
\node at (3.6,0) {$D$};
\node at (5.8,0) {$D'$};

\node at (6.7,0.7) {\Huge$\leadsto$};

\draw[line width = 1pt, black] (8,1.4) -- (9.4,1.4);
\draw[line width = 1pt, black] (8,0) -- (9.4,0);
\draw[line width = 1pt, black] (9.2,1) arc(90:180:1.4 and 0.6);
\draw[line width = 1pt, black] (9.6,1) arc(0:-90:1.4 and 0.6);

\pspt{8}{1.4}
\pspt[red]{9.4}{1.4}
\ngpt[red]{9.2}{1}
\ngpt{9.6}{1}
\pspt[red]{8}{0}
\ngpt[red]{7.8}{0.4}
\ngpt{8.2}{0.4}
\pspt{9.4}{0}

\node at (7.6,1.4) {$C$};
\node at (9.8,1.4) {$C'$};
\node at (7.6,0) {$D$};
\node at (9.8,0) {$D'$};

\end{tikzpicture}

\vspace{1cm}

\begin{tikzpicture}[scale=1]

\node at (-5.3,0) {(b)};

\fill[gray!20, dashed] (-3.8,1) ellipse (0.3 and 0.5);
\fill[gray!20, dashed] (-2.6,1) ellipse (0.3 and 0.5);
\fill[gray!20, dashed] (-4,0) ellipse (0.5 and 0.3);
\fill[gray!20, dashed] (-2.4,0) ellipse (0.5 and 0.3);
\fill[gray!20, dashed] (-4,-0.8) ellipse (0.5 and 0.3);
\fill[gray!20, dashed] (-4,-1.8) ellipse (0.5 and 0.3);

\fill[gray!20, dashed] (0,1) ellipse (0.3 and 0.5);
\fill[gray!20, dashed] (1.4,0.9) ellipse (0.5 and 0.6);
\fill[gray!20, dashed] (-0.2,0) ellipse (0.5 and 0.3);
\fill[gray!20, dashed] (1.3,-0.2) ellipse (0.6 and 0.5);
\fill[gray!20, dashed] (0.1,-1.2) ellipse (0.65 and 0.47);
\fill[gray!20, dashed] (0.1,-2.2) ellipse (0.65 and 0.47);

\fill[gray!20, dashed] (4.9,1) ellipse (0.3 and 0.5);
\fill[gray!20, dashed] (6.3,0.9) ellipse (0.5 and 0.6);
\fill[gray!20, dashed] (4.7,0) ellipse (0.5 and 0.3);
\fill[gray!20, dashed] (6.2,-0.2) ellipse (0.6 and 0.5);
\fill[gray!20, dashed] (5,-1.2) ellipse (0.65 and 0.47);
\fill[gray!20, dashed] (5,-2.2) ellipse (0.65 and 0.47);

\draw[line width = 1pt, black] (-3.8,0.8) -- (-2.6,0.8);
\draw[line width = 1pt, black] (-3.8,0) -- (-2.6,0);
\draw[line width = 1pt, black] (-3.8,-0.8) -- (-3.8,-1.8);

\pspt{-3.8}{0.8}
\ngpt{-2.6}{0.8}
\pspt{-3.8}{0}
\ngpt{-2.6}{0}
\ngpt{-3.8}{-0.8}
\ngpt{-3.8}{-1.8}

\node at (-3.8,1.2) {$C$};
\node at (-2.6,1.2) {$C'$};
\node at (-4.2,0) {$D$};
\node at (-2.2,0) {$D'$};
\node at (-4.2,-0.8) {$G$};
\node at (-4.2,-1.8) {$F$};

\node at (-1.3,0) {\Huge$\leadsto$};

\draw[line width = 1pt, black] (0,0.8) -- (1.2,0.8);
\draw[line width = 1pt, black] (0,0) -- (1.2,0);
\draw[line width = 1pt, black] (0,-1.2) -- (0,-2.2);

\draw[line width = 1pt, blue] (1,-0.4) arc(0:-90:0.6 and 0.6);
\draw[line width = 1pt, blue] (1.4,-0.4) arc(0:-90:1 and 1);

\draw[line width = 1pt, blue] (1.6, 1) arc(90:0:1.1) arc(0:-90:2.3); 
\draw[line width = 1pt, blue] (1.6, 0.6) arc(90:0:0.7) arc(0:-90:1.9); 

\pspt{0}{0.8}
\ngpt{1.2}{0.8}
\ngpt[blue]{1.6}{1}
\pspt[blue]{1.6}{0.6}
\pspt{0}{0}
\ngpt{1.2}{0}
\pspt[blue]{1}{-0.4}
\ngpt[blue]{1.4}{-0.4}
\ngpt{0}{-1.2}
\pspt[blue]{0.4}{-1}
\ngpt[blue]{0.4}{-1.4}
\ngpt{0}{-2.2}
\pspt[blue]{0.4}{-2}
\ngpt[blue]{0.4}{-2.4}

\node at (0,1.2) {$C$};
\node at (1.2,1.2) {$C'$};
\node at (-0.4,0) {$D$};
\node at (1.6,0) {$D'$};
\node at (-0.4,-1.2) {$G$};
\node at (-0.4,-2.2) {$F$};

\node at (3.5,0) {\Huge$\leadsto$};

\draw[line width = 1pt, black] (4.9,0.8) -- (6.1,0.8);
\draw[line width = 1pt, black] (4.9,0) -- (6.1,0);
\draw[line width = 1pt, black] (4.9,-1.2) -- (4.9,-2.2);

\draw[line width = 1pt, black] (5.9,-0.4) arc(0:-90:0.6 and 0.6);
\draw[line width = 1pt, black] (6.3,-0.4) arc(0:-90:1 and 1);

\draw[line width = 1pt, black] (6.5, 1) arc(90:0:1.1) arc(0:-90:2.3); 
\draw[line width = 1pt, black] (6.5, 0.6) arc(90:0:0.7) arc(0:-90:1.9); 

\pspt{4.9}{0.8}
\pspt[red]{6.1}{0.8}
\ngpt{6.5}{1}
\ngpt[red]{6.5}{0.6}
\pspt{4.9}{0}
\pspt[red]{6.1}{0}
\ngpt[red]{5.9}{-0.4}
\ngpt{6.3}{-0.4}
\pspt[red]{4.9}{-1.2}
\ngpt[red]{5.3}{-1}
\ngpt{5.3}{-1.4}
\pspt[red]{4.9}{-2.2}
\ngpt[red]{5.3}{-2}
\ngpt{5.3}{-2.4}

\node at (4.9,1.2) {$C$};
\node at (6.1,1.2) {$C'$};
\node at (4.5,0) {$D$};
\node at (6.5,0) {$D'$};
\node at (4.5,-1.2) {$G$};
\node at (4.5,-2.2) {$F$};

\end{tikzpicture}

\vspace{1cm}

\begin{tikzpicture}[scale=1]

\node at (-5.3,0) {(c)};

\fill[gray!20, dashed] (-3.8,1) ellipse (0.3 and 0.5);
\fill[gray!20, dashed] (-2.6,1) ellipse (0.3 and 0.5);
\fill[gray!20, dashed] (-4,0) ellipse (0.5 and 0.3);
\fill[gray!20, dashed] (-2.4,0) ellipse (0.5 and 0.3);
\fill[gray!20, dashed] (-2.4,-0.8) ellipse (0.5 and 0.3);
\fill[gray!20, dashed] (-2.4,-1.8) ellipse (0.5 and 0.3);

\fill[gray!20, dashed] (2.3,1) ellipse (0.3 and 0.5);
\fill[gray!20, dashed] (0.9,0.9) ellipse (0.5 and 0.6);
\fill[gray!20, dashed] (2.5,0) ellipse (0.5 and 0.3);
\fill[gray!20, dashed] (1,-0.2) ellipse (0.6 and 0.5);
\fill[gray!20, dashed] (2.2,-1.2) ellipse (0.65 and 0.47);
\fill[gray!20, dashed] (2.2,-2.2) ellipse (0.65 and 0.47);

\fill[gray!20, dashed] (7.2,1) ellipse (0.3 and 0.5);
\fill[gray!20, dashed] (5.8,0.9) ellipse (0.5 and 0.6);
\fill[gray!20, dashed] (7.4,0) ellipse (0.5 and 0.3);
\fill[gray!20, dashed] (5.9,-0.2) ellipse (0.6 and 0.5);
\fill[gray!20, dashed] (7.1,-1.2) ellipse (0.65 and 0.47);
\fill[gray!20, dashed] (7.1,-2.2) ellipse (0.65 and 0.47);

\draw[line width = 1pt, black] (-3.8,0.8) -- (-2.6,0.8);
\draw[line width = 1pt, black] (-3.8,0) -- (-2.6,0);
\draw[line width = 1pt, black] (-2.6,-0.8) -- (-2.6,-1.8);

\pspt{-3.8}{0.8}
\ngpt{-2.6}{0.8}
\pspt{-3.8}{0}
\ngpt{-2.6}{0}
\pspt{-2.6}{-0.8}
\pspt{-2.6}{-1.8}

\node at (-3.8,1.2) {$C$};
\node at (-2.6,1.2) {$C'$};
\node at (-4.2,0) {$D$};
\node at (-2.2,0) {$D'$};
\node at (-2.2,-0.8) {$G$};
\node at (-2.2,-1.8) {$F$};

\node at (-1.1,0) {\Huge$\leadsto$};

\draw[line width = 1pt, black] (1.1,0.8) -- (2.3,0.8);
\draw[line width = 1pt, black] (1.1,0) -- (2.3,0);
\draw[line width = 1pt, black] (2.3,-1.2) -- (2.3,-2.2);

\draw[line width = 1pt, blue] (1.3,-0.4) arc(180:270:0.6 and 0.6);
\draw[line width = 1pt, blue] (0.9,-0.4) arc(180:270:1 and 1);

\draw[line width = 1pt, blue] (0.7, 1) arc(90:180:1.1) arc(180:270:2.3); 
\draw[line width = 1pt, blue] (0.7, 0.6) arc(90:180:0.7) arc(180:270:1.9); 

\pspt{1.1}{0.8}
\ngpt[blue]{0.7}{1}
\pspt[blue]{0.7}{0.6}
\ngpt{2.3}{0.8}
\pspt{1.1}{0}
\pspt[blue]{0.9}{-0.4}
\ngpt[blue]{1.3}{-0.4}
\ngpt{2.3}{0}
\pspt{2.3}{-1.2}
\pspt[blue]{1.9}{-1}
\ngpt[blue]{1.9}{-1.4}
\pspt{2.3}{-2.2}
\pspt[blue]{1.9}{-2}
\ngpt[blue]{1.9}{-2.4}

\node at (1.1,1.2) {$C$};
\node at (2.3,1.2) {$C'$};
\node at (0.7,0) {$D$};
\node at (2.7,0) {$D'$};
\node at (2.7,-1.2) {$G$};
\node at (2.7,-2.2) {$F$};

\node at (3.8,0) {\Huge$\leadsto$};

\draw[line width = 1pt, black] (6,0.8) -- (7.2,0.8);
\draw[line width = 1pt, black] (6,0) -- (7.2,0);
\draw[line width = 1pt, black] (7.2,-1.2) -- (7.2,-2.2);

\draw[line width = 1pt, black] (6.2,-0.4) arc(180:270:0.6 and 0.6);
\draw[line width = 1pt, black] (5.8,-0.4) arc(180:270:1 and 1);

\draw[line width = 1pt, black] (5.6, 1) arc(90:180:1.1) arc(180:270:2.3); 
\draw[line width = 1pt, black] (5.6, 0.6) arc(90:180:0.7) arc(180:270:1.9); 

\ngpt[red]{6}{0.8}
\pspt[red]{5.6}{1}
\pspt{5.6}{0.6}
\ngpt{7.2}{0.8}
\ngpt[red]{6}{0}
\pspt{5.8}{-0.4}
\pspt[red]{6.2}{-0.4}
\ngpt{7.2}{0}
\ngpt[red]{7.2}{-1.2}
\pspt{6.8}{-1}
\pspt[red]{6.8}{-1.4}
\ngpt[red]{7.2}{-2.2}
\pspt{6.8}{-2}
\pspt[red]{6.8}{-2.4}

\node at (6,1.2) {$C$};
\node at (7.2,1.2) {$C'$};
\node at (5.6,0) {$D$};
\node at (7.6,0) {$D'$};
\node at (7.6,-1.2) {$G$};
\node at (7.6,-2.2) {$F$};

\end{tikzpicture}
    \caption{A revised schema for converting double points of type (III) to type (I). The three rows (a)--(c) refer to different possible configurations of double points of type (III).}
    \label{fig: finger puncture}
    \end{center}
\end{figure}

    Suppose that $p_i$ is of type (III), with $\cP^C_i = \cP^{C'}_i = 1$ and $\varepsilon^C_i =-\varepsilon^{C'}_i$. Without loss of generality, suppose that $C \subseteq S_0$ and $C' \subseteq S_1$. As before, there must exist another double point $p_j$ of type (III), say with $\cP^D_j = \cP^{D'}_j = 1$. We can order $D$ and $D'$ such that $D \subseteq S_0$ and $D' \subseteq S_1$. We have three cases to consider, each shown in \cref{fig: finger puncture}.
    \pagebreak

    \begin{enumerate}[label=(\alph*)]
        \item Suppose we can choose $p_j$ such that $\varepsilon^C_i =- \varepsilon^D_j$. Then we are in the situation of \cref{fig: finger puncture}(a). By performing a finger move between $C'$ and $D$ if necessary, we may assume that there is a double point $p_\ell$ such that
        \begin{equation*}
            \cP^{C'}_\ell = \cP^D_\ell = 1 \text{ and } \varepsilon^{C'}_\ell = - \varepsilon^{C'}_i = - \varepsilon^D_i = \varepsilon^D_\ell.
        \end{equation*}
        Then by flipping the signs of $\varepsilon^{C'}_\ell$, $\varepsilon^{C'}_i$, $\varepsilon^D_j$, and  $\varepsilon^D_\ell$, we convert both $p_i$ and $p_j$ to type (I), while keeping $p_\ell$ type (I).
        \item Suppose we are not in case (a), but that there is a double point $p_\ell$ of $g(S_0)$ and a component $F\subseteq S_0$ with $\cP^F_\ell \geq 1$ and $\varepsilon^F_\ell  = -\varepsilon^C_i$. Let $G \subseteq S_0$ be the other component of $S_0$ such that $\cP^G_\ell \geq 1$, noting that possibly $F=G$. Then we are in the situation of \cref{fig: finger puncture}(b). Since $p_\ell$ cannot be of type (IV) by assumption, or type (III) since it is a point of intersection between components in $S_0$, it must be of type (I). So $\varepsilon^F_\ell = \varepsilon^G_\ell$.
        
        By performing finger moves if necessary, we may assume there are double points $p_s$, $p_t$ of type (I) such that
        \begin{equation*}
            \cP^F_s = \cP^{C'}_s = \cP^G_t = \cP^{D'}_t = 1 \text{ and } \varepsilon^F_s = \varepsilon^{C'}_s = \varepsilon^G_t = \varepsilon^{D'}_t = -\varepsilon^F_\ell.
        \end{equation*}
        By flipping the signs of $\varepsilon^{C'}_i$, $\varepsilon^{C'}_s$, $\varepsilon^{D'}_j$, $\varepsilon^{D'}_t$, $\varepsilon^F_\ell$, $\varepsilon^F_s$, $\varepsilon^G_\ell$, and $\varepsilon^G_t$, we convert both $p_i$ and $p_j$ to type (I), while keeping each of $p_\ell$, $p_s$ and $p_t$ as type (I).

        Note that if $F=G$, flipping the signs of $\varepsilon^F_\ell=\varepsilon^G_\ell$ and $\varepsilon^F_s = \varepsilon^G_s$ is slightly different to the move described in \cref{lmm: homotopy puncture}, since $\cP^F_\ell = 2$. However, it is still a legal move, since it still does not affect $\sum_i \cP^F_i \varepsilon^F_i$.
        \item Suppose that we are not in case (a) or (b). Then consider the sum
    \begin{equation*}
        e(\S_0,s^Z) - e(\S_1, s^Z) = \sum_{k=1}^n \left( \sum_{C \subseteq S_0}  \cP^C_k  \varepsilon^C_k - \sum_{C \subseteq S_1}  \cP^C_k  \varepsilon^C_k \right).
    \end{equation*}
    Since we are not in case (a), we may assume that all double points of type (III) contribute $2\varepsilon^C_i$ to the sum. A double point of type (I) contributes 0 to the sum if it is not a double point of $g(S_0)$ or $g(S_1)$. If it is a double point of $g(S_0)$, it must contribute $2\varepsilon^C_i$ to the sum, since we are not in case (b). If all double points of $g(S_1)$ contributed $2\varepsilon^C_i$ to the sum, then we would have that
    \begin{align*}
        \abs{ e(\S_0,s^Z) - e(\S_1, s^Z)} &= 2 \cdot \#\{\text{double points of type (III)}\} + 2\self(g(S_0)) + 2\self(g(S_1))\\
        & \geq 4 + 2T,
     \end{align*}
    which contradicts the assumption that $\abs{ e(\S_0,s^Z) - e(\S_1, s^Z)} \leq 2T$. So there must be a double point $p_\ell$ of $g(S_1)$ and a component $F \subseteq S_1$ with $\cP^F_\ell \geq 1$ and $\varepsilon^F_\ell = \varepsilon^C_i$. This case proceeds exactly as case (b), but reversing the roles of $S_0$ and $S_1$. See \cref{fig: finger puncture}(c).
    \end{enumerate}

    We can therefore arrange that there are no double points of type (III). Hence all double points are of type (I), and we are done.
\end{proof}

We can then deduce \cref{cor: puncturing embeddings} from \cref{prop: original finger puncture}, since $\self(\S_0)=\self(\S_1)=0$ when $\S_0$ and $\S_1$ are embedded in $X$. We restate the conclusion for the readers convenience.

\begin{namedtheorem}[\cref{cor: puncturing embeddings}]
    Let $X$ be an orientable 4-manifold, and let $\S_0,\S_1 \subset X$ be two properly embedded compact surfaces. Let $Z \subset \del X$ be a spanning surface for $\del \S_0 \cup \del \S_1$ with associated Seifert section $s^Z$. Then there are surfaces $\S'_0$ and $\S'_1$ isotopic to $\S_0$ and $\S_1$ respectively such that $Z$ is almost-extendable over $\S'_0 \cup \S'_1$ if and only if $[\S_0 \cup Z \cup \S_1] = 0 \in H_2(X;\cyc{2})$ and $e(\S_0,s^Z) = e(\S_1,s^Z)$. 
\end{namedtheorem}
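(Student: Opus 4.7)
The plan is to derive this as an essentially immediate corollary of Proposition \ref{prop: original finger puncture}. After a small perturbation if necessary, I may assume that $\S_0$ and $\S_1$ meet transversely in isolated double points, so the inclusion $f \colon S_0 \sqcup S_1 \immerse X$ of the disjoint union is a generic proper immersion with image $\S \coloneq \S_0 \cup \S_1$, in the sense of Section \ref{sec: finger-puncture stuff}. Since each $\S_i$ is individually embedded, all double points of $f$ lie in $\S_0 \cap \S_1$, and hence $\self(\S_0) = \self(\S_1) = 0$. Consequently, the integer $T = \self(\S_0) + \self(\S_1)$ of Proposition \ref{prop: original finger puncture} vanishes, and the set $\{-2T,\,-2T+4,\dots,2T\}$ collapses to $\{0\}$. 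So condition (ii) of that proposition reduces in this setting to the conjunction $[\S_0 \cup Z \cup \S_1] = 0 \in H_2(X;\cyc{2})$ and $e(\S_0,s^Z) = e(\S_1,s^Z)$.

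For the reverse direction of the statement, I would apply Proposition \ref{prop: original finger puncture} directly: under these two hypotheses, there exists an immersion $g \colon S_0 \sqcup S_1 \immerse X$ regularly homotopic to $f$ such that the homotopy restricts to an isotopy on each $S_i$ individually and $Z$ is almost-extendable over $g(S_0 \sqcup S_1)$. Setting $\S'_i \coloneq g(S_i)$, each $\S'_i$ is a properly embedded compact surface ambient isotopic to $\S_i$ via the corresponding restricted isotopy (this uses the isotopy extension theorem to upgrade the isotopy of the map $\restr{f}{S_i}$ to an ambient isotopy taking $\S_i$ to $\S'_i$). Since $\S'_0 \cup \S'_1 = g(S_0 \sqcup S_1)$, almost-extendability of $Z$ over $\S'_0 \cup \S'_1$ is exactly what is required.

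For the forward direction, suppose $\S'_0$ and $\S'_1$ are isotopic to $\S_0$ and $\S_1$ and that $Z$ is almost-extendable over $\S'_0 \cup \S'_1$. I would observe that both homological vanishing of $[\S_0 \cup Z \cup \S_1]$ and the identity $e(\S_0, s^Z) = e(\S_1, s^Z)$ are invariant under isotopies of $\S_0$ and $\S_1$ that fix $\del \S_0$ and $\del \S_1$: the first because homology classes are isotopy invariants, and the second because the relative normal Euler number depends only on the image up to isotopy rel.\ boundary and the framing $s^Z$ is fixed. It therefore suffices to verify the two conditions for $\S'_0$ and $\S'_1$ in place of $\S_0$ and $\S_1$, which is precisely the (i) $\Rightarrow$ (ii) direction of Proposition \ref{prop: original finger puncture} applied to the pair $(\S'_0, \S'_1)$ (again with $T = 0$). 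No serious obstacle arises beyond this translation, since the difficult combinatorial work on finger moves and sign assignments has already been carried out in the proof of Proposition \ref{prop: original finger puncture}.
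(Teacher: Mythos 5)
Your proposal is correct and takes essentially the same approach as the paper, which deduces the corollary in one line from Proposition~\ref{prop: original finger puncture} by observing that $\self(\S_0) = \self(\S_1) = 0$ when both surfaces are embedded, so $T = 0$ and the admissible range collapses to $\{0\}$. The details you add about upgrading to an ambient isotopy and about the isotopy-invariance of the two conditions in the forward direction are sound, just not spelled out in the paper.
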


 \section{Cobordisms of properly embedded surfaces}\label{sec: cobordisms}

In this section we will prove our main results on cobordism.
In \cref{sec: oriented cobordisms}, we prove the oriented Theorems~\ref{thm: oriented cobordism} and \ref{thm: oriented general cobordism} using the techniques developed in \cref{sec: cs homotopy}. These are very similar to classical results in \cite{ThomQuelqueProprietes}, though relax the conditions that the ambient manifold needs to be closed. We prove Theorem~\ref{mainthm: general cobordism}, Theorem~\ref{mainthm: cobordism} and Corollary~\ref{maincor: cobordism rel boundary} in Sections~\ref{sec: unoriented cobordisms}, \ref{sec: theorem E}, and \ref{sec: cobordisms rel boundary} respectively. Throughout, we write $\pr_X \colon X \times I \to X$ for the projection map onto the ambient manifold $X$.

\subsection{Existence of oriented cobordisms}\label{sec: oriented cobordisms}

We can use \cref{prop: oriented codimension 2} to directly construct cobordisms between $\Z$-homologous oriented properly embedded codimension 2 submanifolds. This applies in all dimensions.

Recall that we say a cobordism $Z \subset \del X \times I$ from $\del \S_0$ to $\del \S_1$ extends to a cobordism $Y \subset X \times I$ from $\S_0$ to $\S_1$ if $Y \cap (\del X \times I) = Z$. If $Y$ and $Z$ are oriented cobordisms, we require that $Z$ has the orientation restricted from $\del Y$. We first prove \cref{thm: oriented general cobordism}, since it is the most different to results already found in the literature.

\begin{namedtheorem}[\cref{thm: oriented general cobordism}]
    Let $X$ be a connected oriented $(n+2)$-manifold and let $\S_0, \S_1 \subset X$ be oriented properly embedded compact $n$-manifolds. 
    Let $Z \subset \del X \times I$ be an oriented cobordism from $\del \S_0$ to $\del \S_1$. Then $Z$ extends to an oriented cobordism from $\S_0$ to $\S_1$ if and only if $$[-\S_0 \cup -\pr_{X}(Z) \cup \S_1] = 0 \in H_n(X;\Z).$$
\end{namedtheorem}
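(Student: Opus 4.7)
The plan is to apply \cref{prop: oriented codimension 2} with $M = X \times I$ (of dimension $n+3$) and $A = \del(X \times I)$, with corners smoothed at $\del X \times \{0,1\}$. The desired cobordism $Y$ has dimension $n+1$, i.e.\ codimension $2$ in $M$, and its boundary $B \coloneq -\S_0 \times \{0\} \cup Z \cup \S_1 \times \{1\} \subset A$ has dimension $n = (n+3) - 3$. After smoothing corners, $A$ becomes a closed oriented $(n+2)$-manifold and $B$ a properly embedded oriented compact $n$-submanifold, which matches the setup of \cref{prop: oriented codimension 2}.

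For the forward direction, assume $Y$ extends $Z$. Then $B = \del Y$ bounds in $X \times I$, so $[B] = 0 \in H_n(X \times I;\Z)$. Pushing forward via the homotopy equivalence $\pr_X \colon X \times I \to X$ and unpacking orientations, $\pr_X(B) = -\S_0 \cup -\pr_X(Z) \cup \S_1$ as oriented $n$-chains in $X$. The sign in front of $\pr_X(Z)$ arises because the boundary orientation on $\del X \times I$ as part of $\del(X \times I)$ is opposite to its standalone product orientation $TI \oplus T\del X$ (a $1$-form swap of the outward normal past $TI$), whereas projection onto $X$ of $\S_i \times \{i\}$ introduces no extra sign since the $-X$ orientation of $X \times \{0\}$ is cancelled by the minus sign already present on $-\S_0$. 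Hence $[-\S_0 \cup -\pr_X(Z) \cup \S_1] = 0 \in H_n(X;\Z)$.

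For the reverse direction, assume the homological condition. The same orientation computation shows that under $(\pr_X)_* \colon H_n(X \times I;\Z) \xrightarrow{\cong} H_n(X;\Z)$, the class $[B]$ maps to $[-\S_0 \cup -\pr_X(Z) \cup \S_1] = 0$, so $[B] = 0 \in H_n(X \times I;\Z)$. By the long exact sequence of the pair $(X \times I, A)$, there is a class $\beta \in H_{n+1}(X \times I, A;\Z)$ with $\del_A \beta = [B]$ (since $\del A = \varnothing$, the connecting map equals $\del_A$). Now \cref{prop: oriented codimension 2} applied to $M = X \times I$, $A$, $B$, and $\beta$ yields an oriented properly embedded compact $(n+1)$-manifold $Y \subset X \times I$ with $Y \cap A = B$ and $[Y] = \beta$. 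This $Y$ is the required oriented cobordism from $\S_0$ to $\S_1$ extending $Z$.

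The main obstacle will be the careful orientation bookkeeping, in particular verifying the sign in front of $\pr_X(Z)$ and confirming that \cref{prop: oriented codimension 2} produces $Y$ with $\del Y = B$ as \emph{oriented} manifolds (rather than merely as sets). A secondary technical point is the corner-smoothing of $\del(X \times I)$, which must be performed so that the three pieces $-\S_0 \times \{0\}$, $Z$, and $\S_1 \times \{1\}$ of $B$ glue into a single smoothly embedded oriented submanifold of $A$; this is possible because $\del Z$ already agrees with $\del \S_0 \cup \del \S_1$ as oriented links by the hypothesis that $Z$ is an oriented cobordism.
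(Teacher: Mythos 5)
Your proposal follows the paper's proof exactly in structure: write down the boundary data $B$, smooth corners, and apply \cref{prop: oriented codimension 2} with $M = X \times I$ and $A = \del(X \times I)$. The one issue is your orientation bookkeeping on $Z$. You set $B \coloneq -\S_0 \times \{0\} \cup Z \cup \S_1 \times \{1\}$ with $Z$ carrying its cobordism orientation, but with these orientations $B$ is \emph{not} a closed oriented manifold: using $\del Z = -\del\S_0 \times \{0\} \cup \del\S_1 \times \{1\}$, the boundaries of the three pieces sum to $-2\,\del\S_0 \times \{0\} + 2\,\del\S_1 \times \{1\} \neq 0$. The paper fixes this by defining $B \coloneq -\S_0 \times \{0\} \cup -Z \cup \S_1 \times \{1\}$, which is closed and is $\del Y$ as oriented manifolds when $Y$ is an extending cobordism. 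You then try to recover the missing sign by claiming it appears under $\pr_{X*}$ because $\del X \times I$ carries the opposite orientation as a piece of $\del(X \times I)$; this explanation does not work, since $\pr_{X*}$ on the fundamental class of $B$ depends only on the intrinsic orientation of $B$, not on the ambient orientation of $\del(X \times I)$. The sign is a property of $B$ itself, not of the pushforward. Put the minus sign on $Z$ in the definition of $B$ and drop the ambient-orientation argument; everything else in your proof is correct and matches the paper.
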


\begin{proof}
    Write $B \coloneq -\S_0 \stimes \{0\} \cup -Z \cup \S_1 \stimes \{1\} \subset \del (X \stimes I).$
    
    The forward direction is clear, since if $B$ is the boundary of some oriented cobordism in $X \times I$, then $[B] = 0 \in H_n(X \times I;\Z)$, which is equivalent to $[\pr_X(B)] = 0 \in H_n(X;\Z)$.

    To show the reverse, suppose that $[B] = 0 \in H_n(X \times I;\Z)$. By considering the exact sequence of the pair $(X \times I, \del (X \times I) )$, there exists some $\beta \in H_{n+1}(X \times I, \del(X \times I);\Z)$ such that $\del \beta = [B] \in H_n(\del (X \times I);\Z)$. 
    Since $Z \subset \del X \times I$ and $-\S_0 \stimes \{0\} \cup \S_1 \stimes\{1\} \subset X \times \del I$ both meet $\del X \times \del I$ transversely, we can smooth the corners of  $X \times I$ so that $B$ is smoothly embedded in $\del (X \times I)$. We can then apply \cref{prop: oriented codimension 2} with $M = X \times I$ and $A = \del (X \times I)$ to find an oriented cobordism $Y$ from $\S_0$ to $\S_1$ extending $Z$.
\end{proof}

Similar methods can also be used to prove \cref{thm: oriented cobordism}, where the cobordism $Z$ on the boundary is not prescribed.

\begin{namedtheorem}[\cref{thm: oriented cobordism}]
    Let $X$ be a connected oriented $(n+2)$-manifold and let $\S_0, \S_1 \subset X$ be oriented properly embedded compact $n$-manifolds. Then there is an oriented cobordism from $\S_0$ to $\S_1$ if and only if $[\S_0] = [\S_1] \in H_2(X, \del X;\Z).$
\end{namedtheorem}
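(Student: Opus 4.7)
For the forward direction, the projection $\pr_X(Y)$ of an oriented cobordism $Y \subset X \times I$ from $\S_0$ to $\S_1$ is a relative $(n+1)$-chain in $(X, \del X)$ with relative boundary $\S_1 - \S_0$, so $[\S_0] = [\S_1] \in H_n(X, \del X;\Z)$.

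For the reverse direction, my plan is to apply \cref{prop: oriented codimension 2} directly to $M \coloneq X \times I$ with the choices $A \coloneq X \times \{0\} \sqcup X \times \{1\} \subset \del M$ (oriented compatibly with $\del(X \times I)$) and $B \coloneq -\S_0 \times \{0\} \sqcup \S_1 \times \{1\} \subset A$. This is a variant of the approach used in the proof of \cref{thm: oriented general cobordism}: by taking $A$ to be only the ``top and bottom'' of $\del M$, rather than all of $\del M$, we avoid having to pre-specify a cobordism on $\del X \times I$. The properly embedded $(n+1)$-manifold-with-corners $Y$ produced by the proposition satisfies $Y \cap A = B$, and is therefore automatically an oriented cobordism from $\S_0$ to $\S_1$ whose intersection $Y \cap (\del X \times I)$ is itself an oriented cobordism from $\del \S_0$ to $\del \S_1$.

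The remaining step is to verify that the hypothesis of \cref{prop: oriented codimension 2} -- the existence of a class $\beta \in H_{n+1}(M, \del M;\Z)$ with $\del_A \beta = [B]$ -- reduces precisely to $[\S_0] = [\S_1] \in H_n(X, \del X;\Z)$. By the long exact sequence of the triple $(M, \del M, \del M \setminus A)$ combined with the excision isomorphism built into the definition of $\del_A$, the image of $\del_A$ is the kernel of the inclusion-induced map $H_n(A, \del A;\Z) \to H_n(M, \del M \setminus A;\Z)$. The pair $(X \times I, \del X \times (0,1))$ deformation retracts onto $(X, \del X)$, so the target is canonically $H_n(X, \del X;\Z)$, and both components of $A$ retract onto the same copy of $X$. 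After the necessary orientation bookkeeping, the resulting map $H_n(X, \del X;\Z)^{\oplus 2} \to H_n(X, \del X;\Z)$ sends $[B] = (-[\S_0], [\S_1])$ to $[\S_1] - [\S_0]$, which vanishes exactly under the hypothesis.

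The only non-formal step is the orientation bookkeeping in this final identification; once it is settled, \cref{prop: oriented codimension 2} supplies the cobordism immediately, so the theorem becomes essentially a direct corollary of the realisation results proved earlier in \cref{sec: cs homotopy}.
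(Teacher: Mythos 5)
Your proposal is correct and follows essentially the same route as the paper: the paper's proof also invokes Proposition~\ref{prop: oriented codimension 2} with exactly the choices $M = X\times I$, $A = X\times\{0,1\}$, and $B = -\S_0\times\{0\}\cup\S_1\times\{1\}$. The only difference is that you spell out the verification that the condition $\del_A\beta = [B]$ for some $\beta$ is equivalent to $[\S_0]=[\S_1]\in H_n(X,\del X;\Z)$, which the paper leaves implicit; your analysis via the long exact sequence of the triple and the deformation retraction of $(X\times I,\,\del X\times(0,1))$ onto $(X,\del X)$ is accurate and fills in that detail correctly.
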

\begin{proof}
    The forward direction is clear. The reverse direction follows almost exactly as in \cref{thm: oriented general cobordism}, except that \cref{prop: oriented codimension 2} is applied with with $M = X \times I$, $A = X \times \{0,1\}$, and $B = -\S_0 \stimes \{0\} \cup \S_1 \stimes \{1\}$.
\end{proof}

In the unoriented case, this approach fails even for $n=2$, since we do not have a suitable analogue of \cref{prop: oriented codimension 2} for finding compact 3-manifolds representing $\cyc{2}$-homology classes in 5-manifolds.

\subsection{Proof of Theorem \ref{mainthm: general cobordism}}\label{sec: unoriented cobordisms}
Instead of using the classical approaches of \cref{sec: oriented cobordisms}, we build unoriented cobordisms between surfaces by ``stretching out" spanning 3-manifolds away from the intersection points between the surfaces. More precisely, suppose we have properly embedded compact surfaces $\S_0,\S_1 \subset X$, and a spanning manifold $Z \subset \del X$ for $\del \S_0 \sqcup \del \S_1$.
Suppose that $Z$ is almost-extendable over $\S_0 \cup \S_1$, in the sense of \cref{def: almost-extendable}. That is, write  $\wh\S \subset \wh{X}$ for the image of the punctured embedding of the immersion given by the union of the two embeddings $\S_0\hookrightarrow X$ and $\S_1 \hookrightarrow X$; then we are supposing that we can find a spanning 3-manifold $\wh{Y} \subset \wh{X}$ for $\wh\S$, which meets each component of $\del\wh{X} \setminus \del X$ in an annulus. If this is the case, $\wh{Y}$ can be stretched out into $\wh{X} \times I$, and these annuli stretch out into the trace of an isotopy in $\del\wh{X} \setminus \del X$ from  $\S_0 \cap (\del\wh{X} \setminus \del X)$ to $\S_1\cap(\del\wh{X} \setminus \del X)$. This trace can be ``filled in" to give a cobordism in $X \times I$ from $\S_0$ to $\S_1$ by the following lemma.

\begin{lemma}\label{lmm: local around double point}
    Let $X = D^4$, let $\S_0 = D^2 \times \{(0,0)\}$, and let $\S_1 = \{(0,0)\} \times D^2$. Let $A \subset \del D^4$ be an annulus spanning the Hopf link $\del \S_0 \sqcup \del \S_1$. Fix an identification $\varphi \colon A \to S^1 \times I$ such that $\varphi\inv(S^1 \times \{i\}) = \del \S_i$ for $i=0,1$. Let $\pr_I \colon S^1 \times I \to I$ be the projection map, and let
    \begin{equation*}
        Z \coloneq \{(a,\pr_I\varphi(a)) \mid a \in A \} \subset \del D^4 \times I.
    \end{equation*}
    Then there is a cobordism from $\S_0$ to $\S_1$ extending $Z$.
\end{lemma}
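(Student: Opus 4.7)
The plan is to construct $Y$ as the trace of a smoothly varying one-parameter family of embedded 2-discs $\{D_\tau\}_{\tau \in I}$ in $D^4$ interpolating between $\S_0$ and $\S_1$, with $\del D_\tau$ running along the annulus $A$ as prescribed by $\varphi$. To set up the model, identify $D^4 = \{(z_1, z_2) \in \C^2 : |z_1|^2 + |z_2|^2 \le 1\}$ so that $\S_0 = D^4 \cap \{z_2 = 0\}$ and $\S_1 = D^4 \cap \{z_1 = 0\}$. The family of unitary rotations
\[
\Phi_\tau(z_1, z_2) = \bigl(\cos(\pi\tau/2)\,z_1 + \sin(\pi\tau/2)\,z_2,\ -\sin(\pi\tau/2)\,z_1 + \cos(\pi\tau/2)\,z_2\bigr)
\]
satisfies $\Phi_0 = \id$ and $\Phi_1(\S_0) = \S_1$. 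Setting $D_\tau := \Phi_\tau(\S_0)$ gives a smooth family of embedded 2-discs in $D^4$, and
\[
Y^{\mathrm{std}} := \bigcup_{\tau \in I} D_\tau \times \{\tau\} \subset D^4 \times I
\]
is a properly embedded 3-ball whose intersections with $D^4 \times \{i\}$ are $\S_i \times \{i\}$ for $i=0,1$, and whose intersection with $\del D^4 \times I$ is the graph of the $\tau$-coordinate on the standard Hopf annulus $A^{\mathrm{std}} := \bigcup_\tau \del D_\tau$. So $Y^{\mathrm{std}}$ is a cobordism of the required form for a standard choice $(A^{\mathrm{std}}, \varphi^{\mathrm{std}})$.

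For a general $(A, \varphi)$, reduce to the model. Up to pre-composition with a self-diffeomorphism of $S^1 \times I$ preserving the boundary circles, the parametrized spanning annuli for the Hopf link $\del \S_0 \sqcup \del \S_1$ in $\del D^4 = S^3$ fall into two isotopy classes rel boundary, distinguished by the framing induced on the link components. Both classes admit explicit model constructions of the form above, the second by replacing $\Phi_\tau$ with an analogous family $\Phi_\tau^-$ of opposite handedness (e.g.\ conjugated by complex conjugation in one coordinate). After reducing $(A, \varphi)$ to the appropriate standard model by an ambient isotopy of $\del D^4$ rel $\del \S_0 \cup \del \S_1$, extend this isotopy to an ambient isotopy of $D^4$ fixing $\S_0 \cup \S_1$ pointwise, by damping to the identity through a collar of $\del D^4$; applying it levelwise in $D^4 \times I$ carries $Y^{\mathrm{std}}$ (or its reflected counterpart) to a cobordism $Y$ extending $Z$.

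The main delicate point is verifying that the boundary isotopy extends through the interior of $D^4$ without disturbing the discs $\S_0 \cup \S_1$; this uses a relative isotopy extension argument together with standard collaring, exploiting that $\S_0 \cup \S_1$ meets $\del D^4$ transversely in the Hopf link. A conceptually cleaner alternative is to bypass the reduction altogether: after smoothing the corners along $\del \S_i \times \{i\}$, the set $\S_0 \times \{0\} \cup Z \cup \S_1 \times \{1\}$ is a smoothly embedded 2-sphere in the 5-ball $D^4 \times I$, which bounds a smooth 3-ball by Haefliger's unknotting theorem for $S^2 \hookrightarrow D^5$ (this lies at the edge of the range $2q \ge 3p + 4$); reintroducing the corners along $\del Z$ then recovers the desired cobordism $Y$.
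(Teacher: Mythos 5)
Your approach is correct in outline but differs entirely from the paper's, which proves the lemma with a single cone construction: $Z$ is the trace of the circle isotopy $\gamma_t := \varphi\inv(\cdot,t)$ in $\del D^4 = S^3 \subset \R^4$, and coning over the origin gives $F(\lambda x, t) := (\lambda\gamma_t(x), t)$ for $\lambda\in[0,1]$, $x\in S^1$, $t\in I$, whose image is the required cobordism with the double point fixed at $\{0\}\times I$. This works for arbitrary $(A,\varphi)$ with no classification and no outside theorems, and it exploits the specific form of $Z$ as the graph of a height function on $A$. Your explicit rotation model $\Phi_\tau$ is a valid special case, but the reduction step and the Haefliger alternative are each far heavier than the task calls for.

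Two gaps in your argument. First, the reduction step is incomplete: you assert without proof the two-class classification of parametrized Hopf-spanning annuli up to reparametrization and rel-boundary isotopy, and, more seriously, an ambient isotopy of $\del D^4$ rel the Hopf link carrying $A$ to $A^{\mathrm{std}}$ does not match $Z$ to $Z^{\mathrm{std}}$ — you must also match $\pr_I\circ\varphi$ with $\pr_I\circ\varphi^{\mathrm{std}}$, which needs a further isotopy of $\del D^4\times I$ moving in the $I$-direction, and then an extension into $D^4\times I$ rel $\S_0\cup\S_1$. This is repairable, but it is substantial extra work relative to the cone. Second, the alternative is mis-cited and disproportionate. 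Haefliger's $2q\geq 3p+4$ unknotting concerns closed embeddings $S^p\hookrightarrow S^q$ (e.g.\ $S^2\hookrightarrow S^5$, codimension $3$), whereas what you need is the codimension-$2$ statement that every smooth $S^2\subset S^4 = \del(D^4\times I)$ bounds a properly embedded smooth $D^3\subset D^4\times I$; deducing this from Haefliger still requires arranging the bounding disc to lie inside the $5$-ball, and the statement itself is Kervaire's theorem (the vanishing of the even-dimensional knot cobordism group), cited in the introduction as the very starting point the paper generalizes. Invoking that result to dispatch a local auxiliary lemma, ignoring the special structure of $Z$, is far out of proportion; the cone construction is the intended proof.
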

\begin{proof}
    The map $ S^1 \times I \to \del D^4 \times I$ given by $(x,t) \mapsto (\varphi\inv(x,t),t)$ is an isotopy from $\del \S_0$ to $\del\S_1$ with trace $Z$. This can be extended radially to a map $F \colon D^2 \times I \to D^4 \times I$ given by
    \begin{equation*}
        F(\lambda \cdot x,t) = ( \lambda \varphi\inv(x,t),t) \quad \text{for $x \in S^1$, $\lambda \in [0,1]$, $t \in I$.} 
    \end{equation*}
    Then $F$ is an isotopy from $\S_0$ to $\S_1$ which fixes the intersection point at the origin.
    Hence $F(D^2 \times I) \subset D^4 \times I$ is a cobordism from $\S_0$ to $\S_1$ extending $Z$.
\end{proof}

We have now developed all of the necessary technical tools to prove Theorem \ref{mainthm: general cobordism}.

\pagebreak

\begin{namedtheorem}[Theorem \ref{mainthm: general cobordism}]
    Let $X$ be a connected orientable 4-manifold, and let $\S_0, \S_1 \subset X$ be properly embedded compact surfaces. 
    Let $Z \subset \del X \times I$ be a cobordism from $\del \S_0$ to $\del \S_1$.
    Then $Z$ extends to a cobordism from $\S_0$ to $\S_1$ if and only if $[\S_0 \cup \pr_X(Z) \cup \S_1] = 0 \in H_2(X;\cyc{2})$ and $e(\S_1,s_1) = e(\S_0,s_0) + e(Z,s_0\cup s_1)$ after an arbitrary choice of framing $s_i$ of $\del \S_i$ for $i=0,1$.
\end{namedtheorem}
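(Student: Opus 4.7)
The plan is three stages: necessity, the closed case of sufficiency ($\del X = \varnothing$), and the general case reduced to the closed case via the closed surface $\S' \coloneq \S_0 \stimes \{0\} \cup Z \cup \S_1 \stimes \{1\} \subset \del(X \times I)$.

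For necessity, suppose $Y \subset X \times I$ extends $Z$. Then $\del Y = \S_0 \stimes \{0\} \cup Z \cup \S_1 \stimes \{1\}$ bounds $Y$, so projecting via the homotopy equivalence $\pr_X \colon X \times I \to X$ gives $[\S_0 \cup \pr_X(Z) \cup \S_1] = 0 \in H_2(X;\cyc{2})$. Applying \cref{lmm: euler numbers of boundaries} to $Y$ yields $e(\del Y) = 0$; decomposing $\del Y$ along $\del \S_0 \sqcup \del \S_1$ with a framing $s_0 \cup s_1$ of $\del Z$ and invoking additivity of normal Euler numbers produces the stated equation.

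For the closed case, $Z$ is empty and the hypotheses reduce to $[\S_0 \cup \S_1] = 0 \in H_2(X;\cyc{2})$ and $e(\S_0) = e(\S_1)$. Apply \cref{cor: puncturing embeddings} with $Z = \varnothing$ to arrange, after isotopies of $\S_0$ and $\S_1$, a spanning 3-manifold $\wh Y \subset \wh X$ for the punctured embedding $\wh\S \subset \wh X$ whose intersection with each 3-sphere component of $\del \wh X$ is an annulus spanning the local Hopf link. Stretch $\wh Y$ into a cobordism $\{(y, g(y)) \mid y \in \wh Y\} \subset \wh X \times I$ via some smooth $g \colon \wh Y \to I$ with $g\inv(\{i\}) = \S_i \cap \wh X$; on each puncture $S^3 \times I$, its trace is the isotopy realised by the spanning annulus, so \cref{lmm: local around double point} fills in each deleted $D^4 \times I$ to yield the desired $Y \subset X \times I$.

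For the general case, form $\S' \subset \del(X \times I)$, a closed surface in the closed orientable 4-manifold $\del(X \times I)$. The homotopy equivalence $\pr_X$ and additivity of Euler numbers convert the hypotheses into $[\S'] = 0 \in H_2(X \times I;\cyc{2})$ and $e(\S') = 0$; moreover a chain-level argument gives $[\S_0] = [\S_1] \in H_2(X, \del X;\cyc{2})$, since $\pr_X(Z) \subset \del X$ contributes trivially relative to $\del X$. Use \cref{lmm: embedding surfaces in 4manifolds} to pick a properly embedded compact surface $M \subset X$ with $[M] = [\S_0] \in H_2(X, \del X;\cyc{2})$; a Mayer--Vietoris computation for $\del(X \times I) = (X \times \{0,1\}) \cup (\del X \times I)$ will show $[\del(M \times I)] = [\S'] \in H_2(\del(X \times I);\cyc{2})$, while $e(\del(M \times I)) = 0$ since $\del(M \times I)$ bounds $M \times I$. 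The closed case applied inside $\del(X \times I)$ then supplies a cobordism $W$ from $\S'$ to $\del(M \times I)$; embedding $W$ in a collar of $\del(X \times I) \subset X \times I$ and capping off $\del(M \times I)$ with $M \times I$ placed in the interior produces the required cobordism extending $Z$.

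The hard part will be the Mayer--Vietoris verification that $[M] = [\S_0]$ produces $[\del(M \times I)] = [\S']$ in $H_2(\del(X \times I);\cyc{2})$. This amounts to producing an explicit 3-chain in $\del X \times I$ cobounding $Z$ and $\del M \times I$, which exists by the hypothesis $[\S_0 \cup \pr_X(Z) \cup \S_1] = 0$ combined with the comparison of $[\S_0], [\S_1], [M] \in H_2(X, \del X;\cyc{2})$ under the connecting homomorphism to $H_1(\del X;\cyc{2})$. The remaining steps---the collar embedding of $W$, the gluing of $M \times I$ with corner smoothing, and the global assembly---are routine.
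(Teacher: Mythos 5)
Your proposal follows the paper's overall structure closely: the necessity direction and the closed case are essentially identical, and the reduction of the general case to the closed one via $\S' \subset \del(X\times I)$ is also the paper's route. The one genuine difference is in how the auxiliary surface $M$ is produced. The paper uses exactness of the pair sequence to find $\alpha \in H_3(X\times I, \del(X\times I);\cyc{2})$ with $\del\alpha = [\S']$, then pulls $\alpha$ back through the suspension isomorphism $H_2(X,\del X;\cyc{2}) \cong H_3(X\times I, \del(X\times I);\cyc{2})$ and realizes the resulting class by $M$ via Lemma~\ref{lmm: embedding surfaces in 4manifolds}; this gives $[\del(M\times I)] = \del[M\times I] = \del\alpha = [\S']$ with nothing left to check. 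You instead pick $M$ with $[M] = [\S_0] \in H_2(X,\del X;\cyc{2})$ and try to verify $[\del(M\times I)] = [\S']$ afterwards. This does work, but your sketch of the ``hard part'' underestimates it: a $3$-chain in $\del X\times I$ cobounding $Z$ and $\del M \times I$ rel.\ $\del X\times\{0,1\}$ (which does exist because $[\del M] = [\del\S_0]$ in $H_1(\del X;\cyc{2})$) still leaves residual $2$-chains on $\del X\times\{0,1\}$, and these must be matched against the residues of the $3$-chains in $X\times\{0\}$ and $X\times\{1\}$ witnessing $[M]=[\S_0]=[\S_1]$. The matching obstruction does vanish, but seeing why requires noting that $\del\colon H_3(X\times I,\del(X\times I);\cyc{2})\to H_2(\del(X\times I);\cyc{2})$ is injective (since $\del(X\times I)\hookrightarrow X\times I$ admits a retraction) and is split by the collapse map back to $H_2(X,\del X;\cyc{2})$, under which $[\S'+\del(M\times I)]$ maps to $[\S_0]+[M]=0$. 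Once you unwind this you have essentially re-derived the suspension-isomorphism argument; the paper's formulation gets there directly and avoids having to guess the correct class for $M$ in advance.
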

\begin{proof}
    We first prove the forward direction. Let $Y \subset X \times I$ be a cobordism from $\S_0$ to $\S_1$ extending $Z$. The homological condition is necessary, since
    \begin{equation*}
        [\S_0 \cup \pr_{X}(Z) \cup \S_1] = [\pr_X(\del Y)] =  0 \in H_2(X;\cyc{2}).
    \end{equation*}
    Recall that, after fixing an orientation on $X$,
    \begin{equation*}
        \del (X \times I) = - X \stimes \{0\} \cup -\del X \stimes I \cup  X \stimes \{1\}.
    \end{equation*} 
    So \cref{def: euler number} of the relative Euler numbers by counting intersection points with a push-off shows that 
    \begin{equation*}
    e(\del Y) = -e(\S_0, s_0) - e(Z, s_0 \cup s_1) + e(\S_1,s_1),
    \end{equation*}
    for any choices of framings $s_i$ of the link $\del \S_i$.
    But $e(\del Y) = 0$ by \cref{lmm: euler numbers of boundaries}, hence the forward direction follows.

    We now prove the reverse direction. Note that it suffices to consider the case where $\S_0$ and $\S_1$ are both non-empty, since the empty submanifold is cobordant to a trivial 2-sphere, which is non-empty. We may also assume that $\S_0$ and $\S_1$ only intersect transversely. In particular, this means that $\del \S_0$ and $\del \S_1$ are disjoint, and that $\S_0 \cup \S_1$ is the image of a generic proper immersion $f \colon S_0 \sqcup S_1 \immerse X$ of a compact surface such that $f(S_i)= \S_i$ for $i=0,1$.
    
    We first consider the case where $\S_0$ and $\S_1$ are closed and $Z = \varnothing$. We apply \cref{cor: puncturing embeddings}. Let $\wh{\S} \subset \wh{X}$ be the image of the punctured embedding of $f$, and let $D_1,\ldots,D_n \subset X$ be the 4-balls such that $\wh{X} = \bar{X \setminus (D_1 \cup \cdots \cup D_n)}$. \cref{cor: puncturing embeddings} says that, potentially after isotopies of $\S_0$ and $\S_1$, there is a spanning manifold $\wh{Y}$ for $\wh{\S}$ such that $A_j \coloneq \wh{Y} \cap \del D_j$ is an annulus for each $j=1,\ldots,n$. 
    
    Write $\wh{\S}_i \coloneq \S_i \cap \wh{\S}$ for $i=0,1$.
    Fix identifications $\varphi_j \colon A_j \to S^1 \times I$ such that
    \begin{equation*}
        \varphi_j\inv(S^1 \times \{i\}) = A_j \cap \wh{\S}_i\quad \text{for $i=0,1$ and $j=1,\ldots,n$.}
    \end{equation*}   
    Then by Tietze's extension theorem and smooth approximation, there is a map $F \colon \wh{Y} \to I$ such that 
    \begin{equation*}
        F\inv(\{i\}) = \wh{\S}_i \text{ for $i=0,1$} \quad\text{and}\quad\restr{F}{A_j} = \pr_I \varphi_j \colon A_j \to I \text{ for $j=1,\ldots,n$.}
    \end{equation*}
    where $\pr_I \colon S^1 \times I \to I$ is the projection map.
    Let $G \colon \wh{Y} \to \wh{X} \times I$ be the embedding given by $G(y) = (y,F(y))$, so that the image $G(\wh{Y}) \subset \wh{X} \times I$ is a cobordism from $\wh{\S}_0$ to $\wh{\S}_1$. 

    But by applying \cref{lmm: local around double point} to each $D_j$, we can find cobordisms $Y_j \subset D_j \times I$ from $\S_0 \cap D_j$ to $\S_1 \cap D_j$ extending $G(\wh{Y} \cap D_j) = \{(a, \pr_I \varphi_j a) \mid a \in A_j\}$ for each $j=1,\ldots,n$.
    Then
    \begin{equation*}
    Y \coloneq G(\wh{Y}) \cup \bigcup_{j=1}^n Y_j \subset X \times I
    \end{equation*}
    is a cobordism from $\S_0$ to $\S_1$. We have therefore shown the result in the case that $Z = \varnothing$.

    Now consider the general case, where $\del \S_0, \del \S_1 \subset \del X$ and $Z \subset \del X \times I$ may be non-empty. Let
    \begin{equation*}
        \S' \coloneq \S_0 \stimes \{0\} \cup Z \cup \S_1 \stimes \{1\} \subset \del (X \times I).
    \end{equation*}
    We again smooth the corners of $X \times I$ so that $\S'$ is a closed surface smoothly embedded in $\del (X \times I)$. Consider the long exact sequence of the pair $(X \times I, \del (X \times I) )$. Since $[\S'] = 0 \in H_2(X \times I;\cyc{2})$ by assumption, exactness says that there exists a class $\alpha \in H_3(X \times I, \del (X \times I);\cyc{2})$ such that $\del \alpha = [\S] \in H_2(\del (X \times I);\cyc{2} )$. But there is a suspension isomorphism
    \begin{equation*}
        H_2(X, \del X;\cyc{2}) \xrightarrow{\cong} H_3(X \times I, \del (X \times I);\cyc{2})
    \end{equation*}
    given by $[\sigma] \mapsto [\sigma \times I]$, so by \cref{lmm: embedding surfaces in 4manifolds}, there is a properly embedded compact surface $M \subset X$ such that $[\S'] = [\del (M \times I)] \in H_2(\del (X \times I);\cyc{2} )$. Furthermore, $e(\del (M \times I) ) = 0$ by \cref{lmm: euler numbers of boundaries}. Since $e(\S')=0$ by assumption, the closed case done above gives a cobordism $Y' \subset \del (X \times I) \times I$ from $\S'$ to $\del (M \times I)$.

    Finally, let $(C,\varphi)$ be a collar of $\del (X \times I)$, and fix an identification $\theta \colon X \times I \to \bar{(X \times I) \setminus C}$ such that $\theta(x) = \varphi(x,1)$ for $x \in \del (X \times I)$. Then
    \begin{equation*}
        Y \coloneq \varphi(Y') \mathop{\cup}_{\theta(\del (M \times I) )} \theta(M \times I)
    \end{equation*}
    is the required cobordism from $\S_0$ to $\S_1$ extending $Z$.
\end{proof}

\subsection{Proof of Theorem \ref{mainthm: cobordism}}\label{sec: theorem E}

In this subsection we prove Theorem \ref{mainthm: cobordism}. We first state a lemma.

\begin{lemma}\label{lmm: assoc section extends}
    Let $Y$ be a closed orientable 3-manifold and let $L_0, L_1 \subset Y$ be two disjoint embedded closed 1-manifolds. Let $Z \subset Y \times I$ be a cobordism from $L_0$ to $L_1$. Let $\pr_Y \colon Z \to Y$ and $\pr_I \colon Z \to I$ be the restrictions of projection maps from $Y \times I$. Assume that ${\pr_{Y}}$ is an embedding near $\del Z$, and hence determines an associated section $s^Z \colon L_0 \cup L_1 \to SN_{Y}{(L_0 \cup L_1)}$ by taking the normal direction into $\pr_Y(Z)$.
    If ${\pr_{Y}} \colon Z \immerse Y$ is an immersion, then $e(Z,s^Z) = 0$.
\end{lemma}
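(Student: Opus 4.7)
The plan is to compute $e(Z, s^Z)$ by comparing $s^Z$ to the nowhere-vanishing section $\sigma$ of $N_{Y\times I}(Z)$ obtained by projecting the vertical vector field $\del/\del t$.

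First I would observe that $\pr_Y\colon Z\immerse Y$ being an immersion is equivalent to $T_pZ \cap \R\cdot\del/\del t = 0$ for every $p \in Z$, since $\ker d\pr_Y = \R\cdot\del/\del t$. Hence $\sigma(p) \coloneqq [\del/\del t] \in T_p(Y\times I)/T_pZ = N_{Y\times I}(Z)|_p$ defines a nowhere-vanishing section $\sigma$ of $N_{Y\times I}(Z)$. By \cref{lmm: vanishing normal euler} applied to each connected component of $Z$, this gives $e(Z, \sigma|_{\del Z}) = 0$.

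Next, I would verify that $s^Z$ and $\sigma|_{\del Z}$ are pointwise colinear nonzero sections of $N_{Y\times I}(Z)|_{\del Z}$, under the canonical identification of $N_{\del(Y\times I)}(\del Z)$ with $N_{Y\times I}(Z)|_{\del Z}$. Fix $p = (q, i) \in L_i \times \{i\}$ and pick a representative $v \in T_q\pr_Y(Z) \setminus T_qL_i$ of the direction $s^Z(p)$ into $\pr_Y(Z)$. Since $\pr_Y$ is an embedding near $\del Z$, there is a unique $w = (v, c) \in T_pZ \subseteq T_qY \oplus T_iI$ with $d\pr_Y(w) = v$; here $c \in \R$ measures the tilt of $T_pZ$ in the $I$-direction. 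Transversality of $Z$ with $\del(Y\times I)$ gives $T_pZ \cap T_qY = T_p\del Z$, so $v \notin T_qL_i$ forces $w \notin T_pZ \cap T_qY$ and hence $c \neq 0$. Working modulo $T_pZ$, $s^Z(p) = [v] = [(v,0)] = [(v,c)] - c\,[\del/\del t] = -c\,\sigma(p)$, proving colinearity.

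Finally, I would invoke the fact that any two pointwise-colinear nowhere-vanishing sections of a rank-2 real vector bundle over a disjoint union of circles are isotopic through nowhere-vanishing sections: on each component, either the pointwise scalar is positive and one rescales linearly, or it is negative and one rotates by $\pi$ within the fiber using a $\sigma$-complementary section, which exists because every rank-2 oriented bundle over $S^1$ is trivial. Since the relative normal Euler number depends only on the isotopy class of the boundary framing through nowhere-zero sections, this yields $e(Z, s^Z) = e(Z, \sigma|_{\del Z}) = 0$. The main technical hurdle is the colinearity computation, where the immersion and transversality hypotheses must be combined carefully to force $c \neq 0$ and to identify $[v]$ with a multiple of $\sigma(p)$.
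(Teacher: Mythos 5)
Your proof is correct. It is essentially the same argument as the paper's, but recast in concrete terms. The paper extracts a trivial line subbundle $K = \ker\bigl(N_{Y\times I}Z \to N(\pr_Y)\bigr)$ of the normal bundle, establishes triviality via an orientation-character computation, and observes that $\restr{K}{\del Z}$ is spanned by $s^Z$; since $K$ is trivial over all of $Z$, the section $s^Z$ extends to a nowhere-vanishing section of $N_{Y\times I}Z$. Your $\sigma = [\del/\del t]$ is precisely an explicit nowhere-vanishing section of that line bundle $K$ (note that at each $p$ the fiber $K_p$ contains $[\del/\del t]$, which is nonzero by the immersion hypothesis, and $K$ is rank one, so $\sigma$ trivialises $K$), so you replace the paper's abstract triviality argument with a concrete trivialisation, and you replace the paper's observation that $\restr{K}{\del Z} = \operatorname{span}(s^Z)$ with your direct coordinate computation showing $s^Z(p) = -c\,\sigma(p)$ with $c \neq 0$.

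One place where your write-up is actually slightly more careful than the paper's: the scalar $-c$ relating $s^Z$ to $\sigma$ necessarily has opposite signs on $L_0 \times \{0\}$ and $L_1 \times \{1\}$ (since $s^Z$ points into $Z$ on each end while $\sigma$ always points in the positive $I$-direction), so the two boundary framings genuinely differ by a sign on each component. Your explicit "rotate by $\pi$ through a complementary section" step addresses exactly this, whereas the paper asserts the extension in one sentence. Both are correct (rotating by $\pi$ in a rank-2 bundle over $S^1$ preserves the isotopy class of the framing and hence the relative Euler number), but your version makes the sign bookkeeping explicit.
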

\begin{proof}
    If $\pr_{Y}$ is an immersion, then it has a normal bundle $N(\pr_{Y})$, giving a quotient map
    \begin{equation*}
         q\colon N_{Y \times I} Z = \frac{\pr_{Y}^* T Y \oplus \pr_I^* T I}{TZ} \to \frac{\pr_{Y}^* T Y}{TZ} = N(\pr_{Y}).
     \end{equation*}
    This is surjective, so in particular of constant rank, giving a short exact sequence
\[\begin{tikzcd}[sep = small]
	0 & K & {N_{Y \times I} Z} & {N(\pr_{Y})} & 0
	\arrow[from=1-1, to=1-2]
	\arrow[from=1-2, to=1-3]
	\arrow["q", from=1-3, to=1-4]
	\arrow[from=1-4, to=1-5]
\end{tikzcd}\]
of vector bundles over $Z$.
Since orientation characters are additive under extension and both $Y$ and $I$ are orientable, $N_{Y \times I}Z$ and $N(\pr_{Y})$ both have the same orientation character as $TZ$. Thus the orientation character of $K$ is trivial, and since $K$ is a line bundle, $K$ is trivial.

After identifying $\del Z$ with its image $L_0 \cup L_1$, the bundle $\restr{N(\pr_{Y})}{\del Z}$ describes the directions normal to both $L_0\cup L_1$ and $\pr_{Y}(Z)$. Thus $\restr{K}{\del Z}$ describes the direction normal to $L_0\cup L_1$ but tangent to $\pr_{Y}(Z)$. Hence $\restr{K}{\del Z}$ is the line subbundle spanned by $s^Z$. Since $\restr{K}{\del Z}$ extends to the trivial line bundle $K$ over $Z$, the section $s^Z$ extends to a non-vanishing section of $N_{Y \times I}Z$ over all of $Z$.
\end{proof}

We now deduce Theorem \ref{mainthm: cobordism}.

\begin{namedtheorem}[Theorem \ref{mainthm: cobordism}]
    Let $X$ be a connected orientable 4-manifold and let $\S_0, \S_1 \subset X$ be properly embedded compact surfaces. Then $\S_0$ and $\S_1$ are cobordant if and only if $[\S_0] = [\S_1] \in H_2(X, \del X;\cyc{2})$ and either $\del X \neq \varnothing$ or $e(\S_0) = e(\S_1)$.
\end{namedtheorem}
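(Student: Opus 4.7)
The forward direction is direct. If $Y \subset X \times I$ is a cobordism from $\S_0$ to $\S_1$, projecting $\del Y$ to $X$ shows that $[\S_0] + [\S_1]$ is a relative boundary, so $[\S_0] = [\S_1]$ in $H_2(X, \del X;\cyc{2})$. When $\del X = \varnothing$, $\del Y = \S_0 \sqcup \S_1$ is a closed 2-manifold in the 4-manifold $\del(X \times I) = X \times \{0,1\}$, and \cref{lmm: euler numbers of boundaries} yields $e(\del Y) = 0$; since the induced boundary orientations on $X \times \{0\}$ and $X \times \{1\}$ are opposite, this gives $e(\S_0) = e(\S_1)$.

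For the reverse direction, the strategy is to apply \cref{mainthm: general cobordism} after constructing a cobordism $Z \subset \del X \times I$ from $\del \S_0$ to $\del \S_1$ satisfying
\begin{enumerate}[label=(\roman*)]
    \item $[\S_0 \cup \pr_X(Z) \cup \S_1] = 0 \in H_2(X;\cyc{2})$, and
    \item $e(\S_1, s_1) - e(\S_0, s_0) - e(Z, s_0 \cup s_1) = 0$ for some framings $s_i$ of $\del \S_i$.
\end{enumerate}
The case $\del X = \varnothing$ is immediate with $Z = \varnothing$, since the hypotheses directly give both conditions, so assume $\del X \neq \varnothing$. I would construct $Z$ in three steps. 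First, the hypothesis $[\S_0] = [\S_1]$ in $H_2(X, \del X;\cyc{2})$ implies $[\del \S_0] = [\del \S_1]$ in $H_1(\del X;\cyc{2})$ via the boundary map, so \cref{prop: realising homology classes} applied to $M = \del X \times I$ provides an initial cobordism $Z_0 \subset \del X \times I$ between the boundary links. Second, the class $[\S_0 \cup \pr_X(Z_0) \cup \S_1] \in H_2(X;\cyc{2})$ maps to $[\S_0] + [\S_1] = 0$ in $H_2(X, \del X;\cyc{2})$, so by exactness of the long exact sequence it lies in the image of $H_2(\del X;\cyc{2}) \to H_2(X;\cyc{2})$. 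Choose a closed surface $M_0 \subset \del X$ realizing this image and set $Z_1 = Z_0 \sqcup (M_0 \times \{1/2\})$; then (i) holds, and $e(M_0 \times \{1/2\}) = 0$ in $\del X \times I$ because its normal bundle has a trivial $I$-summand, so the Euler number contribution from $Z_0$ is unchanged.

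Third, compute the remaining discrepancy $d = e(\S_1, s_1) - e(\S_0, s_0) - e(Z_1, s_0 \cup s_1)$, which by \cref{lmm: twisting euler numbers} is independent of the framing choices. Assuming $d$ is even, invoke Whitney's theorem to obtain a closed surface $F \subset D^4$ with $e(F) = d$, embed $D^4$ in $\del X \times I$ disjointly from $Z_1$, and set $Z = Z_1 \sqcup F$. This preserves (i) since $F$ is null-homologous in a ball, and achieves (ii). Applying \cref{mainthm: general cobordism} to $Z$ then produces the desired cobordism $Y \subset X \times I$ from $\S_0$ to $\S_1$.

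The key technical point, and my expected main obstacle, is verifying that $d$ is even. My plan is to view $B = \S_0 \cup Z_1 \cup \S_1$ as a closed surface in the orientable 4-manifold $\del(X \times I)$, so that $e(B) = \pm d$, and to use \cref{cor: normal euler number} to identify $d \bmod 2$ with the self-intersection $[B] \cdot [B] \in \cyc{2}$ in $\del(X \times I)$. Since $\pr_X \colon X \times I \to X$ is a homotopy equivalence, condition (i) from Step 2 forces $[B] = 0 \in H_2(X \times I;\cyc{2})$. A careful Mayer--Vietoris or collar-based analysis of the self-intersection pairing on $\del(X \times I)$, exploiting the fact that any bounding 3-chain in $X \times I$ can be pushed across the collar to produce a null-cobounding of $B$ with its transverse push-off, should yield $[B] \cdot [B] = 0 \bmod 2$, completing the proof.
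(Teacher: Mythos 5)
Your overall strategy --- build an initial cobordism on the boundary, fix the homology class by adding a closed surface in $\del X$, fix the Euler number by adding a Whitney surface in a ball, then cite Theorem~\ref{mainthm: general cobordism} --- is sound and close in spirit to the paper's. However, Step~1 contains a dimensional error that breaks the argument. Proposition~\ref{prop: realising homology classes} produces a properly embedded \emph{codimension-1} submanifold: applied to the 4-manifold $M = \del X \times I$ it yields a 3-manifold, not the surface $Z_0$ you need. The paper has no tool that represents codimension-2 $\cyc{2}$-homology classes in a 4-manifold with prescribed boundary; Proposition~\ref{lmm: embedding surfaces in 4manifolds} represents such classes but explicitly declines to control the boundary. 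The paper sidesteps this by working inside the 3-manifold $\del X$, where the desired object \emph{is} codimension~1: it applies Theorem~\ref{thm: relative spanning manifolds} to $L = \del\S_0 \cup \del\S_1$ to get a spanning surface $Z' \subset \del X$ in a chosen homology class $\alpha$, then takes the graph $Z = \{(z,f(z))\}$ of a map $f\colon Z' \to I$ to obtain the cobordism in $\del X \times I$. This graph construction is not merely a fix: it makes $\pr_X\colon Z \to \del X$ an embedding, so Lemma~\ref{lmm: assoc section extends} gives $e(Z,s^Z) = 0$ for free, and Lemma~\ref{lmm: even euler number facts} then gives $e(\S_0,s^Z) \equiv e(\S_1,s^Z) \bmod 2$ --- which is exactly the parity of your $d$ --- without any separate argument.

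Your Step~3 parity argument is the right idea but the sketch ("Mayer--Vietoris or collar-based analysis") is too vague to carry weight. The clean route is the Wu formula: for the closed orientable 4-manifold $N = \del(X\times I)$ one has $[B]\cdot[B] = \langle w_2(N),[B]\rangle$, and since $TN \oplus \varepsilon \cong \restr{T(X\times I)}{N}$, the class $w_2(N)$ pulls back from $w_2(X \times I)$ along the inclusion $\iota\colon N\hookrightarrow X\times I$; hence $[B]\cdot[B] = \langle w_2(X\times I),\iota_*[B]\rangle = 0$ because $\iota_*[B] = 0$ by your condition (i). That closes the gap in Step~3 cleanly, so the only structural defect in your proposal is the mis-application of Proposition~\ref{prop: realising homology classes} in Step~1; once you replace that with the spanning-surface-plus-graph construction (or any other valid construction of $Z_0$), the rest of your argument goes through, though the paper's route through Lemmas~\ref{lmm: assoc section extends} and~\ref{lmm: even euler number facts} is shorter.
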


\begin{proof}
    If $\del X = \varnothing$, then the statement of Theorem \ref{mainthm: cobordism} is exactly Theorem \ref{mainthm: general cobordism}. So suppose that $\del X \neq\varnothing$. Then the forward direction is clear by the same argument as Theorem \ref{mainthm: general cobordism}.
    
    So we prove the reverse direction, and suppose that $[\S_0] =[\S_1] \in H_2(X, \del X;\cyc{2})$ and $\del X \neq \varnothing$. Let $L \coloneq \del \S_0 \cup \del \S_1$, and consider the long exact sequence of the triple $(X, \del X, L)$. Since $[\S_0]$ and $[\S_1]$ are homologous, by exactness there is some $\alpha \in H_2(\del X, L;\cyc{2})$ whose image in $H_2(X,L;\cyc{2})$ under inclusion is precisely $[\S_0]+[\S_1]$. By applying \cref{thm: relative spanning manifolds} to $L$, there exists some spanning manifold $Z' \subset \del X$ for $L$ such that $[Z'] = \alpha \in H_2(\del X, L;\cyc{2})$. In particular then $[\S_0 \cup Z' \cup \S_1] = 0 \in H_2(X,L;\cyc{2})$. Since $H_2(L;\cyc{2}) = 0$, this implies that $[\S_0 \cup Z' \cup \S_1] = 0 \in H_2(X;\cyc{2})$ by the long exact sequence of the pair $(X,L)$.

    Let $f \colon Z' \to I$ be any map with $f\inv(\{i\}) = \del \S_i$ for $i=0,1$, and let 
    \begin{equation*}
        Z = \big\{(z,f(z) )\mid z \in Z'\big\} \subset \del X \times I.
    \end{equation*}
    Note that $Z$ is a cobordism from $\del \S_0$ to $\del \S_1$, and the projection $\del X \times I \to \del X$ restricts to an embedding on $Z$. Let $s$ be the Seifert section associated to $Z'$. Then by \cref{lmm: assoc section extends}, $e(Z,s) = 0$. But by \cref{lmm: even euler number facts}, $e(\S_0,s) \equiv e(\S_1,s) \bmod{2}$, and hence
    \begin{equation*}
        e(\S_0 \cup Z \cup \S_1)\equiv-e(\S_0, s) -e(Z,s) + e(\S_1,s) \equiv 0\mod{2}.
    \end{equation*}
    Since an unknotted $\R\P^2 \subset D^4$ has $e(\R\P^2) = \pm 2$ by \cite{WhitneyDiffMan}, we can let $M \subset \del X \times I$ be a disjoint union of copies of $\R\P^2$, disjoint from $Z$, such that $e(M) = -e(\S_0 \cup Z \cup \S_1)$. Hence by Theorem \ref{mainthm: general cobordism}, there is a cobordism $Y$ from $\S_0$ to $\S_1$ extending $Z \cup M$.
\end{proof}

\subsection{Proof of Corollary~\ref{maincor: cobordism rel boundary}} \label{sec: cobordisms rel boundary}

We now consider the situation where $\del \S_0=\del \S_1$, and prove Corollary~\ref{maincor: cobordism rel boundary}. Recall that we call a cobordism from $\S_0$ to $\S_1$ a cobordism rel.\ boundary if it extends $\del \S_0 \times I$.

\begin{namedtheorem}[Corollary~\ref{maincor: cobordism rel boundary}]
    Let $X$ be an orientable 4-manifold and let $\S_0,\S_1 \subset X$ be properly embedded compact surfaces such that $\del \S_0 = \del \S_1$. Then $\S_0$ and $\S_1$ are  cobordant rel.\ boundary if and only if $[\S_0 \cup \S_1] = 0 \in H_2(X;\cyc{2})$ and $e(\S_0,s)  = e(\S_1,s)$ after an arbitrary choice of framing~$s$~of~$\del \S_0$.
\end{namedtheorem}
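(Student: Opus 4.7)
The plan is to derive the corollary directly from Theorem~\ref{mainthm: general cobordism} by specialising to the trivial cobordism $Z \coloneq \del \S_0 \times I \subset \del X \times I$. Since $\del \S_0 = \del \S_1$, a cobordism rel.\ boundary from $\S_0$ to $\S_1$ is precisely a cobordism in $X \times I$ extending this $Z$, so I just need to show that the two conditions of Theorem~\ref{mainthm: general cobordism} reduce to the two conditions in the corollary.

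First, I would simplify the homological condition. Since $\pr_X(Z) = \del \S_0$ is a compact $1$-manifold in $X$, its fundamental class vanishes in $H_2(X;\cyc{2})$ for dimensional reasons, giving
\begin{equation*}
    [\S_0 \cup \pr_X(Z) \cup \S_1] = [\S_0 \cup \S_1] \in H_2(X;\cyc{2}).
\end{equation*}

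Next, for the normal Euler number condition, I would fix any framing $s$ of $\del \S_0 = \del \S_1$ and choose $s_0 = s_1 = s$. The normal bundle $N_{\del X \times I} Z$ is canonically identified with the pullback of $N_{\del X}(\del \S_0)$ along the projection $\del \S_0 \times I \to \del \S_0$, so the framing $s \cup s$ on $\del Z = \del \S_0 \times \{0,1\}$ extends to a nowhere-zero section of $SN_{\del X \times I} Z$ via this pullback (i.e.\ the ``constant in the $I$-direction'' section). By \cref{lmm: vanishing normal euler}, this forces $e(Z, s_0 \cup s_1) = 0$, so the Euler number condition from Theorem~\ref{mainthm: general cobordism} collapses to $e(\S_1, s) = e(\S_0, s)$.

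With these two simplifications in place, both directions of the corollary follow at once: the forward implication from Theorem~\ref{mainthm: general cobordism} applied to any cobordism rel.\ boundary, and the reverse from Theorem~\ref{mainthm: general cobordism} applied directly to $Z = \del \S_0 \times I$. There is no real obstacle—the only small verification is the vanishing of $e(\del \S_0 \times I, s \cup s)$, which is immediate from the product structure of the normal bundle.
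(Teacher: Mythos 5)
Your proposal is correct, and it takes a genuinely different route from the paper. The paper does \emph{not} apply Theorem~\ref{mainthm: general cobordism} directly to $Z = \del\S_0 \times I$. Instead, it first perturbs $\S_1$ to a nearby push-off $\S_1'$ so that $\del\S_0$ and $\del\S_1'$ become disjoint links, takes $Z$ to be the trace of the resulting boundary isotopy, and then invokes Lemma~\ref{lmm: assoc section extends} (which requires $\pr_{\del X}$ to be an immersion on $Z$, hence needs disjoint boundaries) to conclude $e(Z, s^Z) = 0$; it then also has to check that $e(\S_1',s^Z) = e(\S_1,s)$. Your approach avoids all of this: taking the trivial product cobordism $\del\S_0 \times I$, the product decomposition $N_{\del X \times I}(\del\S_0 \times I) \cong \pr^* N_{\del X}\del\S_0$ immediately gives a section extending $s\cup s$, so $e(Z, s\cup s) = 0$ by Lemma~\ref{lmm: vanishing normal euler} without Lemma~\ref{lmm: assoc section extends}, and the reduction of the homological condition is immediate for dimension reasons as you note. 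The one thing your argument leans on that the paper's avoids: the proof of Theorem~\ref{mainthm: general cobordism} in the text opens its reverse implication with the reduction ``we may assume $\S_0$ and $\S_1$ intersect transversely, so $\del\S_0$ and $\del\S_1$ are disjoint,'' which is a genuine (if standard) perturbation step that has to be justified when $\del\S_0 = \del\S_1$ and $Z$ is already fixed. The paper's proof of the corollary effectively carries out that perturbation explicitly, whereas yours treats Theorem~\ref{mainthm: general cobordism} as a black box in its stated generality. Both are valid; yours is cleaner provided one accepts Theorem~\ref{mainthm: general cobordism} exactly as stated.
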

\begin{proof}
    Write $L \coloneq \del \S_0=\del \S_1$. We wish to apply \cref{lmm: assoc section extends} with $L_i = \del \S_i$ for $i=0,1$, but they are not disjoint links. We therefore first perturb $\S_1$ to a surface $\S'_1$ which is transverse to $\S_1$. We do this as follows. Fix a section $s'\colon \S_1 \to DN_X\S_1$ which is transverse to the zero section, such that $\restr{s'}{L} \colon L \to \restr{SN_X\S_1}{L}$ represents the framing $s$. Fix a tubular neighbourhood $(\nu \S_1, \varphi)$ of $\S_1$, and let $\S'_1 \coloneq \varphi s (\S_1)$ be the push-off of $\S_1$ in the direction of $s'$. Then $L$ and $L'\coloneq \del \S'_1$ are disjoint, and hence we may assume that $\S'_1$ is transverse to both $\S_0$ and $\S_1$.

    Let $g \colon \S_1 \times I \to X \times I$ be the isotopy given by $g(x,t) = (\varphi(t \cdot s(x)),t)$, which takes $\S_1$ to $\S'_1$. Let $Z \coloneq g(L \times I) \subset \del X \times I$, so that $Z$ is a cobordism from $L$ to $L'$.  Extend $g$ to an ambient isotopy $G \colon X \times I \to X \times I$. Then there is a cobordism rel.\ boundary $Y$ from $\S_0$ to $\S_1$ if and only if there is a cobordism $G(Y)$ from $\S_0$ to $\S'_1$ extending $Z$.    

    Let $s^Z$ be the Seifert section associated to $Z$. Then by \cref{lmm: assoc section extends}, $e(Z,s^Z) = 0$. Since $\restr{s^Z}{L} = s$, we have that $e(\S_0,s^Z) = e(\S_0,s)$. Also $e(\S'_1,s^Z) = e(\S_1,s)$, since both are given by counting the intersections between $\S'_1$ and $\S_1$ as in \cref{def: euler number}, with signs coming from a choice of orientation of $X$. Thus by Theorem \ref{mainthm: general cobordism}, there is a cobordism from $\S_0$ to $\S'_1$ extending $Z$ if and only if
    \begin{equation*}
        [\S_0 \cup \pr_X(Z) \cup \S'_1] = [\S_0 \cup \S_1] = 0 \in H_2(X;\cyc{2}),
    \end{equation*}
    and $e(\S_0,s) = e(\S_1,s)$. This proves the result.
\end{proof}

 \section{Ambient surgery}\label{sec: Ambient surgery}

For the remainder of this paper, our focus will shift from constructing cobordisms to constructing concordances. Our primary technique for proving the existence of a concordance between two connected surfaces in a simply-connected 4-manifold $X$ will be to surger an arbitrary connected cobordism $Y \subset X \times I$ between them into a concordance.
This approach dates back to Kervaire, who used it to show that any two embedded $2k$-spheres in $S^{2k+2}$ are concordant \cite{Kervaire}. Our approach will be heavily based on the approach of Sunukjian in \cite{Sunukjian} and the exposition by Klug--Miller in \cite{KlugMiller1}.

In this section, we review ambient surgery on embedded submanifolds, recalling or proving several key lemmas. All material found here is based on similar material in \cite{Sunukjian} and \cite{KlugMiller1}, but is recorded here for ease of referencing. Some results are also extended or have details added. In places, our notation differs; in particular, what we refer to as $k$-surgery (referencing the dimension of the $k$-sphere) is referred to as $(k+1)$-surgery (referencing the dimension of the $(k+1)$-handle) by both Sunukjian and Klug--Miller. We make this change to agree with the standard convention in surgery theory (c.f.\ \cite{RanickiSurgery}, \cite{LuckMacko}).

\subsection{Ambient surgery}
Let $X$ be a connected $n$-manifold, let $Y \subset X$ be a properly embedded compact $m$-manifold, and let $K \subset Y$ be an embedded $k$-sphere with $N_YK$ trivial.
A $(k+1)$-disc $\Delta \subset X$ is said to \textit{span $K$ into $X \setminus Y$} if it is embedded in $X$, meets $Y$ transversely, and $\del \Delta = \Delta \cap Y = K$.
Following \cite{KlugMiller1}, a framing $\varphi$ of $N_YK$ is called \textit{$\Delta$-admissible} if it extends to a framing of some trivial subbundle $\xi \subseteq N_X\Delta$ of rank $m-k$. Then a choice of tubular neighbourhood $(\nu \Delta,\eta)$ determines the submanifold with corners
\begin{equation*}
    Y' \coloneq \bar{Y \setminus \eta(D N_YK)} \mathop{\cup}_{\eta(SN_YK)}  \eta(S \xi ) \subseteq X.
\end{equation*}
After smoothing corners, we say that $Y'$ is the result of \textit{ambient $k$-surgery over $\Delta$ with framing $\varphi$}. Note that $Y'$ is a proper embedding of the result of abstract surgery along $K$ with framing $\varphi$, and is determined by $\Delta$ and $\varphi$ up to isotopy.

The same framing $\varphi$ of $N_YK$ may be $\Delta$-admissible for some choices of $(k+1)$-disc $\Delta$ but not others. An abstract surgery along a framed $k$-sphere can be realised ambiently in $X$ if and only if the framing is $\Delta$-admissible for some choice of disc $\Delta$ spanning $K$ into $X \setminus Y$. 

Ambient surgery can also be viewed in terms of cobordism.
Let $Y \times I \cup h^{k+1}$ be the trace of some abstract $k$-surgery on $Y$, where $h^{k+1}$ is the $(m+1)$-dimensional $(k+1)$-handle determining the surgery. This surgery can be realised ambiently if and only if the embedding $Y \times I \subset X \times I$ can be extended to an embedding of the trace of surgery with $h^{k+1} \subset X \times \{1\}$. It can be realised over $\Delta$ if and only if the image of core of $h^{k+1}$ can be taken to be $\Delta \times \{1\}$.

By considering handle decompositions and the ``rising water principle" \cite[\textsection6.2]{GompfStipsicz}, one can see that the fundamental group of the complement $\pi_1(X \setminus Y)$ is only affected by ambient surgery with either very small or very large codimension. The proof of this is identical to that of Proposition 4.1 in \cite{Sunukjian} for the case $n=m+2$.

\begin{lemma}\label{lmm: k-surgery fundamental group}
    Let $2 \leq m-k  \leq n-3$. If $Y'$ is obtained by ambient $k$-surgery on $Y \subset X$, then $\pi_1(X \setminus Y) \cong \pi_1(X \setminus Y')$.
\end{lemma}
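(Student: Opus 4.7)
The plan is to consider the exterior cobordism $E_W := (X \times I) \setminus \mathring{\nu}(W)$, where $W := (Y \times I) \cup h^{k+1} \subset X \times I$ is the trace of the ambient surgery. Then $E_W$ is a cobordism between $E_Y := X \setminus \mathring{\nu}(Y)$ and $E_{Y'} := X \setminus \mathring{\nu}(Y')$, and since $\pi_1(X \setminus Y) \cong \pi_1(E_Y)$ (with the analogous statement for $Y'$), the lemma reduces to showing that both inclusions $E_Y \hookrightarrow E_W$ and $E_{Y'} \hookrightarrow E_W$ induce isomorphisms on $\pi_1$.

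The first step is to describe the handle structure of $E_W$ as a cobordism. Because the $\Delta$-admissible framing trivializes the ambient normal bundle of $h^{k+1}$ inside $X \times I$, the tubular neighborhood $\nu(W)$ is obtained from $\nu(Y) \times I$ by attaching an $(n+1)$-dimensional $(k+1)$-handle $H \cong D^{k+1} \times D^{n-k}$ along $S^k \times D^{n-k}$. A local Morse-theoretic analysis of the height function $t$ restricted to $E_W$, whose critical points lie on $\partial \nu(W)$ above the unique index-$(k+1)$ critical point of $t|_W$, shows that $E_W$ admits a handle decomposition over $E_Y$ consisting of a single handle of index $j_0 = n - m + k$, or dually over $E_{Y'}$ consisting of a single handle of index $j_1 = (n+1) - j_0 = m - k + 1$.

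The conclusion then follows from the rising water principle: attaching a handle of index $j$ to an $(n+1)$-dimensional manifold does not change $\pi_1$ whenever the attaching sphere $S^{j-1}$ is simply connected, i.e., whenever $j \geq 3$. The hypothesis $2 \leq m - k \leq n - 3$ is exactly the conjunction of $j_0 = n - m + k \geq 3$ (equivalent to $m - k \leq n - 3$) and $j_1 = m - k + 1 \geq 3$ (equivalent to $m - k \geq 2$), so both handles satisfy the bound from both sides of the cobordism and the two inclusions are $\pi_1$-isomorphisms, as required.

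The main technical obstacle is the Morse-theoretic identification of $j_0$. Above the single critical point of $t|_W$, the normal sphere fibre of $\partial \nu(W)$ carries two critical points of $t|_{\partial \nu(W)}$, namely the maximum and minimum of $t$ along the fibre, and one must verify that only the maximum contributes a handle to $E_W$ (the minimum instead contributes the original $(k+1)$-handle to $\nu(W)$), determined by the direction of $\nabla t$ relative to the outward normal of $E_W$ at each critical point. The hypothesis is then seen to be sharp, matching exactly the two inequalities needed for $\pi_1$-stability on both sides of $E_W$.
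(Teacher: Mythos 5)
Your Morse-theoretic approach — locating the boundary critical points of the height function on $S\nu W$ above the unique critical point of $t|_W$, determining which one contributes a half-handle to $E_W$, and applying the rising-water principle from both ends of the exterior cobordism — is exactly the argument the paper refers to (Sunukjian, Proposition 4.1, stated there for $n = m+2$). The index $j_0 = n-m+k$ from the $E_Y$ side is computed correctly: the maximal critical point on the normal sphere fibre has tangential index $(k+1) + (n-m-1)$, and $\nabla t$ there points into $E_W$.

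The one inaccuracy is the duality formula $j_1 = (n+1) - j_0$. That identity is the usual handle duality for an $(n+1)$-dimensional cobordism with empty side boundary, but $E_W$ has non-trivial side boundary $S\nu W$, and the two boundary critical points there are genuinely distinct: one contributes a half-handle seen from the $E_Y$ side, the other from the $E_{Y'}$ side. The second critical point — the minimum on the normal sphere fibre, where $\nabla t$ points out of $E_W$ and into $\nu W$ — has tangential index $k+1$ for $t$, hence $\dim (S\nu W) - (k+1) = n - k - 1$ for $1-t$. So $j_1 = n - k - 1$, not $m - k + 1$. These agree precisely when $n = m + 2$, which is the only codimension the paper needs (there the hypothesis $2 \le m - k \le n - 3$ is sharp). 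In higher codimension $j_1 = n - k - 1 > m - k + 1$, so the stated hypothesis still implies both $j_0, j_1 \ge 3$ and the conclusion holds, but it is no longer exactly the conjunction of those two bounds; in fact for codimension $\geq 3$ the statement is vacuous, since the exterior of a codimension-$\geq 3$ submanifold has the same $\pi_1$ as $X$.
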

% \begin{proof}
%     Let $Z = \left(Y \times I\right) \cup h^{k+1} \subset X \times I$ be a proper embedding of the trace of the surgery, where the $(m+1)$-dimensional $(k+1)$-handle $h^{k+1}$ defining the surgery has been pushed into $X \times I$ so that $Z \cap (X \times \{0\} ) = Y\times \{0\}$ and $Z \cap (X \times \{1\} ) = Y' \times \{1\}$.

%     If $\nu Z$ is a tubular neighbourhood of $Z$ in $X \times I$, and $\nu Y$, $\nu Y'$ are tubular neighbourhoods of $Y$, $Y'$ in $X$, it can be shown by the ``rising water principle" \cite[\textsection6.2]{GompfStipsicz} that
%     \begin{equation*}
%         \bar{(X \times I ) \setminus \nu Z} \cong (\bar{X \setminus \nu Y}\times I) \cup h^{k+n-m}
%     \end{equation*}
%     where $h^{k+n-m}$ is a $(n+1)$-dimensional $(k+n-m)$-handle in the dual handle decomposition. Turning this construction upside-down, we also get that
%     \begin{equation*}
%         \bar{(X \times I ) \setminus \nu Z} \cong (\bar{X \setminus \nu Y'}\times I) \cup h^{m-k+1}.
%     \end{equation*}

%     Adding a $(k+n-m)$-handle or an $(m-k+1)$-handle for $2 \leq m-k  \leq n-3$ does not affect the fundamental group, since the handle has index at least $3$. Finally, $\bar{X \setminus \nu Y} \times I$ is homotopy equivalent to $X \setminus Y$, so 
%     \begin{equation*}
%         \pi_1(X \setminus Y) =\pi_1(\bar{(X \times I ) \setminus \nu Z}) = \pi_1(X \setminus Y').\qedhere
%     \end{equation*}
% \end{proof}

We will mostly consider the case where $Y$ has codimension 2; that is, $n=m+2$. Then only ambient 0-surgery (that is, ambient connected sum) and ambient $(m-1)$-surgery affect $\pi_1(X \setminus Y)$. 
If $X$ is simply-connected, then $\pi_1(X \setminus Y)$ is normally generated by meridians to $Y$. By performing ambient 0-surgery along some arc, the meridians at the endpoints of the arc get identified in $\pi_1(X \setminus Y)$. After choosing a set of meridians normally generating $\pi_1(X \setminus Y)$ and identifying them in this way, we get the following result.

\begin{lemma}[{\cite[Proposition 4.3]{Sunukjian}}]\label{lmm: 1-surgery fundamental group}
    Suppose that $n=m+2$ and that $X$ is simply-connected. Then there exists a properly embedded compact $m$-manifold $Y' \subset X$ obtained from $Y$ by performing finitely many ambient 0-surgeries to $Y$, such that $\pi_1(X \setminus Y')$ is cyclic and generated by a meridian to $Y'$. We can take $Y'$ to be orientable if and only if $Y$ is.
\end{lemma}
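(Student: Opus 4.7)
The strategy rests on the fact that $\pi_1(X \setminus Y)$ is normally generated by meridians when $X$ is simply-connected and $Y$ is codimension $2$. Given any loop $\ell \subset X \setminus Y$, it extends to a disc $D \colon D^2 \to X$ by simple-connectedness; after making $D$ transverse to $Y$, the finitely many intersection points each contribute a small meridian, and standard subdivision of $D$ into sub-discs writes $[\ell]$ as a product of conjugates of these meridians. Since $Y$ is compact, it has finitely many components $Y_1, \ldots, Y_N$, and because meridians within a single connected component are conjugate in $\pi_1(X \setminus Y)$ (via paths in that component), the $N$ meridians $\mu_1, \ldots, \mu_N$ suffice to normally generate $\pi_1(X \setminus Y)$.

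First I would use $N - 1$ ambient $0$-surgeries to merge the components. For each $i \geq 2$, pick an arc $\gamma_i \subset X$ from $Y_1$ to $Y_i$, disjoint from $Y$ except at endpoints; such $\gamma_i$ exist since $X$ is connected. An ambient $0$-surgery along $\gamma_i$ attaches a tube to $Y$ merging $Y_1$ and $Y_i$, and the meridional $2$-disc of this tube lies in the new complement, identifying $\mu_i$ with the $\gamma_i$-conjugate of $\mu_1$ in $\pi_1$ of the new complement. After $N - 1$ such surgeries, the resulting submanifold $Y^{(1)}$ is connected and $\pi_1(X \setminus Y^{(1)})$ is normally generated by a single meridian $\mu$.

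The main obstacle is upgrading normal generation by $\mu$ to cyclic generation. For this I would iteratively kill any loop $\alpha \subset X \setminus Y^{(1)}$ whose class lies outside the cyclic subgroup $\langle \mu \rangle$: since $X$ is simply-connected, $\alpha$ bounds a disc $D \colon D^2 \to X$, which can be arranged transverse to $Y^{(1)}$; one then performs a sequence of self-ambient $0$-surgeries on $Y^{(1)}$ along arcs in $D$ joining successive intersection points, which effectively absorbs $D$ into a tubular neighbourhood of the updated $Y^{(2)}$, killing $\alpha$ in $\pi_1$ of the new complement while only contributing further conjugates of $\mu$. Since the open $n$-manifold $X \setminus Y^{(1)}$ has the homotopy type of a finite CW complex (by compactness of $Y^{(1)}$ and a collar argument), $\pi_1(X \setminus Y^{(1)})$ is finitely presented, and finitely many such absorptions produce $Y'$ with $\pi_1(X \setminus Y')$ cyclic and generated by $\mu$. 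Orientability throughout is controlled by the $\Delta$-admissible framings in each $0$-surgery: when $Y$ is orientable the connecting arcs admit framings compatible with a global orientation on $Y$, and when $Y$ is non-orientable the surgeries cannot restore orientability, so $Y'$ may be taken orientable if and only if $Y$ is.
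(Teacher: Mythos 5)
Your opening move---$N-1$ ambient $0$-surgeries to merge the components of $Y$---is correct and is what the paper's sketch suggests. The gap is in the step that upgrades ``normally generated by $\mu$'' to ``cyclic, generated by $\mu$''. The piping you invoke (tube $Y^{(1)}$ to itself along arcs in $D$ joining intersection points) removes intersections of $D$ with the surgered manifold \emph{two at a time}, so it can render $D$ disjoint from the new $Y'$ only when $D\cap Y^{(1)}$ has even cardinality, i.e.\ only when $[\alpha]=0\in H_1(X\setminus Y^{(1)};\cyc{2})$. But you run this for every $\alpha\notin\gen{\mu}$, and an $\alpha=\mu\cdot c$ with $c$ a nontrivial commutator lies outside $\gen{\mu}$ yet meets $Y^{(1)}$ an odd number of times, so the construction cannot even start there. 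Even where it does apply, making $\alpha$ null-homotopic outright is more than you want, since it also kills a power of $\mu$; one should instead kill $\alpha\mu^{-k}$ with $k$ chosen so the class is null-homologous, i.e.\ kill commutators. The phrase ``absorbs $D$ into a tubular neighbourhood'' obscures all of this bookkeeping: what actually happens is that $D$ loses intersection points, and only when it reaches zero does $\del D$ become null-homotopic in the complement.

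A cleaner route to cyclicity, which I take to be the one underlying Sunukjian's Proposition 4.3, is to write a finite generating set of $\pi_1(X\setminus Y^{(1)})$ as products of finitely many \emph{whiskered} meridians (each generator bounds a disc in the simply-connected $X$, and the punctured disc exhibits it as such a product), and then perform $0$-surgeries along arcs from $Y^{(1)}$ to itself whose push-offs trace the whiskers, so that each whiskered meridian becomes literally equal to $\mu$; afterwards $\mu$ genuinely generates. Separately, your termination argument does not survive the paper's non-compactness hypothesis: for $X$ open, $X\setminus Y^{(1)}$ need not have finite homotopy type, so finite generation of its fundamental group requires a separate justification rather than a collar argument. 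The orientability clause is handled correctly.
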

% \begin{proof}
%     Suppose 0-surgery is done over an arc $\Delta$ with endpoints $\del \Delta = \{a,b\} \subset Y$. Let $m_a,m_b \subset X \setminus Y$ be meridians to $Y$ at $a,b$ respectively.
%     Using the same notation as in \cref{lmm: k-surgery fundamental group}, we get that $$\bar{(X \times I ) \setminus \nu Z} \cong (\bar{X \setminus \nu Y}\times I) \cup h^{2}.$$ As described in \cite[\textsection6.2]{GompfStipsicz}, the attaching circle of this 2-handle is a band sum of $m_a$ and $m_b$ along $\Delta$. In particular, after choosing a base point lying on $m_a$, $\pi_1(X \setminus Y')$ is the quotient of $\pi_1(X \setminus Y)$ that identifies $m_a$ with the conjugate of $m_b$ along $\Delta$.

%     When $X$ is simply-connected, $\pi_1( X \setminus Y)$ is generated by conjugates of meridians along arcs connecting them to the chosen basepoint. So we can identify all of the generators by performing ambient 0-surgeries over those arcs, and these 0-surgeries can be made orientation-preserving. Since $Y$ is compact, we only need to perform finitely many surgeries.
% \end{proof}

The condition that $\pi_1(X \setminus Y)$ is cyclic and generated by a meridian to $Y$ will be especially useful, since it guarantees that every curve $\gamma \colon S^1 \to Y$ has a null-homotopic push-off in $X \setminus Y$. This is because, after choosing an arbitrary push-off, it can be altered by meridians to $Y$ until it is null-homotopic, but up to isotopy this is equivalent to simply changing the choice of push-off.

\subsection{Ambient integral Dehn surgery}\label{sec: ambient integral dehn}

We specialise to the case $n=5$, $m=3$, $k=1$. That is, we let $X$ be a 5-manifold and let $Y \subset X$ be a properly embedded compact 3-manifold. This is the case when $Y$ is a cobordism between compact surfaces properly embedded in a 4-manifold. We want to perform ambient 1-surgery (i.e.\ ambient integral Dehn surgery) along a knot $K \subset Y$ with trivial normal bundle. 

In this case, there is a $\cyc{2}$-valued obstruction for a given abstract surgery to be realisable ambiently over a given 2-disc.

\begin{lemma}[{\cite[Lemma 5.4]{KlugMiller1}}]\label{lmm: klug-miller}
    Fix a framing $\varphi$ of $N_YK$ and a 2-disc $\Delta$ spanning $K$ into $X \setminus Y$. Then the $\Delta$-admissible framings of $N_YK$ are exactly the result of either the odd integers or the even integers acting on $\varphi$.
\end{lemma}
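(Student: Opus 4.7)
The plan is to reformulate $\Delta$-admissibility as an obstruction-theoretic extension problem valued in $\pi_1$ of a Stiefel manifold, and then to compute how that obstruction transforms under the $\Z$-action on framings of $N_YK$.

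First I would identify $N_YK$ with a natural rank $2$ subbundle of the rank $3$ bundle $N_X\Delta|_K$. Since $\Delta$ is transverse to $Y$ along $K = \partial\Delta$, the inward normal $v$ to $K$ in $\Delta$ is not contained in $TY$, so $T\Delta|_K \cap TY|_K = TK$, and the induced quotient map
\[
N_YK = TY|_K/TK \longrightarrow TX|_K/T\Delta|_K = N_X\Delta|_K
\]
is fibrewise injective. Under this identification, unwinding the definition shows that a framing $\varphi$ of $N_YK$ is $\Delta$-admissible exactly when its two defining sections, regarded as sections of $N_X\Delta|_K$, extend to a pair of everywhere linearly independent sections of $N_X\Delta$ over all of $\Delta$, i.e.\ to a section of the Stiefel bundle $V_2(N_X\Delta) \to \Delta$.

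Since $\Delta$ is contractible, I would then trivialize $N_X\Delta \cong \Delta \times \R^3$ and rephrase admissibility as asking whether a specific map $K \to V_2(\R^3)$ extends to $\Delta \to V_2(\R^3)$. Because $V_2(\R^3) \simeq \SO(3)$, the primary and only obstruction lives in $H^2(\Delta, \partial\Delta; \pi_1(\SO(3))) \cong \pi_1(\SO(3)) = \cyc{2}$; call it $\theta(\varphi)$.

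Finally I would compute how $\theta$ varies with $\varphi$. Two framings of $N_YK$ that differ by $n \in \Z$ differ, as maps $K \to V_2(\R^3)$, by a loop coming from a winding-$n$ loop in $\mathrm{GL}_2(\R)$ acting on the fixed $2$-plane in $\R^3$; the induced homomorphism $\pi_1(\mathrm{GL}_2(\R)) = \Z \to \pi_1(V_2(\R^3)) = \cyc{2}$ is reduction modulo $2$, since the generator $t \mapsto R_t \in \SO(2)$ becomes a full rotation of a $2$-plane in $\R^3$, which is the nontrivial loop in $\SO(3) \cong \R\P^3$. Thus $\theta(n \star \varphi) \equiv \theta(\varphi) + n \pmod{2}$, so the $\Delta$-admissible framings form exactly one coset of $2\Z$ in the $\Z$-torsor of framings on $\varphi$; surjectivity of $\Z \to \cyc{2}$ also guarantees that at least one admissible framing exists.

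The main obstacle will be to pin down the natural identification $N_YK \hookrightarrow N_X\Delta|_K$ carefully enough that ``extending $\varphi$ to a framing of some rank-$2$ subbundle $\xi \subseteq N_X\Delta$'' really does coincide with extending to a section of $V_2(N_X\Delta)$; after that, the argument reduces to the standard computations $\pi_1(\SO(3)) = \cyc{2}$ and the mod-$2$ reduction map $\pi_1(\mathrm{GL}_2(\R)) \to \pi_1(\SO(3))$.
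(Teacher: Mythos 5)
Your proposal is correct and takes essentially the same approach as the paper: both reduce $\Delta$-admissibility to a $\pi_1(\SO(3)) \cong \cyc{2}$-valued extension obstruction over the contractible disc $\Delta$, and both compute that changing $\varphi$ by $n$ shifts this obstruction by $n \bmod 2$ because the stabilisation map $\pi_1(\SO(2)) \to \pi_1(\SO(3))$ is reduction modulo $2$. The only cosmetic difference is that the paper completes $\varphi$ to a full framing of $\restr{N_X\Delta}{K}$ by appending the trivial complementary line $\zeta$ normal to both $Y$ and $\Delta$, whereas you work directly with the Stiefel bundle $V_2(N_X\Delta)$.
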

Here the action of $\Z$ on the framings of $N_YK$ is given by  $\star$ as in \cref{sec: knot framings}, after identifying framings of $N_YK$ with isotopy classes of sections in the usual way. This is only defined up to sign without a choice of orientation on $N_YK$, but it is still possible to define the orbit of $\varphi$ under $2\Z$.
\smallbreak
\begin{proof}
    Note that $\restr{TX}{K} = N_\Delta K \oplus TK \oplus N_Y K \oplus \zeta$, where $\zeta$ is the line subbundle normal to both $Y$ and $\Delta$. See \cref{fig: subbundle diagram} for a schematic.
    
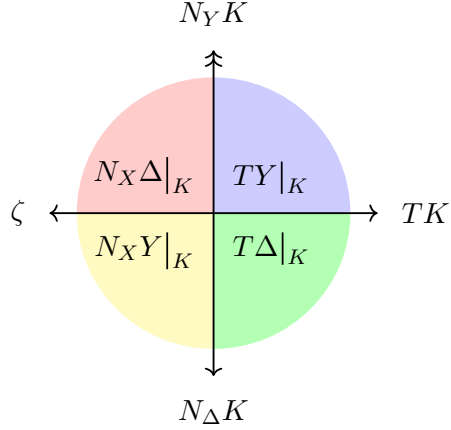
\begin{figure}[t]
    \begin{center}
    \begin{tikzpicture}[scale=1.2]

\pgfmathsetmacro{\radius}{1.5}

\fill[fill = blue!20] (0,0) -- (\radius,0) arc(0:90:\radius) -- (0,0);
\fill[fill = red!20] (0,0) -- (0,\radius) arc(90:180:\radius) -- (0,0);
\fill[fill = yellow!30] (0,0) -- (-\radius,0) arc(180:270:\radius) -- (0,0);
\fill[fill = green!30] (0,0) -- (0,-\radius) arc(-90:0:\radius) -- (0,0);

\draw[->,line width = 0.7pt] (0,0) -- (1.2*\radius,0);
\draw[->,line width = 0.7pt] (0,0) -- (-1.2*\radius,0);
\draw[->>,line width = 0.7pt] (0,0) -- (0,1.2*\radius);
\draw[->,line width = 0.7pt] (0,0) -- (0,-1.2*\radius);

\node[anchor=south] at (0,1.3*\radius) {$N_YK$};
\node[anchor=north] at (0,-1.3*\radius) {$N_\Delta K$};
\node[anchor=west] at (1.3*\radius,0) {$TK$};
\node[anchor=east] at (-1.3*\radius,0) {$\zeta$};

\node[anchor=south west] at (0.1,0.1) {$\restr{TY}{K}$};
\node[anchor=north west] at (0.1,-0.1) {$\restr{T\Delta}{K}$};
\node[anchor=south east] at (-0.1,0.1) {$\restr{N_X\Delta}{K}$};
\node[anchor=north east] at (-0.1,-0.1) {$\restr{N_XY}{K}$};

\end{tikzpicture}
    \caption{Schematic of the bundle $\restr{TX}{K}$. The four smaller subbundles $N_YK$, $TK$, $N_\Delta K$, and $\zeta$ represented by arrows are all normal to each other. The number of heads on the arrow indicates the rank. The subbundle spanned by a pair of adjacent subbundles is indicated between them.}
    \label{fig: subbundle diagram}
    \end{center}
    \vskip -0.0in
\end{figure}
   Since both $N_X\Delta$ and $N_YK$ are orientable and all orientable vector bundles over $S^1$ are trivial, $\zeta$ must be trivial. Hence the framing $\varphi$ of $N_YK$ induces a framing of $\restr{N_X \Delta}{K} =  N_YK \oplus \zeta$, which extends over $\Delta$ if and only if it is the restriction of the unique framing of $N_X \Delta$.
    
    Recall that the framings of a trivial rank $r$ bundle over $S^1$ form a $\pi_1(\SO(r))$-torsor. Any Lie subgroup inclusion $\SO(2) \hookrightarrow \SO(3)$ induces the quotient map
    \begin{equation*}
        \pi_1(\SO(2)) \cong \Z \to \cyc{2} \cong \pi_1(\SO(3) ),
    \end{equation*}
    so two framings of the rank 2 bundle $N_YK$ induce the same framing of the rank 3 bundle $\restr{N_X \Delta}{K}$ if and only if they differ by the action of an even integer.
\end{proof}

\begin{remark}
    For more general $m$ and $k$ with $n=m+2$, the $\Delta$-admissible framings are one orbit class of framings under the action of the kernel
\begin{equation*}
    \ker \big( \pi_{k-1}(\SO(m-k+1)) \to \pi_{k-1}(\SO(m-k+2)) \big)
\end{equation*}
where the map is induced by Lie subgroup inclusion. For $m \geq 2k$, this map can be seen to be an isomorphism by considering the fibration $\SO(m-k+1) \hookrightarrow \SO(m-k+2) \to S^{m-k+1}$; so in this case, each choice of disc $\Delta$ spanning $K$ into $X \setminus Y$ has a unique framing that is $\Delta$-admissible.
\end{remark}

We will find it useful to view \cref{lmm: klug-miller} from the perspective of stable framings. Let $\xi \colon E \to X$ be a trivial vector bundle, and let $\varepsilon\colon \R \times X \to X$ be the trivial line bundle. We say that two framings of $\xi$ are \textit{stably equivalent} if they induce the same framing on $\xi \oplus \ell \varepsilon$ for sufficiently large $\ell \geq 0$, and a \textit{stable framing} of $\xi$ is a stable equivalence of framings. For $r \geq 3$, the map $\pi_1(\SO(r) ) \to \pi_1(\SO(r+1) )$ induced by Lie group inclusion is an isomorphism between cyclic groups of order 2, so there are two stable framings of a trivial vector bundle over $S^1$. So if $\rank(\xi) \geq 3$ the framings and stable framings of $\xi$ are in bijection; if $\rank(\xi) = 2$, a stable framing of $\xi$ can be thought of as a framing ``mod 2". \cref{lmm: klug-miller} says that, for a fixed 2-disc $\Delta$ spanning $K$ into $X \setminus Y$, the $\Delta$-admissible framings of $N_YK$ form one stable equivalence class, since $\varphi$ and $1 \star \varphi$ are representatives of the two distinct stable framings for any framing $\varphi$ of $N_YK$.

The following lemma is a simple but useful consequence of naturality of Stiefel--Whitney classes, and the fact that an orientable vector bundle of rank at least 3 on a closed surface is trivial if and only if its second Stiefel--Whitney class vanishes.

\begin{lemma}\label{lmm: Stiefel Whitney killing lemma}
    Let $M$ be a smooth orientable $n$-manifold with $n \geq 3$, let $\S$ be a closed oriented surface, and let $f \colon \S \to M$ be a continuous map. Then $ \left\langle w_2(M), f_*[\S] \right\rangle = 0$ if and only if $f^*TM$ is a trivial bundle.
\end{lemma}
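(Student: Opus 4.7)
The plan is to unpack the two claims in the ``only if this is the trivial bundle'' statement by using the classification of oriented vector bundles on a surface, then reduce everything to the evaluation pairing via naturality of Stiefel--Whitney classes and Poincar\'e duality on $\S$. I will assume $\S$ is connected, with the general case following by applying the argument to each connected component separately.

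First, I would observe that $f^*TM \to \S$ is an oriented real vector bundle of rank $n \geq 3$ over a closed oriented surface: it is orientable because $M$ is orientable (so $TM$ is), and pullbacks of orientable bundles are orientable. The first step is then to invoke the classification of oriented rank-$n$ bundles over a $2$-complex: since $\BSO(n)$ has $\pi_1 = 0$ and $\pi_2 = \cyc{2}$ for $n \geq 3$ (via the fibration $\SO(n) \to \SO(n+1) \to S^n$ and $\pi_1(\SO(3)) = \cyc{2}$), obstruction theory gives a bijection
\begin{equation*}
    w_2 \colon \{\text{oriented rank-}n \text{ bundles on } \S\}/\text{iso} \xrightarrow{\cong} H^2(\S;\cyc{2}).
\end{equation*}
In particular $f^*TM$ is trivial if and only if $w_2(f^*TM) = 0 \in H^2(\S;\cyc{2})$.

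Next, I would apply naturality of Stiefel--Whitney classes to rewrite $w_2(f^*TM) = f^* w_2(TM) = f^* w_2(M)$. Since $\S$ is a closed connected surface, Poincar\'e duality with $\cyc{2}$-coefficients gives that the evaluation pairing
\begin{equation*}
    \langle - , [\S] \rangle \colon H^2(\S;\cyc{2}) \xrightarrow{\cong} \cyc{2}
\end{equation*}
is an isomorphism, so $f^* w_2(M) = 0$ if and only if $\langle f^* w_2(M), [\S] \rangle = 0$. Finally, by the standard adjunction between pullback in cohomology and pushforward in homology,
\begin{equation*}
    \langle f^* w_2(M), [\S] \rangle = \langle w_2(M), f_*[\S] \rangle \in \cyc{2},
\end{equation*}
and the chain of equivalences yields the lemma.

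There is no real obstacle here, only minor bookkeeping: one must verify the classification statement $[\S,\BSO(n)] \cong H^2(\S;\cyc{2})$ actually identifies the classifying map of $f^*TM$ with its $w_2$, which is the standard fact that the primary obstruction to trivialising an oriented rank-$n$ bundle on a $2$-complex ($n \geq 3$) is $w_2$ and that it is the only obstruction when the base is $2$-dimensional. For a disconnected $\S$, the same argument applied to each component shows that $f^*TM$ is trivial if and only if $\langle w_2(M), f_*[\S_0]\rangle = 0$ for every component $\S_0 \subseteq \S$, which is the form in which the lemma will be invoked.
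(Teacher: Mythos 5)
Your proof is correct and takes essentially the same route as the paper: both arguments rest on (a) the classification of oriented rank-$\geq 3$ bundles over a surface by $w_2$, (b) naturality $w_2(f^*TM) = f^*w_2(M)$, and (c) the observation that $f^*w_2(M) = 0$ in $H^2(\S;\cyc{2})$ if and only if $\langle w_2(M), f_*[\S]\rangle = 0$. The paper deduces (c) from the universal coefficient theorem (noting the surjection $H^2(\S;\cyc{2}) \to \Hom(H_2(\S;\Z),\cyc{2})$ is an isomorphism of groups of order $2$), whereas you invoke Poincar\'e duality; these are two phrasings of the same fact. You are actually slightly more careful than the paper in one respect: you explicitly restrict to connected $\S$ and note that the disconnected case needs the evaluation $\langle w_2(M), f_*[\S_0]\rangle$ to vanish on each component $\S_0$ separately, since $\langle w_2(M), f_*[\S]\rangle = 0$ alone does not force $f^*TM$ trivial when $\S$ has several components. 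The paper's proof, as written, implicitly assumes connectedness as well (the cardinality count ``both groups are cyclic of order $2$'' and the phrase ``zero on the fundamental class'' presume it), and its two applications do only use connected $\S$, so the discrepancy is harmless — but your explicit treatment of it is a minor improvement.
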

\begin{proof}
  Consider the following commutative diagram with exact rows, coming from naturality of the universal coefficient theorem.
\[\begin{tikzcd}
	{H^2(M;\cyc{2})} & {\Hom(H_2(M;\Z),\cyc{2})} & 0 \\
	{H^2(\S;\cyc{2})} & {\Hom(H_2(\S;\Z),\cyc{2})} & 0
	\arrow[from=1-1, to=1-2]
	\arrow["{f^*}", from=1-1, to=2-1]
	\arrow[from=1-2, to=1-3]
	\arrow["{f^*}", from=1-2, to=2-2]
	\arrow["\cong", from=2-1, to=2-2]
	\arrow[from=2-2, to=2-3]
\end{tikzcd}\]
    The bottom horizontal arrow is an isomorphism as it is a surjection and both groups are cyclic of order 2.
    We consider what it means for the image of $w_2(M) \in H^2(M;\cyc{2})$ to be zero under the diagonal of the square.
    \begin{itemize}
        \item Going anticlockwise around the square, $w_2(M)$ maps to $f^*w_2(M) = w_2(f^*TM)$, which is zero if and only if $f^*TM$ is trivial.
        \item Going clockwise around the square, $w_2(M)$ maps to
    \begin{equation*}
        f^* \left\langle w_2(M), - \right\rangle = \left\langle w_2(M), f_*(-) \right\rangle \colon H_2(\S;\Z) \to \cyc{2}
    \end{equation*}
    is the zero map if and only if it is zero on the fundamental class $[\S]$.
    \end{itemize}
    By commutativity of the square, these two conditions must coincide.
\end{proof}

\cref{lmm: Stiefel Whitney killing lemma} allows Sunukjian to give a condition in terms of the action of $w_2(X \setminus Y)$ on $H_2(X \setminus Y;\Z)$ which guarantees that all abstract 1-surgeries on $Y$ can be performed ambiently in $X$ \cite[Theorem 6.1]{Sunukjian}. We repeat the proof here for the reader's convenience.

\begin{proposition}\label{lmm: ambient surgery with non-trivial sphere}
    Suppose that $X$ is orientable, that $\pi_1(X \setminus Y)$ is cyclic and generated by a meridian to $Y$, and that there exists some $\sigma \in H_2(X \setminus Y;\Z)$ with $ \left\langle w_2(X \setminus Y), \sigma \right\rangle = 1 \in \cyc{2}$.
    Then any abstract 1-surgery on $Y$ can be performed ambiently.
\end{proposition}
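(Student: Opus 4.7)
The plan is to show that a given abstract $1$-surgery on $Y$ along a framed knot $(K, \varphi)$ can be realised ambiently by constructing an embedded $2$-disc $\Delta \subset X$ with $\partial \Delta = \Delta \cap Y = K$ such that $\varphi$ is $\Delta$-admissible. The strategy has three steps.

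First, I would exhibit \emph{some} 2-disc spanning $K$ into $X \setminus Y$. Since $\pi_1(X \setminus Y)$ is cyclic and generated by a meridian $\mu$, any boundary-parallel push-off $K' \subset X \setminus Y$ of $K$ is freely homotopic to $\mu^k$ for some $k$. By adding $-k$ meridional twists to the push-off (i.e.\ by composing the collar embedding $K \times [0,\varepsilon) \hookrightarrow \nu Y$ with a map that winds around the meridional direction $k$ times), the push-off becomes null-homotopic in $X \setminus Y$. The null-homotopy gives a continuous map $D^2 \to X \setminus Y$, which by transversality (since $2 + 2 < 5$) upgrades to an embedding. Capping with the annulus from $K$ to its push-off gives a spanning disc $\Delta$.

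Second, I would correct the admissible framing. By \cref{lmm: klug-miller}, among the two stable equivalence classes of framings of $N_Y K$, exactly one consists of $\Delta$-admissible framings. If $\varphi$ lies in this class, we are done. If not, I would modify $\Delta$ by ambient connected sum $\Delta' = \Delta \# S$ with an embedded 2-sphere $S \subset X \setminus Y$ along an arc in $X\setminus Y$. The new disc $\Delta'$ still spans $K$ into $X \setminus Y$, and the rank-3 bundle $N_X \Delta'$ is, up to isomorphism, obtained from $N_X \Delta$ by twisting along the connecting tube by the classifying element of $N_X S$ in $\pi_1(\SO(3)) \cong \cyc{2}$. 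Hence the $\Delta'$-admissible stable class is the opposite of the $\Delta$-admissible one precisely when $N_X S$ is the non-trivial rank-3 bundle on $S^2$. Since $TS^2$ is stably trivial, $w_2(N_X S) = S^* w_2(X \setminus Y)$, so \cref{lmm: Stiefel Whitney killing lemma} identifies this with the condition $\langle w_2(X \setminus Y), [S] \rangle = 1 \in \cyc{2}$.

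Third, I would construct such a sphere $S \subset X \setminus Y$. Using \cref{prop: realising oriented homology classes} applied to $X \setminus Y$, realise $\sigma$ by an embedded closed oriented surface $F \subset X \setminus Y$ with $[F] = \sigma$, so $\langle w_2, [F] \rangle = 1$. Using that $\pi_1(X \setminus Y)$ is cyclic and generated by a meridian, I would reduce the genus of $F$ by a sequence of ambient $1$-surgeries along embedded null-homotopic simple closed curves in $X \setminus Y$: each surgery caps an annulus neighbourhood of the curve with two parallel copies of a bounding disc, preserving $[F]$ and hence the pairing with $w_2$. After reducing to genus zero and discarding components with trivial $w_2$-pairing, one obtains the required sphere $S$.

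The main obstacle is Step 3. Since $\pi_1(X \setminus Y)$ may be infinite cyclic, a curve on $F$ representing $\mu^k$ with $k \neq 0$ is not null-homotopic in $X\setminus Y$, so it cannot be directly surgered. The delicate point is to modify a basis of $H_1(F;\Z)$ by band-summing with auxiliary embedded curves lying in $X \setminus Y$ so as to kill their images in $\pi_1(X \setminus Y)$, while keeping the curves embedded and disjoint and preserving $[F] \in H_2(X \setminus Y;\Z)$. It is precisely here that the error identified in \cref{rmk: proof of 5.1} of \cite{Sunukjian} occurs, and it is this step which will need to be rewritten carefully using the cyclic structure of $\pi_1(X \setminus Y)$.
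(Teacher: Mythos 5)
Your Steps 1 and 2 match the paper's argument (which follows Sunukjian, Theorem 6.1): produce a spanning disc via a null-homotopic push-off, then apply Lemma~\ref{lmm: klug-miller} and correct the admissible stable class by tubing to an embedded sphere $S \subset X \setminus Y$ whose normal bundle is the non-trivial rank-3 bundle, detected by $\langle w_2(X \setminus Y), [S]\rangle = 1$ via Lemma~\ref{lmm: Stiefel Whitney killing lemma}.

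The gap is in Step 3, exactly where you flag the ``main obstacle.'' You try to produce $S$ by realising $\sigma$ as an embedded surface $F$ and then reducing genus by ambient surgery on curves in $F$, which stalls for the reasons you yourself give. The paper avoids this entirely by an algebraic observation: since $\pi_1(X \setminus Y)$ is cyclic, $H_2\bigl(\pi_1(X \setminus Y);\Z\bigr) = 0$, so the Hopf exact sequence
\begin{equation*}
\pi_2(X \setminus Y) \longrightarrow H_2(X \setminus Y;\Z) \longrightarrow H_2\bigl(\pi_1(X \setminus Y);\Z\bigr) \longrightarrow 0
\end{equation*}
shows the Hurewicz map is surjective. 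Hence $\sigma$ is already spherical, and a representing map $S^2 \to X \setminus Y$ is generically an embedding since $\dim(X \setminus Y) = 5 \geq 5$. This gives $S$ directly, with no genus reduction needed and no need to chase meridional powers. Your proposal is therefore incomplete as written; the needed repair is to replace the genus-reduction scheme with the Hurewicz/Hopf argument.

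A secondary point: you attribute the difficulty in Step 3 to the error discussed in Remark~\ref{rmk: proof of 5.1}. That is a mislabelling. The error corrected there concerns whether two spanning discs $\Delta_1 \cup \Delta_2$ can be pushed off $Y$ (a step in the proof of Proposition 5.1 of Sunukjian, i.e.\ the case where $w_2$ acts trivially), not the construction of a sphere with non-trivial $w_2$-pairing, which is the content of the present proposition.
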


\begin{proof}
    Since $\pi_1(X \setminus Y)$ is cyclic, the homology group $H_2(\pi_1(X \setminus Y);\Z) =0$, and so the Hurewicz map $\pi_2(X \setminus Y) \to H_2(X \setminus Y;\Z)$ is surjective \cite[\textsection I.6 \& \textsection II.5]{BrownGroupCohomology}. Hence every class in $H_2(X \setminus Y;\Z)$ is represented by the image of a 2-sphere under a continuous map, which we may assume to be a smooth embedding since $\dim(X) \geq 5$.
    
    Let $\sigma \in H_2(X \setminus Y;\Z)$ be a class such that $\left\langle w_2(X \setminus Y), \sigma \right\rangle =1 $, and let $\S \subset X \setminus Y$ be an oriented embedded 2-sphere with $[\S] = \sigma \in H_2(X \setminus Y;\Z)$. By \cref{lmm: Stiefel Whitney killing lemma}, the bundle $\restr{T(X \setminus Y)}{\S}$ must be non-trivial. Since $TS^2$ is stably trivial, the classification of vector bundles on closed surfaces implies that  $N_{X \setminus Y}\S$ must be the unique non-trivial orientable bundle of rank 3 over $\S$.

    Fix any embedded circle $K \subset Y$ with $N_YK$ trivial, and any framing $\varphi$ of $N_YK$. As $\pi_1(X \setminus Y) $ is generated by a meridian to $Y$, $K$ has null-homotopic push-off. Thus there exists a 2-disc $\Delta$ spanning $K$ into $X \setminus Y$. If $\varphi$ is $\Delta$-admissible, we are done. If not, tube $\Delta$ to $\S$, to form a new disc $\Delta'$ spanning $K$ into $X \setminus Y$. Since $\S$ has non-trivial normal bundle, the framings of $N_YK$ which extend over a subbundle of $N_X\Delta$ cannot extend over a subbundle of $N_X\Delta'$, and vice versa. So $\varphi$ must be $\Delta'$-admissible. Thus we can perform surgery along $K$ with framing $\varphi$, either over $\Delta$ or over $\Delta'$.
\end{proof}

Recall that by \cref{lmm: 1-surgery fundamental group}, when $X$ is simply-connected we can always perform ambient 0-surgery to assume that $\pi_1(X \setminus Y)$ is cyclic and generated by a meridian to $Y$. Hence after performing appropriate 0-surgeries, the only obstruction to being able to perform ambient integral Dehn surgery arises when the action of $w_2(X \setminus Y)$ on $H_2(X \setminus Y;\Z)$ is trivial. In this case, we will use $\Pin^-$-structures to control which abstract surgeries can be realised ambiently.
 \section{Pin$^\pm$-structures and Pin$^\pm$-surgery}\label{sec: Spin}
In this section, we extend many of the results about $\Spin$-surgery found in \cite{Sunukjian} to its unoriented analogue, $\Pin^\pm$-surgery. We begin with some important background. \cref{subsec: spin via framing} gives several equivalent definitions of a $\Spin$-structure on an oriented vector bundle, including a less-used one which is most convenient for our purposes. Sections \ref{sec: pin}--\ref{sec: pin cobordism} discuss $\Pin^\pm$-structures, $\Pin^\pm$-surgery, and $\Pin^\pm$-cobordism respectively. Then in \cref{sec: ambient dehn}, we give conditions involving $\Pin^\pm$-structures for when integral Dehn surgery can be realised ambiently. This will be the key result in our proof of Theorem \ref{mainthm: general concordance}.

\subsection{$\Spin$-structures on vector bundles}\label{subsec: spin via framing}
Let $X$ be a topological space. Recall that for $n \geq 2$, $\Spin(n)$ is the unique connected double cover of $\SO(n)$, and $\Spin(1)$ is the discrete double cover of $\SO(1) = \{1\}$. Let $\xi\colon  E \to X$ be an oriented rank $n$ vector bundle, and let $f_{\xi}\colon X \to \BSO(n)$ be its classifying map. A \textit{$\Spin$-structure} on $\xi$ is a homotopy class of lifts
\[\begin{tikzcd}
	& {\BSpin(n)} \\
	X & {\BSO(n).}
	\arrow[from=1-2, to=2-2]
	\arrow[dashed, from=2-1, to=1-2]
	\arrow["{f_\xi}"', from=2-1, to=2-2]
\end{tikzcd}\]
The oriented bundle $\xi$ admits a $\Spin$-structure if and only if $w_2(\xi)=0$. If $f\colon Y \to X$ is any continuous map and $\mathfrak{s}$ is a $\Spin$-structure on $\xi$, we write $f^* \mathfrak{s}$ for the $\Spin$-structure on $f^* \xi$ given by composing $f$ with a lift $X \to \BSpin(n)$ representing $\mathfrak{s}$.

There are many equivalent definitions of $\Spin$-structures; see Chapter IV of \cite{Kirby} and \cite{ChenZinger} for surveys and further details. We will require an equivalent definition in terms of compatible stable framings over loops.

\begin{definition}[{\cite[Definition 1.3]{ChenZinger}}] \label{def: spin}
    A \textit{$\Spin$-structure} on $\xi$ is a choice of stable framing of $\gamma^* \xi$ for each continuous map $\gamma \colon S^1 \to X$, such that for any compact surface $\Sigma$ and any continuous map $f \colon \Sigma \to X$, the stable framing of $\restr[*]{f}{\del \S} \xi$ extends to a stable framing of $f^* \xi$.
\end{definition}

If $X$ has a CW-structure, this definition of a $\Spin$-structure may be seen as analogous to the well-known definition of a $\Spin$-structure on $\xi$ as a stable framing over the 1-skeleton of $X$ which extends over the 2-skeleton. One key consequence of \cref{def: spin} is that $\Spin$-structures on $\xi$ and $\xi \oplus \varepsilon$ correspond naturally, where $\varepsilon$ is the trivial line bundle over $X$. 

Henceforth, we will use \cref{def: spin} as our primary definition of a $\Spin$-structure.

If $\xi \colon E \to X$ admits a $\Spin$-structure, then the set of $\Spin$-structures on $\xi$ is an $H^1(X;\cyc{2})$-torsor. The action of $a \in H^1(X;\cyc{2})$ is given by changing the stable framing of $\gamma^* \xi$ for a map $\gamma\colon S^1 \to X$ if and only if $\langle a,\gamma_*[S^1] \rangle = 1 \in \cyc{2}$. See \textsection4.2 of \cite{ChenZinger} for a slightly different description of the action in terms of \cref{def: spin} as well as proof that the action is free and transitive, or \textsection1 of \cite{KirbyTaylor} for a more classical viewpoint. 

This means that a $\Spin$-structure is determined by the stable framings of $\xi$ on embedded circles representing a basis of $H_1(X;\cyc{2})$, as we spell out in the next lemma.

\begin{lemma}\label{lmm: spin from basis}
    Let $\xi \colon E \to X$ be an oriented rank $n$ vector bundle which admits a $\Spin$-structure. Let $\Gamma$ be a collection of embeddings $\gamma\colon S^1 \to X$ such that the tuple $\big(\gamma_*[S^1] \in H_1(X;\cyc{2}) \big)_{\gamma \in \Gamma}$ is a basis for $H_1(X;\cyc{2})$. Then the following data are equivalent:
    \begin{enumerate}[label=(\roman*)]
        \item A $\Spin$-structure on $\xi$; and
        \item A choice of stable framing of $\gamma^*\xi$ for each $\gamma \in \Gamma$.
    \end{enumerate}
\end{lemma}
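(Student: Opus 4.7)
The plan is to exhibit the obvious restriction map from (i) to (ii) as an equivariant bijection between torsors, where the equivariance is via a natural isomorphism $H^1(X;\cyc{2}) \cong (\cyc{2})^\Gamma$ induced by the basis hypothesis on $\Gamma$.

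First, I would define the restriction map $\Phi \colon \text{(i)} \to \text{(ii)}$: viewing a $\Spin$-structure $\mathfrak{s}$ as in \cref{def: spin} as a coherent choice of stable framing of $\gamma^*\xi$ for every continuous loop $\gamma \colon S^1 \to X$, simply restrict this choice to the loops in $\Gamma$. This is well-defined on the nose. Next, I would identify both sides as torsors: the source is an $H^1(X;\cyc{2})$-torsor by the structure recalled in the paragraph preceding the lemma, and is non-empty by hypothesis; for the target, each $\gamma^*\xi$ is an oriented rank $n$ bundle over $S^1$ and hence trivial, so its stable framings form a $\pi_1(\SO) = \cyc{2}$-torsor, making the target a (non-empty) $(\cyc{2})^\Gamma$-torsor.

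Then I would check that $\Phi$ intertwines the two torsor actions via the evaluation homomorphism
\begin{equation*}
    \varepsilon \colon H^1(X;\cyc{2}) \to (\cyc{2})^\Gamma, \qquad a \longmapsto \big(\langle a, \gamma_*[S^1]\rangle\big)_{\gamma \in \Gamma}.
\end{equation*}
This is immediate from the description of the $H^1(X;\cyc{2})$-action on $\Spin$-structures recalled just before the lemma: the element $a$ flips the stable framing of $\gamma^*\xi$ precisely when $\langle a, \gamma_*[S^1]\rangle = 1$, which is exactly the action of $\varepsilon(a)$ on the $\gamma$-coordinate.

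The main (and essentially only nontrivial) step is to verify that $\varepsilon$ is an isomorphism of groups. By the universal coefficient theorem,
\begin{equation*}
    H^1(X;\cyc{2}) \cong \Hom(H_1(X;\Z),\cyc{2}) \cong \Hom(H_1(X;\cyc{2}),\cyc{2}),
\end{equation*}
and the hypothesis that $\{\gamma_*[S^1]\}_{\gamma \in \Gamma}$ is a $\cyc{2}$-basis of $H_1(X;\cyc{2})$ gives $\Hom(H_1(X;\cyc{2}),\cyc{2}) \cong (\cyc{2})^\Gamma$ via evaluation on this basis. The composite of these two identifications is precisely $\varepsilon$. Since $\Phi$ is an $\varepsilon$-equivariant map between non-empty torsors and $\varepsilon$ is a bijection, $\Phi$ itself is a bijection, completing the proof. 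I do not foresee any serious obstacle: the argument is a formal consequence of the torsor structures on both sides together with universal coefficients.
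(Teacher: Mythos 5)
Your proof is correct and takes essentially the same approach as the paper's, which also relies on the free and transitive $H^1(X;\cyc{2})$-action on $\Spin$-structures together with the basis hypothesis. Your version is in fact slightly more complete: the paper explicitly spells out only the injectivity of the restriction map, leaving surjectivity implicit, whereas your equivariant-bijection-of-torsors formulation handles both directions at once.
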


\begin{proof}
    The implication (i)$\Rightarrow$(ii) is immediate by \cref{def: spin}. The fact that action of $H^1(X;\cyc{2})$ on $\Spin$-structures is both free and transitive implies that two $\Spin$-structures on $\xi$ determine the same stable framings of $\big\{ \gamma^* \xi \big\}_{\gamma \in \Gamma}$ if and only if they are equivalent $\Spin$-structures. 
\end{proof}

We will use \cref{lmm: spin from basis}(ii)$\Rightarrow$(i) to construct $\Spin$-structures with particular properties by first choosing the appropriate stable framings over a basis of $H_1(X;\cyc{2})$, then extending them to a $\Spin$-structure over the whole vector bundle.

\subsection{$\Pin^\pm$-structures}\label{sec: pin}
Let $\xi \colon E \to X$ be a rank $n$ vector bundle, not necessarily orientable. 
 Write $\det \xi \coloneq \bigwedge^{n} \xi$ for the line bundle given by the top exterior power of $\xi$. A \textit{$\Pin^-$-structure} on $\xi$ is a $\Spin$-structure on $\xi \oplus \det \xi$. A \textit{$\Pin^+$-structure} on $\xi$ is a $\Spin$-structure on $\xi \oplus 3 \det \xi$. We refer the reader to \cite{KirbyTaylor} and \cite{ChenZinger} for more details and equivalent definitions.
We will often talk about a $\Pin^\pm$-structure on $\xi$ being a $\Spin$-structure on $\xi \oplus \ell \det \xi$, without choosing a sign or specifying that $\ell = 2 \pm 1$.

 \begin{remark}
     To make sense of this definition, we must show that for $\ell \geq 1$ odd, $\xi \oplus \ell \det \xi$ is orientable and has a preferred orientation \cite[\textsection2.1]{ChenZinger}.
     To see that it is orientable, it is enough to note that the line bundle $\det \xi$ has the same orientation character as $\xi$, so $\xi \oplus \ell \det \xi$ is orientable for any $\ell \geq 1$ odd. 
      To see that it has a preferred orientation, choose an orientation of the fibre $\restr{\xi}{x}$ for each $x \in X$. This determines an orientation of $\restr{(\ell\det\xi)}{x}$ for each $x \in  X$, and for $\ell$ odd, reversing the orientation of one reverses the orientation of the other. Hence the fibre $\restr{(\xi \oplus \ell\det \xi)}{x}$ has a preferred orientation for each $x \in X$. To check that these combine to give a preferred global orientation of $\xi \oplus \ell \det \xi$, note that for any open $U \subseteq X$ such that $\restr{\xi}{U}$ is orientable, the preferred orientations of the fibres over each $x \in U$ combine to give the orientation of $\restr{(\xi \oplus \ell \det \xi)}{U}$ induced by a choice of orientation of $\restr{\xi}{U}$. There is a cover of $X$ by such open neighbourhoods since $\xi$ is locally trivial, so the preferred orientations of each fibre must combine to give a global orientation.
 \end{remark}

\begin{remark} There is also a definition of a $\Pin^\pm$-structure in terms of lifts of classifying maps \cite[\textsection1]{KirbyTaylor}. For each $n \geq 1$, there exist two Lie groups $\Pin^\pm(n)$ fitting into a central extension
    \begin{equation*}
        1 \to \cyc{2} \to \Pin^\pm(n) \to \O(n) \to 1.
    \end{equation*}
A $\Pin^\pm$-structure on a $\xi$ is equivalent to a homotopy class of lifts
\[\begin{tikzcd}
	& {\BPin^\pm(n)} \\
	X & {\BO(n)}
	\arrow[from=1-2, to=2-2]
	\arrow[dashed, from=2-1, to=1-2]
	\arrow["{f_\xi}"', from=2-1, to=2-2]
\end{tikzcd}\]
    of the classifying map $f_\xi \colon X \to \BO(n)$.

 One can show that $\xi$ admits a $\Pin^+$-structure if and only if $w_2(\xi) = 0$, and admits a $\Pin^-$-structure if and only if $w_2(\xi)+ w_1^2(\xi) = 0$. In either case, if $\xi$ admits a $\Pin^\pm$-structure, the set of $\Pin^\pm$-structures is naturally a $H^1(X;\cyc{2})$-torsor. If $\xi$ is oriented, $\det (\xi)$ is trivial, so there is a natural bijection between the $\Spin$-structures, $\Pin^-$-structures, and $\Pin^+$-structures on $\xi$.
\end{remark}

We note the following for ease of referencing.

\begin{lemma}[{\cite[Lemma 1.6]{KirbyTaylor}}]\label{lmm: inducing pin}
    Let $\xi\colon E \to X$ be a vector bundle, and let $\varepsilon$ be a trivial bundle on $X$ with a choice of stable framing. Then a choice of $\Pin^\pm$-structure on one of $\xi$ or $\varepsilon \oplus \xi$ uniquely determines a $\Pin^\pm$-structure on the other.
\end{lemma}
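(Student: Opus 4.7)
The plan is to reduce the statement to an analogous fact about $\Spin$-structures, and then prove the latter using \cref{def: spin}.

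First I would unpack the two definitions of $\Pin^\pm$-structure. Since $\varepsilon$ is trivial, the chosen stable framing canonically trivialises $\det \varepsilon$, giving a natural isomorphism $\det(\varepsilon \oplus \xi) \cong \det \varepsilon \otimes \det \xi \cong \det \xi$. Writing $\ell = 2 \pm 1$ and $\eta \coloneq \xi \oplus \ell \det \xi$, a $\Pin^\pm$-structure on $\xi$ is by definition a $\Spin$-structure on $\eta$, while a $\Pin^\pm$-structure on $\varepsilon \oplus \xi$ is a $\Spin$-structure on
\begin{equation*}
(\varepsilon \oplus \xi) \oplus \ell \det(\varepsilon \oplus \xi) \;\cong\; \varepsilon \oplus \eta.
\end{equation*}
The preferred orientations on these two bundles agree under this isomorphism, since the framing of $\varepsilon$ orients $\varepsilon$ consistently with the trivialisation of $\det \varepsilon$. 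The problem therefore reduces to showing that a fixed stable framing of $\varepsilon$ induces a natural bijection between $\Spin$-structures on $\eta$ and $\Spin$-structures on $\varepsilon \oplus \eta$.

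Next I would construct this bijection directly from \cref{def: spin}. Given a $\Spin$-structure on $\eta$, that is, a coherent assignment of a stable framing of $\gamma^*\eta$ to each loop $\gamma \colon S^1 \to X$, the fixed stable framing of $\varepsilon$ restricts to a canonical stable framing of $\gamma^*\varepsilon$, and the direct sum gives a stable framing of $\gamma^*(\varepsilon \oplus \eta)$ for each $\gamma$. The coherence condition is inherited: for any continuous map $f \colon \Sigma \to X$ from a compact surface, the stable framing of $f^*\varepsilon$ already extends from $\del \Sigma$ to $\Sigma$ as the restriction of the global stable framing of $\varepsilon$, so direct-summing with the assumed extension of the stable framing of $f^*\eta$ yields an extension of the stable framing of $f^*(\varepsilon \oplus \eta)$. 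The reverse map is defined by stripping off the canonical stable framing of $\gamma^*\varepsilon$ from each loop.

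The main remaining verification is that these two assignments are mutually inverse, which reduces to the observation that direct-summing with a fixed stably framed trivial bundle is a bijection on stable framings (one shows this by stabilising far enough to express everything as a framing of a single trivial bundle of common rank). I do not expect any serious obstacle here; the only care required is to track the orientation conventions implicit in the definition of $\Pin^\pm$, which is handled in the first paragraph.
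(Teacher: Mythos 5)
Your argument follows the same route as the paper's: identify $(\varepsilon \oplus \xi) \oplus \ell\det(\varepsilon \oplus \xi)$ with $\varepsilon \oplus (\xi \oplus \ell\det\xi)$ using triviality of $\varepsilon$, and then invoke the correspondence of $\Spin$-structures under stabilisation. The paper states this correspondence as a known fact and leaves it at that, whereas you unwind it directly from \cref{def: spin}; this is a harmless elaboration, not a different method.
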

This follows by observing that, since $\varepsilon$ is trivial,
\begin{equation*}
        (\varepsilon \oplus \xi) \oplus \ell \det (\varepsilon \oplus \xi) = \varepsilon \oplus (\xi \oplus \ell \det \xi),
    \end{equation*}
and that $\Spin$-structures correspond naturally under stabilisation. The same result is true if $\varepsilon$ is stably trivial and stably framed.

We finish this subsection by discussing $\Pin^\pm$-structures on smooth manifolds. For a smooth manifold $Y$, a \textit{$\Pin^\pm$-structure} on $Y$ is a $\Pin^\pm$-structure on the tangent bundle $TY$. A \textit{$\Pin^\pm$-manifold} is a smooth manifold with a choice of $\Pin^\pm$-structure. The following facts can all be found in \cite{KirbyTaylor}, but are summarised for convenience.

\begin{lemma}\label{lmm: pin manifold facts}
    Let $Y$ be a smooth manifold. Let $Z \subseteq Y$ be a submanifold, and choose a stable framing of $N_YZ$. Then a choice of $\Pin^\pm$-structure on $Y$ determines a $\Pin^\pm$-structure on (i) $Z$; (ii) $\del Y$; and (iii) $Y \times I$.
\end{lemma}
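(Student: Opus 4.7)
I would prove the three statements uniformly by reducing each to a question about stable framings of determinant and normal bundles, using the definition that a $\Pin^\pm$-structure on a manifold $M$ is a $\Spin$-structure on $TM \oplus \ell \det TM$ for $\ell = 2 \pm 1$, together with \cref{lmm: inducing pin}.

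The plan is to first prove (i) in full, and then deduce (ii) and (iii) as special cases obtained by exhibiting a canonical stable framing of the relevant normal bundle. For (i), the starting point is the splitting $\restr{TY}{Z} = TZ \oplus N_YZ$, which gives
\begin{equation*}
    \det(\restr{TY}{Z}) = \det(TZ) \otimes \det(N_YZ).
\end{equation*}
A stable framing of $N_YZ$ induces a stable framing of $\det(N_YZ)$ by applying $\det$ to a framed stable trivialisation $N_YZ \oplus k\varepsilon \cong (k+r)\varepsilon$ and using the canonical trivialisations of the determinants of trivial framed bundles. Hence
\begin{equation*}
    \restr{TY}{Z} \oplus \ell \det(\restr{TY}{Z}) \;\cong\; \big(TZ \oplus \ell \det(TZ)\big) \oplus \big(N_YZ \oplus \ell \det(N_YZ) \otimes \det(TZ)\big),
\end{equation*}
and the second summand on the right is stably trivial with an induced stable framing (the factor $\det(TZ)$ on the $\ell \det(N_YZ) \otimes \det(TZ)$ piece is cancelled up to stable equivalence by using that $\det(N_YZ)$ is stably framed, together with $\ell$ being odd). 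By \cref{lmm: inducing pin}, $\Spin$-structures on the left-hand bundle are in natural bijection with those on $TZ \oplus \ell \det(TZ)$. The $\Pin^\pm$-structure on $Y$ restricts along $Z \hookrightarrow Y$ to a $\Spin$-structure on the left-hand side, and we read off the corresponding $\Pin^\pm$-structure on $Z$.

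For (ii), the outward normal provides a canonical nowhere-vanishing section of $N_Y(\del Y)$, hence a canonical (stable) framing; (ii) then follows from (i) applied with $Z = \del Y$. For (iii), note that $Y \times I$ is not a submanifold of $Y$ in the relevant way, so instead consider the projection $\pi \colon Y \times I \to Y$ and the splitting $T(Y \times I) = \pi^* TY \oplus \pi^*_I TI$, where $\pi_I \colon Y \times I \to I$ is the second projection and $\pi_I^*TI$ carries the canonical framing coming from $I \subset \R$. Since odd powers of the determinant are preserved by adding a canonically framed line bundle, we have stably
\begin{equation*}
    T(Y \times I) \oplus \ell \det T(Y \times I) \;\cong\; \pi^*\big(TY \oplus \ell \det TY\big) \oplus \pi_I^* TI,
\end{equation*}
with the last summand canonically stably framed. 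Pulling back the given $\Spin$-structure on $TY \oplus \ell \det TY$ along $\pi$ and applying \cref{lmm: inducing pin} in the opposite direction produces the required $\Spin$-structure on $T(Y \times I) \oplus \ell \det T(Y \times I)$.

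The main technical nuisance, and really the only subtle point, is bookkeeping with the determinants: one must verify that a stable framing of a bundle $\xi$ really does induce a canonical stable framing of $\det \xi$, and that the isomorphism $\det(TZ \oplus N_YZ) \cong \det(TZ) \otimes \det(N_YZ)$ is compatible with the stable framings on both sides so that the application of \cref{lmm: inducing pin} is legitimate. Once that bookkeeping is carried out carefully, the three statements follow immediately. Since the lemma is standard and proved (in the classifying-space formulation) in \cite{KirbyTaylor}, I would simply cite that reference rather than writing the full verification out.
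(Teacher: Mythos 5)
Your strategy --- restrict the $\Pin^\pm$-structure to $Z$, use the stable framing of $N_YZ$, and transfer through \cref{lmm: inducing pin} --- is the same one the paper uses, and your parts (ii) and (iii) are correct and argued identically. However, the explicit determinant-bundle identity you write out for (i) is wrong. You claim
\begin{equation*}
\restr{TY}{Z} \oplus \ell \det\bigl(\restr{TY}{Z}\bigr) \cong \bigl(TZ \oplus \ell \det(TZ)\bigr) \oplus \bigl(N_YZ \oplus \ell\,\det(N_YZ) \otimes \det(TZ)\bigr),
\end{equation*}
but expanding the left-hand side via $\det(\restr{TY}{Z}) \cong \det(TZ)\otimes\det(N_YZ)$ shows the right-hand side carries an extra $\ell\det(TZ)$ summand. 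Your parenthetical attempting to cancel the $\det(TZ)$ factor using the stable framing of $N_YZ$ does not help: that framing trivialises $\det(N_YZ)$, which reduces the offending piece to $\ell\det(TZ)$, and $\det(TZ)$ has no reason to be trivial (this has nothing to do with $\ell$ being odd). The correct simplification is that $\det(N_YZ)$ is canonically trivial, hence $\det(\restr{TY}{Z})\cong\det(TZ)$ canonically, and
\begin{equation*}
\restr{TY}{Z} \oplus \ell \det\bigl(\restr{TY}{Z}\bigr) \cong \bigl(TZ \oplus \ell\det(TZ)\bigr) \oplus N_YZ,
\end{equation*}
with $N_YZ$ the stably framed summand. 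Note, though, that this determinant computation is exactly the content of \cref{lmm: inducing pin}, so you end up re-deriving that lemma (with an error) rather than applying it. The paper's proof instead applies \cref{lmm: inducing pin} once as a black box, with $\xi = TZ$ and $\varepsilon = N_YZ$, so that $\varepsilon\oplus\xi = \restr{TY}{Z}$; this sidesteps the determinant bookkeeping entirely, and is the path you should take.
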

\begin{proof}
    For (i), note that a $\Pin^\pm$-structure on $Y$ restricts to one on $\restr{TY}{Z}$. When taken with a stable framing of $N_YZ$, this determines a $\Pin^\pm$-structure on $TZ$ by \cref{lmm: inducing pin}. 
    Since $N_Y\del Y$ is the trivial line bundle over $\del Y$, it is naturally framed, and so (ii) follows from (i) by setting $Z = \del Y$. 
    For (iii), let $\pr_Y \colon Y \times I \to Y$ be the projection map, and note that there is a natural isomorphism
\begin{equation*}
    \pr_Y^*(\varepsilon \oplus TY) \cong T(Y \times I),
\end{equation*}
where $\varepsilon$ is the trivial line bundle on $Y$. So the result again follows by \cref{lmm: inducing pin}.
\end{proof}

\subsection{$\Pin^\pm$-surgery}\label{sec: pin surgery time}

Recall that an abstract $k$-surgery on $Y$ is specified by an embedded $k$-sphere $K \subset Y$ and a framing $\varphi$ of $N_YK$. Equivalently, it is specified by a surgery trace $Y \times I \cup h^{k+1}$, where $h^{k+1}$ is a $(k+1)$-handle attached along $K \times \{1\}$. If $Y$ is a $\Pin^\pm$-manifold, an abstract surgery is called a \textit{$\Pin^\pm$-surgery} if the corresponding $\Pin^\pm$-structure on $Y \times I$ as in \cref{lmm: pin manifold facts}(iii) extends over the surgery trace; equivalently, if the surgery trace is a $\Pin^\pm$-manifold with $Y \times \{0\}$ as a $\Pin^\pm$-submanifold of the boundary.

Another viewpoint on this is as follows. The framing $\varphi$ of $N_YK$ and a $\Pin^\pm$-structure on $Y$ induce a $\Pin^\pm$-structure on $K$ by \cref{lmm: pin manifold facts}(i). The core of the attaching handle has a unique $\Pin^\pm$-structure since it is contractible, so determines a unique $\Pin^\pm$-structure on $K$.
The abstract surgery along $K$ with framing $\varphi$ is a $\Pin^\pm$-surgery if and only if these two $\Pin^\pm$-structures on $K$ agree. This second viewpoint has the advantage that it highlights that whether an abstract surgery is a $\Pin^\pm$-surgery or not depends only on the stable equivalence class of $\varphi$.

For $k \neq 1$, the $k$-sphere has a unique $\Pin^\pm$-structure, and hence every abstract $k$-surgery is a $\Pin^\pm$-surgery. For $k=1$, there are two $\Pin^\pm$-structures on the circle, corresponding to the two stable framings of $TS^1$, so there is a $\cyc{2}$-valued obstruction to an abstract 1-surgery being a $\Pin^\pm$-surgery. 
The result of $\Pin^\pm$-surgery inherits a unique $\Pin^\pm$-structure for $k \geq1$.

\subsection{$\Pin^\pm$-cobordism}\label{sec: pin cobordism}
We say that closed $n$-dimensional $\Pin^\pm$-manifolds $M_0$, $M_1$ are \textit{$\Pin^\pm$-cobordant} if there exists an $(n+1)$-dimensional $\Pin^\pm$-manifold $W$
and a $\Pin^\pm$-diffeomorphism $\del W \to M_0 \sqcup M_1$. 
%Here we use \cref{lmm: pin manifold facts}(ii) to induce the $\Pin^\pm$-structure on $\del W$ from the one on $W$.
Equivalently, we say that $M_0$ and $M_1$ are $\Pin^\pm$-cobordant if there exists a finite sequence of $\Pin^\pm$-surgeries on $M_0$ yielding $M_1$. The $\Pin^\pm$-cobordism classes of closed $n$-manifolds form a group under disjoint union, denoted $\Omega^{\Pin^\pm}_n$.

In low dimensions, it becomes more convenient to work with $\Pin^-$-structures than $\Pin^+$-structures, since all compact surfaces and 3-manifold admit $\Pin^-$-structures, while not all admit $\Pin^+$-structures. Moreover, $\Pin^-$-structures on compact surfaces and 3-manifolds behave well under cobordism.

\begin{proposition}[{\cites{BrownInvariant, KirbyTaylor}}]\label{prop: Pin diffeomorphism}
    All compact surfaces admit a $\Pin^-$-structure. Two closed connected $\Pin^-$-surfaces are $\Pin^-$-diffeomorphic if and only if they are abstractly diffeomorphic and $\Pin^-$-cobordant. All compact 3-manifolds admit a $\Pin^-$-structure, and $\Omega_3^{\Pin^-} =0$.
\end{proposition}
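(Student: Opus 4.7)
The plan is to deduce all three claims from existing classifications, combined with short characteristic class verifications. For the existence assertions, I would verify that $w_2(TM) + w_1(TM)^2 = 0$ for any compact surface or compact 3-manifold $M$. On a closed manifold this follows from Wu's formula $w_2 = v_2 + v_1 \cup w_1 = v_2 + w_1^2$: on a closed surface $v_2 = 0$ since $\langle v_2, [M]\rangle = \langle \mathrm{Sq}^2(1),[M]\rangle = 0$, and on a closed 3-manifold $v_2 = 0$ since $\mathrm{Sq}^2$ vanishes on $H^1(M;\cyc{2})$ by the unstable axiom $\mathrm{Sq}^i x = 0$ for $i > \deg x$, after which Poincar\'e duality forces $v_2 = 0$. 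The case of compact manifolds with boundary would be reduced to the closed case by doubling, since the obstruction class is natural under restriction.

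For the Pin$^-$-diffeomorphism classification of closed connected surfaces, I would appeal to Brown's $\cyc{8}$-valued invariant $\beta$ from \cite{BrownInvariant}, which induces an isomorphism $\Omega_2^{\Pin^-} \xrightarrow{\cong} \cyc{8}$ and serves as the complete Pin$^-$-cobordism invariant. For two Pin$^-$-structures on the same underlying surface $\S$ to be related by a Pin$^-$-diffeomorphism, they must (by the cobordism isomorphism) have equal Brown invariant; conversely, the mapping class group of $\S$ acts transitively on the level sets of $\beta$ on the $H^1(\S;\cyc{2})$-torsor of Pin$^-$-structures, which can be checked on explicit generators (Dehn twists, and in the non-orientable case crosscap slides).

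For the vanishing $\Omega_3^{\Pin^-} = 0$, I would simply quote the computation in \cite{KirbyTaylor}; this can also be obtained from the Atiyah--Hirzebruch spectral sequence for the $\Pin^-$-bordism spectrum in low degrees, using $\Omega_0^{\Pin^-} = \cyc{2}$, $\Omega_1^{\Pin^-} = 0$, $\Omega_2^{\Pin^-} = \cyc{8}$ to see that no class can survive in degree 3.

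The main obstacle will be the ``abstractly diffeomorphic and Pin$^-$-cobordant implies Pin$^-$-diffeomorphic'' direction for surfaces, since this requires understanding the mapping class group action on the torsor of Pin$^-$-structures and identifying its orbits with the fibres of $\beta$. This analysis is already carried out in detail in Section 3 of \cite{KirbyTaylor}, so in practice my proof would reduce to extracting that statement and the existence obstruction computation, with the 3-manifold results cited directly.
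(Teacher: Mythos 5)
Your proposal is correct and agrees with the paper's approach: the paper offers no proof of this proposition, instead citing \cites{BrownInvariant, KirbyTaylor} directly, and your outline correctly identifies where each part lives in those references (Wu-formula obstruction computation for existence, Brown's $\cyc{8}$-invariant and the mapping class group orbit analysis in Section 3 of \cite{KirbyTaylor} for the diffeomorphism classification, and the bordism computation for $\Omega_3^{\Pin^-}=0$). The extra detail you supply — in particular the characteristic class verification that $w_2 + w_1^2 = 0$ on all compact surfaces and $3$-manifolds via Wu classes and doubling — is sound and fills in what the paper leaves to the references.
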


Since $\Omega_3^{\Pin^-} =0$, all compact 3-manifolds with chosen $\Pin^-$-structures and $\Pin^-$-diffeomorphic boundaries are related by $\Pin^-$-surgeries. The following lemma shows that these can all be chosen to be 1-surgeries. This was originally shown for closed oriented 3-manifolds by Kaplan using Kirby calculus \cite{Kaplan}; a Morse-theoretic is proof for all compact oriented 3-manifolds is given for Lemma 5.4 of \cite{Sunukjian}. 

\begin{proposition}\label{lmm: pin cobordism with boundary}
    Let $M_0$, $M_1$ be compact connected 3-dimensions $\Pin^-$-manifolds. Suppose that $\del M_0$ and $\del M_1$ are $\Pin^-$-diffeomorphic, and that $M_0$ and $M_1$ are either both orientable or both non-orientable. Then there is a sequence of $\Pin^-$-surgeries taking $M_0$ to $M_1$, all of which can be taken to be 1-surgeries.
\end{proposition}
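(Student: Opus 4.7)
The plan is to mirror Sunukjian's proof of the analogous $\Spin$-statement (Lemma 5.4 of \cite{Sunukjian}), which itself adapts Kaplan \cite{Kaplan}, with $\Pin^-$-structures replacing $\Spin$-structures throughout. The reformulation is that a sequence of $\Pin^-$ $1$-surgeries from $M_0$ to $M_1$ (extending the given boundary identification) is the same data as a $\Pin^-$-cobordism $W$ from $M_0$ to $M_1$ rel.\ $\partial M_0 \times I$ that admits a relative handle decomposition containing only $2$-handles. So the goal is to construct such a $W$.

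First I would establish the existence of \emph{some} $\Pin^-$-cobordism $W$ between $M_0$ and $M_1$ extending the prescribed boundary $\Pin^-$-diffeomorphism $\phi\colon \partial M_0 \to \partial M_1$. Gluing $M_0$ to $-M_1$ along $\phi$ produces a closed $\Pin^-$-$3$-manifold $N$, which bounds a compact $\Pin^-$-$4$-manifold $W'$ by $\Omega_3^{\Pin^-}=0$ from \cref{prop: Pin diffeomorphism}. Excising a regular neighbourhood of a suitable arc from $\partial W'$ into the interior, smoothing corners, and identifying collars reshapes $W'$ into the desired cobordism $W$. Next, I would choose a generic self-indexing Morse function on $W$ rel.\ the boundary and first cancel all $0$-handles against $1$-handles, and dually all $4$-handles against $3$-handles: this is possible purely from the connectedness of $M_0$ and $M_1$ by the usual Morse-theoretic cancellation argument, and neither operation affects the $\Pin^-$-structure.

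The substantive step is then to trade every remaining $1$-handle against a $2$-handle, and dually every $3$-handle against a $2$-handle, while preserving the $\Pin^-$-structure. Following Kaplan, for each $1$-handle one attaches a $2$-handle along a meridian of its co-core; since this meridian bounds a disc in the $1$-handle, the resulting pair is a cancelling pair provided the framing is chosen correctly. The essential point, analogous to \cref{lmm: klug-miller}, is that the $\Pin^-$-compatible framings of this meridian form a single stable equivalence class, and within that class both possible framings of the $2$-dimensional normal bundle are realisable — exactly one of which cancels the $1$-handle while preserving the $\Pin^-$-structure. A symmetric argument (applied to the reversed cobordism) eliminates the $3$-handles. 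I would expect the main obstacle to lie here: one needs to carefully check that the Kaplan trading moves adapt from the $\Spin$ to the $\Pin^-$ setting, particularly in the non-orientable case, by verifying that the action of $H^1(-;\Z/2)$ on $\Pin^-$-structures interacts correctly with the $\Z/2$-ambiguity in the framings, and that the hypothesis that $M_0$ and $M_1$ have the same orientability ensures the intermediate $3$-manifolds remain in the same orientability class throughout. Once these compatibility checks are in place, the resulting $\Pin^-$-cobordism has only $2$-handles, giving the required sequence of $\Pin^-$ $1$-surgeries.
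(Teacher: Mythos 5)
Your overall plan --- glue $M_0$, $\del M_0 \times I$, and $M_1$ into a closed $\Pin^-$-$3$-manifold $N$, find a $\Pin^-$-manifold $W$ with $\del W = N$ using $\Omega_3^{\Pin^-} = 0$, take a relative Morse function, cancel $0$- and $4$-handles by connectedness, and then eliminate $1$- and $3$-handles --- matches the paper's. But the key handle-elimination step is not correctly described. Attaching a $2$-handle to $W$ along a circle in $\del W$ changes $\del W$, so this is not a valid modification of the cobordism; and if instead you introduce a cancelling $2$/$3$-handle pair and cancel the new $2$-handle against the old $1$-handle, you have only traded the $1$-handle for a $3$-handle, which goes in circles when combined with the dual move. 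The correct move, as in the paper (and in Kaplan and Sunukjian), is surgery on the interior of $W$ along a circle $K$ formed by the core of the $1$-handle joined to an arc in $M_0 \times \{0\}$ and pushed slightly into the interior. This preserves $\del W$, removes the $1$-handle, and adds a $2$-handle; in Kirby-calculus terms it replaces a dotted circle with a framed circle.

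You also do not identify the concrete $\Pin^-$-specific obstruction the paper handles. For the surgery along $K$ to be defined one needs $N_W K$ to be trivial, equivalently $K$ to be orientation-preserving in $W$. When $M_0$, $M_1$ are non-orientable, $W$ is non-orientable and $K$ may be orientation-reversing. The paper's fix is to band-sum $K$ with an orientation-reversing loop in $M_0 \times \{0\}$ (available exactly because $M_0$ is non-orientable in this case), making $K$ orientation-preserving. After this, $N_W K$ has two stable framings, exactly one of which yields a $\Pin^-$-surgery, so the $\Pin^-$-structure on $W$ extends over the surgered manifold. Your remark about ``the action of $H^1(-;\cyc{2})$'' correctly senses that something needs checking in the non-orientable case, but the obstruction lives in the non-triviality of $N_W K$ and is resolved by a direct geometric modification of $K$, not a cohomological computation.
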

\begin{proof}
    The case that $M_0$ and $M_1$ are orientable is Lemma 5.4 of \cite{Sunukjian}, so we assume that $M_0$ and $M_1$ are both non-orientable.
    Fix a $\Pin^-$-diffeomorphism $\psi \colon \del M_0 \to \del M_1$, and let $N$ be the closed 3-manifold obtained by gluing 
    \begin{equation*}
        N \coloneq M_0 \times \{0\} \mathop{\cup}_{\id \times \{0\} } \del M_0 \times [0,1] \mathop{\cup}_{\psi \times \{1\} } M_1 \times \{1\}.
    \end{equation*}
    Note that if $M_0$ and $M_1$ are closed, then $N$ is the disjoint union $M_0 \sqcup M_1$. 
    Since $\psi$ preserves the $\Pin^-$-structure on the boundaries, $N$ inherits a $\Pin^-$-structure. Then since $\Omega_3^{\Pin^-} = 0$, we can find a compact connected 4-dimensional $\Pin^-$-manifold $W$ such that $\del W = N$ as $\Pin^-$-manifolds. Take a Morse function $f \colon W \to [0,1]$ such that $f\inv(\{i\}) = M_i \times \{i\} \subseteq \del W$ for $i=0,1$, and $f(\del M_0 \times \{t\}) = t$ for $t \in [0,1]$. This gives a handle decomposition of $W$ rel.\ $M_0 \times \{0\}$.

    % If $M_0$ and $M_1$ are orientable, we can choose orientations and then argue similarly using the fact that $\Omega_3^{\Spin} = 0$ to assume that $W$ is orientable. Hence we choose $f$ such that the handle additions are orientation preserving.
    
    We explain how to modify this handle decomposition of $W$ rel.\ $M_0 \times \{0\}$ into one with only 2-handles. First, since $W$ is connected, every 0-handle has a cancelling 1-handle, and every 4-handle has a cancelling 3-handle. Hence we can arrange that there are no 0-handles or 4-handles.
    For each 1-handle in $W$, find an embedded circle $K$ which is the union of the core of the 1-handle and an arc connecting the two points in the attaching sphere. 
    %If $M_0$ is orientable, the 1-handles are assumed to be orientation-preserving, so $N_WK$ is trivial for any choice of $K$. If $M_0$ is non-orientable, 
    We can choose $K$ such that $N_WK$ is trivial, by taking the band sum of $K$ with an orientation-reversing loop in $M_0 \times \{0\}$ if necessary. Perform 1-surgery along $K$ to obtain a new 4-manifold $W'$, which has handle decomposition with one fewer 1-handle than $W$ (see \cite[\textsection5.2]{GompfStipsicz} -- in terms of Kirby calculus, we are replacing a dotted circle with a 0-framed circle). This surgery can always be taken to be a $\Pin^-$-surgery, since $N_WK$ has two framings, one of which yields $\Pin^-$-surgery while the other does not.
    
    Repeating this for every 1-handle, we may arrange the handle decomposition of $W$ rel.\ $M_0 \times \{0\}$ has no 1-handles. Turning the handle decomposition upside down, we may also arrange that it also has no 3-handles. Reinterpreting the handle decomposition of $W$ as a sequence of $\Pin^-$-surgeries on $M_0$ yields the result. 
\end{proof}

\subsection{Ambient integral Dehn $\Pin^\pm$-surgery}\label{sec: ambient dehn}
We return to ambient integral Dehn surgery as in \cref{sec: ambient integral dehn}. 
Let $X$ be a 5-manifold and let $Y \subset X$ be a properly embedded compact 3-manifold.

Recall that for an embedded circle $K \subset Y$ and 2-disc $\Delta$ spanning $K$ into $X \setminus Y$, we say that a framing $\varphi$ of $N_YK$ is $\Delta$-admissible if abstract surgery along $K$ with framing $\varphi$ can be realised ambiently over $\Delta$. \cref{lmm: klug-miller} says that exactly one of the two stable framings of $N_YK$ is given by $\Delta$-admissible framings. For a fixed $\Pin^\pm$-structure on $Y$, this $\Delta$-admissible stable framing either corresponds to the stable framing inducing $\Pin^\pm$-surgery on $K$, or the other stable framing. This is summarised in the following lemma, analogous to Lemma 5.2 of \cite{Sunukjian}.

\begin{lemma}\label{lmm: ambient pin surgery}
    Let $K \subset Y$ be an embedded circle with $N_YK$ trivial, and let $\Delta$ be a 2-disc spanning $K$ into $X \setminus Y$. Fix a $\Pin^\pm$-structure on $Y$. Then exactly one of the following is true.
    \begin{enumerate}[label=(\roman*)]
        \item A framing $\varphi$ of $N_YK$ is $\Delta$-admissible if and only if abstract surgery along $K$ with framing $\varphi$ is a $\Pin^\pm$-surgery.
        \item For any framing $\varphi$ of $N_YK$, exactly one of the following is true: either $\varphi$ is $\Delta$-admissible; or abstract surgery along $K$ with framing $\varphi$ is a $\Pin^\pm$-surgery.
    \end{enumerate}
\end{lemma}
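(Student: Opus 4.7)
The plan is to reduce both conditions appearing in the statement to invariants of the stable equivalence class of the framing $\varphi$, and then to use the fact that there are exactly two stable equivalence classes of framings of $N_YK$.

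First I would recall that by \cref{lmm: klug-miller}, the set of $\Delta$-admissible framings of $N_YK$ is a single orbit under the action of $2\Z$, and hence coincides with one of the two stable equivalence classes of framings of $N_YK$. Call this the \emph{$\Delta$-admissible stable class}. Secondly, I would recall from \cref{sec: pin surgery time} that whether abstract surgery along $K$ with framing $\varphi$ is a $\Pin^\pm$-surgery depends only on the induced stable framing of $N_YK$: concretely, the framing $\varphi$ together with the fixed $\Pin^\pm$-structure on $Y$ induces a $\Pin^\pm$-structure on $K$ via \cref{lmm: pin manifold facts}(i), and the surgery is a $\Pin^\pm$-surgery if and only if this agrees with the unique $\Pin^\pm$-structure on $K$ extending over the core of the attaching handle. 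Since $\Pin^\pm$-structures on $TK \oplus N_YK$ are determined by a stable framing, this condition depends only on the stable class of $\varphi$. Moreover, since there are exactly two $\Pin^\pm$-structures on $S^1$ (corresponding to the two stable framings of $TS^1$), exactly one of the two stable classes of framings of $N_YK$ produces a $\Pin^\pm$-surgery. Call this the \emph{$\Pin^\pm$-surgery stable class}.

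Now the dichotomy is immediate. Either the $\Delta$-admissible stable class and the $\Pin^\pm$-surgery stable class coincide, in which case a framing $\varphi$ is $\Delta$-admissible if and only if it gives a $\Pin^\pm$-surgery, giving case (i); or they are the two distinct stable classes, in which case every framing $\varphi$ lies in exactly one of them, so exactly one of the two conditions holds, giving case (ii). The two cases are mutually exclusive since in case (i) the zero framing (say) satisfies both conditions simultaneously, while in case (ii) no framing does.

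I do not expect any step here to be a genuine obstacle: the content is entirely bookkeeping about stable framings, and all the required facts have already been established earlier in the paper (\cref{lmm: klug-miller}, the discussion of $\Pin^\pm$-surgery depending only on stable framings, and the identification of stable framings of a rank~$\geq 2$ bundle over $S^1$ with a two-element set). The only small point worth highlighting explicitly is that, because $\mathrm{rank}(N_YK)=2$, one should be careful that ``stable framing" really means one of two classes, rather than an actual framing; this is handled by the remark in \cref{sec: ambient integral dehn} identifying $\pi_1(\SO(2)) \to \pi_1(\SO(r))$ with the quotient $\Z \to \cyc{2}$ for $r \geq 3$.
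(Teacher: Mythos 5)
Your argument is correct and is essentially identical to the paper's own justification, which appears in the paragraph immediately preceding the lemma: \cref{lmm: klug-miller} identifies the $\Delta$-admissible framings with one of the two stable classes, the $\Pin^\pm$-surgery condition singles out the other or the same stable class depending only on the stable equivalence class of $\varphi$, and the dichotomy follows. One minor wording slip: in establishing mutual exclusivity you say ``the zero framing (say) satisfies both conditions simultaneously'' in case (i), but an arbitrary fixed framing need not be $\Delta$-admissible; what you mean (and what suffices) is that \emph{some} framing satisfies both in case (i) — take any $\Delta$-admissible one, which exists by \cref{lmm: klug-miller} — whereas none does in case (ii).
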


A $\Pin^-$-structure on $Y \subset X$ can be thought of in terms of framings of $TX$ over loops in $Y$. This is a generalisation of the ideas in \cite{BlanloeilSaeki}, but for oriented 5-manifolds other than $D^5$.

\begin{lemma}\label{lmm: embedded pin structures}
    A $\Pin^-$-structure on $Y$ is equivalent to a choice of framing of $\gamma^* TX$ for each continuous map $\gamma \colon S^1 \to Y$, such that for any compact surface $\S$ and any continuous map $f\colon \S \to Y$, the framing of $\restr[*]{f}{\del \S} TX$ extends to a framing of $f^*TX$.
\end{lemma}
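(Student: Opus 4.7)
The strategy is to apply \cref{def: spin} to reduce the lemma to the framed-loop characterisation of a $\Spin$-structure on a suitable bundle over $Y$. By definition, a $\Pin^-$-structure on $Y$ is a $\Spin$-structure on the orientable rank $4$ bundle $TY \oplus \det TY$, so by \cref{def: spin} it is exactly a compatible family of stable framings of $\gamma^*(TY \oplus \det TY)$ for each $\gamma \colon S^1 \to Y$, where compatibility means that for every map $f \colon \Sigma \to Y$ from a compact surface, the induced stable framing on $f^*(TY \oplus \det TY)|_{\del\Sigma}$ extends to a stable framing over all of $\Sigma$. It therefore suffices to produce, naturally in $f$, a bijection between framings of $f^*TX$ and stable framings of $f^*(TY \oplus \det TY)$ which intertwines the respective extension-to-the-interior conditions.

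The key ingredient is the bundle identity
\[
TX|_Y \oplus \det TY \;=\; TY \oplus N_XY \oplus \det TY \;=\; (TY \oplus \det TY) \oplus N_XY,
\]
combined with the observation that $X$ orientable forces $w_1(N_XY) = w_1(TY) = w_1(\det TY)$. Hence for any map $f\colon \Sigma \to Y$ of a compact surface (in particular any loop), $f^*N_XY$ and $f^*\det TY \oplus \varepsilon$ are rank $2$ bundles with the same $w_1$, and can be identified as bundles on $\Sigma$. Any such identification produces a stable isomorphism $f^*TX \cong f^*(TY \oplus \det TY) \oplus \varepsilon$. Framings of $f^*TX$ correspond to stable framings of $f^*TX$, because $\pi_1(\O(n)) \to \pi_1(\O(\infty))$ is an isomorphism for $n \geq 3$, and via the stable isomorphism these match stable framings of $f^*(TY \oplus \det TY)$. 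Naturality of the identification under restriction from $\Sigma$ to $\del \Sigma$ then guarantees that extensions over $\Sigma$ on one side correspond to extensions on the other.

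The part I expect to require the most care is verifying that the bijection from the previous paragraph is canonical, so that local choices made over different loops and surface maps assemble into a well-defined equivalence of the two global structures. Concretely, one must check that any two identifications $f^*N_XY \cong f^*\det TY \oplus \varepsilon$ differ by a bundle automorphism whose effect on stable framings is absorbed by the $H^1(Y;\cyc{2})$-torsor structure, which is already the symmetry relating two $\Pin^-$-structures and two compatible systems of framings of $TX$. This reduces to tracking the image of $\pi_*(\O(2))$ in $\pi_*(\O(\infty))$ and comparing it with the $H^1$-torsor actions on either side. Once this coherence is confirmed, the bijection is natural in $f$, the extension conditions match up, and the lemma follows by combining with \cref{def: spin}.
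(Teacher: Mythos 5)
Your strategy coincides with the paper's: identify $f^*TX$ with $f^*(TY \oplus \det TY) \oplus \varepsilon$ via a bundle isomorphism $f^*N_XY \cong f^*(\det TY \oplus \varepsilon)$, and pass between framings and stable framings using that $\pi_1(\O(n)) \to \pi_1(\O)$ is an isomorphism for $n \geq 3$. However, the assertion that $f^*N_XY$ and $f^*\det TY \oplus \varepsilon$ ``can be identified as bundles on $\Sigma$'' for \emph{any} compact surface $\Sigma$ is false as stated. Rank-$2$ bundles over a closed surface are classified by $w_1$ together with $w_2$; equality of $w_1$ is forced by orientability of $X$, but $f^*(\det TY \oplus \varepsilon)$ always has trivial $w_2$, whereas $f^*N_XY$ need not, since one computes $w_2(N_XY) = w_2(TX)|_Y$ using Wu's formula $w_2(TY)=w_1(TY)^2$ on the $3$-manifold $Y$. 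The paper's proof deliberately makes the identification only over $S^1$ and over compact surfaces with \emph{non-empty} boundary --- precisely the spaces of the homotopy type of a $1$-complex, over which rank-$2$ bundles really are determined by $w_1$ alone. The closed-surface part of the compatibility condition in \cref{def: spin} is then recovered automatically by splitting a closed surface along a separating circle and gluing the two extensions, so nothing is lost, but you must restrict the identification step in the same way.

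The coherence concern you raise in the final paragraph is legitimate, and the paper's own proof is no more explicit about it than you are. The resolution you sketch is the intended one: any two identifications differ over a loop $\gamma$ by a gauge transformation whose effect on the induced framing of $\gamma^*TX$ factors through $\pi_1(\O) \cong \cyc{2}$, so a global system of such changes is absorbed into the $H^1(Y;\cyc{2})$-torsor action carried by both sets of structures, and the resulting bijection is canonical up to this equivariance.
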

\begin{proof}
    We first show that over loops and compact surfaces with non-empty boundary, we can identify the vector bundles $TY \oplus \det TY \oplus \varepsilon^Y$ and $TX$, where $\varepsilon^Y$ is the trivial line bundle over $Y$. To this end, let $M$ be either $S^1$ or a compact surface with non-empty boundary, and let $g\colon M \to Y$ be a continuous map. Since $M$ has the homotopy-type of a 1-complex, the rank 2 bundle $g^*N_XY$ admits a section. Hence it splits as $g^* N_XY = \lambda \oplus \varepsilon^M$, where $\varepsilon^M$ is a trivial line bundle over $M$ and $\lambda$ is some other line bundle.
    Since $TX$ is orientable, the orientation characters of $N_XY$ and $TY$ agree. Hence the orientations characters of $\det(g^* N_XY)$ and $\det(g^*TY)$ agree, and in particular agree with the orientation character of $\lambda$. Then since line bundles are determined by their orientation characters, we can identify
    \begin{equation*}
        \lambda = \det(g^* N_XY) = \det (g^* TY) = g^*(\det TY).
    \end{equation*}
    Therefore, we can identify
    \begin{equation*}
        g^* TX = g^*N_XY \oplus g^*TY = \lambda \oplus \varepsilon^M \oplus g^*TY = g^*(TY \oplus \det TY \oplus \varepsilon^Y) ,
    \end{equation*}
    as required. Finally, since $\rank(g^*TX)=5 \geq 3$ and $M$ has the homotopy-type of a 1-complex, there is a bijection between the framings and the stable framings of $g^*TX$. Therefore a stable framing of $g^*(TY \oplus \det TY)$ determines a stable framing of $g^* TX$, and hence a framing of $g^*TX$; and vice versa.

    The lemma then follows by considering the definitions. A $\Pin^-$-structure on $Y$ is a choice of stable framings of $\gamma^*(TY \oplus\det TY)$ for each map $ \gamma \colon S^1 \to Y$, which are compatible in the sense of \cref{def: spin}. By the argument above, this is equivalent to a choice of stable framings of $\gamma^*TX$ for each $\gamma \colon S^1 \to Y$, which are compatible in the sense of the statement of the lemma.
\end{proof}

\begin{corollary}\label{lmm: pin from basis}
    Let $\gamma_1,\ldots,\gamma_n\colon S^1 \to X$ be a collection of embeddings such that the tuple $\big([\gamma_i(S^1)] \in H_1(Y;\cyc{2}) \big)_{1 \leq i \leq n}$ is a basis for $H_1(Y;\cyc{2})$. Then the following data are equivalent:
    \begin{enumerate}[label=(\roman*)]
        \item A $\Pin^-$-structure on $Y$; and
        \item A choice of framing of $\gamma_i^*TX$ for each $i=1,\ldots,n$.
    \end{enumerate}
\end{corollary}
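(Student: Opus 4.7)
The plan is simply to chain together Lemma \ref{lmm: embedded pin structures} and Lemma \ref{lmm: spin from basis} applied to the bundle $TY \oplus \det TY$ on $Y$. By definition a $\Pin^-$-structure on $Y$ is a $\Spin$-structure on $TY \oplus \det TY$, and Lemma \ref{lmm: spin from basis} (with $X$ there replaced by $Y$ here, and $\xi = TY \oplus \det TY$) says that such a $\Spin$-structure is equivalent to a choice of stable framing of $\gamma_i^*(TY \oplus \det TY)$ for each $i=1,\ldots,n$. So I would first reduce the problem to showing that stable framings of $\gamma_i^*(TY \oplus \det TY)$ are in natural bijection with framings of $\gamma_i^*TX$.

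This last bijection is precisely what is established inside the proof of Lemma \ref{lmm: embedded pin structures}: for a map $g \colon M \to Y$ with $M$ of the homotopy type of a 1-complex (in particular $M = S^1$), one has natural isomorphisms
\begin{equation*}
g^*TX \;=\; g^*N_XY \oplus g^*TY \;\cong\; g^*(TY \oplus \det TY) \oplus \varepsilon^M,
\end{equation*}
and, since the total rank is $5 \geq 3$, the map from framings to stable framings of $g^*TX$ is a bijection. Combined with \cref{lmm: inducing pin}, a stable framing of $\gamma_i^*(TY \oplus \det TY)$ corresponds canonically to a framing of $\gamma_i^*TX$.

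So my proof would have two short steps: first, invoke Lemma \ref{lmm: spin from basis} to reduce a $\Pin^-$-structure on $Y$ to a choice of stable framings of $\gamma_i^*(TY \oplus \det TY)$ over the basis $\gamma_1,\ldots,\gamma_n$; second, apply the identification from Lemma \ref{lmm: embedded pin structures} to translate each such stable framing into a framing of $\gamma_i^*TX$. The only thing to check is that these two identifications compose compatibly — i.e., that the framing-of-$\gamma_i^*TX$ description of the data on the basis is really the restriction of the global $\Pin^-$-structure as characterized in \cref{lmm: embedded pin structures}. This is immediate from naturality, because both identifications use the same pullback mechanism. I do not expect any real obstacle; the statement is essentially a packaging of the two earlier lemmas.
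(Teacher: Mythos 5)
Your proposal is correct and takes essentially the same approach as the paper: the paper's proof is literally the one-line observation that the corollary ``follows immediately from Lemma~\ref{lmm: spin from basis} and Lemma~\ref{lmm: embedded pin structures}.'' You have simply spelled out the chain of identifications (a $\Pin^-$-structure is a $\Spin$-structure on $TY\oplus\det TY$, apply \cref{lmm: spin from basis}, then use the bundle identification from the proof of \cref{lmm: embedded pin structures} to convert stable framings of $\gamma_i^*(TY\oplus\det TY)$ into framings of $\gamma_i^*TX$), which is exactly what the paper's terse proof intends.
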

\begin{proof}
    This follows immediately from \cref{lmm: spin from basis} and \cref{lmm: embedded pin structures}.
\end{proof}

Now assume that $\pi_1(X \setminus Y)$ is cyclic and generated by a meridian to $Y$, so that every embedded circle $K \subset Y$ has a null-homotopic push-off. In exactly the cases where \cref{lmm: ambient surgery with non-trivial sphere} does not apply, we construct a $\Pin^-$-structure on $Y$ such that all abstract $\Pin^-$-surgeries on $Y$ can be realised ambiently. This is analogous to Proposition 5.1 of \cite{Sunukjian}, although the proof given below differs substantially from the argument given there (see \cref{rmk: proof of 5.1}). The same result holds for $\Pin^+$-structures under the assumption that $Y$ admits a $\Pin^+$-structure. We do not require this result however, so we omit it.

\begin{proposition}\label{prop: main sunukjian step}
    Suppose that $X$ is orientable, that $\pi_1(X \setminus Y)$ is cyclic and generated by a meridian to $Y$, and that $\left\langle w_2(X \setminus Y), \sigma \right\rangle = 0 \text{ for all }\sigma \in H_2(X \setminus Y;\Z).$
    Then $Y$ admits a $\Pin^-$-structure such that any abstract 1-surgery which is a $\Pin^-$-surgery can be performed ambiently.
\end{proposition}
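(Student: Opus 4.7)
My plan is to construct the desired $\Pin^-$-structure directly via \cref{lmm: pin from basis}, by specifying its associated framings of $\gamma^*TX$ on a basis of $H_1(Y;\cyc{2})$. Choose embedded loops $\gamma_1,\ldots,\gamma_n\subset Y$ whose classes form a basis of $H_1(Y;\cyc{2})$. Since $\pi_1(X\setminus Y)$ is cyclic and generated by a meridian to $Y$, each $\gamma_i$ has a null-homotopic push-off in $X\setminus Y$, and hence bounds an immersed 2-disc $\Delta_i\colon D^2\immerse X$ with $\Delta_i\cap Y = \gamma_i$ and interior in $X\setminus Y$. The rank-5 bundle $\Delta_i^*TX$ is trivial (its base is contractible), so restriction to $\del D^2$ yields a distinguished framing $s_i$ of $\gamma_i^*TX$. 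By \cref{lmm: pin from basis}, this data determines a $\Pin^-$-structure $\mathfrak{s}$ on $Y$.

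With $\mathfrak{s}$ in hand, suppose we have an embedded circle $K\subset Y$ with trivial normal bundle and a framing $\varphi$ of $N_YK$ such that abstract surgery along $(K,\varphi)$ is a $\Pin^-$-surgery. Again by the meridian hypothesis, $K$ bounds an immersed 2-disc $\Delta$ in $X$ with interior in $X\setminus Y$, and my goal is to show that $\varphi$ is $\Delta$-admissible, whence the surgery can be realised ambiently over $\Delta$. By \cref{lmm: ambient pin surgery}, this amounts to showing $\mathfrak{s}$ is set up so that case (i) of that lemma applies to the pair $(K,\Delta)$. Equivalently, through the correspondence of \cref{lmm: embedded pin structures}, the framing of $TX|_K$ determined by $\mathfrak{s}$ must agree (stably) with the framing of $TX|_K$ obtained by restricting any trivialisation of $TX|_\Delta$.

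The hardest step will be verifying this framing-agreement. Writing $[K]=\sum a_i[\gamma_i]\in H_1(Y;\cyc{2})$ and fixing a compact subsurface $\Sigma\subset Y$ with $\del \Sigma = K\cup\bigcup a_i\gamma_i$, the compatibility axiom of $\mathfrak{s}$ forces the chosen framings on the $\gamma_i$ and the $\mathfrak{s}$-framing on $K$ to extend jointly over $\Sigma$. Capping off with $\Delta$ and the $\Delta_i$ produces a closed surface $\hat\Sigma = \Sigma \cup \Delta \cup \bigcup a_i\Delta_i$ in $X$, which after a small transverse perturbation lies in $X\setminus Y$ as some $\hat\Sigma'$. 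The difference between the two framings of $TX|_K$ is then measured by $\langle w_2(X\setminus Y),[\hat\Sigma']\rangle\in\cyc{2}$. When $\hat\Sigma'$ is orientable, $[\hat\Sigma']$ lifts to $H_2(X\setminus Y;\Z)$ and the pairing vanishes by hypothesis. The principal delicacy is the non-orientable case; there I would exploit the cyclic structure of $\pi_1(X\setminus Y)$ together with the Bockstein $H_2(X\setminus Y;\cyc{2})\to H_1(X\setminus Y;\Z)$ to argue that the relevant mod-$2$ classes do in fact lift integrally, or alternatively refine the choice of $\Sigma$ and the capping discs so that only orientable $\hat\Sigma'$ arise. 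I expect this orientability-control step, together with a sign-adjustment of $\mathfrak{s}$ using its $H^1(Y;\cyc{2})$-torsor structure to kill any residual $\cyc{2}$-valued obstruction, to be the main technical content of the proof.
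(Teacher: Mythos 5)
Your overall strategy matches the paper's: define the $\Pin^-$-structure $\mathfrak p$ via \cref{lmm: pin from basis} by selecting framings on a basis of $H_1(Y;\cyc{2})$ coming from spanning discs, then compare the $\mathfrak p$-framing of $\restr{TX}{K}$ with a disc framing via a capped-off surface and \cref{lmm: Stiefel Whitney killing lemma}. But there are two genuine gaps.

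The first gap is precisely the one you flag and do not close: the capped-off surface $\hat\Sigma$ may be non-orientable, and the hypothesis on $w_2(X\setminus Y)$ is only about classes in $H_2(X\setminus Y;\Z)$, not $H_2(X\setminus Y;\cyc{2})$. Neither of your suggested fixes is correct as stated. The Bockstein route fails because in general a $\cyc{2}$-class need not lift integrally, and there is no reason the particular class $[\hat\Sigma']$ does. The ``sign-adjustment of $\mathfrak{s}$ using its $H^1(Y;\cyc{2})$-torsor structure'' also cannot work, because $\mathfrak p$ must be fixed once and for all and then succeed simultaneously for every $K$; twisting $\mathfrak p$ to fix one $K$ destroys the matching on the $\gamma_i$ and potentially on other curves. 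Your second idea (arrange orientable $\hat\Sigma'$) is the correct direction, and the paper's mechanism for achieving it is an actual construction, not a hope: after choosing orientations, $[K]+[K_1]+\cdots+[K_m]$ is $2\beta$ in $H_1(Y;\Z)$ for some $\beta = [L]$, and one takes an \emph{oriented} surface $\wh\Sigma$ with boundary mapping to $K, K_1,\ldots,K_m$ and two oppositely-weighted copies of $L$, capping the two $L$-boundaries with the \emph{same} disc $\Delta_L$ so the stable-framing parity works out. This is the content you would need to supply.

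The second gap is more dangerous because it looks like a routine perturbation but is false: you assert that $\hat\Sigma = \Sigma \cup \Delta \cup \bigcup a_i\Delta_i$ ``after a small transverse perturbation lies in $X\setminus Y$.'' A closed surface and a $3$-manifold in a $5$-manifold generically intersect in isolated points, so a small perturbation does not remove $\hat\Sigma\cap Y$; you would need a section of $SN_XY$ over $\Sigma$ that matches the prescribed inward directions of $\Delta$ on $K$ and the $\Delta_i$ on the $\gamma_i$, and this has a relative Euler-number obstruction that can be nonzero. This is essentially the error in \cite{Sunukjian} that the paper's \cref{rmk: proof of 5.1} identifies, with a concrete counterexample. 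The paper avoids it by reversing the order: it does \emph{not} cap $K$ with an a-priori chosen $\Delta$; instead it only prescribes the push-off section on $\del\wh\Sigma\setminus C$ (using the $\Delta_i$ and $\Delta_L$), extends it freely over $\wh\Sigma$ (possible because $C$ is an unconstrained boundary), reads off the resulting push-off $K^s_+$, and only then builds $\Delta = h(D^2)$ as an embedded disc bounding a null-homotopy of $K^s_+$ in $X\setminus Y$. That ordering is essential; working with an arbitrary disc $\Delta$ spanning $K$ into $X\setminus Y$, as you do, does not give a surface disjoint from $Y$, and the framing-difference computation does not reduce to a pairing with $w_2(X\setminus Y)$.
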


\begin{proof}
   Let us first unpack what is required for such a $\Pin^-$-structure on $Y$ to exist. 
   Fix an embedded circle $K \subset Y$ and a 2-disc $\Delta$ spanning $K$ into $X \setminus Y$, which exists since $K$ has a null-homotopic push-off by assumption. The disc $\Delta$ determines a unique framing of $\restr{TX}{K}$ which extends over $\Delta$. 
   By \cref{lmm: embedded pin structures}, a $\Pin^-$-structure on $Y$ also determines a framing of $\restr{TX}{K}$. If these framings agree, all ambient 1-surgeries over $\Delta$ are $\Pin^-$-surgeries; otherwise, none are. 

   Therefore, given a $\Pin^-$-structure on $Y$, all abstract 1-surgeries which are $\Pin^-$-surgeries can be performed ambiently in $X$ if the following holds: for each embedded curve $K \subset Y$, there is a 2-disc $\Delta$ spanning $K$ into $X \setminus Y$ such that the framing of $\restr{TX}{K}$ determined by the $\Pin^-$-structure extends over $\Delta$. (This is more than is strictly necessary, since we need only consider $K$ such that $N_YK$ is trivial).

   We now construct such a $\Pin^-$-structure using \cref{lmm: pin from basis}. Let $K_1,\ldots,K_n \subset Y$ be embedded circles such that the tuple $\big( [K_i] \in H_1(Y;\cyc{2}) \big)_{1 \leq i \leq n}$ is a basis for $H_1(Y;\cyc{2})$. For each $i=1,\ldots,n$, let $\Delta_i$ be a 2-disc spanning $K_i$ into $X \setminus Y$, which exists since $\pi_1(X \setminus Y)$ is generated by a meridian to $Y$. By \cref{lmm: spin from basis}, give $Y$ the $\Pin^-$-structure such that the determined framing of $\restr{TX}{K_i}$ extends over $\Delta_i$ for all $i=1,\ldots,n$. For ease of referencing, we call this $\Pin^-$-structure $\mathfrak{p}$.

   It remains to show that for all other embedded circles $K \subset Y$, there is a disc $\Delta$ spanning $K$ into $X \setminus Y$ such that the framing of $\restr{TX}{K}$ determined by $\mathfrak{p}$ extends over $\Delta$. To this end, fix an embedded circle $K \subset Y$ and an orientation of $K$.
   We will construct the following three things:
   \begin{itemize}
       \item an embedding $h \colon D^2 \to X$ such that $h(D^2)$ spans $K$ into $X \setminus Y$;
       \item an abstract oriented surface $\S$ with $\del \S \cong S^1$; and
       \item a continuous map $f \colon \S \to X$ such that $f(\del \S) = K$.
   \end{itemize}
   These will satisfy two conditions: first, that the framing of $\restr{TX}{K}  =\restr[*]{f}{\del \S}TX$ determined by $\mathfrak{p}$ extends over $\S$; and second, that the map
   \begin{equation*}
       f \mathop{\cup}_K h \colon \S \mathop{\cup}_{S^1} D^2 \to X
   \end{equation*}
   from the closed oriented surface $\S \cup D^2$ can be homotoped to an embedding in $X \setminus Y$. Then \cref{lmm: Stiefel Whitney killing lemma} and the assumption that $w_2(X \setminus Y)$ acts trivially on $H_2(X \setminus Y;\Z)$ will force $(f \cup h)^* TX$
   to be trivial. Then since the framing of
   $\restr{TX}{K} = \restr[*]{h}{S^1}TX$ determined by $\mathfrak{p}$ extends over $\S$, it must also extend over $\Delta \coloneq h(D^2)$ as required.

    Let us first construct $\S$ and the continuous map $f\colon \S \to X$. We will do this in several steps. First, we will find a compact orientable surface $\wh{\S}$ with at least 3 boundary components, and a continuous map $f \colon \wh{\S} \to Y$. We will then cap off all but one boundary component of $\wh\S$ with discs to obtain $\S$, and extend $f$ over these discs to build the required map $\S \to X$.

   We construct $f \colon \wh{\S} \to Y$ as follows.
   Without loss of generality, suppose $K_1,\ldots,K_n$ are ordered such that, for some $0 \leq m \leq n$,
   \begin{equation*}
       [K] = [K_1] + \cdots + [K_m] \in H_1(Y; \cyc{2}).
   \end{equation*}
   Then, after choosing an orientation of $K_i$ for each $i=1,\ldots,m$, there exists some $\beta \in H_1(Y;\Z)$ such that 
   \begin{equation*}
       [K] + [K_1] + \cdots + [K_m] + 2 \beta = 0 \in H_1(Y;\Z).
   \end{equation*}
   Let $L \subset Y$ be an oriented embedded curve such that $\beta = [L] \in H_1(Y;\Z)$, which exists since $\dim Y=3$. Then there exists an oriented surface $\wh{\S}$ with $m+3$ boundary components, which we label $C$, $C_1,\ldots,C_m$, $Q_1$, and $Q_2$, as well as a continuous map $f \colon \wh{\S} \to Y$ such that the restriction of $f$ to each boundary component of $\wh\S$ is an embedding, and
   \begin{equation*}
       f(C) = K,\quad f(C_i) = K_i\, \text{ for $i=1,\ldots,m$, }\quad \text{and} \quad f(Q_1)=f(Q_2) = L,
   \end{equation*}
   as oriented embedded circles.

   We can now define the oriented surface $\S$ to be the oriented surface formed by capping off the boundary components $C_1,\ldots,C_m,Q_1$, and $Q_2$ with discs, so that $\del \S = C$.

    Next, we describe how to extend $f$ to a map $f \colon \S \to X$. Choose a disc $\Delta_L$ spanning $L$ into $X \setminus Y$, which again exists since $\pi_1(X \setminus Y)$ is generated by a meridian to $Y$. 
    Then extend $f \colon \wh\S \to Y$ to a map $f \colon \S \to X$ such that the disc capping off $C_i$ gets mapped to $\Delta_i $ for $i=1,\ldots,m$, and the two discs capping off $Q_1$ and $Q_2$ both get mapped to $\Delta_L$. Thus $f\colon \S \to X$ is a continuous map from an oriented surface with $\del \S = C \cong S^1$ such that $f(\del \S) =K$, as was required.

    Next, we show that the framing of $\restr{TX}{K}  =\restr[*]{f}{\del \S}TX$ determined by the $\Pin^-$-structure $\mathfrak{p}$ extends over $\S$. By \cref{lmm: embedded pin structures}, the framing of $\restr[*]{f}{\del \wh{\S}} TX$ determined by $\mathfrak{p}$ extends over $\wh{\S}$, since the image $f(\wh{\S})$ lies entirely in $Y$. By construction, it extends further over the discs capping off $C_1,\ldots,C_m$. Suppose that it does not extend over all of $\S$; that is, the framing of $\restr{TX}{L} =  \restr[*]{f}{Q_j} TX$ given by the $\Pin^-$-structure $\mathfrak{p}$ does not extend over the discs capping off $Q_j$ for $j=1,2$. Then we could change the framing of $\restr{TX}{L}$ we are considering. This would change the framing of $\restr[*]{f}{Q_j}TX$ for both $j=1$ and $j=2$, and hence not affect the fact that the framing of $\restr[*]{f}{\del \wh\S} TX$ under consideration extends over $\wh{\S}$. We may therefore assume that the framing of $\restr[*]{f}{\del \wh\S}TX$ under consideration extends over $\S$. In particular, the framing of $\restr[*]{f}{\del \S}TX$ determined by $\mathfrak{p}$ extends over $\S$, as required.

    It remains to construct the embedding $h \colon D^2 \to X$ such that $h(D^2)$ spans $K$ into $X \setminus Y$, and such that map $f \cup h \colon \S \cup D^2 \to X$ can be homotoped off $Y$. We first show that $f\colon \S \to X$ can be homotoped off $Y$, and use this to construct the embedding $h$.
    
    Fix a tubular neighbourhood $(\nu Y, \varphi)$ of $Y$. By taking $\nu Y$ sufficiently small, we may assume that it intersects each of the discs $\Delta_1,\ldots,\Delta_m$, and $\Delta_L$ only in a collar of their boundary. Then the identification $\varphi\colon DN_XY \to \nu Y$ specifies a section of $\restr[*]{f}{C_i}SN_XY$ for each for $i=1,\ldots,n$ given by the normal direction into $\Delta_i$, as well as a section of $\restr[*]{f}{Q_j} SN_XY$ for $j=1,2$ given by the direction into $\Delta_L$. These combine to give a section of $\restr[*]{f}{\del \wh{\S} \setminus C}SN_XY$, which can be extended to a section
    \begin{equation*}
        s\colon \wh{\S} \to \restr[*]{f}{\wh\S}SN_XY,
    \end{equation*}
    since it has not been prescribed on the boundary component $C$. So $\restr{f}{\wh\S}$ can be pushed off $Y$ in the direction of $s$. But by construction, this push-off is into the discs $\Delta_1,\ldots,\Delta_m$, and $\Delta_L$ which only meet $Y$ transversely in their boundaries, so it extends to a push-off of $f\colon \S \to X$ off $Y$. 
    For ease of reference, we call this push-off $f^s_+\colon \S \to \bar{X \setminus \nu Y}$.
   
   We now construct the embedding $h\colon D^2 \to X$. Let
   \begin{equation*}
        K^s_+ \coloneq f^s_+(\del \S) \subset S \nu Y
   \end{equation*}
   be the embedded circle given by pushing $K$ off $Y$ in the direction of $s$. Since $K^s_+$ is bounded by $f^s_+(\S)$, the circle $K^s_+$ is null-homologous in $X \setminus Y$. But since $\pi_1(X \setminus Y)$ is cyclic and hence abelian, $K^s_+$ is in fact null-homotopic in $X \setminus Y$. Thus we can find a null-homotopy of $K^s_+$ in $X \setminus Y$, given by a map $D^2 \to X \setminus Y$ which restricts to $K^s_+$ on the boundary. We may extend this null-homotopy linearly through $\nu Y$ so that it restricts on the boundary $K \subset Y$, and perturb it to give a smooth embedding $h\colon D^2 \to X$. Then $h(D^2)$ spans $K$ into $X \setminus Y$ as required. 

    The final thing that remains to check is that the push-off $f^s_+ \colon \S \to X$ extends to a push-off of $f \cup h \colon \S \cup D^2 \to X$. But this is true by the construction of $h$, since $f^s_+$ pushes $\del \S$ off in the inwards direction of $h(D^2)$.

    We have therefore constructed both the embedding $h \colon D^2 \to X$ and the map $f \colon \S \to X$ satisfying the conditions required.
   
    The proof the concludes as outlined at the start. Since $\S \cup h(D^2)$ is a closed orientable surface and the map $f \cup h$ can be homotoped into $X \setminus Y$, \cref{lmm: Stiefel Whitney killing lemma} and the assumption that $\langle w_2(X \setminus Y),\sigma \rangle = 0$ for all $\sigma \in H_2(X \setminus Y;\Z)$ implies that $(f\cup h)^*TX$ is trivial. Since we have shown that the framing of $\restr[*]{f}{\del \S}$ determined by the $\Pin^-$-structure $\mathfrak{p}$ on $Y$ extends over $\S$, this implies that the framing of $\restr[*]{h}{S^1}TX$ extends over $D^2$. That is, the framing of $\restr{TX}{K}$ determined by $\mathfrak{p}$ extends over the disc $h(D^2)$ which spans $K$ into $X \setminus Y$. Therefore any abstract $\Pin^-$-surgery on $K$ can be realised over $h(D^2)$. 

    Since $K$ was arbitrary, we have shown that any abstract 1-surgery which is a $\Pin^-$-surgery with respect to $\mathfrak{p}$ can be realised ambiently in $X$.
\end{proof}

\begin{remark}
    The statement of Proposition 5.1 of \cite{Sunukjian} omits the assumption that $\pi_1(X \setminus Y)$ is cyclic and generated by a meridian to $Y$, or any possible weaker assumption. Without such an assumption, one must check that the chosen stable framing of $N_XY$ over each embedded circle extends over any map from any compact surface, not just orientable surfaces. This is the claim made in point (3) of the proof, but in fact the proof only verifies it for orientable surfaces. This has no effect on the rest of the proof of Theorem 6.1 of \cite{Sunukjian}, since Proposition 5.1 is only applied in cases where the missing assumption is satisfied.
\end{remark}

\begin{remark}\label{rmk: proof of 5.1}
    There is also an error in point (1) of the proof of Proposition 5.1 of \cite{Sunukjian}. The proof claims that, for any two discs $\Delta_1$ and $\Delta_2$ spanning some embedded circle $K \subset Y$ into $X \setminus Y$, the 2-sphere $\Delta_1 \cup \Delta_2$ can be pushed off $Y$. This is used to argue, similarly to the proof of \cref{prop: main sunukjian step} above, that the unique framings of some subbundles of $TX$ on $\Delta_1$ and $\Delta_2$ restrict to the same framing on $K$. This is true whenever $H_2(X;\Z) = 0$ by an argument involving Alexander duality, and in particular is true in Kervaire's case of $X= D^5$. In general however, it is not always possible to push $\Delta_1 \cup \Delta_2$ off $Y$. This is fixed by arguing in the proof of \cref{prop: main sunukjian step} above, carefully constructing a disc $\Delta$ spanning each embedded curve $K$ into $X \setminus Y$.
    
    We give an explicit counterexample. Let $X = S^2 \times S^2 \times I$ and $Y = S^2 \times \{\pt\} \times I$. Then the complement $X \setminus Y \cong S^2 \times \mathring{D}^2 \times I$, and so $\pi_1( X \setminus Y) = \{1\}$ and $w_2(X \setminus Y) = 0$. Let $K \subset Y$ be any embedded circle, and let $s,s' \colon K \to \restr{SN_XY}{K}$ be two sections of the circle normal bundle. After choosing a tubular neighbourhood $(\nu Y, \varphi)$ of $Y$, let $K_+^s \coloneq \varphi s(K)$ and $K_+^{s'} \coloneq \varphi s'(K)$ be the push-offs of $K$ in the directions of $s$ and $s'$, respectively. Since $X \setminus Y$ is simply-connected, we can find two embedded discs in $X \setminus Y$ with boundaries $K_+^s$ and $K_+^{s'}$ respectively. By extending these discs by an annulus through $\nu Y$ to have boundary $K$, we find two 2-discs $\Delta, \Delta'$ spanning $K$ into $X \setminus Y$. 
    
    The algebraic intersection number $[Y] \cdot [\Delta \cup \Delta'] \in \Z$ is exactly the difference in the framings $s$ and $s'$. To see this, let $\S \subset X$ be the embedded sphere given by perturbing $\Delta \cup \Delta'$ in the direction of $s$ near $K$. Then $\S$ intersects $Y$ only in transverse isolated points, and the signed count of the intersection points agrees with the number of times $s'$ twists positively around $s$.

    In particular, if $s$ and $s'$ are not isotopic, then $\Delta \cup \Delta'$ cannot be pushed off of $Y$.
\end{remark}

Combining \cref{lmm: ambient surgery with non-trivial sphere} and \cref{prop: main sunukjian step} shows that we can always use $\Pin^-$-structures to control ambient surgeries.

\begin{corollary}\label{cor: pin surgery}
    Suppose that $X$ is orientable, and that $\pi_1(X \setminus Y)$ is cyclic and generated by a meridian to $Y$.
    Then $Y$ admits a $\Pin^-$-structure such that any abstract 1-surgery which is a $\Pin^-$-surgery can be performed ambiently.
\end{corollary}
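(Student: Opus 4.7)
The proof plan is a short case analysis based on the action of $w_2(X \setminus Y)$ on $H_2(X \setminus Y; \mathbb{Z})$. Since every compact 3-manifold admits a $\Pin^-$-structure by \cref{prop: Pin diffeomorphism}, the only task is to select the $\Pin^-$-structure carefully; the two propositions proved just before provide the two cases.

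First I would consider the case where there exists some $\sigma \in H_2(X \setminus Y;\Z)$ with $\langle w_2(X \setminus Y), \sigma\rangle = 1 \in \cyc{2}$. Here we are in the situation of \cref{lmm: ambient surgery with non-trivial sphere}, which already guarantees that every abstract 1-surgery on $Y$ can be realised ambiently, regardless of framing. In particular, any $\Pin^-$-surgery can be performed ambiently, and we simply equip $Y$ with an arbitrary $\Pin^-$-structure; this exists by \cref{prop: Pin diffeomorphism} since $Y$ is a compact 3-manifold. There is no obstruction in this case.

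In the complementary case, $\langle w_2(X \setminus Y), \sigma\rangle = 0$ for every $\sigma \in H_2(X \setminus Y;\Z)$, and the conclusion is exactly the statement of \cref{prop: main sunukjian step}: $Y$ admits a $\Pin^-$-structure for which every abstract 1-surgery that happens to be a $\Pin^-$-surgery can be performed ambiently in $X$. Combining the two cases completes the proof.

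Since both cases reduce instantly to the cited propositions, there is no real obstacle; the main conceptual work is already done in \cref{lmm: ambient surgery with non-trivial sphere} and \cref{prop: main sunukjian step}. The only thing to be careful about is ensuring that the dichotomy genuinely covers all situations and that in the first case the chosen (arbitrary) $\Pin^-$-structure indeed makes the vacuous claim meaningful, i.e.\ that $Y$ admits some $\Pin^-$-structure at all — which is guaranteed by \cref{prop: Pin diffeomorphism}.
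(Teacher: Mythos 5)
Your proof is correct and matches the paper's argument exactly: the same dichotomy on whether $w_2(X \setminus Y)$ acts trivially on $H_2(X\setminus Y;\Z)$, invoking \cref{prop: main sunukjian step} in the trivial case and \cref{lmm: ambient surgery with non-trivial sphere} otherwise. Your extra remark that $Y$ admits some $\Pin^-$-structure by \cref{prop: Pin diffeomorphism} is a small but welcome bit of explicitness that the paper leaves implicit.
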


\begin{proof}
    If $w_2(X \setminus Y)$ acts trivially on $H_2(X \setminus Y;\Z)$, this is \cref{prop: main sunukjian step}. If it acts non-trivially, all abstract 1-surgeries can be realised ambiently, so we may choose any $\Pin^-$-structure on $Y$.
\end{proof}

 \section{Concordance of surfaces in simply-connected 4-manifolds}\label{sec: final concordance}
In this final section, we prove Theorem \ref{mainthm: general concordance} as an application of \cref{cor: pin surgery}.

\begin{namedtheorem}[Theorem \ref{mainthm: general concordance}]
    Let $X$ be a simply-connected 4-manifold and let $\S_0,\S_1 \subset X$ be properly embedded compact connected surfaces. Let $Z \subset \del X \times I$ be a concordance from $\del \S_0$ to $\del \S_1$. Then $Z$ extends to a concordance from $\S_0$ to $\S_1$ if and only if $\S_0 \cong \S_1$ and either:
    \begin{enumerate}[label=(\roman*)]
        \item $\S_0$ and $\S_1$ are orientable, and $Z$ extends to an orientable cobordism from $\S_0$ to $\S_1$; or
        \item $\S_0$ and $\S_1$ are non-orientable, and $Z$ extends to a cobordism from $\S_0$ to $\S_1$.
    \end{enumerate}
\end{namedtheorem}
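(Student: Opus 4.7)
The forward direction is immediate: any concordance is a cobordism, and a concordance between orientable surfaces is itself orientable. For the reverse, assume $\S_0 \cong \S_1$ and that $Z$ extends to a cobordism $Y \subset X \times I$, orientable whenever $\S_0$ and $\S_1$ are. The plan is to ambiently surger $Y$ into a concordance while keeping $\del Y$ fixed throughout.

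First, I would perform finitely many ambient $0$-surgeries on $Y$ in the interior of $X \times I$, using \cref{lmm: 1-surgery fundamental group}, to arrange that $\pi_1((X\times I)\setminus Y)$ is cyclic and generated by a meridian to $Y$; these surgeries preserve $\del Y$ and the orientability type of $Y$. Next, I would equip $Y$ with a $\Pin^-$-structure $\mathfrak{p}$ (a $\Spin$-structure in the orientable case) provided by \cref{cor: pin surgery} (respectively \cite[Proposition 5.1]{Sunukjian}), with the property that every abstract $1$-surgery on $Y$ which is a $\Pin^-$- (resp.\ $\Spin$-) surgery with respect to $\mathfrak{p}$ can be realised ambiently in $X \times I$.

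The core step is to connect $Y$ to $\S_0 \times I$ by a sequence of $\Pin^-$-surgeries rel boundary. I would fix a diffeomorphism $\S_0 \cong \S_1$ equal to the identity on the common boundary, and endow $\S_0 \times I$ with a $\Pin^-$-structure pulled back from one on $\S_0$. Using the freedom to adjust both the identifying diffeomorphism and the auxiliary $\Pin^-$-structure on $\S_0$, and exploiting that $(Y,\mathfrak{p})$ itself exhibits a $\Pin^-$-cobordism between the structures it restricts to on $\S_0$ and $\S_1$, I would arrange that $Y$ and $\S_0 \times I$ are compact connected $3$-dimensional $\Pin^-$-manifolds of the same orientability type with $\Pin^-$-diffeomorphic boundaries. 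Then \cref{lmm: pin cobordism with boundary} produces a sequence of abstract $1$-surgeries on $Y$, each a $\Pin^-$-surgery with respect to $\mathfrak{p}$, transforming $Y$ into $\S_0 \times I$ rel boundary. By the choice of $\mathfrak{p}$ in the previous step, each such surgery can be realised ambiently in $X \times I$, and their composition transforms $Y$ into an embedded copy of $\S_0 \times I$, giving the desired concordance extending $Z$.

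The principal obstacle is the compatibility of boundary $\Pin^-$-structures in the final step: one must reconcile $\mathfrak{p}|_{\S_0}$ and $\mathfrak{p}|_{\S_1}$ with the pair induced by a single $\Pin^-$-structure on $\S_0 \times I$ under a boundary-fixing diffeomorphism. This is a boundary-relative $\Pin^-$-classification for compact surfaces, analogous to \cref{prop: Pin diffeomorphism}; the oriented version is classical (and used via Kaplan's theorem in \cite{Sunukjian}), but the $\Pin^-$-version requires a more careful Brown-invariant analysis, which I would handle by studying the action of the mapping class group of $\S_0$ rel boundary on the $H^1(\S_0;\cyc{2})$-torsor of $\Pin^-$-structures.
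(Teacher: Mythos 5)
Your overall strategy coincides with the paper's: reduce to the existence of a suitable cobordism, tube to make it connected, perform ambient $0$-surgeries so that $\pi_1((X\times I)\setminus Y)$ is cyclic and generated by a meridian, equip $Y$ with a $\Pin^-$-structure for which $\Pin^-$-surgeries are realisable ambiently (\cref{cor: pin surgery}), and then invoke \cref{lmm: pin cobordism with boundary} to produce a sequence of $\Pin^-$-$1$-surgeries from $Y$ to $\S_0\times I$, all of which can be realised ambiently and simultaneously, yielding the concordance.

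The place where you diverge, and where your proposal has a genuine gap, is the verification of the hypothesis of \cref{lmm: pin cobordism with boundary} that $\del Y$ and $\del(\S_0\times I)$ are $\Pin^-$-diffeomorphic. You frame this as a ``boundary-relative $\Pin^-$-classification for compact surfaces'' requiring an analysis of the mapping class group of $\S_0$ rel boundary acting on the $H^1(\S_0;\cyc{2})$-torsor of $\Pin^-$-structures, and you leave that analysis undone. This is the wrong diagnosis. The lemma does not require reconciling $\mathfrak{p}|_{\S_0}$ and $\mathfrak{p}|_{\S_1}$ as $\Pin^-$-structures on surfaces-with-boundary; it only requires that the \emph{closed} surfaces $\del Y$ and $\del(\S_0\times I)$ be $\Pin^-$-diffeomorphic. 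Both of these are abstractly diffeomorphic (to $\S_0\sqcup\S_1$ in the closed case, or to the double of $\S_0$ when $\del\S_0\neq\varnothing$), so \cref{prop: Pin diffeomorphism} reduces the question to $\Pin^-$-cobordism. That cobordism is essentially automatic: $\del Y$ bounds $Y$ and $\del(\S_0\times I)$ bounds $\S_0\times I$ as $\Pin^-$-manifolds, so in the $\del\S_0\neq\varnothing$ case (where $\del Y$ is connected) both are null-bordant; in the closed case one restricts $\mathfrak{p}$ to $\S_0\times\{0\}$, extends over $\S_0\times I$, and observes that $\S_1\times\{1\}\subset\del Y$ and $\S_0\times\{1\}\subset\del(\S_0\times I)$ are both $\Pin^-$-cobordant to $\S_0\times\{0\}$ (via $Y$ and $\S_0\times I$ respectively), hence $\Pin^-$-diffeomorphic. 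No mapping-class-group or Brown-invariant computation is required; the classification statement you need is exactly \cref{prop: Pin diffeomorphism} applied to a single closed surface, not a relative refinement of it. Without this observation your proposed route leaves the crucial step unresolved.
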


\begin{proof}
    The forwards direction is immediate, since a concordance is a special type of cobordism. So we prove the reverse. Let $Y \subset X \times I$ be a cobordism from $\S_0$ to $\S_1$ extending $Z$, orientable if $\S_0$ and $\S_1$ are. By tubing together components, we may assume that $Y$ is connected. We will show that if $X$ is simply-connected, then there is a sequence of surgeries taking $Y$ to $\S_0 \times I$ which can be realised ambiently, giving a concordance from $\S_0$ to $\S_1$.
    
    By \cref{lmm: 1-surgery fundamental group}, we can perform ambient 0-surgery on $Y$ to assume that $\pi_1(X \times I \setminus Y)$ is cyclic and generated by a meridian to $Y$ without affecting the orientability of $Y$. Then by \cref{cor: pin surgery}, we can give $Y$ a $\Pin^-$-structure such that all abstract 1-surgeries which are $\Pin^-$-surgeries can be realised ambiently.
    
    In order to apply \cref{lmm: pin cobordism with boundary} to find a sequence of 1-surgeries taking $Y$ to $\S_0 \times I$, we must show that there is a $\Pin^-$-structure on $\S_0 \times I$ such that $\del Y$ and $\del(\S_0 \times I)$ are $\Pin^-$-diffeomorphic. We consider two cases.
    
    \begin{itemize}
        \item If $\S_0$ and $\S_1$ are closed, give $\S_0 \times \{0\}$ the $\Pin^-$-structure restricted from the one on $\del Y$. Extend this to a $\Pin^-$-structure on $\S_0 \times I$ by \cref{lmm: pin manifold facts}(iii). Then both $\S_1 \times \{1\} \subset \del Y$ and $\S_0 \times \{1\} \subset \S_0 \times I$ are $\Pin^-$-cobordant to $\S_0 \times \{0\}$, so they are $\Pin^-$-diffeomorphic by \cref{prop: Pin diffeomorphism}. In particular, $\del Y$ and $\del (\S_0 \times I)$ are $\Pin^-$-diffeomorphic.

        \item If $\S_0$ and $\S_1$ have non-empty boundary, then $\del Y$ is connected. Then for any $\Pin^-$-structure on $\S_0\times I$, the $\Pin^-$-structures on $\del Y$ and $\del (\S_0 \times I)$ are both null-cobordant. So again, \cref{prop: Pin diffeomorphism} says that $\del Y$ and $\del (\S_0 \times I)$ are $\Pin^-$-diffeomorphic. 
    \end{itemize}

    Thus \cref{lmm: pin cobordism with boundary} gives a sequence of $\Pin^-$-surgeries taking $Y$ to $\S_0 \times I$. Each these are 1-surgeries, and we can perform each of these surgeries ambiently in $X \times I$. In fact, we can perform them all simultaneously, since 2-discs are generically disjoint in $X \times I \setminus Y$ and 1-surgery does not change the fundamental group by \cref{lmm: k-surgery fundamental group}. 

    Performing these surgeries ambiently in $X \times I$ yields a properly embedded 3-manifold in $X \times I$ with boundary $\del Y$, diffeomorphic to $\S_0 \times I$. This is the required concordance from $\S_0$ to $\S_1$.
\end{proof}

\bibliography{refs}

\end{document}